\documentclass[leqno,12pt]{amsart}
\usepackage{amssymb}
\usepackage{amsmath}
\usepackage{enumerate}
\usepackage{amsfonts}
\usepackage{hyperref}
\usepackage{mathrsfs}
\usepackage{tikz}

\usepackage[headheight=18pt, top=25mm, bottom=25mm, left=20mm, right=20mm]{geometry}

\hypersetup{
    colorlinks=true,
    linkcolor= blue,
    citecolor =cyan,
    urlcolor = teal,
}

\newtheorem{theorem}{Theorem}[section]
\newtheorem{corollary}[theorem]{Corollary}
\newtheorem{lemma}[theorem]{Lemma}
\newtheorem{proposition}[theorem]{Proposition}
\newtheorem{remark}[theorem]{Remark}

\numberwithin{equation}{section}

\begin{document}
\title[$L^p$ estimates for Dunkl--Schr\"odinger operators]{$L^p$ estimates for Dunkl--Schr\"odinger operators}

\author[Agnieszka Hejna-Łyżwa]{Agnieszka Hejna-Łyżwa}
\author[Suman Mukherjee]{Suman Mukherjee}

\subjclass[2020]{Primary 42B20, 35J10; Secondary 42B10, 42B37}

\keywords{Dunkl--Schr\"odinger operator,
reverse H\"older potentials,
resolvent kernel,
Riesz transforms,
Fefferman--Phong inequality}

\begin{abstract}
We study the Dunkl--Schr\"odinger operator $L=-\Delta_k+V$
on $\mathbb R^N$, $N>2$, where $V$ is a nonnegative potential
in the reverse H\"older class ${\rm RH}^q(dw)$,
$q>\mathbf N/2$, and $\mathbf N$ is the homogeneous dimension.
We establish decay estimates for the resolvent kernel
in terms of the critical radius and the orbit distance.
For reflection-invariant potentials, we obtain $L^p(dw)$
bounds for $VL^{-1}$, second-order Dunkl derivatives of
$L^{-1}$, and the Riesz transform $\nabla_kL^{-1/2}$,
together with fractional potential and mixed estimates.
The proofs combine local estimates for weak solutions,
the Dunkl Fefferman--Phong inequality, and kernel estimates
adapted to the Euclidean and orbit distances.
\end{abstract}

\address{A. Hejna-Łyżwa, Uniwersytet Wroc\l awski,
Instytut Matematyczny,
Pl. Grunwaldzki 2,
50-384 Wroc\l aw,
Poland}
\email{agnieszka.hejna@math.uni.wroc.pl}

\address{S. Mukherjee, Department of Mathematics, Indian Institute of Technology Bombay, Powai, Mumbai, 400076, India}
\email{sumanmukherjee822@gmail.com}

\enlargethispage{0.37cm}
\maketitle
\tableofcontents
\thispagestyle{empty}

\section{Introduction}

We study $L^p$ estimates for Schr\"odinger operators
\[
 L=-\Delta_k+V
\]
in the rational Dunkl setting on $\mathbb R^N$.
Here $\Delta_k$ is the Dunkl Laplacian associated with
a normalized root system $R$, its finite reflection
group $G$, and a nonnegative $G$-invariant multiplicity
function $k$. The underlying measure is
\begin{equation*}
 dw(\mathbf x)
 =\prod_{\alpha\in R}
 |\langle\mathbf x,\alpha\rangle|^{k(\alpha)}
 \,d\mathbf x,
\end{equation*}
and
$\mathbf N=N+\sum_{\alpha\in R}k(\alpha)$
denotes the homogeneous dimension.
We assume that $N>2$ and that the potential $V$ is
nonnegative, not identically zero, and $V\in L^2_{\text{loc}}(dw)$.
Moreover, $V$ belongs to the reverse H\"older class
${\rm RH}^q(dw)$ for some $q>\mathbf N/2$; that is,
\begin{equation}\label{eq:RevHol}
 \left(
 \frac1{w(B)}\int_B V(\mathbf y)^q\,dw(\mathbf y)
 \right)^{1/q}
 \leq
 \frac{C_{\rm RH}}{w(B)}
 \int_B V(\mathbf y)\,dw(\mathbf y)
\end{equation}
for every Euclidean ball $B\subset\mathbb R^N$.
Our aim is to establish $L^p(dw)$ bounds for the
Riesz transforms $\nabla_kL^{-1/2}$ and for operators
involving the Green potential $L^{-1}$, Dunkl
derivatives, and multiplication by powers of $V$.
These estimates provide a counterpart of the
classical Schr\"odinger theory developed by
Shen~\cite{Shen}, in a setting where the differential
operators also contain reflection terms.

For classical Schr\"odinger operators
$\mathscr L=-\Delta_{\mathrm{eucl}}+V$ with nonnegative
reverse H\"older potentials, the critical function
introduced in the work of Fefferman~\cite{Fefferman}
and the Fefferman--Phong inequality provide basic
tools for studying spectral properties and regularity.
Shen~\cite{Shen2,Shen} used this approach to obtain
fundamental-solution estimates and $L^p$ bounds for
Riesz transforms and related operators.

In the rational Dunkl setting, in~\cite{Hejna_Schrodinger}, the following Fefferman--Phong inequality
was established:
\[
 \int_{\mathbb R^N}m(\mathbf x)^2|h(\mathbf x)|^2\,dw(\mathbf x)
 \leq C\left(
 \sum_{j=1}^N\|T_jh\|_{L^2(dw)}^2
 +\|V^{1/2}h\|_{L^2(dw)}^2
 \right)
\]
for $h$ in the quadratic-form domain, and
an atomic characterization of the associated Hardy
space was proved. In~\cite{Hejna_box}, the author obtained a counterpart
of Fefferman's box formula, estimating the eigenvalue
counting function in terms of cubes meeting sublevel
sets of the critical function $m$.
Building on these results, the present paper establishes
resolvent-gradient estimates and $L^p(dw)$ bounds for
Dunkl--Schr\"odinger Riesz transforms and related
operators involving the potential.

Throughout the paper, we assume that $N>2$ and that
$V\not\equiv0$, $V\in L^2_{\text{loc}}(dw)$, is a nonnegative potential
belonging to ${\rm RH}^q(dw)$ for some finite
$q>\mathbf N/2$. We write
\[
 L=-\Delta_k+V,
 \qquad
 \nabla_k=(T_1,\ldots,T_N),
 \qquad
 \nabla_k^2=(T_iT_j)_{1\leq i,j\leq N}.
\]
For vector-valued functions, the $L^p(dw)$ norm is
defined using the Euclidean norm of the components.
For second-order derivatives, we use the convention
\[
 \|\nabla_k^2u\|_{L^p(dw)}
 =\sum_{i,j=1}^N\|T_iT_ju\|_{L^p(dw)}.
\]
For a vector field $\mathbf f=(f_1,\ldots,f_N)$, let
\[
 \operatorname{div}_k\mathbf f=\sum_{j=1}^N T_jf_j.
\]
Negative powers of $L$ are initially interpreted
on their natural domains. At zero spectral parameter,
$L^{-1}$ denotes the Green potential.
The estimates below concern the bounded extensions
of the indicated compositions.

The operator estimates below assume that $V$ is $G$-invariant.
This natural symmetry condition includes radial potentials
and implies $m(g\mathbf x)=m(\mathbf x)$.
It enables arbitrary polynomial decay of the resolvent kernel
on the critical scale, which is used to control the far-field
terms in our proofs.

Our main results are as follows.
\begin{enumerate}
 \item
 \textbf{Potential and second-order derivative estimates.}
 Theorems~\ref{thm:V_boundedness}
 and~\ref{thm:Riesz_boundedness} establish
 \[
  \|VL^{-1}f\|_{L^p(dw)}
  \leq C_p\|f\|_{L^p(dw)},
  \qquad 1\leq p\leq q,
 \]
 and
 \[
  \|\nabla_k^2L^{-1}f\|_{L^p(dw)}
  \leq C_p\|f\|_{L^p(dw)},
  \qquad 1<p\leq q.
 \]

 \item
 \textbf{First-order Riesz transforms.}
 Put
 \[
  p_0=
  \begin{cases}
   \displaystyle\frac{\mathbf Nq}{\mathbf N-q},
       & \mathbf N/2<q<\mathbf N,\\[2mm]
   \infty, & q\geq\mathbf N.
  \end{cases}
 \]
 Proposition~\ref{prop:sharp_Riesz_reduction}
 gives
 \[
  \|\nabla_kL^{-1/2}f\|_{L^p(dw)}
  \leq C_p\|f\|_{L^p(dw)},
  \qquad 1<p<p_0,
 \]
 when $q<\mathbf N$. For $q\geq\mathbf N$, the
 same conclusion holds for every $1<p<\infty$
 by applying that proposition with reverse
 H\"older exponents below and arbitrarily close
 to $\mathbf N$, as explained in the proof of
 Theorem~\ref{thm:conditional_sharp_mixed_estimates}.

 \item
 \textbf{Gradient estimates with a potential factor.}
 Proposition~\ref{prop:weighted_gradient_reduction}
 proves
 \[
  \|V^{1/2}\nabla_kL^{-1}f\|_{L^p(dw)}
  \leq C_p\|f\|_{L^p(dw)},
  \qquad 1<p<p_1,
 \]
 where
 \[
  p_1=
  \begin{cases}
   \displaystyle
   \left(\frac{3}{2q}-\frac1{\mathbf N}\right)^{-1},
       & \mathbf N/2<q<\mathbf N,\\[2mm]
   2q, & q\geq\mathbf N.
  \end{cases}
 \]

 \item
 \textbf{Fractional potential estimates and mixed operators.}
 Theorem~\ref{thm:fractional_potential_boundedness}
 gives, for $0<\theta<1$,
 \[
  \|V^\theta L^{-\theta}f\|_{L^p(dw)}
  \leq C_{p,\theta}\|f\|_{L^p(dw)},
  \qquad 1<p<q/\theta.
 \]
 Combining this result with the first-order
 Riesz transform estimates and duality,
 Theorem~\ref{thm:conditional_sharp_mixed_estimates}
 yields
 \[
  \|L^{-1/2}\operatorname{div}_k\mathbf f\|_{L^p(dw)}
  \leq C_p\|\mathbf f\|_{L^p(dw)},
  \qquad p_0'<p<\infty,
 \]
 \[
  \|\nabla_kL^{-1}\operatorname{div}_k\mathbf f\|_{L^p(dw)}
  \leq C_p\|\mathbf f\|_{L^p(dw)},
  \qquad p_0'<p<p_0,
 \]
 and
 \[
  \|V^{1/2}L^{-1}\operatorname{div}_k\mathbf f\|_{L^p(dw)}
  \leq C_p\|\mathbf f\|_{L^p(dw)},
  \qquad p_0'<p<2q.
 \]
 Here $p_0'$ is the conjugate exponent of $p_0$,
 with $p_0'=1$ when $p_0=\infty$.
\end{enumerate}

The main difficulties arise from the reflection terms
in the Dunkl operators and the Dunkl Laplacian
(see \eqref{eq:T_def} and
\eqref{eq:Dunkl_Laplacian}). Their nonlocality requires
localization on $G$-invariant sets, while mean-value
estimates involve orbit neighbourhoods. Although $dw$
is doubling, ball volumes depend on the position of
their centres, and kernel estimates involve both
Euclidean and orbit distances. This distinction is
particularly relevant to truncations of singular
integrals. Moreover, localization in integration-by-parts
arguments must account for the reflection terms
in the product rule. Since reverse H\"older potentials
need not yield $C^2$ solutions, we formulate the local
estimates for weak solutions.

These estimates provide regularity bounds for the canonical
Green solutions of Dunkl--Schr\"odinger equations with rough
potentials, including equations with right-hand sides in
Dunkl divergence form. In particular, they control the
potential term and the first- and second-order Dunkl
derivatives in the indicated $L^p(dw)$ ranges.
The Riesz transform estimates also provide tools for further
developments in Hardy space and singular integral theory
associated with these operators, while the fractional
potential bounds quantify the relation between multiplication
by $V$ and fractional powers of $L$.
The class of potentials considered here includes
$V(\mathbf x)=\|\mathbf x\|^2$, so the results apply,
in particular, to the Dunkl harmonic oscillator.

Our main tools are estimates for the Dunkl heat kernel
and for translations of radial functions and their
derivatives~\cite{ADzH,DzH1}, together with the
Fefferman--Phong inequality~\cite{Hejna_Schrodinger}.
We combine these with methods of analysis on spaces
of homogeneous type and the Dunkl singular-integral
theory developed in
\cite{ADzH,DzH1,HanLeeLiWick,TanHanHanLeeLi,TanHanLi}.
Local gradient estimates and annular kernel bounds
then lead to the required maximal-function estimates.
Spectral regularization justifies the operator
decompositions without assuming pointwise convergence
of orbit-distance truncations.

\section{Preliminaries}\label{sec:preliminaries}

\subsection{Dunkl geometry and volume estimates}
In this section we present basic facts concerning the theory of the Dunkl operators.  For details we refer the reader to~\cite{Dunkl},~\cite{deJeu},~\cite{Roesler3}, and~\cite{Roesler-Voit}. 

We consider the Euclidean space $\mathbb R^N$ with the scalar product $\langle\mathbf x,\mathbf y\rangle=\sum_{j=1}^N x_jy_j
$, where $\mathbf x=(x_1,...,x_N)$, $\mathbf y=(y_1,...,y_N)$, and the norm $\| \mathbf x\|^2=\langle \mathbf x,\mathbf x\rangle$. For a nonzero vector $\alpha\in\mathbb R^N$,  the reflection $\sigma_\alpha$ with respect to the hyperplane $\alpha^\perp$ orthogonal to $\alpha$ is given by
\begin{align*}
\sigma_\alpha (\mathbf x)=\mathbf x-2\frac{\langle \mathbf x,\alpha\rangle}{\| \alpha\| ^2}\alpha.
\end{align*}
In this paper we fix a normalized root system in $\mathbb R^N$, that is, a finite set  $R\subset \mathbb R^N\setminus\{0\}$ such that $R \cap \alpha \mathbb{R} = \{\pm \alpha\}$,  $\sigma_\alpha (R)=R$, and $\|\alpha\|=\sqrt{2}$ for all $\alpha\in R$. The finite group $G$ generated by the reflections $\sigma_\alpha$, $\alpha \in R$ is called the {\it Weyl group} ({\it reflection group}) of the root system. A~{\textit{multiplicity function}} is a $G$-invariant function $k:R\to\mathbb C$ which will be fixed and $\geq 0$  throughout this paper. 
 Let
\begin{equation}\label{eq:measure_formula}
dw(\mathbf x)=\prod_{\alpha\in R}|\langle \mathbf x,\alpha\rangle|^{k(\alpha)}\, d\mathbf x
\end{equation} 
be  the associated measure in $\mathbb R^N$, where, here and subsequently, $d\mathbf x$ stands for the Lebesgue measure in $\mathbb R^N$.
We denote by 
\begin{equation}\label{eq:homo}
\mathbf{N}=N+\sum_{\alpha \in R} k(\alpha)
\end{equation}
the homogeneous dimension of the system. Clearly, 
\begin{align*} w(B(t\mathbf x, tr))=t^{\mathbf N}w(B(\mathbf x,r)) \ \ \text{\rm for all } \mathbf x\in\mathbb R^N, \ t,r>0,   
\end{align*}
 where $B(\mathbf x, r)=\{\mathbf y\in\mathbb R^N: \|\mathbf y-\mathbf x\|<r\}$. Moreover, 
\begin{align*}
\int_{\mathbb R^N} f(\mathbf x)\, dw(\mathbf x)=\int_{\mathbb R^N} t^{-\mathbf N} f(\mathbf x\slash t)\, dw(\mathbf x)\ \ \text{for} \ f\in L^1(dw)  \   \text{\rm and} \  t>0.
\end{align*}
Observe that there is a constant $C>0$ such that 
\begin{equation}\label{eq:balls_asymp} 
C^{-1}w(B(\mathbf x,r))\leq  r^{N}\prod_{\alpha \in R} (|\langle \mathbf x,\alpha\rangle |+r)^{k(\alpha)}\leq C w(B(\mathbf x,r)),
\end{equation}
so $dw(\mathbf x)$ is doubling, that is, there is a constant $C>0$ such that
\begin{equation}\label{eq:doubling} w(B(\mathbf x,2r))\leq C w(B(\mathbf x,r)) \ \ \text{ for all } \mathbf x\in\mathbb R^N, \ r>0.
\end{equation}
Moreover, there exists a constant $C\ge1$ such that,
for every $\mathbf{x}\in\mathbb{R}^N$ and for every $r_2\ge r_1>0$,
\begin{equation}\label{eq:growth}
C^{-1}\Big(\frac{r_2}{r_1}\Big)^{N}\leq\frac{{w}(B(\mathbf{x},r_2))}{{w}(B(\mathbf{x},r_1))}\leq C \Big(\frac{r_2}{r_1}\Big)^{\mathbf{N}}.
\end{equation}

For a measurable subset $A$ of $\mathbb{R}^N$ we define 
\begin{equation}\label{eq:orbit_of_A}
    \mathcal{O}(A)=\{g(\mathbf{x})\,:\, \mathbf{x} \in A, \, g \in G\}.
\end{equation}
Clearly, by~\eqref{eq:balls_asymp}, for all $\mathbf{x} \in \mathbb{R}^N$ and $r>0$ we get
\begin{equation}\label{eq:ball_orbit_compare}
    w(\mathcal{O}(B(\mathbf{x},r))) \leq |G|w(B(\mathbf{x},r)).
\end{equation}

Let 
\begin{equation}\label{eq:distance_of_orbits}
    d(\mathbf x,\mathbf y)=\min_{\sigma\in G}\| \sigma(\mathbf x)-\mathbf y\|    
\end{equation}\index{distance of orbits}
be the distance of the orbit of $\mathbf{x}$ to the orbit of $\mathbf{y}$.

\subsection{Dunkl operators, transform, and translations}

For $\xi \in \mathbb{R}^N$, the {\it Dunkl operators} $T_\xi$  are the following $k$-deformations of the directional derivatives $\partial_\xi$ by a  difference operator:
\begin{equation}\label{eq:T_def}
     T_\xi f(\mathbf x)= \partial_\xi f(\mathbf x) + \sum_{\alpha\in R} \frac{k(\alpha)}{2}\langle\alpha ,\xi\rangle\frac{f(\mathbf x)-f(\sigma_\alpha(\mathbf{x}))}{\langle \alpha,\mathbf x\rangle}.
\end{equation}
The Dunkl operators $T_{\xi}$, which were introduced in~\cite{Dunkl}, commute and are skew-symmetric with respect to the $G$-invariant measure $dw$. If $\xi \in \mathbb{R}^N$, $f,g \in C^1(\mathbb{R}^N)$ and the function $g$ is $G$-invariant, then we have
\begin{equation}\label{eq:G_inv_Leibniz}
    T_{\xi}(fg)=f(T_{\xi}g)+g(T_{\xi}f).
\end{equation}

For a general smooth multiplier $\varphi$, the product rule
takes the following form:
\begin{equation}\label{eq:general_Leibniz}
\begin{split}
 T_j(\varphi f)(\mathbf{x})
 &=
 \varphi(\mathbf{x})T_jf(\mathbf{x})
 +f(\mathbf{x})\partial_j\varphi(\mathbf{x})
 +
 \sum_{\alpha\in R}\frac{k(\alpha)}2\alpha_j
 f(\sigma_\alpha\mathbf{x})
 \frac{\varphi(\mathbf{x})
       -\varphi(\sigma_\alpha(\mathbf{x}))}
      {\langle\alpha, \mathbf{x}\rangle}.
\end{split}
\end{equation}

If $\varphi\in C_c^\infty(\mathbb R^N)$, the quotient
appearing in~\eqref{eq:general_Leibniz} extends smoothly
across $\alpha^\perp$. Moreover, the mean value theorem
gives
\begin{equation}\label{eq:multiplier_reflection_quotient}
 \left|
 \frac{\varphi(\mathbf{x})
       -\varphi(\sigma_\alpha\mathbf{x})}
      {\langle\alpha,\mathbf{x}\rangle}
 \right|
 \leq
 \frac{2}{\|\alpha\|}
 \|\nabla\varphi\|_{L^\infty}.
\end{equation}

Formula~\eqref{eq:general_Leibniz} also holds for weak
Dunkl derivatives. More precisely, let $\Omega$ be an
open $G$-invariant set, let
$f,T_jf\in L^2_{\mathrm{loc}}(\Omega,dw)$, and let
$\varphi\in C_c^\infty(\Omega)$.
Then~\eqref{eq:general_Leibniz} holds in the
distributional sense in $\Omega$, and its right-hand
side belongs to $L^2(\Omega,dw)$.
This weak version follows by testing against a smooth
compactly supported function, using the skew-symmetry
of $T_j$, and changing variables
$\mathbf{x}\mapsto\sigma_\alpha\mathbf{x}$ in the
reflection terms.
The $L^2$ assertion follows
from~\eqref{eq:multiplier_reflection_quotient} and
the $G$-invariance of $dw$.

For fixed $\mathbf y\in\mathbb R^N$ the {\it Dunkl kernel} $E(\mathbf x,\mathbf y)$ is the unique analytic solution to the system
\begin{equation}\label{eq:Dunkl_kernel_definition}
    T_\xi f=\langle \xi,\mathbf y\rangle f, \ \ f(0)=1.
\end{equation}
The function $E(\mathbf x ,\mathbf y)$, which generalizes the exponential  function $e^{\langle \mathbf x,\mathbf y\rangle}$, has the unique extension to a holomorphic function on $\mathbb C^N\times \mathbb C^N$. Moreover, it satisfies $E(\mathbf{x},\mathbf{y})=E(\mathbf{y},\mathbf{x})$ for all $\mathbf{x},\mathbf{y} \in \mathbb{C}^N$.

Let $\{e_j\}_{1 \leq j \leq N}$ denote the canonical orthonormal basis in $\mathbb R^N$ and let $T_j=T_{e_j}$. 
In our further consideration we shall need the following lemma.
\begin{lemma}
For all $\mathbf{x} \in \mathbb{R}^N$, $\mathbf{z} \in \mathbb{C}^N$ and $\nu \in \mathbb{N}_0^{N}$ we have
$$|\partial^{\nu}_{\mathbf{z}}E(\mathbf{x},\mathbf{z})| \leq \|\mathbf{x}\|^{|\nu|}\exp(\|\mathbf{x}\|\|{\rm Re \;}\mathbf{z}\|).$$
In particular, 
\begin{align*} | E(i\xi, \mathbf x)|\leq 1 \quad \text{ for all } \xi,\mathbf x\in \mathbb R^N.
\end{align*}
\end{lemma}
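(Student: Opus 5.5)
The plan is to use Rösler's positive integral representation of the Dunkl kernel. By Rösler's theorem, for each $\mathbf x\in\mathbb R^N$ there is a probability measure $\mu_{\mathbf x}$ on $\mathbb R^N$, supported in the convex hull of the orbit $\{g(\mathbf x): g\in G\}$, such that
\[
 E(\mathbf x,\mathbf z)=\int_{\mathbb R^N} e^{\langle \eta,\mathbf z\rangle}\,d\mu_{\mathbf x}(\eta)
\]
for all $\mathbf z\in\mathbb C^N$, where $\langle\eta,\mathbf z\rangle=\sum_j\eta_jz_j$ is the bilinear extension. In the paper I would first record this representation, citing~\cite{Roesler3} and~\cite{Roesler-Voit}.

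I would then differentiate under the integral sign in $\mathbf z$. This is justified because the integrand is entire in $\mathbf z$ and $\mu_{\mathbf x}$ is a compactly supported probability measure, so the derivatives are bounded locally uniformly in $\mathbf z$. It gives
\[
 \partial^\nu_{\mathbf z}E(\mathbf x,\mathbf z)=\int \eta^\nu e^{\langle\eta,\mathbf z\rangle}\,d\mu_{\mathbf x}(\eta).
\]

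The support property bounds both factors. Every $\eta\in\operatorname{supp}\mu_{\mathbf x}$ is a convex combination of the points $g(\mathbf x)$, which all have norm $\|\mathbf x\|$ because $G$ acts by orthogonal maps. Hence $\|\eta\|\le\|\mathbf x\|$. It follows that
\[
 |\eta^\nu|=\prod_j|\eta_j|^{\nu_j}\le\|\eta\|^{|\nu|}\le\|\mathbf x\|^{|\nu|}
\]
and
\[
 |e^{\langle\eta,\mathbf z\rangle}|=e^{\langle\eta,{\rm Re}\,\mathbf z\rangle}\le e^{\|\mathbf x\|\|{\rm Re}\,\mathbf z\|}
\]
by Cauchy--Schwarz. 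Since $\mu_{\mathbf x}$ has total mass one, integrating these bounds yields the first estimate.

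For the particular case, take $\nu=0$ and $\mathbf z=i\xi$. Then ${\rm Re}\,\mathbf z=0$, so the bound becomes $|E(\mathbf x,i\xi)|\le1$. Finally, the symmetry $E(\mathbf x,\mathbf y)=E(\mathbf y,\mathbf x)$ for complex arguments, stated above, gives $|E(i\xi,\mathbf x)|\le1$.

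The only nontrivial input, and so the main obstacle, is Rösler's representation itself, namely its positivity and the support statement. Those are deep results, and I would quote them rather than reprove them. Once they are available, the rest is a routine estimate.
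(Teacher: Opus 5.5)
Your proof is correct and follows essentially the same route as the paper. The paper only cites Corollary~5.3 of R\"osler's positivity paper \cite{Roesle99}, and that corollary is derived from the positive representation $E(\mathbf x,\mathbf z)=\int e^{\langle\eta,\mathbf z\rangle}\,d\mu_{\mathbf x}(\eta)$, with $\operatorname{supp}\mu_{\mathbf x}$ contained in the convex hull of $G\cdot\mathbf x$, by differentiating under the integral and bounding $|\eta^\nu|$ and $|e^{\langle\eta,\mathbf z\rangle}|$ exactly as you do; your final use of the symmetry $E(\mathbf x,\mathbf y)=E(\mathbf y,\mathbf x)$ correctly handles the swapped argument order in the ``in particular'' statement.
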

\begin{proof}
See~\cite[Corollary 5.3]{Roesle99}.
\end{proof}

\begin{corollary}\label{coro:Roesler}
There is a constant $C>0$ such that for all $\mathbf{x},\xi \in \mathbb{R}^N$ we have
\begin{equation}
|E(i\xi,\mathbf{x})-1| \leq C\|\mathbf{x}\|\|\xi\|.
\end{equation}
\end{corollary}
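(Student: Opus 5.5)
The plan is to apply the fundamental theorem of calculus along the segment from $\mathbf 0$ to $\mathbf x$ in the first variable, and use the derivative bound from the preceding lemma. Since $E(\mathbf x,\mathbf z)=E(\mathbf z,\mathbf x)$ and $E$ extends holomorphically to $\mathbb C^N\times\mathbb C^N$, we may write $E(i\xi,\mathbf x)=E(\mathbf x, i\xi)$. It is then convenient to differentiate in the second slot, where the lemma gives control. Since $E(\mathbf 0,\cdot)$ is not directly covered, the cleaner route is to scale the argument $\mathbf z=i\xi$ instead. Set
\[
 \phi(t)=E(\mathbf x,it\xi),\qquad t\in[0,1].
\]
Then $\phi(0)=E(\mathbf x,\mathbf 0)=E(\mathbf 0,\mathbf x)=1$ by the normalization in \eqref{eq:Dunkl_kernel_definition} together with symmetry, and $\phi(1)=E(\mathbf x,i\xi)=E(i\xi,\mathbf x)$.

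Next, by the chain rule for the holomorphic function $\mathbf z\mapsto E(\mathbf x,\mathbf z)$,
\[
 \phi'(t)=\sum_{j=1}^N i\xi_j\,\partial_{z_j}E(\mathbf x,\mathbf z)\big|_{\mathbf z=it\xi}.
\]
The lemma with $|\nu|=1$ gives $|\partial_{z_j}E(\mathbf x,it\xi)|\le\|\mathbf x\|\exp(\|\mathbf x\|\|{\rm Re}\,(it\xi)\|)=\|\mathbf x\|$, because ${\rm Re}(it\xi)=0$. Hence
\[
 |\phi'(t)|\le\|\mathbf x\|\sum_j|\xi_j|\le\sqrt N\,\|\mathbf x\|\|\xi\|
\]
by Cauchy--Schwarz. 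Integrating over $[0,1]$ then gives
\[
 |E(i\xi,\mathbf x)-1|\le\sqrt N\|\mathbf x\|\|\xi\|,
\]
so $C=\sqrt N$ works.

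There is no real obstacle here. The only point needing care is to justify $\phi(0)=1$, namely that $E(\mathbf x,\mathbf 0)=1$; this follows from symmetry, since $E(\mathbf 0,\mathbf x)=1$ by definition. The other point is that the lemma is stated for real $\mathbf x$ and complex $\mathbf z$, which is exactly the setting we use.
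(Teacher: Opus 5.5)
Your proof is correct, and it is essentially the intended argument. The paper gives no separate proof and presents the corollary as an immediate consequence of R\"osler's derivative bound. Your integration of $\frac{d}{dt}E(\mathbf x,it\xi)$ over $[0,1]$, using ${\rm Re}(it\xi)=0$ together with the normalization $E(\mathbf x,\mathbf 0)=1$ and the symmetry $E(i\xi,\mathbf x)=E(\mathbf x,i\xi)$, is exactly how it follows, with $C=\sqrt N$.
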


The \textit{Dunkl transform}
  \begin{align*}\mathcal F f(\xi)=c_k^{-1}\int_{\mathbb R^N} E(-i\xi, \mathbf x)f(\mathbf x)\, dw(\mathbf x),
  \end{align*}
  where
  $$c_k=\int_{\mathbb{R}^N}e^{-\frac{\|\mathbf{x}\|^2}{2}}\,dw(\mathbf{x})>0,$$
   originally defined for $f\in L^1(dw)$, is an isometry on $L^2(dw)$, i.e.,
   \begin{equation}\label{eq:Plancherel}
       \|f\|_{L^2(dw)}=\|\mathcal{F}f\|_{L^2(dw)} \text{ for all }f \in L^2(dw).
   \end{equation}
   The Dunkl transform was introduced by C.F. Dunkl in
\cite{Dunkl3}, where the Plancherel theorem was also established.

The Dunkl transform preserves the Schwartz class of functions $\mathcal S(\mathbb R^N)$ (see \cite{deJeu}). Its inverse $\mathcal F^{-1}$ has the form
  \begin{align*} \mathcal F^{-1} g(x)=c_k^{-1}\int_{\mathbb R^N} E(i\xi, \mathbf x)g(\xi)\, dw(\xi).
  \end{align*}
Moreover,
\begin{equation}\label{eq:der_transform}
    \mathcal{F}(T_j f)(\xi)=i\xi_j\mathcal{F}f(\xi).
\end{equation}
The {\it Dunkl translation\/} $\tau_{\mathbf{x}}f$ of a function $f\in\mathcal{S}(\mathbb{R}^N)$ by $\mathbf{x}\in\mathbb{R}^N$ is defined by
\begin{align*}
\tau_{\mathbf{x}} f(\mathbf{y})=c_k^{-1} \int_{\mathbb{R}^N}{E}(i\xi,\mathbf{x})\,{E}(i\xi,\mathbf{y})\,\mathcal{F}f(\xi)\,{dw}(\xi).
\end{align*}
  It is a contraction on $L^2(dw)$, however it is an open  problem  if the Dunkl translations are bounded operators on $L^p(dw)$ for $p\ne 2$.
  
  {The \textit{Dunkl convolution\/} $f*g$ of two reasonable functions (for instance Schwartz functions) is defined by
$$
(f*g)(\mathbf{x})=c_k\,\mathcal{F}^{-1}[(\mathcal{F}f)(\mathcal{F}g)](\mathbf{x})=\int_{\mathbb{R}^N}(\mathcal{F}f)(\xi)\,(\mathcal{F}g)(\xi)\,E(\mathbf{x},i\xi)\,dw(\xi) \text{ for }\mathbf{x}\in\mathbb{R}^N,
$$
or, equivalently, by}
\begin{align*}
  {(f{*}g)(\mathbf{x})=\int_{\mathbb{R}^N}f(\mathbf{y})\,\tau_{\mathbf{x}}g(-\mathbf{y})\,{dw}(\mathbf{y})=\int_{\mathbb R^N} f(\mathbf y)g(\mathbf x,\mathbf y) \,dw(\mathbf{y}) \text{ for all } \mathbf{x}\in\mathbb{R}^N},  
\end{align*}
where, here and subsequently, $g(\mathbf x,\mathbf y)=\tau_{\mathbf x}g(-\mathbf y)$. 

The following formula for the Dunkl translations of radial function was proved by R\"osler in~\cite{Roesler2003}.

\begin{theorem}\label{teo:roesler_formula}\index{R\"osler's formula}
For any $\mathbf{x} \in \mathbb{R}^{N}$  there exists a unique probability Borel measure $\mu_{\mathbf{x}}$ such that for all $f \in C^{\infty}(\mathbb{R}^N)$ such that $f(\mathbf{y})=\widetilde{f}(\|\mathbf{y}\|)$ and for all $\mathbf{y} \in \mathbb{R}^N$,  we have
\begin{equation}\label{eq:translation-radial}
\tau_{\mathbf{x}}f(-\mathbf{y})=\int_{\mathbb{R}^N}{(\tilde{f}\circ A)}(\mathbf{x},\mathbf{y},\eta)\,d\mu_{\mathbf{x}}(\eta) \text{ for all }\mathbf{y} \in \mathbb{R}^N,
\end{equation}
where, here and subsequently,
\begin{equation*}
A(\mathbf{x},\mathbf{y},\eta)=\sqrt{{\|}\mathbf{x}{\|}^2+{\|}\mathbf{y}{\|}^2-2\langle \mathbf{y},\eta\rangle}=\sqrt{{\|}\mathbf{x}{\|}^2-{\|}\eta{\|}^2+{\|}\mathbf{y}-\eta{\|}^2}.
\end{equation*}
Recall that the support of $\mu_{\mathbf{x}}$ is contained in the convex hull of $\mathcal{O}(\mathbf{x})$.
\end{theorem}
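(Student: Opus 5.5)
The statement is R\"osler's theorem \cite{Roesler2003}, and I would follow her route: prove the formula first for Gaussians, then extend it to general radial functions by density. Throughout I would read ``$f\in C^\infty$ radial'' as ``radial Schwartz function'', since that is where $\tau_{\mathbf x}$ has been defined. The one nontrivial ingredient I would import, also from R\"osler (\cite{Roesle99} and its sequel), is the positive integral representation of the Dunkl kernel: for each $\mathbf x$ there is a probability measure $\mu_{\mathbf x}$, supported in the convex hull of $\mathcal O(\mathbf x)$, such that
\[
E(\mathbf x,\mathbf z)=\int_{\mathbb R^N}e^{\langle\eta,\mathbf z\rangle}\,d\mu_{\mathbf x}(\eta)\qquad\text{for all }\mathbf z\in\mathbb C^N.
\]
This is the measure $\mu_{\mathbf x}$ of the theorem, and its support property is exactly the last sentence of the statement.

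\textbf{Step 1: the Gaussian case.} Let $\gamma_s(\mathbf y)=e^{-s\|\mathbf y\|^2}$ with $s>0$. Its Dunkl transform is again a Gaussian, a constant times $e^{-\|\xi\|^2/(4s)}$. Insert this into the definition of $\tau_{\mathbf x}$ and use the Macdonald-type identity
\[
c_k^{-1}\int_{\mathbb R^N} E(i\xi,\mathbf x)E(i\xi,\mathbf y)\,\mathcal F\gamma_s(\xi)\,dw(\xi)=e^{-s(\|\mathbf x\|^2+\|\mathbf y\|^2)}E(2s\mathbf x,\mathbf y).
\]
This identity is the Dunkl analogue of the Gaussian integral and is proved by analytic continuation in $\mathbf x,\mathbf y$ from the standard formula. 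Replacing $\mathbf y$ by $-\mathbf y$ and applying the integral representation with $\mathbf z=-2s\mathbf y$ gives
\[
\tau_{\mathbf x}\gamma_s(-\mathbf y)=\int e^{-s(\|\mathbf x\|^2+\|\mathbf y\|^2-2\langle\mathbf y,\eta\rangle)}\,d\mu_{\mathbf x}(\eta)=\int e^{-sA(\mathbf x,\mathbf y,\eta)^2}\,d\mu_{\mathbf x}(\eta).
\]
This is \eqref{eq:translation-radial} for $\gamma_s$. In passing, this computation shows $A^2\ge0$ on the support of $\mu_{\mathbf x}$, since $\|\eta\|\le\|\mathbf x\|$ there, so $A$ is well defined.

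\textbf{Step 2: extension by density.} Write $\tilde f(r)=F(r^2)$ with $F$ rapidly decreasing on $[0,\infty)$. By Stone--Weierstrass on the one-point compactification, linear combinations of $e^{-st}$, $s>0$, are dense in $C_0([0,\infty))$. I would approximate $F$ by such combinations $F_n$ and pass to the limit on both sides of \eqref{eq:translation-radial}.
\begin{itemize}
\item The right-hand side converges because $\mu_{\mathbf x}$ is a probability measure and $F_n\to F$ uniformly.
\item The left-hand side needs more. From Lemma 2.1 we have $|\tau_{\mathbf x}g(-\mathbf y)|\le c_k^{-1}\|\mathcal Fg\|_{L^1(dw)}$, so it suffices to have $\mathcal F(F_n(\|\cdot\|^2))\to\mathcal Ff$ in $L^1(dw)$.
\end{itemize}
Sup-norm approximation of $F$ does not give this directly. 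I would instead approximate in a weighted norm. One option is to write $F(t)=e^{-t}P(t)$ and approximate $P$ by Laguerre polynomials, so that $f$ is approximated by Gaussian--Hermite-type combinations, each of which is a limit of Gaussian combinations together with derivatives in $s$. The other option is to approximate $f$ in the Schwartz topology by radial combinations of $\gamma_s$, using that such combinations are dense among radial Schwartz functions. Either way the Dunkl transforms then converge in $L^1(dw)$, because $\mathcal F$ is continuous on $\mathcal S$.

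\textbf{Step 3: uniqueness.} Suppose a probability measure $\nu$ satisfies the formula for all Gaussians. Then $\int e^{2s\langle\mathbf y,\eta\rangle}\,d\nu(\eta)=E(2s\mathbf x,\mathbf y)$ for all $s>0$ and $\mathbf y\in\mathbb R^N$. So $\nu$ and $\mu_{\mathbf x}$ have the same Laplace transform on all of $\mathbb R^N$. Both are compactly supported, since any such $\nu$ has finite exponential moments in every direction, so the Laplace transforms are entire and $\nu=\mu_{\mathbf x}$.

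\textbf{Main obstacle.} I expect the main difficulty to be Step 2: choosing an approximation of radial Schwartz functions by Gaussian combinations that converges in a topology strong enough to control $\tau_{\mathbf x}$. Control of $\tau_{\mathbf x}$ in $L^p$ for $p\ne 2$ is not available, so I would route all control through the $L^1(dw)$ norm of the Dunkl transform, as in Step 2.
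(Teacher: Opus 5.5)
The paper gives no proof of this statement; it quotes it from \cite{Roesler2003}. Your outline is the natural reconstruction of that argument: the Gaussian case via the Gaussian integral identity and R\"osler's positive representation of $E(\mathbf x,\cdot)$, then approximation, then uniqueness via Laplace transforms. As written, however, it has a sign error and leaves the decisive approximation step unproved. The sign: with the paper's convention, $\tau_{\mathbf x}f(\mathbf y)=f(\mathbf x+\mathbf y)$ when $k=0$. In general, since $E(ia\mathbf x,ia\mathbf y)=E(-a^2\mathbf x,\mathbf y)$,
\[
 \tau_{\mathbf x}\gamma_s(\mathbf y)=e^{-s(\|\mathbf x\|^2+\|\mathbf y\|^2)}E(-2s\mathbf x,\mathbf y),
\]
not $E(2s\mathbf x,\mathbf y)$. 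Hence $\tau_{\mathbf x}\gamma_s(-\mathbf y)=e^{-s(\|\mathbf x\|^2+\|\mathbf y\|^2)}E(\mathbf x,2s\mathbf y)$, and the representation must be applied with $\mathbf z=2s\mathbf y$. With your identity and $\mathbf z=-2s\mathbf y$ you get $+2\langle\mathbf y,\eta\rangle$ in the exponent, i.e.\ the formula for the reflected measure $\mu_{-\mathbf x}$. So your final display does not follow from the preceding ones.

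The real gap is Step~2. Everything hinges on radial Gaussian combinations being dense among radial Schwartz functions in a topology that controls $\|\mathcal F\,\cdot\,\|_{L^1(dw)}$. You only assert this. The Laguerre alternative would need convergence of the expansion in such a norm, which you do not prove. The claim is true, and the cleanest route is to approximate $\mathcal Ff$ instead of $f$. The transforms $\mathcal F^{\pm1}$ map radial Gaussians to radial Gaussians, and $|E(i\xi,\mathbf x)|\le1$ bounds both $\|g\|_{L^\infty}$ and $|\tau_{\mathbf x}g(-\mathbf y)|$ by $c_k^{-1}\|\mathcal Fg\|_{L^1(dw)}$. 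So it suffices that $\operatorname{span}\{e^{-s\|\xi\|^2}:s>0\}$ is dense in the radial part of $L^1(dw)$. In the variable $t=\|\xi\|^2$ this space is $L^1((0,\infty),t^{\mathbf N/2-1}dt)$. By Hahn--Banach you must show that any $h\in L^\infty(0,\infty)$ with $\int_0^\infty e^{-st}h(t)t^{\mathbf N/2-1}\,dt=0$ for all $s>0$ vanishes a.e.; this is Lerch's uniqueness theorem for the Laplace transform. Now choose $g_n\to\mathcal Ff$ in $L^1(dw)$ and set $f_n=\mathcal F^{-1}g_n$. Then $f_n\to f$ uniformly and $\tau_{\mathbf x}f_n(-\mathbf y)\to\tau_{\mathbf x}f(-\mathbf y)$ simultaneously, which is exactly what your two bullets require.

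Finally, in Step~3, finite exponential moments do not imply compact support; a Gaussian measure has all of them. You do not need compact support. A Laplace transform that is finite on all of $\mathbb R^N$ extends to an entire function on $\mathbb C^N$, so agreement on $\mathbb R^N$ gives equal Fourier transforms and $\nu=\mu_{\mathbf x}$. Compact support does hold, but for a different reason: $\int e^{\langle\eta,\mathbf z\rangle}\,d\nu(\eta)=E(\mathbf x,\mathbf z)\le e^{\|\mathbf x\|\|\mathbf z\|}$ for real $\mathbf z$ forces $\operatorname{supp}\nu\subset\overline{B(0,\|\mathbf x\|)}$.
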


\subsection{Dunkl Laplacian and Dunkl heat semigroup}\label{sec:laplacian} The {\it Dunkl Laplacian} associated with $R$ and $k$  is the differential-difference operator $\Delta_k=\sum_{j=1}^N T_{j}^2$, which  acts on $C^2(\mathbb{R}^N)$-functions by

\begin{equation}\label{eq:Dunkl_Laplacian}
    \Delta_k f(\mathbf x)=\Delta f(\mathbf x)+\sum_{\alpha\in R} k(\alpha) \delta_\alpha f(\mathbf x),
\end{equation}
\begin{align*}
    \delta_\alpha f(\mathbf x)=\frac{\partial_\alpha f(\mathbf x)}{\langle \alpha , \mathbf x\rangle} - \frac{\|\alpha\|^2}{2} \frac{f(\mathbf x)-f(\sigma_\alpha \mathbf x)}{\langle \alpha, \mathbf x\rangle^2}.
\end{align*}
Obviously, $\mathcal F(\Delta_k f)(\xi)=-\| \xi\|^2\mathcal Ff(\xi)$. The operator $\Delta_k$ is essentially self-adjoint on $L^2(dw)$ (see for instance \cite[Theorem\;3.1]{AH}) and generates the semigroup $H_t$  of linear self-adjoint contractions on $L^2(dw)$. The semigroup has the form
  \begin{equation}\label{eq:heat}
  H_t f(\mathbf x)=\mathcal F^{-1}(e^{-t\|\xi\|^2}\mathcal Ff(\xi))(\mathbf x)=\int_{\mathbb R^N} h_t(\mathbf x,\mathbf y)f(\mathbf y)\, dw(\mathbf y),
  \end{equation}
  where the heat kernel is given by
 \begin{equation}\label{eq:heat_def}
 \begin{split}
 h_t(\mathbf x,\mathbf y)
 &=\tau_{\mathbf x}h_t(-\mathbf y),\\
 h_t(\mathbf x)
 &=c_k^{-1}\mathcal F^{-1}
       (e^{-t\|\cdot\|^2})(\mathbf x)
 =c_k^{-1}(2t)^{-\mathbf N/2}
       e^{-\|\mathbf x\|^2/(4t)}.
 \end{split}
\end{equation}
$h_t(\mathbf x,\mathbf y)$  is a $C^\infty$-function of all the variables $\mathbf x,\mathbf y \in \mathbb{R}^N$, $t>0$, and satisfies \begin{align*} 0<h_t(\mathbf x,\mathbf y)=h_t(\mathbf y,\mathbf x),
  \end{align*}
 \begin{equation} \label{eq:h_integral_1}
    \int_{\mathbb R^N} h_t(\mathbf x,\mathbf y)\, dw(\mathbf y)=1.
 \end{equation}

Let 
$$d(\mathbf x,\mathbf y)=\min_{\sigma\in G}\| \sigma(\mathbf x)-\mathbf y\|$$
be the distance of the orbit of $\mathbf x$ to the orbit of $\mathbf y$. Let us denote
\begin{equation}\label{eq:mathcal_G}
     \mathcal G_t(\mathbf x,\mathbf y)=\Big(\max (w(B(\mathbf x,\sqrt{t})),w(B(\mathbf y, \sqrt{t})))\Big)^{-1}\exp\Big(-\frac {d(\mathbf x,\mathbf y)^2}{t}\Big).
 \end{equation}
  We shall need the following estimates for $h_t(\mathbf x,\mathbf y)$ - their two step proof, which is based on R\"osler's formula for the Dunkl  translations of  radial functions (see~\cite{Roesler2003}), can be found in~\cite[Theorem 4.1]{ADzH} and~\cite[Theorem 3.1]{DzH1}. 
\begin{theorem}\label{theorem:heat}
There are constants $C,c>0$ such that for all $\mathbf{x},\mathbf{y} \in \mathbb{R}^N$ and $t>0$ we have
\begin{equation}\label{eq:Gauss}
h_t(\mathbf{x},\mathbf{y}) \leq C\Big(1+\frac{\|\mathbf{x}-\mathbf{y}\|}{\sqrt{t}}\Big)^{-2}\mathcal{G}_{t/c}(\mathbf{x},\mathbf{y}).
\end{equation}
\end{theorem}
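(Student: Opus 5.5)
The estimate follows from R\"osler's formula (Theorem~\ref{teo:roesler_formula}) applied to the Gaussian $h_t(\mathbf x)=c_k^{-1}(2t)^{-\mathbf N/2}e^{-\|\mathbf x\|^2/(4t)}$. We obtain
\[
 h_t(\mathbf x,\mathbf y)=c_k^{-1}(2t)^{-\mathbf N/2}\int_{\mathbb R^N}e^{-A(\mathbf x,\mathbf y,\eta)^2/(4t)}\,d\mu_{\mathbf x}(\eta).
\]
The proof then compares $A$ with the two distances and controls the $\mu_{\mathbf x}$-mass of small sets. By homogeneity ($w(B(t\mathbf x,tr))=t^{\mathbf N}w(B(\mathbf x,r))$ and the scaling of $\mu_{\mathbf x}$), one may take $t=1$.

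\textbf{Step 1 (comparison of $A$ with the orbit distance).} Since $\operatorname{supp}\mu_{\mathbf x}\subset\operatorname{conv}\mathcal O(\mathbf x)$, any $\eta$ in the support satisfies $\|\eta\|\le\|\mathbf x\|$. Moreover, for each $\mathbf y$ the linear function $\eta\mapsto -\langle\mathbf y,\eta\rangle$ attains its minimum over the convex hull at a vertex $\sigma(\mathbf x)$. Hence
\[
 A(\mathbf x,\mathbf y,\eta)\ge \min_\sigma\|\sigma(\mathbf x)-\mathbf y\|=d(\mathbf x,\mathbf y).
\]
This gives the crude bound $h_t\le C t^{-\mathbf N/2}e^{-d^2/(4t)}$.

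\textbf{Step 2 (the correct volume factor).} This is the main obstacle. One needs $t^{-\mathbf N/2}$ replaced by $w(B(\mathbf x,\sqrt t))^{-1}$, and the latter can be much smaller when $\mathbf x$ is far from the reflecting hyperplanes. The plan, following~\cite{ADzH}, is to avoid working directly with $\mu_{\mathbf x}$. Instead:
\begin{itemize}
 \item Use the semigroup property, positivity of $h_t$, and \eqref{eq:h_integral_1}.
 \item Combine these with the elementary bound $h_t(\mathbf x,\mathbf x)\le C\,w(B(\mathbf x,\sqrt t))^{-1}$. This bound is obtained from $h_{2t}(\mathbf x,\mathbf x)=\|h_t(\mathbf x,\cdot)\|_{L^2(dw)}^2$ together with a Harnack/doubling type argument: the mass of $h_t(\mathbf x,\cdot)$ is concentrated on $\mathcal O(B(\mathbf x,C\sqrt t))$ by Step 1, and one uses \eqref{eq:ball_orbit_compare}.
 \item Then apply Cauchy--Schwarz: $h_t(\mathbf x,\mathbf y)\le h_t(\mathbf x,\mathbf x)^{1/2}h_t(\mathbf y,\mathbf y)^{1/2}$.
 \item Interpolate (take geometric means) with the Gaussian bound from Step 1, applied at time $2t$.
\end{itemize}
This produces $\max(w(B(\mathbf x,\sqrt t)),w(B(\mathbf y,\sqrt t)))^{-1}e^{-c\,d^2/t}$, using \eqref{eq:growth} to absorb polynomial factors into the exponential. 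The symmetry $h_t(\mathbf x,\mathbf y)=h_t(\mathbf y,\mathbf x)$ yields the maximum of the two volumes.

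\textbf{Step 3 (the extra factor $(1+\|\mathbf x-\mathbf y\|/\sqrt t)^{-2}$).} Following~\cite{DzH1}, this decay in the Euclidean distance comes from regularity rather than from Step 1. Write $h_t(\mathbf x,\mathbf y)-h_t(\mathbf x,\sigma\mathbf y)$, or equivalently express the difference terms in $\Delta_k$, and use $\partial_t h_t=\Delta_k h_t$. The estimates for $T_j$-derivatives of $h_t$ give $\sqrt t$-gains. Two applications, producing the square, bound the part of $h_t$ supported where $\|\mathbf x-\mathbf y\|\gg\sqrt t$ but $d(\mathbf x,\mathbf y)\lesssim\sqrt t$. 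For the first approach I would try testing $h_t(\mathbf x,\cdot)$ against a smooth cutoff vanishing near $\mathbf x$, integrating the reflection parts of $\Delta_k$ by parts twice, and using \eqref{eq:general_Leibniz} and \eqref{eq:multiplier_reflection_quotient}. Each integration gives a factor $\sqrt t/\|\mathbf x-\mathbf y\|$. I expect this step to be the most delicate in constants, but the volume issue in Step 2 is the conceptual crux.
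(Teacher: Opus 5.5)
Your outline matches the two-step structure of the proofs the paper cites: R\"osler's formula with $A\ge d$, then the volume factor, then the Euclidean factor. Step~1 is correct. However, both substantive steps have gaps, and the mechanisms you propose for them fail.

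\textbf{Step 2.} Combining $h_{2t}(\mathbf x,\mathbf x)=\|h_t(\mathbf x,\cdot)\|_{L^2(dw)}^2$ with concentration of mass on a set $E$ of measure $\simeq w(B(\mathbf x,\sqrt t))$ gives, by Cauchy--Schwarz, only the \emph{lower} bound $h_{2t}(\mathbf x,\mathbf x)\ge(4w(E))^{-1}$. An upper bound needs a pointwise Harnack inequality, which you name but never prove. Step~1 does not even give the concentration uniformly in $\mathbf x$. Integrating $t^{-\mathbf N/2}e^{-d^2/(4t)}$ against $dw$ produces the unbounded factor $t^{-\mathbf N/2}w(B(\mathbf x,\sqrt t))\simeq\prod_{\alpha}(1+|\langle\alpha,\mathbf x\rangle|/\sqrt t)^{k(\alpha)}$. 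For the same reason, taking geometric means with the Step~1 bound cannot work. The ratio $w(B(\mathbf x,\sqrt t))/t^{\mathbf N/2}$ can be arbitrarily large while $d(\mathbf x,\mathbf y)/\sqrt t$ stays bounded, so \eqref{eq:growth} cannot absorb it into $e^{-cd^2/t}$.

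The missing idea is that R\"osler's formula supplies both ingredients directly. Because $\|\eta\|\le\|\mathbf x\|$ on $\operatorname{supp}\mu_{\mathbf x}$, we have
\[
 A(\mathbf x,\mathbf y',\eta)^2\le 2A(\mathbf x,\mathbf y,\eta)^2+2\|\mathbf y-\mathbf y'\|^2 .
\]
Hence $h_t(\mathbf x,\mathbf y)\le 2^{\mathbf N/2}e^{1/4}h_{2t}(\mathbf x,\mathbf y')$ whenever $\|\mathbf y-\mathbf y'\|<\sqrt t$. Averaging over $\mathbf y'\in B(\mathbf y,\sqrt t)$ and using \eqref{eq:h_integral_1} gives $h_t(\mathbf x,\mathbf y)\le C\,w(B(\mathbf y,\sqrt t))^{-1}$, and symmetry gives the maximum of the two volumes. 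The \emph{multiplicative} splitting $e^{-A^2/(4t)}\le e^{-d^2/(8t)}e^{-A^2/(8t)}$ then yields $h_t\le 2^{\mathbf N/2}e^{-d(\mathbf x,\mathbf y)^2/(8t)}h_{2t}\le C\mathcal G_{t/c}$.

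\textbf{Step 3.} This is not a proof. Testing against cutoffs gives integrated rather than pointwise bounds, and the gain of $\sqrt t/\|\mathbf x-\mathbf y\|$ per integration by parts is unjustified. What you need is a way to turn multiplication by $x_j-y_j$ into a Dunkl derivative. The formula $h_t(\mathbf x,\mathbf y)=c\,t^{-\mathbf N/2}e^{-(\|\mathbf x\|^2+\|\mathbf y\|^2)/(4t)}E(\mathbf x,\mathbf y/(2t))$ gives $T_{j,\mathbf x}h_t(\mathbf x,\mathbf y)=\frac{y_j-x_j}{2t}h_t(\mathbf x,\mathbf y)$; the paper itself uses this in Lemma~\ref{lem:free_Riesz_maximal_truncation}. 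Apply $T_{j,\mathbf x}$ to $(x_j-y_j)h_t$ via \eqref{eq:general_Leibniz}, where the reflection quotient equals $\alpha_j$. Summing over $j$ and using $\Delta_{k,\mathbf x}h_t=\partial_th_t$ gives
\[
 \frac{\|\mathbf x-\mathbf y\|^2}{2t}\,h_t(\mathbf x,\mathbf y)
 =N\,h_t(\mathbf x,\mathbf y)+\sum_{\alpha\in R}k(\alpha)\,h_t(\sigma_\alpha\mathbf x,\mathbf y)+2t\,\partial_th_t(\mathbf x,\mathbf y).
\]
Each term on the right is at most $C\mathcal G_{t/c}(\mathbf x,\mathbf y)$:
\begin{itemize}
\item the reflected terms, because $\mathcal G_{t/c}$ is $G$-invariant in $\mathbf x$;
\item the last term, because $\partial_th_t(\cdot)$ is radial with profile satisfying $t|\partial_t\tilde h_t|\le C\tilde h_{2t}$ (where $h_t(\mathbf z)=\tilde h_t(\|\mathbf z\|)$), so R\"osler's formula gives $|t\,\partial_th_t(\mathbf x,\mathbf y)|\le C\,h_{2t}(\mathbf x,\mathbf y)$.
\end{itemize}
Together with $h_t\le C\mathcal G_{t/c}$, this proves \eqref{eq:Gauss}. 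The square comes from this single quadratic identity, not from two integrations by parts. The reflected terms have no decay in $\|\mathbf x-\mathbf y\|$, which is why the power is exactly $2$.
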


Theorem~\ref{theorem:heat} implies the following Lemma (see~\cite[Corollary 3.5]{DzH1}).

\begin{lemma}\label{lem:nonradial_estimation} 
Suppose that $\varphi \in C_c^{\infty}(\mathbb{R}^N)$ is radial and supported by the unit ball $B(0,1)$. Set $\varphi_t(\mathbf{x})=t^{-\mathbf{N}}\varphi(t^{-1}\mathbf{x})$. Then there is $C>0$ such that for all $\mathbf{x},\mathbf{y}\in \mathbb{R}^{N}$ and $t>0$ we have
\begin{align*}
    &|\tau_{\mathbf x}\varphi_t(-\mathbf y)| \leq C \Big(1+\frac{\|\mathbf{x}-\mathbf{y}\|}{t}\Big)^{-2}\Big(\max(w(B(\mathbf{x},t)),w(B(\mathbf{y},t)))\Big)^{-1}\chi_{[0,1]}(d(\mathbf{x},\mathbf{y})/t).
\end{align*}
\end{lemma}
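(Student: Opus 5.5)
The plan is to reduce everything to Theorem~\ref{theorem:heat} through R\"osler's formula (Theorem~\ref{teo:roesler_formula}), using that $\mu_{\mathbf x}$ is a \emph{positive} measure. Write $\varphi(\mathbf x)=\widetilde\varphi(\|\mathbf x\|)$, so that $\varphi_t(\mathbf x)=t^{-\mathbf N}\widetilde\varphi(\|\mathbf x\|/t)$ is again radial. Then \eqref{eq:translation-radial} gives
\[
 \tau_{\mathbf x}\varphi_t(-\mathbf y)=t^{-\mathbf N}\int_{\mathbb R^N}\widetilde\varphi\big(A(\mathbf x,\mathbf y,\eta)/t\big)\,d\mu_{\mathbf x}(\eta).
\]
The proof then splits into a support statement and a size statement, which are multiplied at the end.

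\emph{Support.} The function $\eta\mapsto A(\mathbf x,\mathbf y,\eta)^2=\|\mathbf x\|^2+\|\mathbf y\|^2-2\langle\mathbf y,\eta\rangle$ is affine in $\eta$. On the convex hull of $\mathcal O(\mathbf x)$ it therefore attains its minimum at an extreme point $\eta=g\mathbf x$, where its value is $\|g\mathbf x-\mathbf y\|^2$. Since $\operatorname{supp}\mu_{\mathbf x}$ lies in this convex hull, $A(\mathbf x,\mathbf y,\eta)\ge d(\mathbf x,\mathbf y)$ on the support. As $\widetilde\varphi$ vanishes on $[1,\infty)$, the integrand is zero whenever $d(\mathbf x,\mathbf y)\ge t$, which produces the factor $\chi_{[0,1]}(d(\mathbf x,\mathbf y)/t)$.

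\emph{Size.} Since $\widetilde\varphi$ is bounded and supported in $[0,1]$, we have $|\widetilde\varphi(s)|\le Ce^{-s^2}$ for $s\ge0$. Because $\mu_{\mathbf x}\ge0$,
\[
 |\tau_{\mathbf x}\varphi_t(-\mathbf y)|\le Ct^{-\mathbf N}\int e^{-A(\mathbf x,\mathbf y,\eta)^2/t^2}\,d\mu_{\mathbf x}(\eta).
\]
Apply R\"osler's formula to the Gaussian $h_s$ from \eqref{eq:heat_def} with $4s=t^2$. This identifies the right-hand side as $C'h_{t^2/4}(\mathbf x,\mathbf y)$, with the power $t^{-\mathbf N}$ matching $(2s)^{-\mathbf N/2}$ up to constants. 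Theorem~\ref{theorem:heat} then yields
\[
 |\tau_{\mathbf x}\varphi_t(-\mathbf y)|\le C\Big(1+\frac{2\|\mathbf x-\mathbf y\|}{t}\Big)^{-2}\big(\max(w(B(\mathbf x,t/(2\sqrt c))),w(B(\mathbf y,t/(2\sqrt c))))\big)^{-1}.
\]
Here I drop the exponential factor, since it is at most $1$.

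Finally, the doubling property \eqref{eq:doubling}, or \eqref{eq:growth}, replaces the radius $t/(2\sqrt c)$ by $t$ at the cost of a constant. The factor $(1+2\|\mathbf x-\mathbf y\|/t)^{-2}$ is at most $(1+\|\mathbf x-\mathbf y\|/t)^{-2}$. Combining this with the support statement gives the claim.

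The main point requiring care is the domination step. One must use the positivity of $\mu_{\mathbf x}$, so that pointwise domination of $\widetilde\varphi$ by a Gaussian passes through the translation; this is not available for general functions. One must also check the normalisation in \eqref{eq:heat_def}, so that the Gaussian with width $t$ really corresponds to $h_{t^2/4}$ and the constants stay uniform in $t$. Everything else is bookkeeping with doubling.
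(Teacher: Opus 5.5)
Your proposal is correct and follows essentially the route the paper intends. The paper simply derives this lemma from Theorem~\ref{theorem:heat} (citing \cite[Corollary~3.5]{DzH1}), and the mechanism is exactly yours: dominate the radial profile by a Gaussian, pass the bound through R\"osler's formula using the positivity of $\mu_{\mathbf x}$ to obtain $C\,h_{t^2/4}(\mathbf x,\mathbf y)$ (your normalisation check is right), then read off the support factor from $A(\mathbf x,\mathbf y,\eta)\geq d(\mathbf x,\mathbf y)$ on the convex hull of $\mathcal O(\mathbf x)$, which is the same argument the paper uses in Proposition~\ref{propo:characteristic_estimate}.
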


\subsection{Dunkl-Schr\"odinger operator and semigroup}\label{sec:Schorodinger}
Let $V \geq 0$ be a measurable function such that $V \in L^2_{\rm{loc}}(dw)$. We consider the following operator on the Hilbert space $L^2(dw)$:
\begin{equation}\label{eq:Schrodinger_operator}
    \mathcal{L}=-\Delta_k+V
\end{equation}
with the domain
\begin{align*}
    \mathcal{D}(\mathcal{L})=\{f \in L^2(dw)\,:\,\|\xi\|^2\mathcal{F}f(\xi) \in L^2(dw(\xi)) \text{ and }V(\mathbf{x})f(\mathbf{x}) \in L^2(dw(\mathbf{x}))\}
\end{align*}
(see~\cite{AH}). We call this operator the \textit{Dunkl-Schr\"odinger operator}. Let us define the quadratic form
\begin{equation}
    \mathbf{Q}(f,g)=\sum_{j=1}^{N}\int_{\mathbb{R}^N}T_jf(\mathbf{x})\overline{T_jg(\mathbf{x})}\,dw(\mathbf{x})+\int_{\mathbb{R}^N}V(\mathbf{x})f(\mathbf{x})\overline{g(\mathbf{x})}\,dw(\mathbf{x})
\end{equation}
with the domain
\begin{align*}
    \mathcal{D}(\mathbf{Q})=\left\{f \in L^2(dw)\;:\; \left(\sum_{j=1}^{N}|T_jf|^2\right)^{1/2},V^{1/2}f \in L^2(dw)\right\} .
\end{align*}
The quadratic form is densely defined and closed (see~\cite[Lemma 4.1]{AH}), so there exists a unique positive self-adjoint operator $L$ such that
\begin{align*}
    \langle Lf,f\rangle=\mathbf{Q}(f,f) \text{ for all }f \in \mathcal{D}(L), 
\end{align*}
moreover,
\begin{align*}
    \mathcal{D}(L^{1/2})=\mathcal{D}(\mathbf{Q}) \text{ and }\mathbf{Q}(f,f)=\|L^{1/2}f\|^2_{L^2(dw)},
\end{align*}
where $L^{1/2}$ is a unique self-adjoint operator such that $(L^{1/2})^2=L$. By \cite[Theorem~4.2]{AH},
$C_c^\infty(\mathbb R^N)$ is dense in
$\mathcal D(\mathbf Q)$ with respect to the norm
\[
 \|f\|_{\mathbf Q}
 =\bigl(\|f\|_2^2+\mathbf Q(f,f)\bigr)^{1/2}.
\]
It was proved in~\cite[Theorem 4.6]{AH}, that $\mathcal{L}$ is essentially self-adjoint on $C^{\infty}_{c}(\mathbb{R}^N)$ and $L$ is its closure. Consequently, $-L$ generates the semigroup of self-adjoint contractions on $L^2(dw)$. The semigroup has the form (see~\cite[Theorem 4.8]{AH})
\begin{equation}\label{eq:K_semigroup}
    K_tf(\mathbf{x})=\int_{\mathbb{R}^N}k_t(\mathbf{x},\mathbf{y})f(\mathbf{y})\,dw(\mathbf{y}),
\end{equation}
where $k_t(\mathbf{x},\mathbf{y})$ is the integral kernel which satisfies
\begin{equation}\label{eq:kernels_compare}
   0 \leq k_t(\mathbf{x},\mathbf{y}) \leq h_t(\mathbf{x},\mathbf{y}). 
\end{equation}

\section{Reverse H\"older potentials and the Fefferman--Phong inequality}
\label{sec:Fefferman--Phong}

In this section we collect the properties of reverse H\"older potentials
and the associated auxiliary function that will be used later. The
doubling property, the scale estimate, the properties of the auxiliary
function, and the Fefferman--Phong inequality are recalled from
\cite[Sections~3--5]{Hejna_Schrodinger}.

\subsection{Reverse Hölder estimates and self-improvement}

Throughout the remainder of the paper, assume that
$V\geq0$, $V\not\equiv0$ modulo $dw$-null sets, and
$V\in{\rm RH}^q(dw)$ for some
$\max(1,\mathbf N/2)<q<\infty$. Thus there exists $C_{\rm RH}>0$ such that
\begin{equation}\label{eq:reverse_Holder}
 \left(\frac{1}{w(B)}\int_B V(\mathbf{x})^q\,dw(\mathbf{x})\right)^{1/q}
 \leq C_{\rm RH}\frac{1}{w(B)}\int_B V(\mathbf{x})\,dw(\mathbf{x})
\end{equation}
for every ball $B\subset\mathbb{R}^N$.

We shall also use the standard nesting and self-improvement properties
of reverse H\"older classes on the doubling measure space
$(\mathbb{R}^N,\|\cdot\|,dw)$.

\begin{lemma}\label{lem: self improve RH}
Let $1<q<\infty$ and $V\in {\rm RH}^{q}(dw)$.
\begin{enumerate}
\item[\rm(i)] $V\in {\rm RH}^{p}(dw)$ for every $1<p<q$.
\item[\rm(ii)] There exists $\varepsilon>0$ such that
$V\in {\rm RH}^{q+\varepsilon}(dw)$. The exponent $\varepsilon$ depends
only on $q$, the reverse H\"older constant, and the structural constants
of $dw$.
\end{enumerate}
\end{lemma}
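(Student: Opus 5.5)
Part (i) follows from H\"older's inequality. For $1<p<q$ and any ball $B$, Jensen's (H\"older's) inequality with respect to the probability measure $w(B)^{-1}\,dw|_B$ gives
\[
 \Big(\frac1{w(B)}\int_B V^p\,dw\Big)^{1/p}\le\Big(\frac1{w(B)}\int_B V^q\,dw\Big)^{1/q}\le \frac{C_{\rm RH}}{w(B)}\int_B V\,dw .
\]
Hence $V\in{\rm RH}^p(dw)$ with the same constant. No geometry of $dw$ is needed here.

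Part (ii) is Gehring's lemma on the doubling metric measure space $(\mathbb R^N,\|\cdot\|,dw)$. First we need the reverse H\"older inequality on balls, and the doubling property \eqref{eq:doubling} for $dw$. These are all the structural input required. Put $f=V^{1/s}$ with $s=q$ suitably normalized, or work directly with $V$. Then \eqref{eq:reverse_Holder} says that $V$ satisfies a reverse H\"older inequality with exponent $q$ on every ball, and also on the doubled ball if needed, since doubling lets us pass between $B$ and $2B$.

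The standard route is the Calder\'on--Zygmund approach. Fix a ball $B_0$ and consider the level sets $E_\lambda=\{\mathbf x\in B_0: M_{B_0}V(\mathbf x)>\lambda\}$, where $M_{B_0}$ is the maximal function restricted to balls inside $B_0$. For $\lambda$ above the average of $V$ on $B_0$, a Vitali-type covering on the doubling space selects disjoint balls $B_i$ with averages comparable to $\lambda$; the dilates of these balls cover $E_\lambda$ up to constants. On each $B_i$ the reverse H\"older inequality gives
\[
 \int_{B_i}V^q\,dw\le C\lambda^{q}w(B_i)\le C\lambda^{q-1}\int_{B_i\cap\{V>\epsilon\lambda\}}V\,dw
\]
after splitting off the part where $V\le\epsilon\lambda$. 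Summing over $i$ yields the distributional inequality
\[
 \int_{\{V>\lambda\}\cap B_0}V^q\,dw\le C\lambda^{q-1}\int_{\{V>\epsilon\lambda\}\cap \widetilde B_0}V\,dw .
\]
Here $\widetilde B_0$ is a fixed enlargement of $B_0$.

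We then multiply by $\lambda^{\varepsilon-1}$ and integrate in $\lambda$. Gehring's integration lemma, applied to a truncation $\min(V,K)$ to keep everything finite and then letting $K\to\infty$, gives
\[
 \Big(\frac1{w(B_0)}\int_{B_0}V^{q+\varepsilon}\,dw\Big)^{1/(q+\varepsilon)}\le C\Big(\frac1{w(\widetilde B_0)}\int_{\widetilde B_0}V^{q}\,dw\Big)^{1/q}
\]
for $\varepsilon$ small, depending only on $q$, $C_{\rm RH}$ and the doubling constant. Combined with \eqref{eq:reverse_Holder} on $\widetilde B_0$ and doubling, the right-hand side is bounded by $C\,w(B_0)^{-1}\int_{B_0}V\,dw$. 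This holds because doubling of $V\,dw$ follows from RH and doubling of $dw$ in the usual way, since ${\rm RH}\subset A_\infty$.

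The main obstacle is the enlargement $\widetilde B_0$. We handle it by deriving the local inequality on $\widetilde B_0$ first and then using the doubling of the measure $V\,dw$ to return to $B_0$. Alternatively, we simply cite the known Gehring lemma for spaces of homogeneous type, since that lemma uses only doubling.
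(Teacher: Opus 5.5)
The paper does not prove this lemma. It records both parts as standard properties of reverse H\"older classes on the doubling space $(\mathbb R^N,\|\cdot\|,dw)$. Your part (i) is correct. Your fallback for (ii), citing Gehring's lemma for spaces of homogeneous type, is what the paper implicitly relies on.

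\textbf{Gap in the self-contained sketch of (ii).} The sketch fails at the absorption step, and your remedy for $\widetilde B_0$ does not address it. Integrating your good-$\lambda$ inequality against $\lambda^{\varepsilon-1}\,d\lambda$ over $(\lambda_0,K)$ gives
\[
 \int_{B_0}V^{q}\min(V,K)^{\varepsilon}\,dw
 \le \lambda_0^{\varepsilon}\int_{B_0}V^{q}\,dw
 +C\varepsilon\int_{\widetilde B_0}V^{q}\min(V,K)^{\varepsilon}\,dw .
\]
The last term lives on the larger ball, so it cannot be absorbed into the left-hand side however small $C\varepsilon$ is. Doubling of $V\,dw$ gives no control of $\int_{\widetilde B_0}V^{q}\min(V,K)^{\varepsilon}\,dw$ by the corresponding integral over $B_0$. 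Lemma~\ref{lem:mu_doubling} only handles the harmless last step, replacing $\widetilde B_0$ by $B_0$ once an $L^q$ right-hand side has been reached.

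A related imprecision: the Vitali dilates $5B_i$ are what cover $\{V>\lambda\}\cap B_0$. So you need \eqref{eq:reverse_Holder} on $5B_i$ together with $w(5B_i)^{-1}\int_{5B_i}V\,dw\lesssim\lambda$. This requirement is what forces the stopping balls out of $B_0$. A maximal function restricted to balls inside $B_0$ does not by itself produce averages comparable to $\lambda$.

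\textbf{Repairs.} In this Euclidean setting the cleanest fix is to use dyadic cubes.
\begin{itemize}
\item Doubling of $dw$ on balls gives $w(\widehat Q)\le Cw(Q)$ for a dyadic cube $Q$ and its parent $\widehat Q$.
\item \eqref{eq:reverse_Holder} together with Lemma~\ref{lem:mu_doubling} yields the reverse H\"older inequality on cubes.
\item The Calder\'on--Zygmund decomposition of $V$ in a cube $Q_0$, at levels $\lambda>w(Q_0)^{-1}\int_{Q_0}V\,dw$, gives maximal cubes $Q_i\subset Q_0$ with $\lambda<w(Q_i)^{-1}\int_{Q_i}V\,dw\le C\lambda$. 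These cover $\{V>\lambda\}\cap Q_0$ up to a null set.
\item Hence the good-$\lambda$ inequality holds with $Q_0$ on both sides. Absorption is then legitimate, since the truncated quantity is at most $K^{\varepsilon}\int_{Q_0}V^q\,dw<\infty$.
\item You return to balls by covering a ball with boundedly many dyadic cubes of comparable size, using doubling of $dw$ and of $V\,dw$.
\end{itemize}
Alternatively, keep balls but run your argument on concentric balls $B(\mathbf x_0,s)\subset B(\mathbf x_0,t)$, $R/2\le s<t\le R$, with stopping radii at most $c(t-s)$. Then $\lambda_0\lesssim (t/(t-s))^{\mathbf N}\,w(B(\mathbf x_0,t))^{-1}\int_{B(\mathbf x_0,t)}V\,dw$. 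The resulting inequality has the form $\phi(s)\le\theta\phi(t)+A(t-s)^{-\alpha}$ with $\theta<1$, and the standard Giaquinta--Modica iteration lemma closes it.
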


Define the measure
\begin{equation}\label{eq:mu}
 \mu(A)=\int_A V(\mathbf{x})\,dw(\mathbf{x})
\end{equation}
for measurable sets $A\subseteq\mathbb{R}^N$.

\begin{lemma}[{\cite[Lemma~5]{Hejna_Schrodinger}}]
\label{lem:mu_doubling}
The measure $\mu$ is doubling: there exists $C_\mu>0$ such that
\[
 \mu(B(\mathbf{x},2r))\leq C_\mu\mu(B(\mathbf{x},r))
 \qquad(\mathbf{x}\in\mathbb{R}^N,\ r>0).
\]
\end{lemma}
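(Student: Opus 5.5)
The plan is to show that $\mu(B(\mathbf x,2r))\le C\,\mu(B(\mathbf x,r))$ by combining the reverse H\"older inequality \eqref{eq:reverse_Holder} on the larger ball with H\"older's inequality and the doubling property \eqref{eq:doubling} of $dw$. The idea is the standard fact that ${\rm RH}^q$ weights with respect to a doubling measure are $A_\infty$ weights, so they cannot concentrate on small subsets of a ball. Fix $\mathbf x$ and $r$, and set $B=B(\mathbf x,2r)$ and $E=B(\mathbf x,r)$.

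\textbf{Step 1: H\"older on a subset.} For any measurable $F\subseteq B$, H\"older's inequality with exponents $q$ and $q'$, followed by \eqref{eq:reverse_Holder} on $B$, gives
\[
 \mu(F)=\int_F V\,dw\le\Big(\int_B V^q\,dw\Big)^{1/q}w(F)^{1/q'}
 \le C_{\rm RH}\,\mu(B)\Big(\frac{w(F)}{w(B)}\Big)^{1/q'}.
\]

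\textbf{Step 2: the outer annulus cannot carry almost all the mass.} Taking $F=B\setminus E$ in Step~1 would only give
\[
 \mu(B\setminus E)\le C_{\rm RH}\Big(1-\tfrac{w(E)}{w(B)}\Big)^{1/q'}\mu(B).
\]
The right-hand side need not be smaller than $\mu(B)$, since $C_{\rm RH}\ge1$, so this choice fails. The remedy is to apply the $A_\infty$-type inequality to sets of small relative measure, with a Vitali/Calder\'on--Zygmund-type decomposition of the annulus.

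A simpler route avoids this. The annulus $B\setminus E$ is covered by finitely many balls $B_i=B(\mathbf z_i,r/4)$ with $\mathbf z_i\in B\setminus E$. Their number is bounded uniformly, since the Euclidean covering number of an annulus by balls of comparable radius is dimensional. Each $B_i$ lies in $B(\mathbf x,3r)$. It therefore suffices to prove the following:

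\emph{Claim.} If $B'=B(\mathbf z,\rho)$ and $B''=B(\mathbf z',\rho)$ with $\|\mathbf z-\mathbf z'\|\le 12\rho$, then $\mu(B')\le C\mu(B'')$.

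Given the claim, one chains through a bounded number of intermediate balls of radius $r/4$ from each $\mathbf z_i$ to $\mathbf x$, so that $\mu(B_i)\le C\mu(B(\mathbf x,r/4))\le C\mu(E)$. Summing over $i$ gives $\mu(B)\le C\mu(E)$.

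\textbf{Step 3: proof of the claim (the main point).} Let $\widetilde B=B(\mathbf z,13\rho)$, which contains both $B'$ and $B''$. By the doubling of $dw$ and \eqref{eq:growth}, $w(B'')\ge c\,w(\widetilde B)$ with $c>0$ uniform. The task is to show that $V$ cannot be nearly absent from $B''$.

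The key fact is a lower bound for the mass of a set in terms of its relative $w$-measure. Suppose $S\subseteq\widetilde B$ satisfies $w(S)\ge c\,w(\widetilde B)$. Applying Step~1 to the complement gives
\[
 \mu(\widetilde B\setminus S)\le C_{\rm RH}(1-c)^{1/q'}\mu(\widetilde B).
\]
This is useful only when $C_{\rm RH}(1-c)^{1/q'}<1$. When it fails, I will use instead the standard Gehring/$A_\infty$ equivalence on spaces of homogeneous type. Reverse H\"older on all balls with respect to the doubling measure $dw$ implies that $V\,dw\in A_\infty(dw)$. That is, there exist $\delta,C$ such that
\[
 \frac{\mu(S)}{\mu(\widetilde B)}\le C\Big(\frac{w(S)}{w(\widetilde B)}\Big)^{\delta}
\]
for all $S\subseteq\widetilde B$; this is exactly Step~1. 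Moreover, by the symmetric characterization of $A_\infty$, for every $\eta>0$ there is $\beta>0$ such that $w(S)\ge\eta\,w(\widetilde B)$ implies $\mu(S)\ge\beta\,\mu(\widetilde B)$. I expect this symmetric implication to be the main obstacle. I would prove it by the classical Coifman--Fefferman argument: run a dyadic Calder\'on--Zygmund decomposition (Christ cubes for $(\mathbb R^N,\|\cdot\|,dw)$) of $V$ at height $\lambda$ times its average, and use Step~1 on the level sets to show $w(\{V\le \lambda\,\mathrm{avg}\})$ is small for small $\lambda$. Then $S$ must meet $\{V>\lambda\,\mathrm{avg}\}$ in a set of measure at least $\tfrac{\eta}{2}w(\widetilde B)$, giving $\mu(S)\ge \lambda\,\tfrac{\eta}{2}\,w(\widetilde B)\,\mathrm{avg}=\beta\mu(\widetilde B)$.

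Taking $S=B''$ yields $\mu(B')\le\mu(\widetilde B)\le\beta^{-1}\mu(B'')$, which proves the claim and hence the lemma. All constants depend only on $q$, $C_{\rm RH}$, $N$, and the doubling constants of $dw$.
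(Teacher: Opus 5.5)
Your Step 1 is correct, and so is your observation in Step 2 that applying it to the thick annulus $B(\mathbf x,2r)\setminus B(\mathbf x,r)$ fails because $C_{\rm RH}\ge1$. The gap is in Step 3. The Claim there, and in particular the implication $w(S)\ge\eta\,w(\widetilde B)\Rightarrow\mu(S)\ge\beta\,\mu(\widetilde B)$ applied with $S=B''$, is essentially the doubling property of $\mu$ itself. The covering reduction has therefore pushed all the content into the one step you do not prove, and the sketch you offer for it does not work.

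A Calder\'on--Zygmund stopping time at height $\lambda\,\mathrm{avg}_{\widetilde B}V$ with $\lambda<1$ is essentially trivial and says nothing about where $V$ is small. Stopping instead on cubes where the average of $V$ drops below $\lambda\,\mathrm{avg}_{\widetilde B}V$ produces a set $\Omega$ that covers $\{V\le\lambda\,\mathrm{avg}_{\widetilde B}V\}$ a.e.\ and has $\mu(\Omega)\le C\lambda\,\mu(\widetilde B)$. Turning this into smallness of $w(\Omega)$ is exactly the reverse implication you are trying to prove. Step~1 only converts small $w$-measure into small $\mu$-measure. Applied to the complement of $A_\lambda=\{\mathbf y\in\widetilde B: V(\mathbf y)\le\lambda\,\mathrm{avg}_{\widetilde B}V\}$, it yields only $w(A_\lambda)\le\bigl(1-((1-\lambda)/C_{\rm RH})^{q'}\bigr)w(\widetilde B)$, which does not tend to $0$ as $\lambda\to0$. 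Finally, the Coifman--Fefferman symmetry of $A_\infty$ is proved for pairs of doubling measures, since the decomposition has to be run with respect to $\mu$. Appealing to it here is therefore circular.

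The missing ingredient is a geometric property of $dw$ that lets you apply Step~1 to sets of small relative measure: thin annuli carry uniformly small $w$-mass. For $\mathbf y\in B(\mathbf x,\rho)$ one has $\prod_{\alpha}|\langle\mathbf y,\alpha\rangle|^{k(\alpha)}\le C\prod_\alpha(|\langle\mathbf x,\alpha\rangle|+\rho)^{k(\alpha)}$. The annulus $B(\mathbf x,\rho)\setminus B(\mathbf x,(1-\varepsilon)\rho)$ has Lebesgue measure at most $C\varepsilon\rho^N$. Hence \eqref{eq:balls_asymp} gives
\[
 w\bigl(B(\mathbf x,\rho)\setminus B(\mathbf x,(1-\varepsilon)\rho)\bigr)\le C_0\,\varepsilon\,w(B(\mathbf x,\rho)).
\]
Now apply Step~1 with $F$ equal to this thin annulus. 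This gives $\mu(F)\le C_{\rm RH}(C_0\varepsilon)^{1/q'}\mu(B(\mathbf x,\rho))\le\tfrac12\mu(B(\mathbf x,\rho))$ for a fixed small $\varepsilon$. In other words, $\mu(B(\mathbf x,(1-\varepsilon)\rho))\ge\tfrac12\mu(B(\mathbf x,\rho))$ for all $\mathbf x$ and $\rho$. Iterating $k$ times with $(1-\varepsilon)^k\le\tfrac12$ yields $\mu(B(\mathbf x,2r))\le2^k\mu(B(\mathbf x,r))$. No covering, chaining, or $A_\infty$ symmetry is needed. This plays the role that corner subcubes of nearly full measure play in the Euclidean proof that $A_\infty$ weights are doubling.
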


The following estimate controls the potential on smaller scales.

\begin{lemma}[{\cite[Lemma~7]{Hejna_Schrodinger}}]\label{lem:Rr}
There exists $C\geq1$ such that, for every $\mathbf{x}\in\mathbb{R}^N$
and $0<r_1<r_2$,
\begin{align*}
 &\frac{r_1^2}{w(B(\mathbf{x},r_1))}
   \int_{B(\mathbf{x},r_1)}V(\mathbf{y})\,dw(\mathbf{y})
 \\
 &\qquad\leq
 C\left(\frac{r_1}{r_2}\right)^{2-\mathbf{N}/q}
 \frac{r_2^2}{w(B(\mathbf{x},r_2))}
 \int_{B(\mathbf{x},r_2)}V(\mathbf{y})\,dw(\mathbf{y}).
\end{align*}
\end{lemma}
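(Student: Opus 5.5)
The plan is to compare the two normalized averages through Hölder's inequality and the reverse H\"older inequality \eqref{eq:reverse_Holder} on the larger ball. Set $B_1=B(\mathbf{x},r_1)\subset B_2=B(\mathbf{x},r_2)$. First, applying H\"older's inequality with exponents $q,q'$ on $B_1$ and enlarging the domain of integration of $V^q$ from $B_1$ to $B_2$, we will obtain
\[
 \frac{1}{w(B_1)}\int_{B_1}V\,dw
 \leq\Big(\frac{1}{w(B_1)}\int_{B_1}V^q\,dw\Big)^{1/q}
 \leq\Big(\frac{w(B_2)}{w(B_1)}\Big)^{1/q}
 \Big(\frac{1}{w(B_2)}\int_{B_2}V^q\,dw\Big)^{1/q}.
\]
Next we will apply \eqref{eq:reverse_Holder} on $B_2$, which bounds the last factor by $C_{\rm RH}w(B_2)^{-1}\int_{B_2}V\,dw$.

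It will then remain to control the volume ratio. By the upper bound in \eqref{eq:growth}, $w(B_2)/w(B_1)\leq C(r_2/r_1)^{\mathbf N}$, hence $(w(B_2)/w(B_1))^{1/q}\leq C(r_2/r_1)^{\mathbf N/q}$. Multiplying both sides of the resulting inequality by $r_1^2=(r_1/r_2)^2r_2^2$ will give
\[
 \frac{r_1^2}{w(B_1)}\int_{B_1}V\,dw
 \leq C\Big(\frac{r_1}{r_2}\Big)^{2}\Big(\frac{r_2}{r_1}\Big)^{\mathbf N/q}
 \frac{r_2^2}{w(B_2)}\int_{B_2}V\,dw,
\]
which is exactly the claim, since $(r_1/r_2)^2(r_2/r_1)^{\mathbf N/q}=(r_1/r_2)^{2-\mathbf N/q}$.

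I do not expect a genuine obstacle. The one point requiring care is to use the growth bound \eqref{eq:growth} with the homogeneous dimension $\mathbf N$ (not $N$) in the upper estimate, because this is what produces the exponent $2-\mathbf N/q$, which is positive since $q>\mathbf N/2$. Note also that $C$ depends only on $C_{\rm RH}$, $q$ and the structural constants of $dw$, and is independent of $\mathbf{x}$, $r_1$, $r_2$.
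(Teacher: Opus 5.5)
Your proof is correct and is the standard argument. The paper cites this lemma from \cite{Hejna_Schrodinger} without reproducing a proof, but it carries out exactly your computation inside the proof of Lemma~\ref{lem:L_1}: H\"older on the small ball, reverse H\"older \eqref{eq:reverse_Holder} on the large ball, and the upper volume growth bound in \eqref{eq:growth} with exponent $\mathbf N$.
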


\subsection{The auxiliary function
\texorpdfstring{$m(\mathbf{x})$}{m(x)}}

Following \cite[Section~4.1]{Hejna_Schrodinger}, define
\begin{equation}\label{eq:m}
 \frac{1}{m(\mathbf{x})}
 =\sup\left\{r>0:\;
 \frac{r^2}{w(B(\mathbf{x},r))}
 \int_{B(\mathbf{x},r)}V(\mathbf{y})\,dw(\mathbf{y})\leq1\right\}.
\end{equation}
Under the above assumptions, $0<m(\mathbf{x})<\infty$ for every
$\mathbf{x}\in\mathbb{R}^N$; see \cite[Section~4.1]{Hejna_Schrodinger}.

\begin{lemma}[{\cite[Lemma~8]{Hejna_Schrodinger}}]\label{lem:m_growth}
There exist constants $C\geq1$ and $\kappa>0$ such that, for every
$\mathbf{x},\mathbf{y}\in\mathbb{R}^N$,
\begin{equation}\label{eq:Shen_A}
 C^{-1}m(\mathbf{y})\leq m(\mathbf{x})\leq Cm(\mathbf{y})
 \quad\text{if }\|\mathbf{x}-\mathbf{y}\|<m(\mathbf{x})^{-1},
\end{equation}
\begin{equation}\label{eq:Shen_B}
 m(\mathbf{y})\leq Cm(\mathbf{x})
 \bigl(1+m(\mathbf{x})\|\mathbf{x}-\mathbf{y}\|\bigr)^\kappa,
\end{equation}
and
\begin{equation}\label{eq:Shen_C}
 m(\mathbf{y})\geq C^{-1}m(\mathbf{x})
 \bigl(1+m(\mathbf{x})\|\mathbf{x}-\mathbf{y}\|\bigr)^{-\kappa/(1+\kappa)}.
\end{equation}
\end{lemma}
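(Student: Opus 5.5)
The plan is to follow Shen's argument for the critical function, carried over to the doubling space $(\mathbb R^N,\|\cdot\|,dw)$. The basic tool is the scale function
\[
\Psi(\mathbf x,r)=\frac{r^2}{w(B(\mathbf x,r))}\int_{B(\mathbf x,r)}V\,dw .
\]
By Lemma~\ref{lem:Rr}, $\Psi(\mathbf x,\cdot)$ is comparable to an increasing function with polynomial growth of order at least $2-\mathbf N/q>0$. By the doubling properties of $w$ (\eqref{eq:doubling}, \eqref{eq:growth}) and of $\mu$ (Lemma~\ref{lem:mu_doubling}), $\Psi(\mathbf x,r)$ and $\Psi(\mathbf y,r')$ are comparable whenever $\|\mathbf x-\mathbf y\|\lesssim r\sim r'$. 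I also want $\Psi(\mathbf x,\cdot)$ to be continuous and to tend to $0$ as $r\to0$ and to $\infty$ as $r\to\infty$; the upper limit uses $V\not\equiv0$ together with Lemma~\ref{lem:Rr}. With these facts, $\rho(\mathbf x)=1/m(\mathbf x)$ satisfies $\Psi(\mathbf x,\rho(\mathbf x))=1$. Moreover, for $r\le\rho(\mathbf x)$ we get $\Psi(\mathbf x,r)\lesssim (r/\rho(\mathbf x))^{2-\mathbf N/q}$. For $r\ge\rho(\mathbf x)$ we get the lower bound $\Psi(\mathbf x,r)\gtrsim (r/\rho(\mathbf x))^{\delta}$ for some $\delta>0$, again from Lemma~\ref{lem:Rr}.

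For \eqref{eq:Shen_A}, take $\|\mathbf x-\mathbf y\|<\rho(\mathbf x)$. Then $B(\mathbf y,2\rho(\mathbf x))\supset B(\mathbf x,\rho(\mathbf x))$ and $B(\mathbf y,\rho(\mathbf x))\subset B(\mathbf x,2\rho(\mathbf x))$. Combining these inclusions with doubling of $w$ and $\mu$ gives $\Psi(\mathbf y,\rho(\mathbf x))\sim1$. The power behaviour of $\Psi(\mathbf y,\cdot)$ above and below $\rho(\mathbf y)$ then forces $\rho(\mathbf y)\sim\rho(\mathbf x)$.

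For \eqref{eq:Shen_B} and \eqref{eq:Shen_C}, set $R=\|\mathbf x-\mathbf y\|\ge\rho(\mathbf x)$; otherwise \eqref{eq:Shen_A} already gives both. Let $r=2R$ and note that $B(\mathbf x,R)\subset B(\mathbf y,r)$ and $B(\mathbf y,R)\subset B(\mathbf x,r)$.

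\textbf{Bound \eqref{eq:Shen_B}.} Using $B(\mathbf x,R)\subset B(\mathbf y,r)$ and doubling,
\[
\Psi(\mathbf y,r)\gtrsim\Psi(\mathbf x,R)\gtrsim (R/\rho(\mathbf x))^{\delta}\ge c .
\]
If this forces $r\ge\rho(\mathbf y)$ up to a constant, we are done. More precisely, I will use the small-scale bound $\Psi(\mathbf y,r)\lesssim (r/\rho(\mathbf y))^{2-\mathbf N/q}$ for $r\le\rho(\mathbf y)$. This yields
\[
\rho(\mathbf y)\lesssim r\,(R/\rho(\mathbf x))^{-\delta/(2-\mathbf N/q)},
\]
which is an upper bound on $\rho(\mathbf y)$, i.e. a lower bound on $m(\mathbf y)$. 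The exponent bookkeeping gives \eqref{eq:Shen_C} directly in the form $m(\mathbf y)\gtrsim m(\mathbf x)(1+Rm(\mathbf x))^{-\kappa/(1+\kappa)}$, after choosing $\kappa$ appropriately.

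\textbf{Bound \eqref{eq:Shen_B}, other direction.} Here I need $m(\mathbf y)\lesssim m(\mathbf x)(1+Rm(\mathbf x))^{\kappa}$, i.e. a lower bound on $\rho(\mathbf y)$. Apply the growth of $\Psi$ at $\mathbf x$ polynomially: $\Psi(\mathbf x,r)\lesssim (r/\rho(\mathbf x))^{\kappa_0}$ for $r\ge\rho(\mathbf x)$, with $\kappa_0$ coming from the doubling constants of $\mu$ and the lower growth in \eqref{eq:growth}. Combined with the inclusion $B(\mathbf y,R)\subset B(\mathbf x,r)$, this bounds $\Psi(\mathbf y,s)$ for small $s$ via Lemma~\ref{lem:Rr} centred at $\mathbf y$ with $r_2=R$. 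Then $\Psi(\mathbf y,s)\le1$ holds for $s\sim\rho(\mathbf x)(R/\rho(\mathbf x))^{-\kappa}$, which gives the claim.

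An alternative for \eqref{eq:Shen_C} is the standard trick of deducing it from \eqref{eq:Shen_B} with the roles of $\mathbf x$ and $\mathbf y$ swapped. This produces exactly the exponent $\kappa/(1+\kappa)$. If $m(\mathbf y)\ge m(\mathbf x)$ there is nothing to prove. Otherwise \eqref{eq:Shen_B} gives
\[
m(\mathbf x)\le Cm(\mathbf y)(1+m(\mathbf y)R)^{\kappa}\le C m(\mathbf y)^{1/(1+\kappa)}\cdots
\]
and solving yields $m(\mathbf y)\gtrsim m(\mathbf x)(1+m(\mathbf x)R)^{-\kappa/(1+\kappa)}$. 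I will use this route, since it gives precisely the stated exponent.

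The step I expect to be the main obstacle is handling the position-dependent volumes $w(B(\mathbf x,r))$ in \eqref{eq:balls_asymp} when comparing $\Psi$ at different centres. This is resolved purely through doubling and \eqref{eq:growth}, since only Euclidean balls appear and no orbit distance enters.
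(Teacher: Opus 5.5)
Your final argument is correct and is essentially Shen's proof, which is what the cited Lemma~8 transplants to $(\mathbb R^N,\|\cdot\|,dw)$; the paper itself gives no proof. Concretely: \eqref{eq:Shen_A} follows from $\Psi(\mathbf y,m(\mathbf x)^{-1})\simeq1$. \eqref{eq:Shen_B} follows from $\Psi(\mathbf y,R)\lesssim\Psi(\mathbf x,2R)\lesssim(Rm(\mathbf x))^{\kappa_0}$ and Lemma~\ref{lem:Rr} at $\mathbf y$, giving $\kappa=\kappa_0/(2-\mathbf N/q)-1$ after enlarging $\kappa_0$ if needed. \eqref{eq:Shen_C} follows by swapping $\mathbf x,\mathbf y$ in \eqref{eq:Shen_B}.

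Two corrections. Your first, discarded route to \eqref{eq:Shen_C} fails as written: when $2R>m(\mathbf y)^{-1}$ it only gives $m(\mathbf y)^{-1}<2R$, i.e.\ exponent $1$. In that case you would need the upper growth bound $\Psi(\mathbf y,2R)\lesssim(2Rm(\mathbf y))^{\kappa_0}$, which recovers exactly $\kappa/(1+\kappa)$. Also, write the swap step as a case split. When $m(\mathbf y)R\le1$, you get $m(\mathbf x)\le C2^{\kappa}m(\mathbf y)$. When $m(\mathbf y)R\ge1$, you get $m(\mathbf x)\le C2^{\kappa}m(\mathbf y)^{1+\kappa}R^{\kappa}$. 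This replaces your garbled bound involving $m(\mathbf y)^{1/(1+\kappa)}$.
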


If, in addition, $V$ is $G$-invariant, then the definition of $m$ and
the $G$-invariance of $dw$ give
\[
 m(g\mathbf{x})=m(\mathbf{x})
 \qquad(g\in G,\ \mathbf{x}\in\mathbb{R}^N).
\]

\subsection{The Fefferman--Phong inequality}

The following inequality was proved in
\cite[Theorem~6]{Hejna_Schrodinger}. Here $\mathbf{Q}$ is the quadratic
form associated with $L=-\Delta_k+V$, defined in
Section~\ref{sec:Schorodinger}.

\begin{theorem}[Fefferman--Phong inequality]\label{theo:Fefferman-Phong}
There exists $C>0$ such that, for every $f\in\mathcal{D}(\mathbf{Q})$,
\begin{equation}\label{eq:Fefferman-Phong}
 \int_{\mathbb{R}^N}|f(\mathbf{x})|^2m(\mathbf{x})^2\,dw(\mathbf{x})
 \leq C\mathbf{Q}(f,f).
\end{equation}
\end{theorem}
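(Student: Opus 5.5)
The plan is to follow Shen's localization argument on the doubling space $(\mathbb R^N,\|\cdot\|,dw)$. Two reductions come first. By the density of $C_c^\infty(\mathbb R^N)$ in $\mathcal D(\mathbf Q)$ (\cite[Theorem~4.2]{AH}) and Fatou's lemma, it suffices to treat $f\in C_c^\infty(\mathbb R^N)$. For such $f$, I will use the identity
\[
 \sum_{j=1}^N\|T_jf\|_{L^2(dw)}^2
 =\int_{\mathbb R^N}\|\nabla f\|^2\,dw
 +\sum_{\alpha\in R}\frac{k(\alpha)}{2}\int_{\mathbb R^N}
 \frac{|f(\mathbf x)-f(\sigma_\alpha\mathbf x)|^2}{\langle\alpha,\mathbf x\rangle^2}\,dw(\mathbf x)
\]
(up to harmless constants). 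It follows from $\langle-\Delta_kf,f\rangle$, formula~\eqref{eq:Dunkl_Laplacian} and integration by parts. Since $k\ge0$, this gives $\int\|\nabla f\|^2dw\le\mathbf Q(f,f)$. The reflection terms can therefore be discarded, and it is enough to prove
\[
 \int|f|^2m^2\,dw\le C\Bigl(\int\|\nabla f\|^2dw+\int V|f|^2dw\Bigr).
\]

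Next, for each $\mathbf x$ put $B_{\mathbf x}=B(\mathbf x,m(\mathbf x)^{-1})$. By the definition~\eqref{eq:m}, the continuity of $r\mapsto r^2w(B)^{-1}\mu(B)$, and Lemma~\ref{lem:Rr}, one has $\mu(B_{\mathbf x})\simeq m(\mathbf x)^2w(B_{\mathbf x})$. Using~\eqref{eq:Shen_A} and a Vitali/Besicovitch-type selection, I choose a countable family $B_i=B(\mathbf x_i,r_i)$, $r_i=m(\mathbf x_i)^{-1}$, with three properties: the balls cover $\mathbb R^N$, the enlarged balls $2B_i$ have bounded overlap, and $m\simeq m(\mathbf x_i)$ on $2B_i$. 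This is the standard argument from Shen's paper, which uses only~\eqref{eq:Shen_A}, \eqref{eq:Shen_B} and doubling. It then suffices to prove, uniformly in $i$, the local estimate
\[
 r_i^{-2}\int_{B_i}|f|^2dw\le C\int_{B_i}\bigl(\|\nabla f\|^2+V|f|^2\bigr)dw ,
\]
and sum over $i$.

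For the local estimate, fix $B=B_i$ with radius $r$ and write $f_B$ for the $dw$-average of $f$ over $B$. First,
\[
 r^{-2}\int_B|f|^2dw\le 2r^{-2}\int_B|f-f_B|^2dw+2r^{-2}w(B)|f_B|^2 .
\]
The first term is controlled by the $L^2$ Poincaré inequality for $dw$ on Euclidean balls. Next, $r^{-2}w(B)\simeq\mu(B)$, so
\[
 r^{-2}w(B)|f_B|^2\lesssim\mu(B)|f_B|^2\le2\int_BV|f|^2dw+2\int_BV|f-f_B|^2dw .
\]
For the last term, Hölder's inequality and the reverse Hölder bound~\eqref{eq:reverse_Holder} give
\[
 \int_BV|f-f_B|^2dw\le C\,\mu(B)\Bigl(\frac1{w(B)}\int_B|f-f_B|^{2q'}dw\Bigr)^{1/q'} .
\]
A Sobolev--Poincaré inequality for $dw$, with the gain governed by $\mathbf N$, bounds the bracket by $C\,r^2w(B)^{-1}\int_B\|\nabla f\|^2dw$. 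This is admissible exactly when $2q'<2\mathbf N/(\mathbf N-2)$, i.e.\ $q>\mathbf N/2$ (an arbitrary finite exponent suffices if $\mathbf N\le2$). Since $\mu(B)r^2/w(B)\simeq1$, this term is also $\lesssim\int_B\|\nabla f\|^2dw$, which completes the local estimate.

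The main obstacle is the (Sobolev--)Poincaré inequality for $dw=\prod|\langle\mathbf x,\alpha\rangle|^{k(\alpha)}d\mathbf x$ on Euclidean balls, with the exponent $2\mathbf N/(\mathbf N-2)$ dictated by the homogeneous dimension. For large $k$ the weight is not in $A_2$, so the classical weighted theory does not apply directly. I would first try to prove the $(1,1)$-Poincaré inequality directly, via the representation $|f(\mathbf y)-f_B|\lesssim\int_B\|\nabla f(\mathbf z)\|\,\|\mathbf y-\mathbf z\|\,w(B(\mathbf y,\|\mathbf y-\mathbf z\|))^{-1}\,dw(\mathbf z)$. This is obtained by averaging over line segments and using that the product weight is comparable along segments in the regime~\eqref{eq:balls_asymp}. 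The Hajłasz--Koskela self-improvement on a doubling space satisfying~\eqref{eq:growth} would then upgrade this to the required $L^{2q'}$--$L^2$ form.
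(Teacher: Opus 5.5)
The paper gives no proof of this theorem. It quotes it from \cite[Theorem~6]{Hejna_Schrodinger}, so there is no route here to compare yours with.

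Judged on its own, your argument has a genuine gap at the step you flagged as the main obstacle, and the gap is not technical. For $dw$ with the classical gradient, the Poincar\'e and Sobolev--Poincar\'e inequalities on Euclidean balls are false once a ball meets a reflecting hyperplane with non-small multiplicity. Fix $\alpha\in R$ and a ball $B$ centred on $\alpha^\perp$; the other factors of the density are bounded on $B$. Put $f(\mathbf x)=\phi(\langle\alpha,\mathbf x\rangle/\epsilon)$ with $\phi=0$ on $(-\infty,-1]$ and $\phi=1$ on $[1,\infty)$. By $\sigma_\alpha$-symmetry, $\int_B|f-f_B|\,dw\geq c\,w(B)$ uniformly in $\epsilon$, whereas
\[
 \int_B\|\nabla f\|^2\,dw\leq C_B\,\epsilon^{2k(\alpha)-1},
 \qquad
 \int_B\|\nabla f\|\,dw\leq C_B\,\epsilon^{2k(\alpha)}.
\]
So the $L^2$ Poincar\'e inequality fails as soon as $k(\alpha)>1/2$. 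The $(1,1)$ inequality you meant to self-improve fails for every $k(\alpha)>0$. Your segment representation fails for the same reason: the density vanishes on the walls and is not comparable along segments crossing them. Note that \eqref{eq:balls_asymp} concerns ball volumes, not the density. Haj\l asz--Koskela therefore has nothing to start from.

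What is missing is a way to cross the walls, built into the localization. Splitting each $B_i$ into Weyl-chamber pieces is not enough on its own. Suppose $B_i$ crosses $\alpha^\perp$ only in a thin cap $S$ of width $\eta r_i$. Then $f$ can be $1$ on $S$ and $0$ on the rest of $B_i$ at arbitrarily small gradient cost, while $V$ may average far less than $r_i^{-2}$ on $S$. For example, the $G$-invariant potential $V=|\langle\alpha,\mathbf x\rangle|^b$ lies in ${\rm RH}^q(dw)$ and gives $\mu(S)\lesssim(\eta r_i)^b w(S)$, while $r_i^{-2}\simeq r_i^b$. Your local estimate on such a $B_i$ then fails by a factor of order $\eta^{-b}$.

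One repair is to re-centre each critical ball, with a bounded enlargement, at a point lying on every wall that meets it and at distance at least twice its radius from all other walls. This is possible because the hyperplane arrangement is finite, and it is harmless by Lemma~\ref{lem:m_growth}. The ball then splits into fat convex cones on which the density is log-concave up to bounded factors, so Poincar\'e and Sobolev--Poincar\'e inequalities with the $\mathbf N$-exponent hold there. By doubling of $V\,dw$, each cone carries $\mu$-mass comparable to $\mu(B)$. Your H\"older/reverse-H\"older step can then be run cone by cone.

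Alternatively, keep the reflection part of $\sum_j\|T_jf\|_2^2$ that you discarded. On a ball of radius $r$ centred on $\alpha^\perp$,
\[
 \int_B|f-f\circ\sigma_\alpha|^2\,dw\leq 2r^2\int_B\frac{|f(\mathbf x)-f(\sigma_\alpha\mathbf x)|^2}{\langle\alpha,\mathbf x\rangle^2}\,dw(\mathbf x),
\]
which glues the chamber pieces together.

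Your reduction to $C_c^\infty$, the carr\'e du champ identity, and the exponent count $q>\mathbf N/2$ are all fine.
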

\section{Local estimates for weak solutions}

Throughout this section, $\Omega\subseteq\mathbb R^N$ is an open
$G$-invariant set. We write
\[
 U(\mathbf{x},r)=\mathcal O(B(\mathbf{x},r))
 =\bigcup_{g\in G}B(g(\mathbf{x}),r).
\]

\subsection{Weak solutions and localization of the quadratic form}
We use the local form domain
\[
 \mathcal D_{\mathrm{loc}}(\mathbf Q;\Omega)
 =\{u\in L^2_{\mathrm{loc}}(\Omega,dw):
       \chi u\in\mathcal D(\mathbf Q)
       \text{ for every $G$-invariant }\chi\in C_c^\infty(\Omega)\},
\]
where $\chi u$ is extended by zero outside $\Omega$.
A function $u$ in this domain is a weak solution of
$(-\Delta_k+V+i\tau)u=0$ in $\Omega$ if
\begin{equation}\label{eq:local_weak_equation}
 \sum_{j=1}^N\int_\Omega T_ju\,\overline{T_j\varphi}\,dw
 +\int_\Omega(V+i\tau)u\overline\varphi\,dw=0
 \qquad(\varphi\in C_c^\infty(\Omega)).
\end{equation}
The test functions in this identity need not be $G$-invariant.

We record two facts about localization of the form. We explain why multiplication by a fixed
$\varphi\in C_c^\infty(\Omega)$ preserves the local
form domain and is continuous in the corresponding
form norm.

Choose a compact $G$-invariant set $K\subset\Omega$
containing a neighbourhood of
$\mathcal O(\operatorname{supp}\varphi)$.
By~\eqref{eq:general_Leibniz},
\[
\begin{split}
 T_j(\varphi u)(\mathbf{x})
 &=
 \varphi(\mathbf{x})T_ju(\mathbf{x})
 +u(\mathbf{x})\partial_j\varphi(\mathbf{x})
 +
 \sum_{\alpha\in R}\frac{k(\alpha)}2\alpha_j
 u(\sigma_\alpha\mathbf{x})
 \frac{\varphi(\mathbf{x})
       -\varphi(\sigma_\alpha(\mathbf{x}))}
      {\langle\alpha,\mathbf{x}\rangle}.
\end{split}
\]
The first two terms are supported in
$\operatorname{supp}\varphi$.
For each $\alpha$, the coefficient of
$u(\sigma_\alpha\mathbf{x})$ in the last sum vanishes
outside
\[
 \operatorname{supp}\varphi
 \,\cup\,
 \sigma_\alpha(\operatorname{supp}\varphi)
 \subset K.
\]
Furthermore, this coefficient is bounded
by~\eqref{eq:multiplier_reflection_quotient}.
Since $K$ and $dw$ are $G$-invariant,
\[
 \int_K|u(\sigma_\alpha\mathbf{x})|^2\,dw(\mathbf{x})
 =
 \int_K|u(\mathbf{x})|^2\,dw(\mathbf{x}).
\]
Consequently,
\begin{equation}\label{eq:smooth_multiplier_local_bound}
 \|T_j(\varphi u)\|_{L^2(dw)}
 \leq
 \|\varphi\|_{L^\infty}\|T_ju\|_{L^2(K,dw)}
 +
 C_\varphi\|u\|_{L^2(K,dw)},
\end{equation}
where $\varphi u$ is extended by zero outside $\Omega$.
The constant $C_\varphi$ depends only on the root
system, the multiplicity function, and the first
derivatives of $\varphi$.

The remaining terms in the form norm satisfy
\[
 \|\varphi u\|_{L^2(dw)}
 \leq
 \|\varphi\|_{L^\infty}\|u\|_{L^2(K,dw)}
\]
and
\[
 \|V^{1/2}\varphi u\|_{L^2(dw)}
 \leq
 \|\varphi\|_{L^\infty}
 \|V^{1/2}u\|_{L^2(K,dw)}.
\]
Thus $\varphi u\in\mathcal D(\mathbf Q)$ whenever
$u\in\mathcal D_{\mathrm{loc}}(\mathbf Q;\Omega)$.

Applying the same estimates to $u_n-u$ also shows
continuity: if
\[
 u_n\to u,\qquad
 T_ju_n\to T_ju\quad(1\leq j\leq N),\qquad
 V^{1/2}u_n\to V^{1/2}u
 \quad\text{in }L^2(K,dw),
\]
then
\[
 \|\varphi(u_n-u)\|_2^2
 +
 \mathbf Q\bigl(\varphi(u_n-u),\varphi(u_n-u)\bigr)
 \longrightarrow0.
\]

For $f,g\in C^2(\mathbb R^N)$ we consider \textit{carr\'e du champ operator}: 
\begin{equation}\label{form_Gamma} \Gamma (f,g)=\frac{1}{2}\Big(\Delta_k (f \bar g)-f\Delta_k \bar g-\bar g\Delta_k f\Big).
\end{equation}

By  straightforward calculations (see~\cite[Lemma 3.1]{Velicu1}) we have
\begin{equation}\label{form_Gamma1} \Gamma(f,g)(\mathbf x)=\langle \nabla f(\mathbf x),\nabla \bar g(\mathbf x)\rangle + \sum_{\alpha \in R}\frac{k(\alpha)}{2}\frac{(f(\mathbf x)-f(\sigma_\alpha(\mathbf x)))(\overline{ g(\mathbf x)-g(\sigma_\alpha(\mathbf x)}))}{\langle \alpha,\mathbf x\rangle^2}.
\end{equation}

\subsection{Subharmonicity and mean-value estimates}
\begin{lemma}\label{lem:subharmonic}
Let $\Omega\subseteq\mathbb R^N$ be an open $G$-invariant set,
let $V\geq0$, and let $\tau\in\mathbb R$.
Suppose that
$u\in\mathcal D_{\mathrm{loc}}(\mathbf Q;\Omega)$
is a weak solution of
\begin{equation}\label{eq:main_eq}
    (-\Delta_k+V+i\tau)u=0
    \qquad\text{in }\Omega.
\end{equation}
Then $|u|^2$ is $\Delta_k$-subharmonic in the distributional
sense. More precisely, for every nonnegative
$\varphi\in C_c^\infty(\Omega)$,
\begin{equation}\label{eq:weak_subharmonicity}
    \int_\Omega |u|^2\Delta_k\varphi\,dw
    \geq
    2\int_\Omega V|u|^2\varphi\,dw
    \geq0.
\end{equation}
\end{lemma}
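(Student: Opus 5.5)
The idea is to test the weak equation \eqref{eq:local_weak_equation} with $\psi=\varphi u$ (with $\varphi\ge0$, $\varphi\in C_c^\infty(\Omega)$), take real parts, and rewrite the gradient terms through the carr\'e du champ \eqref{form_Gamma1}. Since $u$ is not smooth, $\varphi u$ is not literally a test function. However, by the localization discussion above, $\varphi u\in\mathcal D(\mathbf Q)$, with support in a compact subset of $\Omega$. Mollifying in a $G$-equivariant way (Dunkl convolution with a radial approximate identity, or the density of $C_c^\infty$ in $\mathcal D(\mathbf Q)$ from \cite[Theorem~4.2]{AH}, combined with a $G$-invariant cutoff equal to one near $\mathcal O(\operatorname{supp}\varphi)$) yields test functions $\psi_n\to\varphi u$ in the form norm. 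The identity \eqref{eq:local_weak_equation} then extends to $\psi=\varphi u$. Taking real parts kills the $i\tau$ term, because $\operatorname{Re}(i\tau|u|^2\varphi)=0$. This leaves
\[
 \operatorname{Re}\sum_j\int T_ju\,\overline{T_j(\varphi u)}\,dw+\int V|u|^2\varphi\,dw=0 .
\]

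Next we need the algebraic identity, first for smooth $u$:
\[
 \operatorname{Re}\sum_j\int T_ju\,\overline{T_j(\varphi u)}\,dw
 =\int\varphi\,\Gamma(u,u)\,dw-\tfrac12\int |u|^2\Delta_k\varphi\,dw .
\]
To derive it, use the self-adjointness of $\Delta_k$ and the definition \eqref{form_Gamma}. This gives $\int\Gamma(f,g)\varphi\,dw=\tfrac12\int(f\bar g\,\Delta_k\varphi-\varphi f\Delta_k\bar g-\varphi\bar g\Delta_kf)\,dw$. Also $-\int\Delta_ku\,\overline{\varphi u}\,dw=\sum_j\int T_ju\,\overline{T_j(\varphi u)}\,dw$ by skew-symmetry. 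Taking real parts and combining the two formulas gives the identity. For $u\in\mathcal D_{\rm loc}(\mathbf Q;\Omega)$, both sides are continuous in the local form norm. The left side is handled by \eqref{eq:general_Leibniz} and \eqref{eq:multiplier_reflection_quotient}. On the right, $\Gamma(u,u)$ is read through \eqref{form_Gamma1} as $|\nabla u|^2$ plus the reflection differences. Here I would rather avoid separating $\nabla u$ from the difference terms. Instead I would write the weak form of $\Gamma(u,u)\varphi$ directly as $\sum_j\varphi|T_ju|^2$ plus explicit terms involving $u(\mathbf x)-u(\sigma_\alpha\mathbf x)$. Expanding $\sum_j T_ju\,\overline{T_j(\varphi u)}$ by \eqref{eq:general_Leibniz} gives an expression that is manifestly $\ge$ (quadratic nonnegative part) $-\tfrac12\int|u|^2\Delta_k\varphi$. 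So the approximation is done at the level of the expanded Leibniz expression, where every term is an $L^1$ pairing of $L^2$ functions on a compact $G$-invariant set.

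Combining the two displays gives
\[
\tfrac12\int|u|^2\Delta_k\varphi\,dw=\int\varphi\,\Gamma(u,u)\,dw+\int V|u|^2\varphi\,dw\ge\int V|u|^2\varphi\,dw .
\]
This uses $\Gamma(u,u)\ge0$ (both terms in \eqref{form_Gamma1} are nonnegative, with $k\ge0$) and $\varphi\ge0$. Multiplying by $2$ gives \eqref{eq:weak_subharmonicity}.

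The main obstacle is the rigorous justification of the identity in the second step for nonsmooth $u$. In particular, the reflection terms make $\Gamma(u,u)\varphi$ nonlocal: the difference quotients $(u(\mathbf x)-u(\sigma_\alpha\mathbf x))/\langle\alpha,\mathbf x\rangle$ need not be individually integrable near $\alpha^\perp$. I would handle this by approximating $\chi u$ ($\chi$ a $G$-invariant cutoff equal to one near $\mathcal O(\operatorname{supp}\varphi)$) by $u_n\in C_c^\infty$ in the form norm. We prove the inequality $\tfrac12\int|u_n|^2\Delta_k\varphi\ge\operatorname{Re}\sum_j\int T_ju_n\overline{T_j(\varphi u_n)}$ for smooth $u_n$; it follows from $\Gamma\ge0$. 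Then we pass to the limit using the continuity statement established above. Only this inequality is needed, not the full identity, and Fatou-type issues disappear.
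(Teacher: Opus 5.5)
Your proposal is correct and follows the paper's proof: you prove the smooth inequality from $\Gamma(f,f)\geq0$, pass to $u$ by approximating $\chi u$ by $C_c^\infty$ functions in the form norm (where $\chi$ is a $G$-invariant cutoff), and then test the weak equation with $\varphi u$ and take real parts. There is one sign slip in your last paragraph: the smooth inequality should read $\tfrac12\int|u_n|^2\Delta_k\varphi\,dw\geq-\operatorname{Re}\sum_j\int T_ju_n\,\overline{T_j(\varphi u_n)}\,dw$, as your own identity together with $\Gamma\geq0$ shows, and only this sign combines with the weak equation to give the stated lower bound $2\int V|u|^2\varphi\,dw$.
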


\begin{proof}
We first establish an inequality for smooth functions.
By the definition of the carré du champ,
\[
    \Delta_k|f|^2
    =
    2\operatorname{Re}(\overline f\,\Delta_k f)
    +2\Gamma(f,f).
\]
Moreover, formula~\eqref{form_Gamma1} gives
\[
    \Gamma(f,f)(\mathbf{x})
    =
    |\nabla f(\mathbf{x})|^2
    +
    \sum_{\alpha\in R}\frac{k(\alpha)}2
    \frac{|f(\mathbf{x})-f(\sigma_\alpha\mathbf{x})|^2}
         {\langle\alpha,\mathbf{x}\rangle^2}
    \geq0.
\]
Consequently, if $f\in C_c^\infty(\mathbb R^N)$ and
$0\leq\varphi\in C_c^\infty(\Omega)$, integration by parts
yields
\begin{align}
    \int_{\mathbb R^N}|f|^2\Delta_k\varphi\,dw
    &=
    2\int_{\mathbb R^N}\varphi\Gamma(f,f)\,dw
    -
    2\operatorname{Re}\sum_{j=1}^N
    \int_{\mathbb R^N}
    T_jf\,\overline{T_j(f\varphi)}\,dw
    \nonumber\\
    &\geq
    -2\operatorname{Re}\sum_{j=1}^N
    \int_{\mathbb R^N}
    T_jf\,\overline{T_j(f\varphi)}\,dw.
    \label{eq:smooth_subharmonicity_inequality}
\end{align}

We next pass this inequality to $u$.
Fix $0\leq\varphi\in C_c^\infty(\Omega)$.
Choose a $G$-invariant function
$\chi\in C_c^\infty(\Omega)$ equal to one on a neighbourhood
of $\mathcal O(\operatorname{supp}\varphi)$, and put
$v=\chi u$, extended by zero outside $\Omega$.
By the definition of the local form domain,
$v\in\mathcal D(\mathbf Q)$.

Using the density of $C_c^\infty(\mathbb R^N)$ in the form
domain, choose $v_n\in C_c^\infty(\mathbb R^N)$ such that
\[
    \|v_n-v\|_2^2
    +\mathbf Q(v_n-v,v_n-v)\longrightarrow0.
\]
In particular,
\[
    v_n\longrightarrow v,
    \qquad
    T_jv_n\longrightarrow T_jv
    \quad\text{in }L^2(dw).
\]
The product formula for the Dunkl operators also gives
\[
    T_j(v_n\varphi)\longrightarrow T_j(v\varphi)
    \quad\text{in }L^2(dw).
\]
Indeed, the derivatives of $\varphi$ and the quotients
\[
    \frac{\varphi(\mathbf{x})
          -\varphi(\sigma_\alpha(\mathbf{x}))}
         {\langle\alpha,\mathbf{x}\rangle}
\]
are bounded, so every term in the product formula converges
in $L^2(dw)$.

Furthermore,
\[
    \big\||v_n|^2-|v|^2\big\|_1
    \leq
    \|v_n-v\|_2\bigl(\|v_n\|_2+\|v\|_2\bigr)
    \longrightarrow0.
\]
Since $\Delta_k\varphi$ is bounded and compactly supported,
we may apply~\eqref{eq:smooth_subharmonicity_inequality}
to $v_n$ and pass to the limit. This gives
\[
    \int_{\mathbb R^N}|v|^2\Delta_k\varphi\,dw
    \geq
    -2\operatorname{Re}\sum_{j=1}^N
    \int_{\mathbb R^N}
    T_jv\,\overline{T_j(v\varphi)}\,dw.
\]
All terms in this inequality depend only on the values
of $v$ in a neighbourhood of
$\mathcal O(\operatorname{supp}\varphi)$.
There $\chi=1$, so the inequality becomes
\begin{equation}\label{eq:weak_localized_inequality}
    \int_\Omega |u|^2\Delta_k\varphi\,dw
    \geq
    -2\operatorname{Re}\sum_{j=1}^N
    \int_\Omega
    T_ju\,\overline{T_j(u\varphi)}\,dw.
\end{equation}

Finally, $u\varphi$ is an admissible form-domain test
function in the weak equation, by local approximation
in the form norm. Therefore,
\[
    \sum_{j=1}^N
    \int_\Omega T_ju\,\overline{T_j(u\varphi)}\,dw
    +
    \int_\Omega(V+i\tau)|u|^2\varphi\,dw
    =0.
\]
Taking real parts, we obtain
\[
    \operatorname{Re}\sum_{j=1}^N
    \int_\Omega T_ju\,\overline{T_j(u\varphi)}\,dw
    =
    -\int_\Omega V|u|^2\varphi\,dw.
\]
Substitution into~\eqref{eq:weak_localized_inequality}
proves~\eqref{eq:weak_subharmonicity}.
\end{proof}

Let $\mathbf{1}_A$ denote the characteristic function of the set $A \subseteq \mathbb{R}^N$. We state Theorem 3.1 of~\cite{GR}.

\begin{theorem}\label{teo:GR}
    Assume that $\Omega \subseteq \mathbb{R}^N$ is an open $G$-invariant set and $f \in C^2(\mathbb{R}^N)$. Then, for all $\mathbf{x} \in \Omega$ and $\rho>0$ such that $B(\mathbf{x},\rho) \subseteq \Omega$, and for all $0 < R \leq \frac{\rho}{3}$, we have
    \begin{equation}
        M_B^R(f)(\mathbf{x})=f(\mathbf{x})+\frac{1}{R^{\mathbf{N}}}\int_0^R\int_0^r M_B^t(\Delta_k f)(\mathbf{x}) t\,dt \, r^{\mathbf{N}-1}\,dr,
    \end{equation}
    where, for $t \in [0,R]$,
    \begin{equation}
        M_B^t(f)(\mathbf{x})=\frac{1}{w(B(0,t))}\int_{\mathbb{R}^N}\mathbf{1}_{B(0,t)}(\mathbf{x},\mathbf{y})f(\mathbf{y})\,dw(\mathbf{y}).
    \end{equation}
\end{theorem}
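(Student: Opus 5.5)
We would prove the formula by a Green-type identity, pairing $\Delta_k f$ against the Dunkl translate of an explicit radial kernel. Throughout we read $\mathbf 1_{B(0,t)}(\mathbf x,\mathbf y)$ as $\tau_{\mathbf x}\mathbf 1_{B(0,t)}(-\mathbf y)$, defined through R\"osler's formula (Theorem~\ref{teo:roesler_formula}) as $\mu_{\mathbf x}\{\eta: A(\mathbf x,\mathbf y,\eta)<t\}$.

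\textbf{Reduction to a radial kernel.} Interchanging the integrals on the right-hand side, it equals $\int_{\mathbb R^N}\tau_{\mathbf x}K_R(-\mathbf y)\,\Delta_k f(\mathbf y)\,dw(\mathbf y)$. Here $K_R(\mathbf y)=\widetilde K_R(\|\mathbf y\|)$, with
\[
 \widetilde K_R(s)=\frac{1}{R^{\mathbf N}}\int_0^R\int_0^r \frac{\mathbf 1_{[0,t)}(s)}{w(B(0,t))}\,t\,dt\,r^{\mathbf N-1}\,dr ,
\]
and R\"osler's formula is linear in the profile. This $K_R$ is radial and supported in $\overline{B(0,R)}$.

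\textbf{The radial ODE.} On radial functions, $\Delta_k$ acts on the profile by $\widetilde\phi''+\frac{\mathbf N-1}{s}\widetilde\phi'$. This follows from \eqref{eq:Dunkl_Laplacian}: the difference terms vanish, and $\sum_\alpha k(\alpha)\partial_\alpha\phi/\langle\alpha,\mathbf y\rangle=\sum_\alpha k(\alpha)\widetilde\phi'/s$. Since $w(B(0,t))=c\,t^{\mathbf N}$, a direct computation with the Wronskian of $\{1,s^{2-\mathbf N}\}$ should give, in the distributional sense,
\[
 \Delta_kK_R=\frac{\mathbf 1_{B(0,R)}}{w(B(0,R))}-\delta_0 .
\]
Here $\delta_0$ is the Dunkl delta at the origin, i.e.\ $\int\phi\,\delta_0\,dw=\phi(0)$, and the normalization $R^{-\mathbf N}\int_0^R r^{\mathbf N-1}dr$ is exactly what fixes the constant in front of $\delta_0$. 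This is the step I would check first; it is a one-variable calculation.

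\textbf{Green's identity.} Next we use that $\tau_{\mathbf x}$ commutes with $\Delta_k$ (both are Dunkl multipliers) and the symmetry $\int(\tau_{\mathbf x}\phi)(-\mathbf y)\,g(\mathbf y)\,dw=\int\phi(-\mathbf y)\,\tau_{\mathbf x}g(\mathbf y)\,dw$-type identities. For a Schwartz $g$ these give
\[
 \int\tau_{\mathbf x}K_R(-\mathbf y)\,\Delta_k g\,dw=\int \tau_{\mathbf x}(\Delta_kK_R)(-\mathbf y)\,g\,dw=M_B^R(g)(\mathbf x)-g(\mathbf x).
\]
To pass from $g$ to $f\in C^2$, we note that the support of $\mu_{\mathbf x}$ lies in the convex hull of $\mathcal O(\mathbf x)$. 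Hence all translated kernels vanish unless $\|\mathbf y\|\le\|\mathbf x\|+R$. Choose a radial $\chi\in C_c^\infty$ with $\chi=1$ on a large ball. Since $\sigma_\alpha$ preserves norms, $\Delta_k(\chi f)=\Delta_k f$ there, and $\chi f$ can be approximated in $C^2$ on compacts by Schwartz functions.

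\textbf{Main obstacle.} The delicate part is that $K_R$ and $\mathbf 1_{B(0,t)}$ are not smooth: $K_R$ has the $s^{2-\mathbf N}$ singularity at $0$ and jumps of derivatives at $R$. So neither the commutation $\tau_{\mathbf x}\Delta_k=\Delta_k\tau_{\mathbf x}$ nor the appearance of $f(\mathbf x)$ is immediate. I would regularize by replacing $\widetilde K_R$ with smooth radial profiles $\widetilde K_R^{\varepsilon}$. These are obtained by solving the same ODE with right-hand side $\mathbf 1_{B_R}/w(B_R)$ mollified minus a radial approximate identity $\phi_\varepsilon$. Apply the smooth identity, then let $\varepsilon\to0$. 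The term $\int\tau_{\mathbf x}\phi_\varepsilon(-\mathbf y)f(\mathbf y)\,dw\to f(\mathbf x)$ follows from Lemma~\ref{lem:nonradial_estimation} together with continuity of $f$; here $\tau_{\mathbf x}\phi_\varepsilon(-\cdot)$ concentrates where $d(\mathbf x,\mathbf y)\le\varepsilon$, and one must also check that the orbit points contribute correctly (for $f$ that are not $G$-invariant this uses that the mass of $\tau_{\mathbf x}\phi_\varepsilon(-\cdot)$ near $g(\mathbf x)\neq\mathbf x$ tends to zero, which again follows from the $(1+\|\mathbf x-\mathbf y\|/\varepsilon)^{-2}$ factor). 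The other terms converge by dominated convergence inside R\"osler's integral, since $\mu_{\mathbf x}$ is a probability measure and the profiles converge boundedly, or in $L^1(s^{\mathbf N-1}ds)$ for the singular part.
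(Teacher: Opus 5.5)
The paper does not prove this statement. It quotes it as Theorem~3.1 of~\cite{GR}, so your argument is an independent route, and it is correct.

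\textbf{The kernel computation.} The step you planned to check first works out. For $0<s<R$,
\[
 \widetilde K_R(s)=\frac{s^{2-\mathbf N}-R^{2-\mathbf N}}{\mathbf N(\mathbf N-2)\,w(B(0,1))}-\frac{R^2-s^2}{2\mathbf N\,w(B(0,R))},
\]
and $\widetilde K_R=0$ for $s\geq R$. Both $\widetilde K_R$ and $\widetilde K_R'$ vanish at $s=R$, so no surface term appears there. The radial operator gives $1/w(B(0,R))$ inside the ball. The coefficient of $s^{2-\mathbf N}$ is exactly the one producing $-\delta_0$, since the $dw$-area of the unit sphere is $\mathbf N\,w(B(0,1))$.

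\textbf{What your route buys.} Compared with quoting \cite{GR}, your Green-kernel argument uses only tools already in the preliminaries: R\"osler's formula, positivity of radial translations, and Lemma~\ref{lem:nonradial_estimation}. It gives the identity for every $\mathbf x$ and every $R>0$ whenever $f\in C^2(\mathbb R^N)$. This is exactly the form used in Corollary~\ref{coro:u1}. There the theorem is applied to the globally smooth $f_\varepsilon$, and locality enters only through the sign of $\Delta_kf_\varepsilon$ on the supports $\mathcal O(B(\mathbf x,t))$ of the averaging kernels. The hypotheses on $\Omega$, $\rho$ and $R\le\rho/3$ play no role in your version. For a genuinely local statement with $f\in C^2(\Omega)$, replace your radial cutoff by a $G$-invariant one equal to $1$ near $\mathcal O(\overline{B(\mathbf x,R)})$. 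Then use $A(\mathbf x,\mathbf y,\eta)\ge d(\mathbf x,\mathbf y)$ for $\eta$ in the convex hull of $\mathcal O(\mathbf x)$.

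\textbf{Points to make explicit in the limit $\varepsilon\to0$.}

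\emph{The mass identity.} The fact doing the work, which you never state, is
\[
 \int\tau_{\mathbf x}h(-\mathbf y)\,dw(\mathbf y)=\int h\,dw
\]
for nonnegative radial $h$, with $\tau_{\mathbf x}h$ given by R\"osler's formula. For smooth compactly supported $h$ this is the Dunkl transform at $\xi=0$; the general case follows by monotone convergence.
\begin{itemize}
\item It gives $\tau_{\mathbf x}\phi_\varepsilon(-\cdot)$ mass one. Together with positivity, this is what yields $\int\tau_{\mathbf x}\phi_\varepsilon(-\mathbf y)f(\mathbf y)\,dw(\mathbf y)\to f(\mathbf x)$. Lemma~\ref{lem:nonradial_estimation} only removes the mass near the points $g(\mathbf x)\ne\mathbf x$.
\item It also shows that convergence of the profiles in $L^1(s^{\mathbf N-1}ds)$ on a fixed interval implies convergence of the pairings with the locally bounded $f$ and $\Delta_kf$. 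The fact that $\mu_{\mathbf x}$ is a probability measure does not by itself handle the singular part.
\end{itemize}

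\emph{The regularized kernels.} The mollified $\mathbf 1_{B(0,R)}/w(B(0,R))$ must keep $dw$-mass one, so that $\widetilde K_R^\varepsilon$ stays compactly supported. Nonnegativity of both pieces of the right-hand side gives the uniform bound $|\widetilde K_R^\varepsilon(s)|\le Cs^{2-\mathbf N}\mathbf 1_{[0,R+1]}(s)$. Combined with pointwise convergence, this gives $\widetilde K^\varepsilon_R\to\widetilde K_R$ in $L^1(s^{\mathbf N-1}ds)$.

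\emph{A simplification.} Once you regularize, the Schwartz approximation of $\chi f$ is unnecessary. The identity $\int u\,\Delta_kv\,dw=\int\Delta_ku\,v\,dw$ holds directly for $u=\tau_{\mathbf x}K_R^\varepsilon(-\cdot)\in C_c^\infty$ and $v\in C^2$.
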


\begin{corollary}\label{coro:u1}
Let $u$ be as in Lemma~\ref{lem:subharmonic}. For each fixed $r>0$,
\begin{equation}\label{eq:weak_mean_value}
 |u(\mathbf{x})|^2\leq
 \frac1{w(B(0,r))}\int_\Omega
 \mathbf1_{B(0,r)}(\mathbf{x},\mathbf{y})|u(\mathbf{y})|^2\,dw(\mathbf{y})
\end{equation}
for almost every $\mathbf{x}$ such that
$\overline{U(\mathbf{x},3r)}\subset\Omega$.
\end{corollary}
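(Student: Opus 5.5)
We combine the weak subharmonicity of Lemma~\ref{lem:subharmonic} with the mean-value formula of Theorem~\ref{teo:GR}. Theorem~\ref{teo:GR} requires $C^2$ functions, so we regularize $|u|^2$ by Dunkl convolution with a radial mollifier. Fix $\mathbf x_0$ with $\overline{U(\mathbf x_0,3r)}\subset\Omega$. By compactness we may choose $\delta>0$ and an open $G$-invariant set $\Omega'$ with $\overline{U(\mathbf x_0,3r+4\delta)}\subset\Omega'\Subset\Omega$. Multiply by a $G$-invariant cutoff $\chi\in C_c^\infty(\Omega)$ with $\chi=1$ near $\overline{\Omega'}$, and put $F=\chi|u|^2\in L^1(dw)$. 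Let $\phi\in C_c^\infty(B(0,1))$ be radial and nonnegative with $\int\phi\,dw=1$, and set $\phi_\varepsilon(\mathbf x)=\varepsilon^{-\mathbf N}\phi(\mathbf x/\varepsilon)$. Define $F_\varepsilon=F*\phi_\varepsilon$. This function is $C^\infty$, and by Lemma~\ref{lem:nonradial_estimation} the kernel $\phi_\varepsilon(\mathbf x,\mathbf y)$ vanishes unless $d(\mathbf x,\mathbf y)\le\varepsilon$. Hence, for $\varepsilon<\delta$, the value $F_\varepsilon(\mathbf x)$ near $U(\mathbf x_0,3r)$ depends only on $F$ in $U(\mathbf x,\varepsilon)$, where $F=|u|^2$.

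\textbf{Step 1: $F_\varepsilon$ is $\Delta_k$-subharmonic near the orbit.} Since $\Delta_k$ commutes with Dunkl convolution by radial functions, we have, for $\mathbf x$ in $\Omega''=\{\mathbf x: d(\mathbf x,\mathbf x_0)<3r+2\delta\}$,
\[
\Delta_kF_\varepsilon(\mathbf x)=\int |u(\mathbf y)|^2\,\Delta_k^{\mathbf y}\bigl[\phi_\varepsilon(\mathbf x,\mathbf y)\bigr]\,dw(\mathbf y).
\]
The function $\mathbf y\mapsto\phi_\varepsilon(\mathbf x,\mathbf y)=\tau_{\mathbf x}\phi_\varepsilon(-\mathbf y)$ is nonnegative by R\"osler's formula (Theorem~\ref{teo:roesler_formula}), smooth, and compactly supported in $U(\mathbf x,\varepsilon)\subset\Omega$. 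It is therefore an admissible test function in \eqref{eq:weak_subharmonicity}, which gives $\Delta_kF_\varepsilon(\mathbf x)\ge0$ on $\Omega''$. The point requiring care is the identity $\Delta_k(F*\phi_\varepsilon)=F*\Delta_k\phi_\varepsilon$, together with the symmetry of the Dunkl Laplacian acting on the kernel in $\mathbf x$ versus $\mathbf y$. Both follow on the Fourier side, since $\mathcal F(\tau_{\mathbf x}\phi_\varepsilon)(\xi)=E(-i\xi,\mathbf x)\mathcal F\phi_\varepsilon(\xi)$ and $\Delta_k$ acts as multiplication by $-\|\xi\|^2$ in either variable.

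\textbf{Step 2: apply the mean-value formula.} Theorem~\ref{teo:GR} requires a globally $C^2$ function. To obtain one, apply it to $F_\varepsilon$ after modifying $F_\varepsilon$ away from $U(\mathbf x_0,3r)$; we expect the formula to involve $f$ only on $U(\mathbf x,3R)$, and in any case $F_\varepsilon$ is already $C^\infty$ on $\mathbb R^N$. The formula is applied with $\mathbf x$ near $\mathbf x_0$, $R=r$ and $\rho=3r$. The averages $M_B^t(\Delta_kF_\varepsilon)(\mathbf x)$ integrate $\Delta_kF_\varepsilon\ge0$ against the nonnegative kernel $\mathbf 1_{B(0,t)}(\mathbf x,\cdot)$, which is supported in $U(\mathbf x,t)\subset\Omega''$. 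Hence $F_\varepsilon(\mathbf x)\le M_B^r(F_\varepsilon)(\mathbf x)$. This step is where we expect the main obstacle: Theorem~\ref{teo:GR} is stated with $B(\mathbf x,\rho)\subseteq\Omega$, whereas the translated indicator lives on orbit neighbourhoods. The hypothesis $\overline{U(\mathbf x,3r)}\subset\Omega$ should be exactly what makes every term in the formula see only the region where subharmonicity holds. If this localization is not transparent from the statement, the fallback is to use $\Omega''$ as the set in Theorem~\ref{teo:GR} and check the support of $\mathbf 1_{B(0,t)}(\mathbf x,\cdot)$ through R\"osler's formula.

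\textbf{Step 3: pass to the limit $\varepsilon\to0$.} Since $\varepsilon<\delta$, we have $F_\varepsilon\to|u|^2$ in $L^1_{\mathrm{loc}}$ near $U(\mathbf x_0,3r)$, because Dunkl convolution with an approximate identity converges in $L^1(dw)$. Consequently $F_\varepsilon(\mathbf x)\to|u(\mathbf x)|^2$ for almost every $\mathbf x$ along a subsequence. The right-hand side $M_B^r(F_\varepsilon)(\mathbf x)$ equals $\int F_\varepsilon(\mathbf y)\,\mathbf 1_{B(0,r)}(\mathbf x,\mathbf y)\,dw(\mathbf y)/w(B(0,r))$. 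The kernel $\mathbf 1_{B(0,r)}(\mathbf x,\cdot)$ is bounded by $C/w(B(\mathbf x,r))$ by R\"osler's formula, since the translated indicator is at most $1$. This kernel bound together with the $L^1$ convergence gives convergence to $M_B^r(|u|^2)(\mathbf x)$. This yields \eqref{eq:weak_mean_value} almost everywhere on a neighbourhood of $\mathbf x_0$, and a countable cover of the admissible set completes the argument.
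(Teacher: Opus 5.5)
Your proposal is correct and follows essentially the same route as the paper. You mollify $\chi|u|^2$ by a radial Dunkl approximate identity, use the translated kernel as a test function in Lemma~\ref{lem:subharmonic} to get $\Delta_kF_\varepsilon\ge0$ on $U(\mathbf x_0,3r+2\delta)$, apply Theorem~\ref{teo:GR} there, and pass to the limit via $L^1(dw)$ convergence and a countable cover.

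One small slip in Step 3: the fact that the translated indicator is at most $1$ gives the normalized kernel bound $1/w(B(0,r))$, not $C/w(B(\mathbf x,r))$, which would need Proposition~\ref{propo:characteristic_estimate}. The weaker bound is all you need, and it even yields convergence of $M_B^r(F_\varepsilon)(\mathbf x)$ at every $\mathbf x$, whereas the paper uses $L^1$-contractivity of $M_B^r$ and a subsequence.
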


\begin{proof}
Fix $r>0$ and write
\[
    A_r=
    \{\mathbf{x}\in\Omega:
      \overline{U(\mathbf{x},3r)}\subset\Omega\}.
\]
We first prove the assertion almost everywhere in a small
neighbourhood of an arbitrary point $\mathbf{x}_0\in A_r$.

Since $\overline{U(\mathbf{x}_0,3r)}$ is a compact subset
of $\Omega$, we may choose $\delta>0$ such that
\[
    \overline{U(\mathbf{x}_0,3r+4\delta)}
    \subset\Omega.
\]
Choose a $G$-invariant function
$\chi\in C_c^\infty(U(\mathbf{x}_0,3r+4\delta))$
such that
\[
    0\leq\chi\leq1,
    \qquad
    \chi=1
    \quad\text{on }U(\mathbf{x}_0,3r+3\delta).
\]
Set $f=\chi|u|^2$ in $\Omega$ and extend $f$ by zero
outside $\Omega$. Then $f\in L^1(dw)$ has compact support.

Choose a nonnegative radial function
$\rho\in C_c^\infty(B(0,1))$ with $\int\rho\,dw=1$,
and define
\[
    \rho_\varepsilon(\mathbf{x})
    =\varepsilon^{-\mathbf N}\rho(\mathbf{x}/\varepsilon),
    \qquad
    f_\varepsilon=f*\rho_\varepsilon.
\]
Since $\rho$ is radial, nonnegative, and normalized by
$\int_{\mathbb R^N}\rho\,dw=1$, Proposition~5.2 and
Remark~5.5 of~\cite{ADzH} apply to the kernels associated
with $\rho_\varepsilon$. Together with the
$L^1(dw)$-contractivity of convolution by $\rho_\varepsilon$,
they yield, by the same approximation argument as in
Corollary~5.4 of~\cite{ADzH},
\begin{equation}\label{eq:mean_value_L1_approximation}
    f_\varepsilon\longrightarrow f
    \quad\text{in }L^1(dw)
    \qquad\text{as }\varepsilon\to0.
\end{equation}
Moreover, $f_\varepsilon$ is smooth.

We now verify that $f_\varepsilon$ is subharmonic
on the smaller neighbourhood
\[
    \Omega_0=U(\mathbf{x}_0,3r+2\delta)
\]
whenever $0<\varepsilon<\delta$.
For $\mathbf{z}\in\Omega_0$, put
\[
    K_{\varepsilon,\mathbf{z}}(\mathbf{y})
    =\tau_{\mathbf{z}}\rho_\varepsilon(-\mathbf{y}).
\]
By positivity of radial Dunkl translation and its
support property,
\[
    K_{\varepsilon,\mathbf{z}}\geq0,
    \qquad
    \operatorname{supp}K_{\varepsilon,\mathbf{z}}
    \subseteq\overline{U(\mathbf{z},\varepsilon)}.
\]
Since $\mathbf{z}\in\Omega_0$ and $\varepsilon<\delta$,
this support is contained in
$U(\mathbf{x}_0,3r+3\delta)$, where $\chi=1$.
Thus $K_{\varepsilon,\mathbf{z}}$ is a nonnegative
smooth compactly supported test function in $\Omega$.

The Dunkl Laplacian acts identically in the two
variables of the convolution kernel:
\[
    \Delta_{k,\mathbf{z}}
    \bigl(\tau_{\mathbf{z}}\rho_\varepsilon(-\mathbf{y})\bigr)
    =
    \Delta_{k,\mathbf{y}}
    \bigl(\tau_{\mathbf{z}}\rho_\varepsilon(-\mathbf{y})\bigr).
\]
Consequently,
\begin{align*}
    \Delta_k f_\varepsilon(\mathbf{z})
    &=
    \int_{\mathbb R^N}
    f(\mathbf{y})\,
    \Delta_{k,\mathbf{y}}
    K_{\varepsilon,\mathbf{z}}(\mathbf{y})\,dw(\mathbf{y})
    \\
    &=
    \int_\Omega
    |u(\mathbf{y})|^2\,
    \Delta_{k,\mathbf{y}}
    K_{\varepsilon,\mathbf{z}}(\mathbf{y})\,dw(\mathbf{y})
    \geq0.
\end{align*}
Here the second equality follows because $\chi=1$
on a neighbourhood of the support of
$K_{\varepsilon,\mathbf{z}}$, which lies in a
$G$-invariant set. The last inequality is precisely
the distributional subharmonicity from
Lemma~\ref{lem:subharmonic}, applied to
$K_{\varepsilon,\mathbf{z}}$.

For every $\mathbf{x}\in B(\mathbf{x}_0,\delta)$,
we have
\[
    B(\mathbf{x},3r)\subset\Omega_0.
\]
We may therefore apply Theorem~\ref{teo:GR} to the
smooth function $f_\varepsilon$ on $\Omega_0$.
Since $\Delta_k f_\varepsilon\geq0$ there and the
averaging kernels are nonnegative, the integral
remainder in that theorem is nonnegative. Hence
\begin{equation}\label{eq:mean_value_regularized}
    f_\varepsilon(\mathbf{x})
    \leq M_B^r(f_\varepsilon)(\mathbf{x}),
    \qquad
    \mathbf{x}\in B(\mathbf{x}_0,\delta).
\end{equation}

It remains to pass to the limit.
The normalized kernel
\[
    \frac{\mathbf1_{B(0,r)}(\mathbf{x},\mathbf{y})}
         {w(B(0,r))}
\]
is nonnegative and has integral one in each variable.
Therefore $M_B^r$ is an $L^1(dw)$ contraction, and
\eqref{eq:mean_value_L1_approximation} implies
\[
    \|M_B^r(f_\varepsilon)-M_B^r(f)\|_{L^1(dw)}
    \leq
    \|f_\varepsilon-f\|_{L^1(dw)}
    \longrightarrow0.
\]
Choose a sequence $\varepsilon_n\to0$ along which
both $f_{\varepsilon_n}\to f$ and
$M_B^r(f_{\varepsilon_n})\to M_B^r(f)$ almost everywhere.
Passing to the limit in~\eqref{eq:mean_value_regularized}
gives
\[
    f(\mathbf{x})\leq M_B^r(f)(\mathbf{x})
\]
for almost every $\mathbf{x}\in B(\mathbf{x}_0,\delta)$.

Finally, for such $\mathbf{x}$ we have
$f(\mathbf{x})=|u(\mathbf{x})|^2$.
The kernel defining $M_B^r$ is supported in
$\overline{U(\mathbf{x},r)}$, which is contained in
the region where $\chi=1$. Thus
\[
    M_B^r(f)(\mathbf{x})
    =
    \frac1{w(B(0,r))}
    \int_\Omega
    \mathbf1_{B(0,r)}(\mathbf{x},\mathbf{y})
    |u(\mathbf{y})|^2\,dw(\mathbf{y}).
\]
This proves~\eqref{eq:weak_mean_value} almost everywhere
in $B(\mathbf{x}_0,\delta)$.

The set $A_r$ is covered by the neighbourhoods obtained
in this way. Taking a countable subcover and discarding
the union of the corresponding null sets proves the
assertion almost everywhere in $A_r$.
\end{proof}

In order to generalize the corollary above, we will use the following estimate.

\begin{proposition}\label{propo:characteristic_estimate}
    There is a constant $C>0$ such that for all $\mathbf{x},\mathbf{y} \in \mathbb{R}^N$ and $R>0$ we have
    \begin{equation*}
        \mathbf{1}_{B(0,R)}(\mathbf{x},\mathbf{y}) \leq C \frac{R^{\mathbf{N}}}{w(B(\mathbf{x},R))}\frac{R^2}{R^2+\|\mathbf{x}-\mathbf{y}\|^2} \mathbf{1}_{\mathcal{O}(B(\mathbf{x},R))}(\mathbf{y}).
    \end{equation*}
\end{proposition}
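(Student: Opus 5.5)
The function $\mathbf{1}_{B(0,R)}(\mathbf{x},\mathbf{y})$ is, by the convention $g(\mathbf{x},\mathbf{y})=\tau_{\mathbf{x}}g(-\mathbf{y})$, the Dunkl translate of the radial function $\mathbf{1}_{B(0,R)}$. The plan is to compare it with the translate of a smooth radial bump and then apply Lemma~\ref{lem:nonradial_estimation}.

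\emph{Step 1 (Rösler's formula).} The characteristic function $\mathbf{1}_{B(0,R)}$ is radial but not smooth, so Theorem~\ref{teo:roesler_formula} does not apply to it directly. I will take it as the meaning of the translate:
\[
\mathbf{1}_{B(0,R)}(\mathbf{x},\mathbf{y})=\int \mathbf{1}_{[0,R)}(A(\mathbf{x},\mathbf{y},\eta))\,d\mu_{\mathbf{x}}(\eta).
\]
If the paper's interpretation is instead via the Dunkl transform, the same identity follows by monotone approximation of $\mathbf{1}_{[0,R)}$ by smooth radial functions and Rösler's formula. Two consequences follow at once.
\begin{enumerate}
\item The translate is nonnegative and at most $1$.
\item It vanishes unless $d(\mathbf{x},\mathbf{y})<R$. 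Indeed, for $\eta$ in the convex hull of $\mathcal{O}(\mathbf{x})$ one has $A(\mathbf{x},\mathbf{y},\eta)\geq d(\mathbf{x},\mathbf{y})$ (a standard fact from \cite{ADzH}). Hence the translate is supported in $\overline{\mathcal{O}(B(\mathbf{x},R))}$.
\end{enumerate}
The boundary sphere may have to be handled by using $\mathbf{1}_{\mathcal O(B(\mathbf{x},R))}$ up to a null set, or by replacing $R$ with a slightly larger radius and using doubling.

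\emph{Step 2 (domination by a smooth bump).} Fix a radial $\varphi\in C_c^\infty(B(0,1))$ with $0\leq\varphi\leq 1$ and $\varphi=1$ on $B(0,1/2)$. Then
\[
\mathbf{1}_{[0,R)}(s)\leq \tilde\varphi\bigl(s/(2R)\bigr),
\]
and, since $\mu_{\mathbf{x}}$ is a positive measure, Rösler's formula gives
\[
\mathbf{1}_{B(0,R)}(\mathbf{x},\mathbf{y})\leq (2R)^{\mathbf{N}}\,\tau_{\mathbf{x}}\varphi_{2R}(-\mathbf{y}).
\]
Lemma~\ref{lem:nonradial_estimation} with $t=2R$ then yields
\[
\mathbf{1}_{B(0,R)}(\mathbf{x},\mathbf{y})\leq C\,R^{\mathbf{N}}\Bigl(1+\frac{\|\mathbf{x}-\mathbf{y}\|}{2R}\Bigr)^{-2}\frac{1}{w(B(\mathbf{x},2R))}.
\]
Doubling \eqref{eq:doubling} allows replacing $w(B(\mathbf{x},2R))$ with $w(B(\mathbf{x},R))$. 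Finally, $(1+s/(2R))^{-2}\leq C R^2/(R^2+s^2)$.

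\emph{Step 3 (assembly).} Multiplying the bound from Step~2 by the support indicator from Step~1 gives the claim. The main obstacle is rigor in Step~1: justifying Rösler's formula for the discontinuous radial indicator, and the precise support statement including the boundary. I would first try monotone approximation from below by smooth radial cutoffs $\tilde\psi_n\uparrow\mathbf{1}_{[0,R)}$, since each satisfies the support property with radius $R$ and the limit inherits it.
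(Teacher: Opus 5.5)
Your proof is correct and follows essentially the paper's route: you dominate the indicator by a nice radial function, use positivity of Rösler's formula to transfer this to the translates, apply a known translation estimate, and get the sharp support from Rösler's formula. The only difference is the comparison function: the paper uses the Gaussian $R^{\mathbf N}h_{R^2}$ together with Theorem~\ref{theorem:heat}, whereas you use a compactly supported bump together with Lemma~\ref{lem:nonradial_estimation}, which is itself a consequence of that theorem. Your boundary worry is unnecessary: $A(\mathbf x,\mathbf y,\eta)\geq d(\mathbf x,\mathbf y)$ gives pointwise vanishing whenever $d(\mathbf x,\mathbf y)\geq R$, which is exactly $\mathbf y\notin\mathcal O(B(\mathbf x,R))$.
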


\begin{proof}
The explicit Gaussian formula gives
$\mathbf1_{B(0,R)}\leq C R^{\mathbf N}h_{R^2}$.
Positivity of radial Dunkl translation therefore implies
\[
 \mathbf1_{B(0,R)}(\mathbf{x},\mathbf{y})
 \leq C R^{\mathbf N}h_{R^2}(\mathbf{x},\mathbf{y}).
\]
By Theorem~\ref{theorem:heat} and doubling, the right-hand side is at most
\[
 C\frac{R^{\mathbf N}}{w(B(\mathbf{x},R))}
 \frac{R^2}{R^2+\|\mathbf{x}-\mathbf{y}\|^2}.
\]
Finally, R\"osler's formula~\eqref{eq:translation-radial} shows that
$\mathbf1_{B(0,R)}(\mathbf{x},\mathbf{y})=0$ when
$d(\mathbf{x},\mathbf{y})\geq R$, which gives the asserted support
restriction.
\end{proof}

\begin{corollary}\label{coro:c2}
Let $\gamma>1$ and let $u$ be a weak solution as in
Lemma~\ref{lem:subharmonic}. Suppose that
$\overline{U(\mathbf{x}_0,\gamma R)}\subset\Omega$. Then
\begin{equation}\label{eq:c2}
\begin{split}
 \operatorname*{ess\,sup}_{\mathbf{x}\in B(\mathbf{x}_0,R)}|u(\mathbf{x})|^2
 &\leq\frac{C_\gamma}{w(B(\mathbf{x}_0,R))}
 \sum_{g\in G}\frac{R^2}{R^2+\|\mathbf{x}_0-g(\mathbf{x}_0)\|^2}
\int_{B(g(\mathbf{x}_0),\gamma R)}|u(\mathbf{y})|^2\,dw(\mathbf{y}).
\end{split}
\end{equation}
In particular, $u\in L^\infty_{\mathrm{loc}}(\Omega,dw)$.
\end{corollary}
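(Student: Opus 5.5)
The plan is to combine the pointwise mean-value inequality of Corollary~\ref{coro:u1} with the kernel bound of Proposition~\ref{propo:characteristic_estimate}. The averaging radius must be chosen small enough that the hypothesis of Corollary~\ref{coro:u1} holds at every point of $B(\mathbf{x}_0,R)$.

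Fix $\gamma>1$ and set $r=(\gamma-1)R/4$.

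\textbf{Step 1 (admissibility of $r$).} For $\mathbf{x}\in B(\mathbf{x}_0,R)$ and $g\in G$ we have
\[
 B(g(\mathbf{x}),3r)\subset B(g(\mathbf{x}_0),R+3r),
\]
and $R+3r<\gamma R$. Hence $\overline{U(\mathbf{x},3r)}\subset\overline{U(\mathbf{x}_0,\gamma R)}\subset\Omega$, so Corollary~\ref{coro:u1} applies for almost every $\mathbf{x}\in B(\mathbf{x}_0,R)$ and gives
\[
 |u(\mathbf{x})|^2\leq \frac{1}{w(B(0,r))}\int_\Omega\mathbf1_{B(0,r)}(\mathbf{x},\mathbf{y})|u(\mathbf{y})|^2\,dw(\mathbf{y}).
\]

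\textbf{Step 2 (kernel bound).} Proposition~\ref{propo:characteristic_estimate} bounds the kernel by
\[
 C\frac{r^{\mathbf N}}{w(B(\mathbf{x},r))}\frac{r^2}{r^2+\|\mathbf{x}-\mathbf{y}\|^2}\mathbf1_{\mathcal O(B(\mathbf{x},r))}(\mathbf{y}).
\]
Since $w(B(0,r))=cr^{\mathbf N}$, the factors $r^{\mathbf N}$ cancel. We then split $\mathcal O(B(\mathbf{x},r))$ into the balls $B(g(\mathbf{x}),r)\subset B(g(\mathbf{x}_0),\gamma R)$, $g\in G$.

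\textbf{Step 3 (comparability).} Several quantities must be compared.

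First, by doubling and \eqref{eq:growth}, $w(B(\mathbf{x},r))\geq C_\gamma^{-1}w(B(\mathbf{x}_0,R))$, because $B(\mathbf{x}_0,R)\subset B(\mathbf{x},2R)$ and $R/r$ depends only on $\gamma$.

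Second, for $\mathbf{y}\in B(g(\mathbf{x}),r)$ we need
\[
 \frac{r^2}{r^2+\|\mathbf{x}-\mathbf{y}\|^2}\lesssim_\gamma \frac{R^2}{R^2+\|\mathbf{x}_0-g(\mathbf{x}_0)\|^2}.
\]
To see this, note that $G$ acts by isometries, so
\[
 \|\mathbf{x}-\mathbf{y}\|\geq \|\mathbf{x}_0-g(\mathbf{x}_0)\|-\|\mathbf{x}-\mathbf{x}_0\|-\|g(\mathbf{x})-g(\mathbf{x}_0)\|-r\geq \|\mathbf{x}_0-g(\mathbf{x}_0)\|-3R.
\]
We then distinguish two cases. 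If $\|\mathbf{x}_0-g(\mathbf{x}_0)\|\leq 6R$, the right-hand side is bounded below by a constant, and the left-hand side is at most $1$. Otherwise $\|\mathbf{x}-\mathbf{y}\|\geq\frac12\|\mathbf{x}_0-g(\mathbf{x}_0)\|$, and both sides are comparable, with constants depending on $r/R$, hence on $\gamma$.

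Summing over $g\in G$ yields \eqref{eq:c2} for almost every $\mathbf{x}\in B(\mathbf{x}_0,R)$, which is the essential supremum bound.

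\textbf{Local boundedness.} The right-hand side of \eqref{eq:c2} is finite, since $u\in L^2_{\mathrm{loc}}(\Omega,dw)$ and the balls $B(g(\mathbf{x}_0),\gamma R)$ lie in a compact subset of $\Omega$. Every point of the open $G$-invariant set $\Omega$ admits such a small ball, so $u\in L^\infty_{\mathrm{loc}}$.

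\textbf{Main obstacle.} The delicate step is Step 3: we must pass from the kernel centred at the variable point $\mathbf{x}$ to the fixed reference point $\mathbf{x}_0$, with uniform control of both the volume factor and the orbit-decay factor. This is exactly where the choice $r\sim(\gamma-1)R$ and the isometric action of $G$ are used.
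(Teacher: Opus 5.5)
Your proposal is correct and follows the paper's proof essentially step for step. Like the paper, you apply Corollary~\ref{coro:u1} at a radius $r\simeq_\gamma R$ (the paper takes $r=(\gamma-1)R/10$), then Proposition~\ref{propo:characteristic_estimate}, split $\mathcal O(B(\mathbf{x},r))$ into the balls $B(g(\mathbf{x}),r)\subset B(g(\mathbf{x}_0),\gamma R)$, and compare the volume and decay factors with those at $\mathbf{x}_0$.

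One small slip: the bound $\|\mathbf{x}-\mathbf{y}\|\geq\|\mathbf{x}_0-g(\mathbf{x}_0)\|-3R$ uses $r\leq R$, which fails when $\gamma>5$. Write $2R+r$ in place of $3R$ (with threshold $2(2R+r)$ in the case split), or first reduce to $\gamma\leq5$; the argument then goes through unchanged.
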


\begin{proof}
Put $r=(\gamma-1)R/10$. For every
$\mathbf{x}\in B(\mathbf{x}_0,R)$, the closure of
$U(\mathbf{x},3r)$ lies in $U(\mathbf{x}_0,\gamma R)$.
Apply Corollary~\ref{coro:u1} at radius $r$, followed by
Proposition~\ref{propo:characteristic_estimate}.
If $\mathbf{y}\in B(g(\mathbf{x}),r)$, then
\[
 R^2+\|\mathbf{x}-\mathbf{y}\|^2
 \simeq_\gamma R^2+\|\mathbf{x}_0-g\mathbf{x}_0\|^2,
 \qquad
 B(g\mathbf{x},r)\subset B(g\mathbf{x}_0,\gamma R).
\]
Also, $w(B(\mathbf{x},r))\simeq_\gamma w(B(\mathbf{x}_0,R))$.
Using $w(B(0,r))=r^{\mathbf N}w(B(0,1))$ gives
\eqref{eq:c2} for almost every $\mathbf{x}$, hence the essential supremum
estimate. Local boundedness follows by covering compact subsets of
$\Omega$ with such balls.
\end{proof}

Then, we will need the following lemma.

\begin{lemma}\label{lem:m_instead_of_square}
    Let $\gamma>1$. There is a constant $C=C(\gamma)>0$ such that for all $\mathbf{x}_0 \in \mathbb{R}^N$, $g \in G$, $R>0$, and $f \in L^2(dw)$ we have
    \begin{equation}\label{eq:using_square}
    \begin{split}
        &\frac{R^2}{R^2+\|g(\mathbf{x}_0)-\mathbf{x}_0\|^2}\int_{B(g(\mathbf{x}_0),\gamma R)}|f(\mathbf{y})|^2\,dw(\mathbf{y}) \\
        &\leq \frac{C  \left(1+(m(\mathbf{x}_0)R)^{\frac{2\kappa}{\kappa+1}}\right)}{m(\mathbf{x}_0)^2}\int_{B(g(\mathbf{x}_0),\gamma R)}|f(\mathbf{y})|^2m(\mathbf{y})^2\,dw(\mathbf{y}),
    \end{split}
    \end{equation}
    where $\kappa>0$ is defined in Lemma~\ref{lem:m_growth}.
\end{lemma}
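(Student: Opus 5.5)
The estimate is pointwise in nature: it suffices to show that for every $\mathbf{y}\in B(g(\mathbf{x}_0),\gamma R)$,
\begin{equation*}
 \frac{R^2}{R^2+\|g(\mathbf{x}_0)-\mathbf{x}_0\|^2}
 \leq C\,\frac{1+(m(\mathbf{x}_0)R)^{\frac{2\kappa}{\kappa+1}}}{m(\mathbf{x}_0)^2}\,m(\mathbf{y})^2 ,
\end{equation*}
and then multiply by $|f(\mathbf{y})|^2$ and integrate over $B(g(\mathbf{x}_0),\gamma R)$. The plan is to bound $m(\mathbf{y})$ from below using \eqref{eq:Shen_C} of Lemma~\ref{lem:m_growth} with base point $\mathbf{x}_0$:
\begin{equation*}
 m(\mathbf{y})^2\geq C^{-1}m(\mathbf{x}_0)^2\bigl(1+m(\mathbf{x}_0)\|\mathbf{x}_0-\mathbf{y}\|\bigr)^{-\frac{2\kappa}{\kappa+1}} .
\end{equation*}
Writing $D=\|g(\mathbf{x}_0)-\mathbf{x}_0\|$ and $s=\frac{2\kappa}{\kappa+1}\in(0,2)$, the triangle inequality gives $\|\mathbf{x}_0-\mathbf{y}\|\leq D+\gamma R$. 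The claim therefore reduces to the elementary inequality
\begin{equation*}
 \frac{R^2}{R^2+D^2}\bigl(1+m(\mathbf{x}_0)(D+\gamma R)\bigr)^{s}\leq C_\gamma\bigl(1+(m(\mathbf{x}_0)R)^{s}\bigr).
\end{equation*}

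To verify this, I would set $a=m(\mathbf{x}_0)R$ and $t=D/R$. The left-hand side is then at most
\begin{equation*}
 C_\gamma\,\frac{(1+a+at)^s}{1+t^2}\leq C_\gamma\,\frac{(1+a)^s(1+t)^s}{1+t^2}.
\end{equation*}
Because $s<2$ (indeed $s\le 2$ suffices), the factor $(1+t)^s/(1+t^2)$ is bounded uniformly in $t\geq0$. Since $(1+a)^s\leq C(1+a^s)$, this gives the reduced inequality.

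The only point needing care is that the loss in $m$ when moving from $\mathbf{x}_0$ to the far-away point $\mathbf{y}$ near $g(\mathbf{x}_0)$ grows with $D$. This growth is exactly absorbed by the prefactor $R^2/(R^2+D^2)$, and that absorption is why the exponent $s=2\kappa/(\kappa+1)<2$ from \eqref{eq:Shen_C} matters. No $G$-invariance of $V$ is needed: we only compare $m(\mathbf{y})$ with $m(\mathbf{x}_0)$, never with $m(g(\mathbf{x}_0))$. The resulting constant depends only on $\gamma$ and the constants $C,\kappa$ of Lemma~\ref{lem:m_growth}.
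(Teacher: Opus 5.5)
Your proposal is correct and follows the paper's proof. Both reduce the claim to a pointwise bound obtained from \eqref{eq:Shen_C} at the base point $\mathbf{x}_0$ together with the triangle inequality $\|\mathbf{x}_0-\mathbf{y}\|\le\|\mathbf{x}_0-g(\mathbf{x}_0)\|+\gamma R$; the only difference is the final elementary inequality, where the paper uses that $z\mapsto(1+z)^{\kappa/(\kappa+1)}/z$ is decreasing, whereas you factor $1+a+at\le(1+a)(1+t)$ and use that $(1+t)^s/(1+t^2)$ is bounded for $s\le 2$.
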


\begin{proof}
    By~\eqref{eq:Shen_C}, for all $\mathbf{y} \in B(g(\mathbf{x}_0),\gamma R)$ we have
    \begin{equation*}
        \frac{1}{m(\mathbf{y})^2} \leq C \frac{1}{m(\mathbf{x}_0)^2}\left(1+m(\mathbf{x}_0)\|\mathbf{x}_0-\mathbf{y}\|\right)^{\frac{2\kappa}{\kappa+1}}.
    \end{equation*}
    Using the triangle inequality, for $\mathbf{y} \in B(g(\mathbf{x}_0),\gamma R)$ we get $\|\mathbf{x}_0-\mathbf{y}\| \leq \|\mathbf{x}_0-g(\mathbf{x}_0)\| + \gamma R$, which implies
    \begin{align*}
        \left(1+m(\mathbf{x}_0)\|\mathbf{x}_0-\mathbf{y}\|\right)^{\frac{2\kappa}{\kappa+1}} 
        &\leq C\left(1+m(\mathbf{x}_0)^2(\|\mathbf{x}_0-g(\mathbf{x}_0)\|^2+R^2)\right)^{\frac{\kappa}{\kappa+1}}.
    \end{align*}
    Thus, setting $z = m(\mathbf{x}_0)^2(\|\mathbf{x}_0-g(\mathbf{x}_0)\|^2+R^2) \geq m(\mathbf{x}_0)^2 R^2$, the factor in front of the integral is bounded by
    \begin{align*}
        \frac{R^2}{R^2+\|\mathbf{x}_0-g(\mathbf{x}_0)\|^2}\frac{1}{m(\mathbf{x}_0)^2} (1+z)^{\frac{\kappa}{\kappa+1}} 
        &= R^2 \frac{(1+z)^{\frac{\kappa}{\kappa+1}}}{z}.
    \end{align*}
    Set $a=\kappa/(\kappa+1)\in(0,1)$. The function
$h(z)=(1+z)^a/z$ is decreasing on $(0,\infty)$, since
\[
 h'(z)=\frac{(1+z)^{a-1}}{z^2}\bigl((a-1)z-1\bigr)<0.
\]
As $z\geq m(\mathbf{x}_0)^2R^2$, it follows that
\[
 R^2\frac{(1+z)^a}{z}
 \leq
 \frac{\bigl(1+m(\mathbf{x}_0)^2R^2\bigr)^a}
      {m(\mathbf{x}_0)^2}
 \leq
 \frac{1+(m(\mathbf{x}_0)R)^{2a}}
      {m(\mathbf{x}_0)^2},
\]
where the last inequality uses $(1+t)^a\leq 1+t^a$
for $t\geq0$. Consequently, for every
$\mathbf y\in B(g(\mathbf{x}_0),\gamma R)$,
\[
 \frac{R^2}{R^2+\|g(\mathbf{x}_0)-\mathbf{x}_0\|^2}
 \leq
 \frac{C\bigl(1+(m(\mathbf{x}_0)R)^{2\kappa/(\kappa+1)}\bigr)}
      {m(\mathbf{x}_0)^2}\,m(\mathbf y)^2.
\]
Multiplying by $|f(\mathbf y)|^2$ and integrating over
$B(g(\mathbf{x}_0),\gamma R)$ proves
\eqref{eq:using_square}.
\end{proof}

\subsection{Caccioppoli and weighted local estimates}

We will also need the following lemma (variant of Caccioppoli's inequality).

\begin{lemma}\label{lem:cacc}
Let $\gamma>1$, $\tau\in\mathbb R$, and let
$u\in\mathcal D_{\mathrm{loc}}(\mathbf Q;\Omega)$ be a weak solution of
$(-\Delta_k+V+i\tau)u=0$ in $\Omega$.
If $\overline{U(\mathbf{x}_0,\gamma R)}\subset\Omega$, then
\begin{equation}\label{eq:local_caccioppoli}
\begin{split}
 &\int_{U(\mathbf{x}_0,R)}
 \left(\sum_{j=1}^N|T_ju|^2+V|u|^2+|\tau||u|^2\right)\,dw
 \leq \frac{C_\gamma}{R^2}
 \int_{U(\mathbf{x}_0,\gamma R)}|u|^2\,dw.
\end{split}
\end{equation}
The same estimate holds for solutions of
$(-\Delta_k+V+a+i\tau)u=0$, $a\geq0$, with a constant independent of $a$.
\end{lemma}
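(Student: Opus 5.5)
Our plan is to follow the classical Caccioppoli argument. The test function is $\eta^2u$, where $\eta$ is a $G$-invariant cutoff. Using a $G$-invariant cutoff removes the reflection terms from the product rule, by \eqref{eq:G_inv_Leibniz}.

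Choose $\eta\in C_c^\infty(U(\mathbf x_0,\gamma R))$, $G$-invariant, with $0\le\eta\le1$, $\eta=1$ on $U(\mathbf x_0,R)$ and $|\nabla\eta|\le C_\gamma/R$. It can be obtained as $\eta(\mathbf x)=\psi(d(\mathbf x,\mathbf x_0))$, suitably mollified by averaging over $G$, with $\psi$ a one-dimensional cutoff. Since $d(\cdot,\mathbf x_0)$ is $1$-Lipschitz and $G$-invariant, the gradient bound holds, and the support of $\eta$ lies in the compact $G$-invariant set $\overline{U(\mathbf x_0,\gamma R)}\subset\Omega$.

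Because $\eta^2$ is $G$-invariant, \eqref{eq:G_inv_Leibniz} (in its weak form) gives
\[
T_j(\eta^2u)=\eta^2T_ju+2\eta u\,\partial_j\eta .
\]
In particular $\eta^2u\in\mathcal D(\mathbf Q)$, by the localization discussion preceding Lemma~\ref{lem:subharmonic}. It is therefore an admissible test function in \eqref{eq:local_weak_equation}, by approximation in the form norm exactly as in the proof of Lemma~\ref{lem:subharmonic}. Testing with it gives
\[
\sum_j\int\eta^2|T_ju|^2\,dw+\int(V+a)\eta^2|u|^2\,dw+i\tau\int\eta^2|u|^2\,dw=-2\sum_j\int\eta\,\partial_j\eta\,T_ju\,\bar u\,dw .
\]
We then take real and imaginary parts separately.

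\emph{Real part.} Cauchy--Schwarz gives the bound $\le\frac12\sum_j\int\eta^2|T_ju|^2+2\int|\nabla\eta|^2|u|^2$. Absorbing the first term yields
\[
\sum_j\int\eta^2|T_ju|^2+\int(V+a)\eta^2|u|^2\le 4\int|\nabla\eta|^2|u|^2\le \frac{C_\gamma}{R^2}\int_{U(\mathbf x_0,\gamma R)}|u|^2 .
\]
The term with $a\ge0$ is simply discarded, so the constant does not depend on $a$.

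\emph{Imaginary part.} We get $|\tau|\int\eta^2|u|^2\le 2\int\eta|\nabla\eta||T_ju||u|\le \sum_j\int\eta^2|T_ju|^2+\int|\nabla\eta|^2|u|^2$. By the real-part bound this is $\le C_\gamma R^{-2}\int_{U(\mathbf x_0,\gamma R)}|u|^2$. Since $\eta=1$ on $U(\mathbf x_0,R)$, adding the two estimates gives \eqref{eq:local_caccioppoli}.

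The main technical point is justifying that $\eta^2u$ is admissible and that the Leibniz rule holds for weak Dunkl derivatives. Both are handled by the form-norm approximation already described: approximate $\chi u$ by $C_c^\infty$ functions and multiply by $\eta^2$. This multiplication is continuous in the form norm by \eqref{eq:smooth_multiplier_local_bound}. The weak equation with smooth test functions then passes to the limit, since $V^{1/2}u\in L^2$ on the support.
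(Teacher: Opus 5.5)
Your proof is correct and is essentially the paper's argument: a $G$-invariant cutoff $\eta$ removes the reflection terms, $\eta^2u$ is admissible by form-norm approximation, and the real and imaginary parts of the tested identity are controlled by Cauchy--Schwarz and absorption, with the nonnegative $a$-term discarded. One small correction concerns the cutoff. Averaging over $G$ does not smooth $\psi(d(\cdot,\mathbf x_0))$, since that function is already $G$-invariant. Instead, mollify with a radial Euclidean bump, which commutes with $G$, or use the paper's construction $1-\prod_{g\in G}(1-\eta_0\circ g)$ built from an ordinary ball cutoff $\eta_0$.
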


\begin{proof}
Choose a $G$-invariant $\eta\in C_c^\infty(U(\mathbf{x}_0,\gamma R))$
with $0\leq\eta\leq1$, $\eta=1$ on $U(\mathbf{x}_0,R)$, and
$|\nabla\eta|\leq C_\gamma/R$. Such a function can be constructed from
ordinary ball cutoffs by taking $1-\prod_{g\in G}(1-\eta_0\circ g)$.
By the local form approximation described above, $\eta^2u$ is an
admissible test function. Since
$T_j(\eta^2u)=\eta^2T_ju+2\eta(\partial_j\eta)u$, we obtain
\[
 A+B+i\tau C=-2Z,
\]
where
\[
 A=\int\eta^2\sum_j|T_ju|^2\,dw,\quad
 B=\int(V+a)\eta^2|u|^2\,dw,\quad
 C=\int\eta^2|u|^2\,dw,
 \qquad
 Z=\sum_j\int\eta T_ju\,\overline u\,\partial_j\eta\,dw.
\]
Taking real and imaginary parts and applying Cauchy--Schwarz and Young's
inequalities yields
\[
 A+B+|\tau|C\leq4|Z|
 \leq\tfrac12 A+8\int|u|^2|\nabla\eta|^2\,dw.
\]
The properties of $\eta$ imply \eqref{eq:local_caccioppoli}; the
nonnegative term involving $a$ can be discarded.
\end{proof}

\subsection{Decay on the critical-radius scale}
    We conclude this section by the following proposition.

\begin{proposition}\label{propo:large_decay}
Let $\kappa_1\geq0$ and $\tau\in\mathbb R$. Suppose that
$u\in\mathcal D_{\mathrm{loc}}(\mathbf Q;\Omega)$ is a weak solution of
$(-\Delta_k+V+i\tau)u=0$ in $\Omega$.
If $\overline{U(\mathbf{x}_0,2R)}\subset\Omega$ and
$Rm(\mathbf{x}_0)\geq1$, then
\begin{equation}\label{eq:claim_21}
\begin{split}
 &\operatorname*{ess\,sup}_{B(\mathbf{x}_0,R)}|u|^2
 \leq
 C_{\kappa_1}(Rm(\mathbf{x}_0))^{-2/(\kappa+1)}
 \max_{g\in G}(Rm(g\mathbf{x}_0))^{-2\kappa_1}
 \frac1{w(B(\mathbf{x}_0,R))}
 \int_{U(\mathbf{x}_0,2R)}|u|^2\,dw,
\end{split}
\end{equation}
where $\kappa$ is the exponent in Lemma~\ref{lem:m_growth}.
\end{proposition}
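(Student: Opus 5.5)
The plan is to combine the local mean-value bound of Corollary~\ref{coro:c2} with the Fefferman--Phong inequality (Theorem~\ref{theo:Fefferman-Phong}) applied to $G$-invariant cutoffs of $u$. The Caccioppoli inequality, Lemma~\ref{lem:cacc}, controls the gradient terms. One application produces the factor $(Rm(\mathbf x_0))^{-2/(\kappa+1)}$. A finite iteration over shrinking orbit neighbourhoods then produces the extra factor $\max_g(Rm(g\mathbf x_0))^{-2\kappa_1}$. Write $m_*=\min_{g\in G}m(g\mathbf x_0)$, so that this extra factor equals $(Rm_*)^{-2\kappa_1}$.

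\emph{Step 1 (basic gain).} Fix radii $R<r_1<r_2<2R$, for instance $r_1=5R/4$ and $r_2=3R/2$. Apply Corollary~\ref{coro:c2} with $\gamma=r_1/R$. Then apply Lemma~\ref{lem:m_instead_of_square} to each term of the sum over $g$. Since $Rm(\mathbf x_0)\ge1$, the prefactor $(1+(m(\mathbf x_0)R)^{2\kappa/(\kappa+1)})/m(\mathbf x_0)^2$ is at most $C R^2(Rm(\mathbf x_0))^{-2/(\kappa+1)}$. This gives
\[
\operatorname*{ess\,sup}_{B(\mathbf x_0,R)}|u|^2\le \frac{C R^2 (Rm(\mathbf x_0))^{-2/(\kappa+1)}}{w(B(\mathbf x_0,R))}\int_{U(\mathbf x_0,r_1)}|u|^2m^2\,dw .
\]
Next take a $G$-invariant cutoff $\eta$ that equals $1$ on $U(\mathbf x_0,r_1)$, is supported in $U(\mathbf x_0,r_2)$, and satisfies $|\nabla\eta|\le C/R$. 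By the localization discussion, $\eta u\in\mathcal D(\mathbf Q)$. For $G$-invariant $\eta$, the Leibniz rule \eqref{eq:G_inv_Leibniz} gives $T_j(\eta u)=\eta T_ju+u\partial_j\eta$. Hence
\[
\mathbf Q(\eta u,\eta u)\le C\int_{U(\mathbf x_0,r_2)}\Bigl(\sum_j|T_ju|^2+V|u|^2\Bigr)dw+CR^{-2}\int_{U(\mathbf x_0,r_2)}|u|^2\,dw .
\]
Alternatively, testing the equation with $\eta^2u$ gives directly $\mathbf Q(\eta u,\eta u)=\int|\nabla\eta|^2|u|^2\,dw$. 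Either way, Caccioppoli (Lemma~\ref{lem:cacc}) or this identity bounds the right-hand side by $CR^{-2}\int_{U(\mathbf x_0,2R)}|u|^2\,dw$. The Fefferman--Phong inequality then yields $\int_{U(\mathbf x_0,r_1)}|u|^2m^2\,dw\le CR^{-2}\int_{U(\mathbf x_0,2R)}|u|^2\,dw$. This proves \eqref{eq:claim_21} with $\kappa_1=0$. If $Rm_*\le1$, then $(Rm_*)^{-2\kappa_1}\ge1$, so this case already gives the full claim.

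\emph{Step 2 (iteration when $Rm_*>1$).} Choose $n\ge\kappa_1(\kappa+1)$ and radii $r_2=\rho_n<\rho_{n-1}<\dots<\rho_0=2R$ with $\rho_{k-1}-\rho_k\simeq R/n$. For $\mathbf y\in B(g\mathbf x_0,2R)$, estimate \eqref{eq:Shen_C} centred at $g\mathbf x_0$ gives
\[
m(\mathbf y)\ge c\,m(g\mathbf x_0)\bigl(Rm(g\mathbf x_0)\bigr)^{-\kappa/(\kappa+1)},
\]
so $R^2m(\mathbf y)^2\ge c(Rm_*)^{2/(\kappa+1)}$ on all of $U(\mathbf x_0,2R)$. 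Repeating the cutoff, Fefferman--Phong and Caccioppoli argument between $\rho_k$ and $\rho_{k-1}$ then gives
\[
\int_{U(\mathbf x_0,\rho_k)}|u|^2\,dw\le C_n (Rm_*)^{-2/(\kappa+1)}\int_{U(\mathbf x_0,\rho_{k-1})}|u|^2\,dw .
\]
Iterating $n$ times gains $(Rm_*)^{-2n/(\kappa+1)}\le(Rm_*)^{-2\kappa_1}$. Substituting this into the end of Step~1, with $2R$ replaced by $\rho_n=r_2$ there, gives \eqref{eq:claim_21}.

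The main obstacle is the bookkeeping of the orbit neighbourhoods. The cutoffs must be $G$-invariant so that the reflection terms in the product rule vanish and Caccioppoli applies on $U(\cdot,\cdot)$. The lower bound for $m$ must hold near every orbit point $g\mathbf x_0$, not just near $\mathbf x_0$, which is exactly why the iterated decay involves $\max_g$, equivalently $m_*$, rather than $m(\mathbf x_0)$. The constants depend on $n$, hence only on $\kappa_1$.
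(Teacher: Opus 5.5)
Your proof is correct, and it follows the paper's overall strategy. Step~1 is the paper's one-step estimate: Corollary~\ref{coro:c2} and Lemma~\ref{lem:m_instead_of_square}, then Fefferman--Phong on a $G$-invariant cutoff. Your case split between $Rm_*\le1$ and $Rm_*>1$, with $n\ge\kappa_1(\kappa+1)$ steps, is also the paper's.

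The difference is in how the iteration is done. Write $I(s)=\int_{U(\mathbf x_0,s)}|u|^2\,dw$. The paper re-applies its full $L^\infty$ one-step bound at every orbit point $g\mathbf x_0$, on the growing radii $s_i=\gamma^{2i}R$. It then integrates the essential supremum back to obtain $I(s_i)\le C(Rm_*)^{-2/(\kappa+1)}I(s_{i+1})$, so each step passes through the mean-value inequality again.

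You iterate purely at the $L^2$ level. You use the pointwise bound $R^2m^2\gtrsim(Rm_*)^{2/(\kappa+1)}$ on all of $U(\mathbf x_0,2R)$, obtained from \eqref{eq:Shen_C} centred at each $g\mathbf x_0$. You combine it with Fefferman--Phong and the energy identity $\mathbf Q(\eta u,\eta u)=\int|\nabla\eta|^2|u|^2\,dw$, which holds because $\tau$ is real and $\eta$ is $G$-invariant. This also makes Caccioppoli unnecessary.

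Your route is leaner, with no mean-value step inside the iteration. The paper's route has the convenience of reusing a single packaged estimate at every scale.

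One bookkeeping point. Your final substitution, with $2R$ replaced by $r_2$, is literally correct only for the energy-identity version of Step~1, which bounds the supremum by $I(r_2)$. The Caccioppoli version needs a margin beyond $r_2$, so use the identity there.
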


\begin{proof}
Put
\[
 \beta=\frac{2}{\kappa+1}.
\]
We first prove an estimate that yields one factor of decay.
We then iterate it over increasing orbit neighbourhoods.

Fix $\gamma>1$. Let $\mathbf a\in\mathbb R^N$ and $s>0$
satisfy
\[
 \overline{U(\mathbf a,\gamma^2s)}\subset\Omega,
 \qquad sm(\mathbf a)\geq1.
\]
Combining Corollary~\ref{coro:c2} with
Lemma~\ref{lem:m_instead_of_square}, we obtain
\begin{equation}\label{eq:large_decay_weighted_step}
 \operatorname*{ess\,sup}_{B(\mathbf a,s)}|u|^2
 \leq
 \frac{C_\gamma
       \bigl(1+(sm(\mathbf a))^{2\kappa/(\kappa+1)}\bigr)}
      {w(B(\mathbf a,s))m(\mathbf a)^2}
 \int_{U(\mathbf a,\gamma s)}m(\mathbf y)^2
                    |u(\mathbf y)|^2\,dw(\mathbf y).
\end{equation}
To apply Lemma~\ref{lem:m_instead_of_square}, which is
formulated for global $L^2(dw)$ functions, we first multiply
$u$ by a compactly supported cutoff equal to one on
$\overline{U(\mathbf a,\gamma s)}$.
The balls occurring in Corollary~\ref{coro:c2} lie in this
orbit neighbourhood, so this localization does not change
their integrals. Summing over $G$ introduces at most a factor
$|G|$, which is absorbed into $C_\gamma$.

We next estimate the weighted integral in
\eqref{eq:large_decay_weighted_step}.
Choose
\[
 t=\frac{\gamma+\gamma^2}{2}
\]
and a $G$-invariant cutoff
$\eta\in C_c^\infty(U(\mathbf a,ts))$ such that
\[
 0\leq\eta\leq1,\qquad
 \eta=1\ \text{on }U(\mathbf a,\gamma s),\qquad
 |\nabla\eta|\leq C_\gamma s^{-1}.
\]
Since $u$ belongs to the local form domain, $\eta u$ belongs
to the global form domain. The Fefferman--Phong inequality
therefore gives
\[
 \int_{U(\mathbf a,\gamma s)}m^2|u|^2\,dw
 \leq \int_{\mathbb R^N}m^2|\eta u|^2\,dw
 \leq C\mathbf Q(\eta u,\eta u).
\]
Moreover, the $G$-invariance of $\eta$ implies
\[
 T_j(\eta u)=\eta T_ju+u\,\partial_j\eta.
\]
Consequently,
\[
 \mathbf Q(\eta u,\eta u)
 \leq
 C\int_{U(\mathbf a,ts)}
       \left(\sum_{j=1}^N|T_ju|^2+V|u|^2\right)\,dw
 +C_\gamma s^{-2}
       \int_{U(\mathbf a,ts)}|u|^2\,dw.
\]
Applying Lemma~\ref{lem:cacc} between
$U(\mathbf a,ts)$ and $U(\mathbf a,\gamma^2s)$ yields
\[
 \int_{U(\mathbf a,\gamma s)}m^2|u|^2\,dw
 \leq C_\gamma s^{-2}
       \int_{U(\mathbf a,\gamma^2s)}|u|^2\,dw.
\]
Substituting this into
\eqref{eq:large_decay_weighted_step}, and using
$sm(\mathbf a)\geq1$, we obtain
\[
 \frac{1+(sm(\mathbf a))^{2\kappa/(\kappa+1)}}
      {(sm(\mathbf a))^2}
 \leq 2(sm(\mathbf a))^{-\beta}.
\]
Thus
\begin{equation}\label{eq:procedure_first_step}
 \operatorname*{ess\,sup}_{B(\mathbf a,s)}|u|^2
 \leq
 \frac{C_\gamma(sm(\mathbf a))^{-\beta}}
      {w(B(\mathbf a,s))}
 \int_{U(\mathbf a,\gamma^2s)}|u|^2\,dw.
\end{equation}

We now apply this estimate to the neighbourhoods of
$\mathbf x_0$. Write
\[
 b_*=\min_{g\in G}Rm(g\mathbf x_0),
 \qquad
 I(s)=\int_{U(\mathbf x_0,s)}|u|^2\,dw.
\]
In particular,
\[
 b_*^{-2\kappa_1}
 =\max_{g\in G}(Rm(g\mathbf x_0))^{-2\kappa_1}.
\]

If $\kappa_1=0$, applying
\eqref{eq:procedure_first_step} with
$\mathbf a=\mathbf x_0$, $s=R$, and $\gamma^2<2$
proves the assertion directly.
The same argument applies when $b_*<1$, since then
$b_*^{-2\kappa_1}\geq1$.

It remains to consider $b_*\geq1$ and $\kappa_1>0$.
Choose an integer $n\geq1$ such that
\[
 n\beta\geq2\kappa_1,
\]
and then choose $\gamma>1$ sufficiently close to one that
\[
 \gamma^{2(n+1)}<2.
\]
Set
\[
 s_i=\gamma^{2i}R,\qquad i=0,\ldots,n+1.
\]
The first application of
\eqref{eq:procedure_first_step} gives
\begin{equation}\label{eq:large_decay_initial_step}
 \operatorname*{ess\,sup}_{B(\mathbf x_0,R)}|u|^2
 \leq
 \frac{C_\gamma(Rm(\mathbf x_0))^{-\beta}}
      {w(B(\mathbf x_0,R))}\,I(s_1).
\end{equation}

For each $i=1,\ldots,n$ and $g\in G$, we have
\[
 s_i m(g\mathbf x_0)\geq b_*\geq1,
 \qquad
 U(g\mathbf x_0,\gamma^2s_i)
   =U(\mathbf x_0,s_{i+1}).
\]
Hence another application of
\eqref{eq:procedure_first_step} gives
\[
 \operatorname*{ess\,sup}_{B(g\mathbf x_0,s_i)}|u|^2
 \leq
 \frac{C_\gamma b_*^{-\beta}}
      {w(B(g\mathbf x_0,s_i))}\,I(s_{i+1}).
\]
Multiplying by $w(B(g\mathbf x_0,s_i))$ and summing over
$g\in G$, we obtain
\[
 \begin{split}
 I(s_i)
 &\leq \sum_{g\in G}
        \int_{B(g\mathbf x_0,s_i)}|u|^2\,dw\leq C_\gamma |G|\,b_*^{-\beta}I(s_{i+1}).
 \end{split}
\]
Iterating this inequality yields
\[
 I(s_1)\leq (C_\gamma|G|)^n b_*^{-n\beta}I(s_{n+1}).
\]
Since $b_*\geq1$ and $n\beta\geq2\kappa_1$, we have
\[
 b_*^{-n\beta}\leq b_*^{-2\kappa_1}.
\]
Also, $s_{n+1}<2R$, so
\[
 I(s_{n+1})\leq I(2R).
\]
Combining these estimates with
\eqref{eq:large_decay_initial_step} proves
\eqref{eq:claim_21}. All applications of the one-step
estimate take place in orbit neighbourhoods whose closures
are contained in $U(\mathbf x_0,2R)\subset\Omega$.
The constants introduced by the iteration depend on
$\kappa_1$ and the fixed structural parameters and are
absorbed into $C_{\kappa_1}$.
\end{proof}

\section{Resolvent kernels}

\subsection{Kernel representation and basic estimates}
The following proposition establishes a formal relationship between the kernel of the semigroup (heat kernel) generated by the operator $-\Delta_k + V$ and the fundamental solution (resolvent kernel) of the operator $-\Delta_k+V+i\tau I$.

\begin{proposition}
Let $\{K_t\}_{t \ge 0} = \{e^{-tL}\}_{t \ge 0}$ be the self-adjoint contraction semigroup generated by $L$ (see~\eqref{eq:K_semigroup}).
Then, for any $\tau \in \mathbb{R} \setminus \{0\}$, the fundamental solution $\Gamma(\mathbf{x},\mathbf{y},\tau)$ of the operator $-\Delta_k + V(\mathbf{x}) + i\tau I$, defined as the integral kernel of the resolvent operator $R(-i\tau, L) = (L + i\tau I)^{-1}$, is given by the formula:
\begin{equation}\label{eq:resolvent_kernel}
\Gamma(\mathbf{x},\mathbf{y},\tau) = \int_0^\infty e^{-i\tau t} k_t(\mathbf{x},\mathbf{y}) \, dt,
\end{equation}
where the integral on the right-hand side converges in the weak sense (in the space of Dunkl distributions or as a limit in the quadratic form formulation).
\end{proposition}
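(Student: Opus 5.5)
The plan is to start at the operator level and only afterwards pass to kernels. Since $L$ is a nonnegative self-adjoint operator on $L^2(dw)$ and $\tau\neq0$, the point $-i\tau$ lies in the resolvent set. The spectral theorem, together with the elementary identity
\[
 \frac1{\lambda+i\tau}=\int_0^\infty e^{-i\tau t}e^{-t\lambda}\,dt
 \qquad(\lambda\geq0),
\]
suggests the Laplace-transform representation
\[
 (L+i\tau I)^{-1}f=\int_0^\infty e^{-i\tau t}K_tf\,dt .
\]
This integral does \emph{not} converge absolutely in norm: the integrand $e^{-i\tau t-t\lambda}$ is integrable only for $\lambda>0$. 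I would therefore introduce the damped integrals
\[
 I_{\varepsilon,T}f=\int_0^T e^{-(\varepsilon+i\tau)t}K_tf\,dt,
\]
which are Bochner integrals in $L^2(dw)$. By the functional calculus, $I_{\varepsilon,T}=\phi_{\varepsilon,T}(L)$ with
\[
 \phi_{\varepsilon,T}(\lambda)=\frac{1-e^{-(\lambda+\varepsilon+i\tau)T}}{\lambda+\varepsilon+i\tau}.
\]

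The key uniform bound is
\[
 |\phi_{\varepsilon,T}(\lambda)|\leq \frac{2}{|\lambda+\varepsilon+i\tau|}\leq\frac2{|\tau|}
 \qquad(\lambda\geq0).
\]
Moreover, $\phi_{\varepsilon,T}(\lambda)\to(\lambda+i\tau)^{-1}$ pointwise as $T\to\infty$ and then $\varepsilon\to0$; for $\lambda=0$ the convergence needs $\varepsilon>0$ first. Dominated convergence in the spectral measure then gives strong convergence of $I_{\varepsilon,T}$ to $(L+i\tau)^{-1}$ in $L^2(dw)$. This is the sense of weak convergence asserted in the statement. Equivalently, one can phrase it through the quadratic form: for $f,g\in L^2(dw)$ and $h=(L+i\tau)^{-1}f\in\mathcal D(L)\subset\mathcal D(\mathbf Q)$, the limit satisfies $\mathbf Q(h,\varphi)+i\tau\langle h,\varphi\rangle=\langle f,\varphi\rangle$.

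To reach kernels, I would test against $f,g\in C_c(\mathbb R^N)$ and use \eqref{eq:K_semigroup}:
\[
 \langle I_{\varepsilon,T}f,g\rangle
 =\iint\Bigl(\int_0^Te^{-(\varepsilon+i\tau)t}k_t(\mathbf x,\mathbf y)\,dt\Bigr)f(\mathbf y)\overline{g(\mathbf x)}\,dw(\mathbf y)\,dw(\mathbf x).
\]
Fubini applies here, since $0\leq k_t\leq h_t$ by \eqref{eq:kernels_compare} and $\int h_t(\mathbf x,\mathbf y)\,dw(\mathbf y)=1$ by \eqref{eq:h_integral_1}. So the kernel of $I_{\varepsilon,T}$ is the truncated, damped integral of $e^{-i\tau t}k_t$. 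Defining $\Gamma(\cdot,\cdot,\tau)$ as the distributional limit of these kernels gives \eqref{eq:resolvent_kernel}, with the limit taken in the sense above.

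The main obstacle is the large-$t$ behaviour. The bound $k_t\leq h_t\lesssim w(B(\mathbf x,\sqrt t))^{-1}$ together with $\mathbf N\geq N>2$ gives $\int_1^\infty k_t(\mathbf x,\mathbf y)\,dt<\infty$ locally uniformly, by the growth estimate \eqref{eq:growth}. Hence for $\mathbf x\neq\mathbf y$, and in fact off the orbit of $\mathbf y$, the integral even converges absolutely. Small $t$ is handled by Theorem~\ref{theorem:heat}: for $d(\mathbf x,\mathbf y)>0$ the Gaussian factor makes $\int_0^1 k_t\,dt$ finite. I would therefore first try to upgrade the representation to an absolutely convergent integral for $d(\mathbf x,\mathbf y)>0$ via these bounds. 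I would fall back on the weak/form formulation only on the orbit diagonal, where $\int_0 w(B(\mathbf x,\sqrt t))^{-1}dt$ can diverge.
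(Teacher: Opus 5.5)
Your proposal is correct and follows essentially the paper's argument. Like the paper, you damp with $e^{-\varepsilon t}$, use the spectral theorem to pass from the Laplace-transform representation to the imaginary axis, and apply Fubini via $0\le k_t\le h_t$ to read off the kernel. Your upgrade to absolute convergence for $d(\mathbf x,\mathbf y)>0$, using Theorem~\ref{theorem:heat} and $N>2$, is exactly what the paper carries out in Proposition~\ref{propo:basic_decay}. Your explicit bound $|\phi_{\varepsilon,T}(\lambda)|\le 2/|\tau|$ also makes the paper's step ``the relation extends to the imaginary axis'' fully rigorous.

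Two small points. First, the pointwise small-time bound needs $d(\mathbf x,\mathbf y)>0$, not merely $\mathbf x\neq\mathbf y$, since $\exp(-d(\mathbf x,\mathbf y)^2/t)=1$ on the whole orbit of $\mathbf y$. Second, identifying the distributional limit with the pointwise integral also requires the majorant $\int_0^\infty h_t(\mathbf x,\mathbf y)\,dt$ to be locally integrable across the orbit diagonal. The paper obtains this from the $L^2(dw)$-contractivity of $H_t$ for $t\le 1$.
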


\begin{proof}
From the theory of linear operator semigroups and the Hille--Yosida theorem, it follows that for a nonnegative self-adjoint operator $L$, the resolvent $R(-\lambda, L) = (L + \lambda I)^{-1}$ for $\text{Re}(\lambda) > 0$ can be uniquely represented as the Laplace transform of the semigroup $\{K_t\}_{t \ge 0}$ in the strong operator topology:
\begin{equation}\label{eq:hille_yosida}
(L + \lambda I)^{-1} = \int_0^\infty e^{-\lambda t} K_t \, dt.
\end{equation}
Since the spectrum $\sigma(L)$ is contained in the half-line $[0, \infty)$, the point $-i\tau$ for $\tau \neq 0$ belongs to the resolvent set $\rho(L)$. By virtue of the spectral theorem for self-adjoint operators, this relation extends to the imaginary axis in the following sense:
\begin{equation}
(L + i\tau I)^{-1} f = \lim_{\varepsilon \to 0^+} \int_0^\infty e^{-(\varepsilon + i\tau) t} K_t f \, dt,
\end{equation}
where the limit converges in the $L^2(\mathbb{R}^N, dw)$-norm for every function $f \in L^2(dw)$.

Now, consider the action of this operator on any test function $f \in C_c^\infty(\mathbb{R}^N)$. Utilizing the kernel representation of the semigroup $K_t$ (see~\eqref{eq:K_semigroup}), we obtain:
\begin{equation}
\left( (L + i\tau I)^{-1} f \right)(\mathbf{x}) = \lim_{\varepsilon \to 0^+} \int_0^\infty e^{-(\varepsilon + i\tau) t} \left( \int_{\mathbb{R}^N} k_t(\mathbf{x},\mathbf{y}) f(\mathbf{y}) \, dw(\mathbf{y}) \right) dt.
\end{equation}
Applying Fubini's theorem to interchange the order of integration (which is justified by the absolute convergence introduced by the regularization factor $e^{-\varepsilon t}$, boundedness of $f$, and~\eqref{eq:kernels_compare}), we get:
\begin{equation}
\left( (L + i\tau I)^{-1} f \right)(\mathbf{x}) = \lim_{\varepsilon \to 0^+} \int_{\mathbb{R}^N} \left( \int_0^\infty e^{-(\varepsilon + i\tau) t} k_t(\mathbf{x},\mathbf{y}) \, dt \right) f(\mathbf{y}) \, dw(\mathbf{y}).
\end{equation}
Passing to the limit as $\varepsilon \to 0^+$ (interpreted in the sense of weak operator convergence or within the distributional space dual to the test functions), we arrive at:
\begin{equation}
\left( (L + i\tau I)^{-1} f \right)(\mathbf{x}) = \int_{\mathbb{R}^N} \left( \int_0^\infty e^{-i\tau t} k_t(\mathbf{x},\mathbf{y}) \, dt \right) f(\mathbf{y}) \, dw(\mathbf{y}).
\end{equation}
On the other hand, by the definition of the resolvent kernel (fundamental solution) $\Gamma(\mathbf{x},\mathbf{y},\tau)$, the following identity holds:
\begin{equation}
\left( (L + i\tau I)^{-1} f \right)(\mathbf{x}) = \int_{\mathbb{R}^N} \Gamma(\mathbf{x},\mathbf{y},\tau) f(\mathbf{y}) \, dw(\mathbf{y}).
\end{equation}
By the uniqueness of the integral representation for densely defined operators on $L^2(dw)$, it immediately follows that these kernels must coincide almost everywhere with respect to the measure $dw(\mathbf{y})$, which leads directly to:
\begin{equation}
\Gamma(\mathbf{x},\mathbf{y},\tau) = \int_0^\infty e^{-i\tau t} k_t(\mathbf{x},\mathbf{y}) \, dt.
\end{equation}
This completes the proof.
\end{proof}

In the next proposition, we will prove an estimate for $\Gamma(\mathbf{x},\mathbf{y},\tau)$, which is a consequence of~\eqref{eq:kernels_compare} and the estimates for $h_t(\mathbf{x},\mathbf{y})$ (see Theorem~\ref{theorem:heat}).

\begin{proposition}\label{propo:basic_decay}
Assume $N>2$ and let $\tau\in\mathbb R\setminus\{0\}$.
The resolvent kernel $\Gamma(\mathbf x,\mathbf y,\tau)$
admits the representation
\[
 \Gamma(\mathbf x,\mathbf y,\tau)
 =\int_0^\infty e^{-i\tau t}k_t(\mathbf x,\mathbf y)\,dt,
\]
where the integral converges absolutely for almost every
$(\mathbf x,\mathbf y)$ with $d(\mathbf x,\mathbf y)>0$.
Moreover,
\[
 |\Gamma(\mathbf x,\mathbf y,\tau)|
 \leq C\frac{d(\mathbf x,\mathbf y)^2}
 {w(B(\mathbf x,d(\mathbf x,\mathbf y)))}
\]
for almost every such pair, with $C$ independent of $\tau$.
\end{proposition}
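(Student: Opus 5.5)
The plan is to bound the time integral absolutely, using $|e^{-i\tau t}|=1$ together with $0\leq k_t\leq h_t$ from \eqref{eq:kernels_compare} and the Gaussian bound of Theorem~\ref{theorem:heat}. Fix $\mathbf x,\mathbf y$ with $d=d(\mathbf x,\mathbf y)>0$. Then
\[
 \int_0^\infty |e^{-i\tau t}k_t(\mathbf x,\mathbf y)|\,dt
 \leq C\int_0^\infty \frac{1}{w(B(\mathbf x,\sqrt{t/c}))}\,e^{-c\,d^2/t}\,dt ,
\]
where the factor $(1+\|\mathbf x-\mathbf y\|/\sqrt t)^{-2}\leq1$ is discarded and the maximum of the two ball volumes is bounded below by $w(B(\mathbf x,\sqrt{t/c}))$. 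Once this integral is shown to be at most $Cd^2/w(B(\mathbf x,d))$, three conclusions follow at once. The integral converges absolutely, so the kernel identity \eqref{eq:resolvent_kernel} from the previous proposition holds pointwise almost everywhere off the set $\{d=0\}$. The pointwise value is bounded by the same quantity. The constant is independent of $\tau$.

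To estimate the integral, split it at $t=d^2$.

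\emph{Case $t\leq d^2$.} Apply \eqref{eq:growth} with $r_1=\sqrt t\leq r_2=d$, which gives $w(B(\mathbf x,d))/w(B(\mathbf x,\sqrt t))\leq C(d^2/t)^{\mathbf N/2}$. The integrand is then bounded by
\[
 \frac{C}{w(B(\mathbf x,d))}\,(d^2/t)^{\mathbf N/2}e^{-c d^2/t}.
\]
Substituting $s=t/d^2$, this piece is at most $Cd^2/w(B(\mathbf x,d))\int_0^1 s^{-\mathbf N/2}e^{-c/s}\,ds$, and the last integral is finite.

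\emph{Case $t\geq d^2$.} Here the hypothesis $N>2$ enters. Apply the lower bound in \eqref{eq:growth} with $r_1=d\leq r_2=\sqrt t$, which gives $w(B(\mathbf x,\sqrt t))\geq C^{-1}(t/d^2)^{N/2}w(B(\mathbf x,d))$. Dropping the exponential, this piece is at most
\[
 \frac{C}{w(B(\mathbf x,d))}\int_{d^2}^\infty (d^2/t)^{N/2}\,dt
 =\frac{C d^2}{w(B(\mathbf x,d))}\int_1^\infty s^{-N/2}\,ds ,
\]
and this converges precisely because $N>2$.

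Before these two cases are run, one small point needs to be settled: the rescaling $\sqrt{t/c}$ versus $\sqrt t$ is harmless by doubling \eqref{eq:doubling}. The main obstacle is the large-time regime. There one needs a lower volume growth exponent exceeding $2$, and the argument must use the Euclidean lower exponent $N$ in \eqref{eq:growth} rather than $\mathbf N$, since no decay in $t$ is available from $V$ uniformly. A secondary point is the almost-everywhere interpretation: Fubini and the dominated limit $\varepsilon\to0^+$ in the previous proof are justified once this integrable majorant is available, locally uniformly in $\mathbf y$ away from $\mathcal O(\mathbf x)$.
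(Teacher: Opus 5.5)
Your proof is correct and follows the paper's argument. You use the same split of $\int_0^\infty h_t(\mathbf x,\mathbf y)\,dt$ at $t=d(\mathbf x,\mathbf y)^2$, with upper volume growth for small times and lower growth with exponent $N>2$ for large times, combined with $|e^{-i\tau t}k_t|\le h_t$. The only minor difference is in identifying the integral with the resolvent kernel. The paper builds a majorant for $\int_0^\infty\langle H_t|f|,|\varphi|\rangle\,dt$ from $L^2$-contractivity for $t\le1$ and the bound $h_t\lesssim t^{-N/2}$ for $t\ge1$, whereas you use the pointwise bound itself. Your version also works, even across the orbit diagonal, since $\int_{U(\mathbf x,\rho)}d(\mathbf x,\mathbf y)^2\,w(B(\mathbf x,d(\mathbf x,\mathbf y)))^{-1}\,dw(\mathbf y)\le C\rho^2$.
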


\begin{proof}
We first estimate the time integral of the heat kernel.
Fix $\mathbf x,\mathbf y$ with
$r=d(\mathbf x,\mathbf y)>0$, and write
\[
 \int_0^\infty h_t(\mathbf x,\mathbf y)\,dt
 =
 \int_0^{r^2}h_t(\mathbf x,\mathbf y)\,dt
 +
 \int_{r^2}^\infty h_t(\mathbf x,\mathbf y)\,dt
 =:I_1+I_2.
\]

For $0<t\leq r^2$, Theorem~\ref{theorem:heat}
and the upper volume growth estimate in
\eqref{eq:growth} give
\[
 h_t(\mathbf x,\mathbf y)
 \leq
 \frac{C}{w(B(\mathbf x,\sqrt t))}
       e^{-cr^2/t}
 \leq
 \frac{C}{w(B(\mathbf x,r))}
       \left(\frac r{\sqrt t}\right)^{\mathbf N}
       e^{-cr^2/t}
 \leq \frac{C}{w(B(\mathbf x,r))}.
\]
The last inequality follows from the boundedness of
$v^{\mathbf N}e^{-cv^2}$ for $v\geq1$.
Consequently,
\[
 I_1\leq C\frac{r^2}{w(B(\mathbf x,r))}.
\]

For $t\geq r^2$, the lower volume growth estimate yields
\[
 w(B(\mathbf x,\sqrt t))
 \geq C^{-1}\left(\frac{\sqrt t}{r}\right)^N
                  w(B(\mathbf x,r)).
\]
Therefore, since $N>2$,
\[
 \begin{split}
 I_2
 &\leq
 \frac{Cr^N}{w(B(\mathbf x,r))}
       \int_{r^2}^\infty t^{-N/2}\,dt\leq C\frac{r^2}{w(B(\mathbf x,r))}.
 \end{split}
\]
Together, these estimates show that
\begin{equation}\label{eq:integrated_heat_bound}
 \int_0^\infty h_t(\mathbf x,\mathbf y)\,dt
 \leq C\frac{r^2}{w(B(\mathbf x,r))}<\infty.
\end{equation}

By \eqref{eq:kernels_compare}, we may thus define
\[
 \widetilde\Gamma(\mathbf x,\mathbf y,\tau)
 :=\int_0^\infty e^{-i\tau t}
                   k_t(\mathbf x,\mathbf y)\,dt
\]
by an absolutely convergent integral off the orbit diagonal.
Furthermore,
\[
 |\widetilde\Gamma(\mathbf x,\mathbf y,\tau)|
 \leq \int_0^\infty h_t(\mathbf x,\mathbf y)\,dt.
\]
It remains to identify $\widetilde\Gamma$ with the
resolvent kernel.

For $\varepsilon>0$, put
\[
 \Gamma_\varepsilon(\mathbf x,\mathbf y,\tau)
 =\int_0^\infty e^{-(\varepsilon+i\tau)t}
                   k_t(\mathbf x,\mathbf y)\,dt.
\]
By dominated convergence,
$\Gamma_\varepsilon\to\widetilde\Gamma$ almost everywhere
off the orbit diagonal. To pass to the limit against
test functions, we also need an integrable majorant
in both spatial variables.

Let $f,\varphi\in C_c^\infty(\mathbb R^N)$, and let
$H_t$ denote the semigroup with kernel $h_t$.
For $0<t\leq1$, its $L^2(dw)$-contractivity gives
\[
 \int_0^1
 \langle H_t|f|,|\varphi|\rangle_{L^2(dw)}\,dt
 \leq \|f\|_{L^2(dw)}\|\varphi\|_{L^2(dw)}.
\]
For $t\geq1$, the heat kernel estimate and lower volume
growth imply
\[
 h_t(\mathbf x,\mathbf y)
 \leq C_{\operatorname{supp}\varphi}\,t^{-N/2},
 \qquad \mathbf x\in\operatorname{supp}\varphi.
\]
Since $N>2$, it follows that
\[
 \int_1^\infty
 \langle H_t|f|,|\varphi|\rangle_{L^2(dw)}\,dt<\infty.
\]
Thus
\[
 \int_0^\infty\int_{\mathbb R^N}\int_{\mathbb R^N}
 h_t(\mathbf x,\mathbf y)
 |f(\mathbf y)|\,|\varphi(\mathbf x)|
 \,dw(\mathbf y)\,dw(\mathbf x)\,dt<\infty.
\]

Fubini's theorem and dominated convergence now yield
\[
 \begin{split}
 &\int_{\mathbb R^N}\int_{\mathbb R^N}
 \widetilde\Gamma(\mathbf x,\mathbf y,\tau)
 f(\mathbf y)\overline{\varphi(\mathbf x)}
 \,dw(\mathbf y)\,dw(\mathbf x)\\
 &\qquad=
 \lim_{\varepsilon\downarrow0}
 \left\langle
 (L+(\varepsilon+i\tau)I)^{-1}f,\varphi
 \right\rangle_{L^2(dw)}\\
 &\qquad=
 \left\langle (L+i\tau I)^{-1}f,\varphi
 \right\rangle_{L^2(dw)}.
 \end{split}
\]
Hence $\widetilde\Gamma$ is a locally integrable kernel
of the resolvent and agrees almost everywhere with
$\Gamma$. The desired estimate follows from
\eqref{eq:integrated_heat_bound}.
\end{proof}

The same heat-kernel majorant also gives absolute convergence,
for almost every pair $(\mathbf x,\mathbf y)$ with
$d(\mathbf x,\mathbf y)>0$, of
\[
 \Gamma(\mathbf x,\mathbf y,0)
 :=\int_0^\infty k_t(\mathbf x,\mathbf y)\,dt.
\]
This defines the Green kernel, which satisfies the bound
in Proposition~\ref{propo:basic_decay} as well.
No bounded inverse of $L$ on all of $L^2(dw)$ is asserted.
All kernel statements below are understood almost everywhere
for each fixed $\tau\in\mathbb R$.

\subsection{The resolvent kernel as a local weak solution}
\begin{lemma}\label{lem:resolvent_local_weak_solution}
Assume $N>2$ and fix $\tau\in\mathbb R$.
For almost every $\mathbf y\in\mathbb R^N$, the function
\[
 u(\mathbf x)=\Gamma(\mathbf x,\mathbf y,\tau)
\]
belongs to
\[
 \mathcal D_{\mathrm{loc}}
 \bigl(\mathbf Q;\mathbb R^N\setminus\mathcal O(\{\mathbf y\})\bigr)
\]
and satisfies
\[
 (-\Delta_k+V+i\tau)u=0
\]
weakly in $\mathbb R^N\setminus\mathcal O(\{\mathbf y\})$.
\end{lemma}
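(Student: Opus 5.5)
The idea is to realize $\Gamma(\cdot,\mathbf y,\tau)$ as a limit of resolvents applied to approximate identities at $\mathbf y$. Form-domain bounds are inherited locally away from the orbit of $\mathbf y$, via Caccioppoli-type estimates that are uniform in the approximation parameter.

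First I regularize in the spectral parameter when $\tau=0$, using $L+\varepsilon$ with $\varepsilon\downarrow0$; for $\tau\neq0$ no regularization is needed. For fixed $\mathbf y$ and small $\delta>0$, let $f_\delta$ be nonnegative, supported in $B(\mathbf y,\delta)$, with $\int f_\delta\,dw=1$, and put $u_\delta=(L+\varepsilon+i\tau)^{-1}f_\delta\in\mathcal D(L)\subset\mathcal D(\mathbf Q)$. Then for every $\varphi\in C_c^\infty(\mathbb R^N)$,
\[
 \mathbf Q(u_\delta,\varphi)+(\varepsilon+i\tau)\langle u_\delta,\varphi\rangle=\langle f_\delta,\varphi\rangle ,
\]
and the right-hand side vanishes when $\operatorname{supp}\varphi\cap B(\mathbf y,\delta)=\emptyset$. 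Hence $u_\delta$ is a weak solution of $(-\Delta_k+V+\varepsilon+i\tau)u=0$ in the $G$-invariant open set $\Omega_\delta=\mathbb R^N\setminus\overline{U(\mathbf y,\delta)}$.

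Next I obtain uniform local $L^2$ bounds. Take a $G$-invariant compact set $K\subset\mathbb R^N\setminus\mathcal O(\{\mathbf y\})$, so that $d(\mathbf x,\mathbf y)\geq 2\rho>0$ on a neighbourhood $K'$ of $K$. By the kernel representation from the proof of Proposition~\ref{propo:basic_decay} (with $\varepsilon$ included) and \eqref{eq:kernels_compare}, $|u_\delta(\mathbf x)|\le\int f_\delta(\mathbf z)\int_0^\infty h_t(\mathbf x,\mathbf z)\,dt\,dw(\mathbf z)$. By \eqref{eq:integrated_heat_bound}, this is bounded uniformly in $\delta<\rho$, $\varepsilon$ and $\mathbf x\in K'$. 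Lemma~\ref{lem:cacc} then bounds $\int_K(\sum_j|T_ju_\delta|^2+V|u_\delta|^2)\,dw$ uniformly, after covering $K$ by finitely many orbit neighbourhoods $U(\mathbf x_0,R)$ with $\overline{U(\mathbf x_0,2R)}\subset K'$.

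Now I pass to the limit. By dominated convergence in the kernel formula, $u_\delta\to\Gamma(\cdot,\mathbf y,\tau)$ (and then $\varepsilon\to0$) pointwise on $K'$ for almost every $\mathbf y$, namely at Lebesgue points of the continuous-off-diagonal majorant. A more robust route is to use that $\mathbf y\mapsto\int_0^\infty e^{-(\varepsilon+i\tau)t}k_t(\mathbf x,\mathbf y)\,dt$ is an $L^1_{\rm loc}$ function and to apply the Lebesgue differentiation theorem in $\mathbf y$ for a.e. $\mathbf y$, with a countable dense family of test data. Uniform boundedness then gives $u_\delta\to u$ in $L^2(K,dw)$. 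Along a subsequence, $T_ju_\delta\rightharpoonup g_j$ and $V^{1/2}u_\delta\rightharpoonup h$ weakly in $L^2(K,dw)$, and distributional limits identify $g_j=T_ju$ and $h=V^{1/2}u$. For any $G$-invariant $\chi\in C_c^\infty$ with support in the complement of the orbit, the estimate \eqref{eq:smooth_multiplier_local_bound} and the closedness of $\mathbf Q$ (weak lower semicontinuity of the form norm) give $\chi u\in\mathcal D(\mathbf Q)$. Thus $u\in\mathcal D_{\rm loc}(\mathbf Q;\mathbb R^N\setminus\mathcal O(\{\mathbf y\}))$. The weak equation \eqref{eq:local_weak_equation} passes to the limit because each term is linear and continuous under these weak convergences. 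The term $\varepsilon\langle u_\delta,\varphi\rangle$ vanishes since $u_\delta$ is uniformly bounded on $\operatorname{supp}\varphi$.

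I expect the main obstacle to be the rigorous almost-everywhere identification of the limit with $\Gamma(\cdot,\mathbf y,\tau)$ for a single fixed $\mathbf y$, rather than only in an averaged sense in $\mathbf y$. My first attempt is to prove the weak equation in integrated form: test against $\psi(\mathbf y)$ and use the symmetry $\Gamma(\mathbf x,\mathbf y,\tau)=\Gamma(\mathbf y,\mathbf x,\tau)$. This yields the identity for all $\varphi$ in a countable dense subset of $C_c^\infty$ and for a.e. $\mathbf y$ simultaneously. The local form bounds, being uniform, then extend to all $\varphi$ by density.
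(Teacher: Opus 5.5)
Your proposal is correct and follows essentially the paper's proof: shifted resolvents applied to approximate identities at $\mathbf y$, uniform local bounds from the integrated heat-kernel majorant and Lemma~\ref{lem:cacc}, Lebesgue differentiation in $\mathbf y$, weak compactness in the form domain, and passage to the limit first in the source parameter and then in the spectral shift. The paper uses the vector-valued differentiation theorem in $L^2(U,dw)$, shifts for every $\tau$ rather than only $\tau=0$, and makes the exceptional set independent of the shift and of the localization via a countable family of invariant open sets.

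One caveat concerns your first justification, via Lebesgue points of the majorant. It does not work: being a Lebesgue point of the majorant says nothing about convergence of the averages $\int f_\delta(\mathbf z)\Gamma_\varepsilon(\mathbf x,\mathbf z,\tau)\,dw(\mathbf z)$, and continuity of $\Gamma_\varepsilon(\mathbf x,\cdot,\tau)$ is not available. So rely on the differentiation route, taking $f_\delta$ to be the normalized indicator of $B(\mathbf y,\delta)$ and $\varepsilon$ along a sequence; this also makes your final integrated-form fallback unnecessary.
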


\begin{proof}
We approximate the kernel by solutions with an $L^2(dw)$
right-hand side. Local Caccioppoli estimates allow us to
pass to the limit in the weak equation.

For $a>0$ and $\varepsilon>0$, set
\[
 F_{\varepsilon,\mathbf y}
 =\frac{\mathbf1_{B(\mathbf y,\varepsilon)}}
        {w(B(\mathbf y,\varepsilon))},
 \qquad
 u_{a,\varepsilon}
 =(L+(a+i\tau)I)^{-1}F_{\varepsilon,\mathbf y}.
\]
Since $F_{\varepsilon,\mathbf y}\in L^2(dw)$, we have
$u_{a,\varepsilon}\in\mathcal D(L)$ and
\[
 (L+a+i\tau)u_{a,\varepsilon}=F_{\varepsilon,\mathbf y}.
\]
In particular, the equation is homogeneous on every
invariant open set disjoint from $B(\mathbf y,\varepsilon)$.

The kernel of this regularized resolvent is
\[
 \Gamma_a(\mathbf x,\mathbf z,\tau)
 =\int_0^\infty e^{-(a+i\tau)t}
                    k_t(\mathbf x,\mathbf z)\,dt.
\]
Thus
\[
 u_{a,\varepsilon}(\mathbf x)
 =\frac1{w(B(\mathbf y,\varepsilon))}
   \int_{B(\mathbf y,\varepsilon)}
       \Gamma_a(\mathbf x,\mathbf z,\tau)\,dw(\mathbf z).
\]
By heat-kernel domination and the estimate proved in
Proposition~\ref{propo:basic_decay},
\begin{equation}\label{eq:regularized_green_majorant}
 |\Gamma_a(\mathbf x,\mathbf z,\tau)|
 \leq \int_0^\infty h_t(\mathbf x,\mathbf z)\,dt
 \leq C\frac{d(\mathbf x,\mathbf z)^2}
 {w(B(\mathbf x,d(\mathbf x,\mathbf z)))},
\end{equation}
uniformly in $a>0$.

Fix bounded $G$-invariant open sets $U,W$ such that
\[
 \overline U\subset W,
 \qquad
 \overline W\subset
 \mathbb R^N\setminus\mathcal O(\{\mathbf y\}).
\]
The distance between $\overline W$ and the orbit of
$\mathbf y$ is positive. Consequently, for sufficiently
small $\varepsilon$, the right-hand side of
\eqref{eq:regularized_green_majorant} is uniformly bounded
for $\mathbf x\in W$ and
$\mathbf z\in B(\mathbf y,\varepsilon)$.
Averaging in $\mathbf z$, we obtain
\[
 \|u_{a,\varepsilon}\|_{L^\infty(W)}\leq C_{W,\mathbf y},
\]
uniformly in $a$ and $\varepsilon$.

Applying the local Caccioppoli estimate to
$u_{a,\varepsilon}$ gives
\begin{equation}\label{eq:kernel_local_form_bound}
 \int_U
 \left(
 |u_{a,\varepsilon}|^2
 +\sum_{j=1}^N|T_ju_{a,\varepsilon}|^2
 +V|u_{a,\varepsilon}|^2
 \right)\,dw
 \leq C_{U,W,\mathbf y}.
\end{equation}
The constant is independent of $a>0$: the additional
potential term $a|u_{a,\varepsilon}|^2$ is nonnegative
and may be discarded in the Caccioppoli estimate.

We first let $\varepsilon\downarrow0$, keeping
$a=a_n:=1/n$ fixed. On separated bounded sets, the
majorant \eqref{eq:regularized_green_majorant} makes
\[
 \mathbf z\longmapsto
 \Gamma_{a_n}(\cdot,\mathbf z,\tau)
\]
a locally integrable function with values in $L^2(U,dw)$.
The Lebesgue differentiation theorem for such functions
therefore yields
\begin{equation}\label{eq:kernel_source_approximation}
 u_{a_n,\varepsilon}
 \longrightarrow
 \Gamma_{a_n}(\cdot,\mathbf y,\tau)
 \quad\text{in }L^2(U,dw)
\end{equation}
for almost every admissible $\mathbf y$.

Here and below, the exceptional set can be chosen
independently of $n$ and of the localizations used in the
proof. Indeed, one first applies differentiation to a
countable family of invariant open sets, obtained from
finite unions of $G$-orbits of balls with rational centres
and radii, and then takes the union of the resulting null
sets. This family provides the required localizations
away from each orbit.

We explain how the form bound allows us to pass to the
limit. Let $\chi\in C_c^\infty(U)$ be $G$-invariant.
By the product rule and
\eqref{eq:kernel_local_form_bound}, the functions
$\chi u_{a_n,\varepsilon}$ are bounded in
$\mathcal D(\mathbf Q)$ equipped with the norm
\[
 \|v\|_{\mathbf Q}^2
 :=\|v\|_{L^2(dw)}^2+\mathbf Q(v,v).
\]
They therefore have a weakly convergent subsequence
in this Hilbert space. Their strong $L^2(dw)$ limit,
identified by \eqref{eq:kernel_source_approximation},
is
\[
 \chi\Gamma_{a_n}(\cdot,\mathbf y,\tau).
\]
It follows that this function belongs to
$\mathcal D(\mathbf Q)$. Since the cutoff is arbitrary,
$\Gamma_{a_n}(\cdot,\mathbf y,\tau)$ belongs to the
local form domain away from the orbit of $\mathbf y$.

Recall that, by the definition of $u_{a_n,\varepsilon}$,
\[
 (L+a_n+i\tau)u_{a_n,\varepsilon}
 =F_{\varepsilon,\mathbf y}.
\]
For sufficiently small $\varepsilon$, the support of
$F_{\varepsilon,\mathbf y}$ is disjoint from $U$.
Testing this equation with $\varphi\in C_c^\infty(U)$
therefore gives
\[
 \sum_{j=1}^N
 \int T_ju_{a_n,\varepsilon}\,
             \overline{T_j\varphi}\,dw
 +\int (V+a_n+i\tau)
             u_{a_n,\varepsilon}\overline\varphi\,dw
 =\int F_{\varepsilon,\mathbf y}\overline\varphi\,dw
 =0.
\]
Choose $\chi$ equal to one on a neighbourhood of
$\mathcal O(\operatorname{supp}\varphi)$.
Weak convergence in the form domain then allows us
to pass to the limit in this identity. In particular,
the potential term is written as
\[
 \int V u_{a_n,\varepsilon}\overline\varphi\,dw
 =
 \int (V^{1/2}u_{a_n,\varepsilon})
             \overline{V^{1/2}\varphi}\,dw.
\]
We conclude that $\Gamma_{a_n}(\cdot,\mathbf y,\tau)$
solves the homogeneous equation with potential $V+a_n$.

Finally, we let $n\to\infty$. Absolute convergence
of the heat-kernel integral gives
\[
 \Gamma_{a_n}(\mathbf x,\mathbf y,\tau)
 \longrightarrow\Gamma(\mathbf x,\mathbf y,\tau)
\]
for almost every $\mathbf x$ outside the orbit of
$\mathbf y$. The uniform bound on $U$ obtained from
\eqref{eq:regularized_green_majorant} implies
\[
 \Gamma_{a_n}(\cdot,\mathbf y,\tau)
 \longrightarrow\Gamma(\cdot,\mathbf y,\tau)
 \quad\text{in }L^2(U,dw).
\]
Moreover, the local form bounds remain uniform in $n$,
by \eqref{eq:kernel_local_form_bound} and weak lower
semicontinuity. Repeating the cutoff argument above
shows that $\Gamma(\cdot,\mathbf y,\tau)$ belongs
to the local form domain.

We may therefore pass to the limit in the weak equation
for $\Gamma_{a_n}$. The only additional term vanishes:
\[
 \left|
 a_n\int_U\Gamma_{a_n}(\mathbf x,\mathbf y,\tau)
                    \overline{\varphi(\mathbf x)}\,dw(\mathbf x)
 \right|
 \leq a_n\|\Gamma_{a_n}(\cdot,\mathbf y,\tau)\|_{L^2(U,dw)}
             \|\varphi\|_{L^2(U,dw)}
 \longrightarrow0.
\]
Hence
\[
 \sum_{j=1}^N\int T_j\Gamma(\cdot,\mathbf y,\tau)
                         \overline{T_j\varphi}\,dw
 +\int(V+i\tau)\Gamma(\cdot,\mathbf y,\tau)
                         \overline\varphi\,dw=0.
\]
Since the sets $U,W$ may be chosen around any compact
subset of $\mathbb R^N\setminus\mathcal O(\{\mathbf y\})$,
this proves the assertion.
\end{proof}

\subsection{Decay estimates involving the critical function}

    Combining Propositions~\ref{propo:large_decay} and~\ref{propo:basic_decay} we get the following estimate.

\begin{theorem}\label{theorem:fundamental}
Assume $N>2$, and let $\kappa>0$ be as in
Lemma~\ref{lem:m_growth}.
There exists a constant $C>0$, independent of $\tau$,
such that, for each fixed $\tau\in\mathbb R$,
\begin{equation}\label{eq:fundamendal_est}
 |\Gamma(\mathbf x,\mathbf y,\tau)|
 \leq
 C\bigl(1+m(\mathbf x)d(\mathbf x,\mathbf y)\bigr)^{-1/(\kappa+1)}
 \frac{d(\mathbf x,\mathbf y)^2}
      {w(B(\mathbf x,d(\mathbf x,\mathbf y)))}
\end{equation}
for almost every pair $(\mathbf x,\mathbf y)$ with
$d(\mathbf x,\mathbf y)>0$.
\end{theorem}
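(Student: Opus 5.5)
The estimate splits into two regimes according to the size of $m(\mathbf x)d(\mathbf x,\mathbf y)$. Write $r=d(\mathbf x,\mathbf y)>0$. If $m(\mathbf x)r\leq C_0$ for a fixed large constant $C_0$, the factor $(1+m(\mathbf x)r)^{-1/(\kappa+1)}$ is bounded below by a positive constant, and \eqref{eq:fundamendal_est} follows directly from Proposition~\ref{propo:basic_decay}. So the real work is the case $m(\mathbf x)r$ large, where we want to gain the extra decay from Proposition~\ref{propo:large_decay}.

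In that case I fix $\mathbf y$ outside the exceptional null set of Lemma~\ref{lem:resolvent_local_weak_solution}. Then $u=\Gamma(\cdot,\mathbf y,\tau)$ is a weak solution of $(-\Delta_k+V+i\tau)u=0$ in $\Omega=\mathbb R^N\setminus\mathcal O(\{\mathbf y\})$. Set $R=r/4$. Since $d(\mathbf z,\mathbf y)\geq r-2R=r/2$ for $\mathbf z\in\overline{U(\mathbf x,2R)}$, we have $\overline{U(\mathbf x,2R)}\subset\Omega$. Moreover $Rm(\mathbf x)\geq C_0/4\geq1$.

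Apply Proposition~\ref{propo:large_decay} with $\kappa_1=0$, centered at $\mathbf x$ (as $\mathbf x_0$). This gives
\[
\operatorname*{ess\,sup}_{B(\mathbf x,R)}|u|^2\leq C(Rm(\mathbf x))^{-2/(\kappa+1)}\frac{1}{w(B(\mathbf x,R))}\int_{U(\mathbf x,2R)}|u|^2\,dw .
\]
For every $\mathbf z\in U(\mathbf x,2R)$ we have $r/2\leq d(\mathbf z,\mathbf y)\leq 3r/2$. Proposition~\ref{propo:basic_decay}, applied with the roles of the variables swapped via the symmetry $\Gamma(\mathbf z,\mathbf y,\tau)=\Gamma(\mathbf y,\mathbf z,\tau)$, then gives
\[
|u(\mathbf z)|\leq C r^2/w(B(\mathbf z,r)).
\]
Since $\mathbf z$ lies within distance $2R$ of some $g\mathbf x$, and $w(B(g\mathbf x,r))=w(B(\mathbf x,r))$ by $G$-invariance of $dw$, doubling gives $w(B(\mathbf z,r))\simeq w(B(\mathbf x,r))$. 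Using \eqref{eq:ball_orbit_compare}, $w(U(\mathbf x,2R))\leq C w(B(\mathbf x,R))$, so the averaged integral is bounded by $C r^4/w(B(\mathbf x,r))^2$. Taking square roots yields
\[
\operatorname*{ess\,sup}_{B(\mathbf x,R)}|u|\leq C(m(\mathbf x)r)^{-1/(\kappa+1)}\frac{r^2}{w(B(\mathbf x,r))},
\]
which is \eqref{eq:fundamendal_est} at $\mathbf x$, since $1+m(\mathbf x)r\simeq m(\mathbf x)r$ here.

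The main obstacle is measure-theoretic: the estimate is an essential supremum over balls, whereas the theorem is stated for almost every pair. One fix is to work with $\mathbf y$ in a full-measure set, and for $\mathbf x$ use Lebesgue points or a countable cover. Cover the region $\{\mathbf x: d(\mathbf x,\mathbf y)\in[2^j,2^{j+1}),\ m(\mathbf x)2^j\geq C_0\}$ by balls with rational centers and radii comparable to $2^{j}$. On each ball, use Lemma~\ref{lem:m_growth}, in the form \eqref{eq:Shen_A} together with \eqref{eq:Shen_B}, to compare $m$ at the center with $m(\mathbf x)$; apply the bound above at the centers, with slightly adjusted constants, and discard the countable union of null sets. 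Also keep in mind that $C$ must not depend on $\tau$, which holds because both Proposition~\ref{propo:basic_decay} and Proposition~\ref{propo:large_decay} are uniform in $\tau$.
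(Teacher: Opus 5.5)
Your argument is the paper's. The paper also splits at $m(\mathbf x)d(\mathbf x,\mathbf y)= 10$ and uses Proposition~\ref{propo:basic_decay} in the near regime. In the far regime it applies Proposition~\ref{propo:large_decay} with $\kappa_1=0$ at $\mathbf x_0=\mathbf x$, taking $R=r/10$ instead of your $r/4$. It then bounds the local average by Proposition~\ref{propo:basic_decay} exactly as you do. No symmetry swap is needed there, since that proposition already bounds $\Gamma(\mathbf z,\mathbf y,\tau)$ with $\mathbf z$ in the first slot.

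The almost-everywhere issue is settled by the first of your two options, Lebesgue points. The paper fixes $\mathbf y$ outside the null sets of both Proposition~\ref{propo:basic_decay} and Lemma~\ref{lem:resolvent_local_weak_solution}. It then takes $\mathbf x$ to be a Lebesgue point of $|u|^2$, so that $|u(\mathbf x)|^2\le\operatorname*{ess\,sup}_{B(\mathbf x,R)}|u|^2$. Because this ball is centred at $\mathbf x$ itself, no comparison of $m$ at different points is needed.

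The countable-cover variant you actually sketch does not work as written:
\begin{itemize}
\item Balls of radius comparable to $2^j\approx d(\mathbf x,\mathbf y)\gg m(\mathbf x)^{-1}$ lie outside the range of \eqref{eq:Shen_A}.
\item \eqref{eq:Shen_B}--\eqref{eq:Shen_C} only give $m(\mathbf c)r\gtrsim (m(\mathbf x)r)^{1/(\kappa+1)}$ for the centre $\mathbf c$. This turns the exponent $1/(\kappa+1)$ into $1/(\kappa+1)^2$, which is not a change of constant.
\end{itemize}
That route would only work with balls of radius $\lesssim m(\mathbf c)^{-1}$.
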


\begin{proof}
Fix $\tau\in\mathbb R$ and choose $\mathbf y$ outside
the exceptional null sets in
Proposition~\ref{propo:basic_decay} and
Lemma~\ref{lem:resolvent_local_weak_solution}.
Put
\[
 u(\mathbf z)=\Gamma(\mathbf z,\mathbf y,\tau).
\]
Then $u$ belongs to the local form domain and solves
\[
 (-\Delta_k+V+i\tau)u=0
\]
weakly outside $\mathcal O(\{\mathbf y\})$.
Let $\mathbf x$ be a Lebesgue point of $|u|^2$ outside
this orbit at which the basic kernel bound holds, and set
\[
 r=d(\mathbf x,\mathbf y)>0.
\]
These conditions hold for almost every $\mathbf x$.

If $m(\mathbf x)r\leq10$, the factor
$(1+m(\mathbf x)r)^{-1/(\kappa+1)}$ is bounded below
by a positive constant depending only on $\kappa$.
Thus, the desired estimate follows from
Proposition~\ref{propo:basic_decay}.

Suppose now that $m(\mathbf x)r>10$, and put $R=r/10$.
Then $Rm(\mathbf x)>1$ and
\[
 \overline{U(\mathbf x,2R)}
 \subset\mathbb R^N\setminus\mathcal O(\{\mathbf y\}).
\]
We may therefore apply
Proposition~\ref{propo:large_decay} with $\kappa_1=0$.
Since $\mathbf x$ is a Lebesgue point of $|u|^2$,
\[
 |u(\mathbf x)|^2
 \leq\operatorname*{ess\,sup}_{B(\mathbf x,R)}|u|^2.
\]
Taking square roots in that proposition gives
\begin{equation}\label{eq:fundamental_local_average}
 |u(\mathbf x)|
 \leq
 C(Rm(\mathbf x))^{-1/(\kappa+1)}
 \left(
 \frac1{w(B(\mathbf x,R))}
 \int_{U(\mathbf x,2R)}|u(\mathbf z)|^2\,dw(\mathbf z)
 \right)^{1/2}.
\end{equation}
Because $m(\mathbf x)r>10$ and $R=r/10$,
\[
 (Rm(\mathbf x))^{-1/(\kappa+1)}
 \leq C(1+m(\mathbf x)r)^{-1/(\kappa+1)}.
\]

We next estimate the local average in
\eqref{eq:fundamental_local_average}.
For $\mathbf z\in U(\mathbf x,2R)$, choose $g\in G$
such that
\[
 \|\mathbf z-g(\mathbf x)\|<2R=r/5.
\]
The orbit distance satisfies
\[
 |d(\mathbf z,\mathbf y)-d(g(\mathbf x),\mathbf y)|
 \leq\|\mathbf z-g(\mathbf x)\|.
\]
Since $d(g(\mathbf x),\mathbf y)=r$, we obtain
\[
 \frac45r\leq d(\mathbf z,\mathbf y)\leq\frac65r.
\]
The doubling property and the $G$-invariance of $dw$
also imply
\[
 w(B(\mathbf z,d(\mathbf z,\mathbf y)))
\simeq w(B(g(\mathbf x),r))
 =w(B(\mathbf x,r)).
\]
Indeed, the centres $\mathbf z$ and $g(\mathbf x)$
are at distance less than $r/5$, and the radii
are comparable to $r$.

Consequently, Proposition~\ref{propo:basic_decay} gives
\[
 |u(\mathbf z)|
 \leq C\frac{r^2}{w(B(\mathbf x,r))}
\]
for almost every $\mathbf z\in U(\mathbf x,2R)$.
Moreover,
\[
 w(U(\mathbf x,2R))
 \leq\sum_{g\in G}w(B(g(\mathbf x),2R))
 =|G|w(B(\mathbf x,2R))
 \leq Cw(B(\mathbf x,R)).
\]
It follows that
\[
 \left(
 \frac1{w(B(\mathbf x,R))}
 \int_{U(\mathbf x,2R)}|u(\mathbf z)|^2\,dw(\mathbf z)
 \right)^{1/2}
 \leq C\frac{r^2}{w(B(\mathbf x,r))}.
\]
Substituting this into
\eqref{eq:fundamental_local_average} proves
\eqref{eq:fundamendal_est}.
The constants in the estimates used above are
independent of $\tau$.
\end{proof}

Furthermore, if the potential $V$ is $G$-invariant, the estimate obtained in Theorem~\ref{theorem:fundamental} can be significantly strengthened to an arbitrary polynomial decay.

\begin{theorem}\label{theorem:fundamental_invariant}
Assume $N>2$ and that $V$ is $G$-invariant.
Then, for every $\kappa_1>0$, there exists a constant
$C_{\kappa_1}>0$, independent of $\tau$, such that,
for each fixed $\tau\in\mathbb R$,
\begin{equation}\label{eq:fundamental_est_invariant}
 |\Gamma(\mathbf x,\mathbf y,\tau)|
 \leq C_{\kappa_1}
 \bigl(1+m(\mathbf x)d(\mathbf x,\mathbf y)\bigr)^{-\kappa_1}
 \frac{d(\mathbf x,\mathbf y)^2}
      {w(B(\mathbf x,d(\mathbf x,\mathbf y)))}
\end{equation}
for almost every pair $(\mathbf x,\mathbf y)$ with
$d(\mathbf x,\mathbf y)>0$.
\end{theorem}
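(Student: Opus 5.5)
The proof will follow the argument of Theorem~\ref{theorem:fundamental}, with Proposition~\ref{propo:large_decay} applied for a general $\kappa_1$ instead of $\kappa_1=0$. The key point is that $m$ is $G$-invariant when $V$ is, so the factor $\max_{g\in G}(Rm(g\mathbf x))^{-2\kappa_1}$ equals $(Rm(\mathbf x))^{-2\kappa_1}$.

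First I fix $\tau\in\mathbb R$ and choose $\mathbf y$ outside the exceptional null sets of Proposition~\ref{propo:basic_decay} and Lemma~\ref{lem:resolvent_local_weak_solution}. I set $u=\Gamma(\cdot,\mathbf y,\tau)$, which is a weak solution in the local form domain away from $\mathcal O(\{\mathbf y\})$. I then take $\mathbf x$ to be a Lebesgue point of $|u|^2$ off that orbit at which the basic bound holds, and put $r=d(\mathbf x,\mathbf y)$. If $m(\mathbf x)r\leq10$, the factor $(1+m(\mathbf x)r)^{-\kappa_1}$ is bounded below by a constant depending on $\kappa_1$, so Proposition~\ref{propo:basic_decay} already gives \eqref{eq:fundamental_est_invariant}.

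If $m(\mathbf x)r>10$, I set $R=r/10$. Then $Rm(\mathbf x)>1$ and $\overline{U(\mathbf x,2R)}$ avoids the orbit of $\mathbf y$. Proposition~\ref{propo:large_decay} applies with $\kappa_1$ replaced by $\kappa_1$ (or any larger exponent), and $G$-invariance gives $m(g\mathbf x)=m(\mathbf x)$. Together these yield
\[
 |u(\mathbf x)|^2\leq C_{\kappa_1}(Rm(\mathbf x))^{-2/(\kappa+1)-2\kappa_1}\frac{1}{w(B(\mathbf x,R))}\int_{U(\mathbf x,2R)}|u|^2\,dw .
\]
The local average is estimated exactly as in the proof of Theorem~\ref{theorem:fundamental}. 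For $\mathbf z\in U(\mathbf x,2R)$ we have $d(\mathbf z,\mathbf y)\simeq r$ and $w(B(\mathbf z,d(\mathbf z,\mathbf y)))\simeq w(B(\mathbf x,r))$. Proposition~\ref{propo:basic_decay} then bounds $|u(\mathbf z)|$ by $Cr^2/w(B(\mathbf x,r))$, and $w(U(\mathbf x,2R))\leq Cw(B(\mathbf x,R))$. Taking square roots and using $Rm(\mathbf x)\simeq 1+m(\mathbf x)r$ gives the decay factor $(1+m(\mathbf x)r)^{-\kappa_1}$; the extra factor $(Rm(\mathbf x))^{-1/(\kappa+1)}\leq1$ is simply discarded. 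All constants are independent of $\tau$, since those in Proposition~\ref{propo:large_decay} and Proposition~\ref{propo:basic_decay} are.

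The only substantive point is the $G$-invariance reduction. Without it, the iteration in Proposition~\ref{propo:large_decay} only provides decay in $\min_g Rm(g\mathbf x)$, which can be small even when $Rm(\mathbf x)$ is large. Once $m(g\mathbf x)=m(\mathbf x)$ is used, nothing else in the argument needs to change.
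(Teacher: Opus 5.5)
Your proposal is correct and follows the paper's proof essentially step for step. The matching steps are: the case split at $m(\mathbf x)r=10$; the application of Proposition~\ref{propo:large_decay} with parameter $\kappa_1$ at $R=r/10$; the reduction $\max_{g\in G}(Rm(g\mathbf x))^{-2\kappa_1}=(Rm(\mathbf x))^{-2\kappa_1}$ via $G$-invariance of $m$; discarding the factor $(Rm(\mathbf x))^{-1/(\kappa+1)}$; and bounding the local average through Proposition~\ref{propo:basic_decay}.
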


\begin{proof}
Fix $\kappa_1>0$ and $\tau\in\mathbb R$.
As in the proof of Theorem~\ref{theorem:fundamental},
choose $\mathbf y$ outside the exceptional null sets and
put
\[
 u(\mathbf z)=\Gamma(\mathbf z,\mathbf y,\tau).
\]
Let $\mathbf x$ be a Lebesgue point of $|u|^2$ outside
$\mathcal O(\{\mathbf y\})$ at which the basic kernel
bound holds, and write
\[
 r=d(\mathbf x,\mathbf y)>0.
\]

If $m(\mathbf x)r\leq10$, the factor
$(1+m(\mathbf x)r)^{-\kappa_1}$ is bounded below by
a positive constant depending only on $\kappa_1$.
Hence \eqref{eq:fundamental_est_invariant} follows
from Proposition~\ref{propo:basic_decay}.

Suppose now that $m(\mathbf x)r>10$, and set $R=r/10$.
Then
\[
 Rm(\mathbf x)>1,
 \qquad
 \overline{U(\mathbf x,2R)}
 \subset\mathbb R^N\setminus\mathcal O(\{\mathbf y\}).
\]
By Lemma~\ref{lem:resolvent_local_weak_solution},
$u$ is a local weak solution on this neighbourhood.
We may therefore apply
Proposition~\ref{propo:large_decay} with parameter
$\kappa_1$.

The $G$-invariance of $V$ and $dw$ implies
$m(g\mathbf x)=m(\mathbf x)$ for every $g\in G$.
Consequently,
\[
 \max_{g\in G}(Rm(g\mathbf x))^{-2\kappa_1}
 =(Rm(\mathbf x))^{-2\kappa_1}.
\]
Taking square roots in
Proposition~\ref{propo:large_decay}, and using the
Lebesgue point property at $\mathbf x$, gives
\[
 |u(\mathbf x)|
 \leq C_{\kappa_1}
 (Rm(\mathbf x))^{-1/(\kappa+1)-\kappa_1}
 \left(
 \frac1{w(B(\mathbf x,R))}
 \int_{U(\mathbf x,2R)}|u(\mathbf z)|^2\,dw(\mathbf z)
 \right)^{1/2}.
\]
Since $Rm(\mathbf x)>1$, we may discard the additional
factor $(Rm(\mathbf x))^{-1/(\kappa+1)}$.
Moreover, $R=r/10$ and $m(\mathbf x)r>10$ imply
\[
 (Rm(\mathbf x))^{-\kappa_1}
 \leq C_{\kappa_1}(1+m(\mathbf x)r)^{-\kappa_1}.
\]

It remains to estimate the local average.
For $\mathbf z\in U(\mathbf x,2R)$, the same geometric
comparisons as in the proof of
Theorem~\ref{theorem:fundamental} give
\[
 d(\mathbf z,\mathbf y)\simeq r,
 \qquad
 w(B(\mathbf z,d(\mathbf z,\mathbf y)))
 \simeq w(B(\mathbf x,r)).
\]
Thus Proposition~\ref{propo:basic_decay} implies
\[
 |u(\mathbf z)|
 \leq C\frac{r^2}{w(B(\mathbf x,r))}
\]
for almost every $\mathbf z\in U(\mathbf x,2R)$.
Using also
\[
 w(U(\mathbf x,2R))
 \leq |G|w(B(\mathbf x,2R))
 \leq Cw(B(\mathbf x,R)),
\]
we obtain
\[
 \left(
 \frac1{w(B(\mathbf x,R))}
 \int_{U(\mathbf x,2R)}|u(\mathbf z)|^2\,dw(\mathbf z)
 \right)^{1/2}
 \leq C\frac{r^2}{w(B(\mathbf x,r))}.
\]
Combining these estimates proves
\eqref{eq:fundamental_est_invariant}.
All constants are independent of $\tau$.
\end{proof}

\section{Potential estimates and fractional powers}

    \begin{theorem}\label{thm:V_boundedness}
Assume $q > \frac{\mathbf{N}}{2}$, $N>2$, and, $V \in {\rm{RH}}^q(dw)$, and that $V$ is $G$-invariant. Then for any $1 \le p \le q$, there exists a constant $C > 0$ such that for all $f \in L^p(dw)$ we have
\[
\|V(-\Delta_k + V)^{-1}f\|_{L^p(dw)} \le C \|f\|_{L^p(dw)}.
\]
\end{theorem}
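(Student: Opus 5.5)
The plan is to follow Shen's classical scheme: prove the endpoint $p=1$ by a kernel estimate, prove the endpoint $p=q$ by a local $L^q$ argument, and interpolate. The central object is the operator $f\mapsto V(\mathbf x)\int\Gamma(\mathbf x,\mathbf y,0)f(\mathbf y)\,dw(\mathbf y)$ with nonnegative kernel $V(\mathbf x)\Gamma(\mathbf x,\mathbf y,0)$. Here $\Gamma\ge0$, since $k_t\ge0$. By Marcinkiewicz (or Riesz--Thorin, as the operator is positive linear), it suffices to prove boundedness on $L^1(dw)$ and on $L^q(dw)$. First I identify $VL^{-1}f$ with this integral operator on a dense class, e.g. $f\in C_c^\infty$, using the Green kernel defined after Proposition~\ref{propo:basic_decay}.

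\emph{The case $p=1$.} By Fubini it is enough to show that
\[
\sup_{\mathbf y}\int V(\mathbf x)\Gamma(\mathbf x,\mathbf y,0)\,dw(\mathbf x)<\infty .
\]
The symmetry $\Gamma(\mathbf x,\mathbf y,0)=\Gamma(\mathbf y,\mathbf x,0)$ and $d(\mathbf x,\mathbf y)=d(\mathbf y,\mathbf x)$ let me apply Theorem~\ref{theorem:fundamental_invariant} centred at $\mathbf y$, with $w(B(\mathbf y,d))\simeq w(B(\mathbf x,d))$; this equivalence follows from $G$-invariance and doubling, since $\|\mathbf x-g\mathbf y\|=d$ for some $g$. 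Fix $\rho=1/m(\mathbf y)$. Since $V$ and $dw$ are $G$-invariant, the set $\{\mathbf x: d(\mathbf x,\mathbf y)<s\}$ is $U(\mathbf y,s)$, and its $\mu$-measure is at most $|G|\mu(B(\mathbf y,s))$. I decompose into orbit-annuli $2^{j-1}\rho\le d<2^j\rho$, $j\in\mathbb Z$. For $j\le0$, Lemma~\ref{lem:Rr} bounds the contribution by
\[
\frac{s^2}{w(B(\mathbf y,s))}\,\mu(B(\mathbf y,s))\le C\,2^{j(2-\mathbf N/q)}\cdot\frac{\rho^2\mu(B(\mathbf y,\rho))}{w(B(\mathbf y,\rho))}\le C2^{j(2-\mathbf N/q)},
\]
where $s=2^j\rho$; this sums because $q>\mathbf N/2$. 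For $j>0$ I use the doubling of $\mu$ (Lemma~\ref{lem:mu_doubling}) together with \eqref{eq:growth}. These give $s^2\mu(B(\mathbf y,s))/w(B(\mathbf y,s))\le C2^{j\delta}$ for some fixed $\delta$, and the factor $(1+2^j)^{-\kappa_1}$ from Theorem~\ref{theorem:fundamental_invariant}, with $\kappa_1>\delta$, gives a geometric series. This is exactly where $G$-invariance and arbitrary polynomial decay are needed.

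\emph{The case $p=q$.} I prove the pointwise estimate
\[
\Bigl(\frac1{w(B)}\int_B |Vu|^q\Bigr)^{1/q}\lesssim \frac{1}{w(B)}\int_B V\,dw\cdot\sup_{B}|u|
\]
for $u=L^{-1}f$ and $B=B(\mathbf x_0,\rho(\mathbf x_0))$, $\rho=1/m$, using \eqref{eq:reverse_Holder}. Summing over a bounded-overlap covering by critical balls, I then need $\sup_B|u|\lesssim$ a local maximal quantity of $f$. Here I split $f=f\mathbf1_{U(\mathbf x_0,2\rho)}+f\mathbf1_{\text{rest}}$. For the far part I use the kernel bound with polynomial decay, giving $\rho^2\,\mathcal M f$-type control summed over orbit-annuli. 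For the near part I use the bound $\Gamma\lesssim d^2/w(B(\mathbf x,d))$, whose $L^{q'}$ norm in $\mathbf y$ over a ball of radius $2\rho$ is controlled when $q>\mathbf N/2$. This follows from a computation via \eqref{eq:growth} and gives $\|u\|_{L^\infty(B)}\lesssim \rho^{2}w(B)^{-1/q}\|f\|_{L^q(U(\mathbf x_0,4\rho))}$ plus far terms. Combined with $\rho^2 w(B)^{-1}\mu(B)\simeq1$ (the definition of $m$), this yields $\int_B V^q|u|^q\lesssim \int_{U(\mathbf x_0,C\rho)}|f|^q$ plus tail terms with geometric decay. Summing over the covering, with finite overlap ensured by Lemma~\ref{lem:m_growth}, completes the case.

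The main obstacle I expect is the near-part estimate at $p=q$. One problem is that $\int_{B(\mathbf x,2\rho)}\bigl(d^2/w(B(\mathbf x,d))\bigr)^{q'}dw(\mathbf y)$ must be handled with orbit distance while the volume depends on the centre. The other is that tail terms summed over the covering must be controlled using $m$'s slow variation \eqref{eq:Shen_B}--\eqref{eq:Shen_C}. If the direct $L^\infty$ route fails for the near part, I would fall back to the weak-solution approach, via Corollary~\ref{coro:c2}, bounding the local sup by $L^2$ averages.
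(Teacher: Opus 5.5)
Your proof is correct, but it organizes the argument differently from the paper.

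\textbf{The paper's route.} For each $g\in G$, the paper splits the kernel according to whether $\|\mathbf x-g(\mathbf y)\|<m(\mathbf x)^{-1}$.
\begin{itemize}
\item The near piece is bounded on $L^1(dw)$ by Lemma~\ref{lem:L_1}, which is essentially your column-integral computation for $j\le0$.
\item The near piece is also bounded on $L^q(dw)$ by Lemma~\ref{lem:L_q_large}. That proof uses an $\varepsilon$-weighted H\"older inequality, Tonelli, and the reverse H\"older inequality on the critical ball around $g(\mathbf y)$.
\item Riesz--Thorin is applied to the near piece only.
\item The far piece is bounded on every $L^p(dw)$, $1\le p\le q$, directly by Lemma~\ref{lem:L_p_large}. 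This uses the arbitrary decay from Theorem~\ref{theorem:fundamental_invariant} together with $V\in{\rm RH}^p(dw)$.
\end{itemize}

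\textbf{Your route.} You interpolate the full operator instead.
\begin{itemize}
\item At $p=1$ you use a Schur column test, via the symmetry of $\Gamma(\cdot,\cdot,0)$ and decay centred at $\mathbf y$.
\item At $p=q$ you use Shen-style localization to critical balls, bounding by $\|V\|_{L^q(B)}\operatorname*{ess\,sup}_B|u|$. The near part is controlled by the $L^{q'}$ integrability of $d^2/w(B(\mathbf x,d))$, which is exactly where $q>\mathbf N/2$ enters. The far part is controlled by the kernel decay.
\end{itemize}

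\textbf{What your route needs that the paper avoids.} You need a covering of $\mathbb R^N$ by balls $B(\mathbf x_k,m(\mathbf x_k)^{-1})$ with bounded overlap. This follows from Lemma~\ref{lem:m_growth}, but you must prove it or cite it.

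\textbf{Simplifying the tails.} You do not need to count overlaps of dilated balls. For $\mathbf x\in B=B(\mathbf x_0,\rho)$, the far part of $u$ is at most
\[
C\rho^2\inf_{\mathbf z\in B}\sum_{g\in G}M(|f|)(g(\mathbf z)),
\]
once the decay exponent exceeds $2$. Combined with $\rho^2\mu(B)/w(B)\le1$, this bounds the far contribution on $B$ by $C\int_B\bigl(\sum_{g}M(|f|)\circ g\bigr)^q\,dw$. Summing over the covering then uses only bounded overlap and the $L^q(dw)$ maximal theorem; here $q>1$ because $\mathbf N\ge3$.

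\textbf{Comparison.} Your argument yields a local $L^\infty$ bound for $u$ on critical balls as a by-product. The paper's argument is purely integral and needs no covering. The fallback via Corollary~\ref{coro:c2} that you mention is not needed.
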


    The next lemma is an easy consequence of the basic properties of the auxiliary function $m$ (see Lemma~\ref{lem:m_growth}).

    \subsection{Local estimates for the potential operator}

\begin{lemma}\label{lem:x_y_switch}
There exists a constant $C_1>0$ such that
\[
 \|\mathbf x-\mathbf y\|>\frac1{m(\mathbf x)}
 \quad\Longrightarrow\quad
 \|\mathbf x-\mathbf y\|>\frac{C_1}{m(\mathbf y)}.
\]
\end{lemma}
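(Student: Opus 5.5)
We argue by contradiction using only \eqref{eq:Shen_B} from Lemma~\ref{lem:m_growth}; the plan is to show that $m(\mathbf y)\|\mathbf x-\mathbf y\|$ is bounded below by a fixed constant whenever $m(\mathbf x)\|\mathbf x-\mathbf y\|>1$. Set $s=\|\mathbf x-\mathbf y\|$ and assume $s>1/m(\mathbf x)$. Then $m(\mathbf x)s>1$, so $1+m(\mathbf x)s\le 2m(\mathbf x)s$.

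The key step is to apply \eqref{eq:Shen_B} with the roles of $\mathbf x$ and $\mathbf y$ interchanged:
\[
 m(\mathbf x)\le C m(\mathbf y)\bigl(1+m(\mathbf y)s\bigr)^{\kappa}.
\]
Suppose that $m(\mathbf y)s\le 1$. Then the bracket is at most $2^\kappa$, and hence $m(\mathbf x)\le C2^\kappa m(\mathbf y)$. Multiplying by $s$ gives
\[
 1<m(\mathbf x)s\le C2^\kappa m(\mathbf y)s.
\]
That is, $s> (C2^{\kappa})^{-1}/m(\mathbf y)$. In the remaining case $m(\mathbf y)s>1$, we have directly $s>1/m(\mathbf y)$. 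Taking $C_1=\min(1,(C2^\kappa)^{-1})$ handles both cases at once.

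An alternative route uses \eqref{eq:Shen_C} directly: $m(\mathbf y)\ge C^{-1}m(\mathbf x)(1+m(\mathbf x)s)^{-\kappa/(\kappa+1)}$. Multiplying by $s$ and using $m(\mathbf x)s>1$, we get $m(\mathbf y)s\gtrsim (m(\mathbf x)s)^{1/(\kappa+1)}\ge 1$ up to the constant $C^{-1}2^{-\kappa/(\kappa+1)}$. Either route works, and I would present the second since it is a one-line computation. No step is a real obstacle. The only care needed is to note that the constants come from Lemma~\ref{lem:m_growth} and are uniform in $\mathbf x,\mathbf y$, and that the lemma involves only the Euclidean distance, so no $G$-invariance is needed here.
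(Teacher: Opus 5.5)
Your proposal is correct, and the route you say you would present (apply \eqref{eq:Shen_C}, use $1+m(\mathbf x)s\le 2m(\mathbf x)s$, and conclude $m(\mathbf y)s\ge C^{-1}2^{-\kappa/(\kappa+1)}(m(\mathbf x)s)^{1/(\kappa+1)}>C^{-1}2^{-\kappa/(\kappa+1)}$) is exactly the paper's proof, with the same constant $C_1$. Your first route, which applies \eqref{eq:Shen_B} with $\mathbf x,\mathbf y$ interchanged and splits into the cases $m(\mathbf y)s\le 1$ and $m(\mathbf y)s>1$, is also valid; it is a case distinction rather than the argument by contradiction you announced at the start.
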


\begin{proof}
Put $r=\|\mathbf x-\mathbf y\|$ and $t=m(\mathbf x)r>1$.
By \eqref{eq:Shen_C} and $1+t\leq2t$,
\[
 rm(\mathbf y)
 \geq C^{-1}t(1+t)^{-\kappa/(\kappa+1)}
 \geq C^{-1}2^{-\kappa/(\kappa+1)}
             t^{1/(\kappa+1)}
 > C^{-1}2^{-\kappa/(\kappa+1)}.
\]
Thus the assertion holds with
$C_1=C^{-1}2^{-\kappa/(\kappa+1)}$.
\end{proof}

The following lemma is a standard estimate which holds on the spaces of homogeneous type.

\begin{lemma}\label{lem:homogeneous}
    Let $\delta>0$. There is a constant $C>0$ such that for all $\mathbf{x} \in \mathbb{R}^N$ and $r>0$ we have:
\begin{enumerate}[(A)]
    \item \label{numitem:outside_orbit} $\int_{\mathcal{O}(B(\mathbf{x},r))^{c}}\left(\frac{d(\mathbf{x},\mathbf{y})}{r}\right)^{-\delta}w(B(\mathbf{x},d(\mathbf{x},\mathbf{y})))^{-1}\,dw(\mathbf{y}) \leq C$;
    \item \label{numitem:inside_orbit} $\int_{\mathcal{O}(B(\mathbf{x},r))}\left(\frac{d(\mathbf{x},\mathbf{y})}{r}\right)^{\delta}w(B(\mathbf{x},d(\mathbf{x},\mathbf{y})))^{-1}\,dw(\mathbf{y}) \leq C$.
\end{enumerate}
\end{lemma}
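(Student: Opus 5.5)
The plan is a dyadic decomposition in the orbit distance $d(\mathbf x,\mathbf y)$. Put
\[
A_j=\{\mathbf y:\ 2^{j}r\le d(\mathbf x,\mathbf y)<2^{j+1}r\},\qquad j\in\mathbb Z .
\]
Since $d(\mathbf x,\mathbf y)<s$ is equivalent to $\mathbf y\in\mathcal O(B(\mathbf x,s))$, we have
\[
\mathcal O(B(\mathbf x,r))^c=\bigcup_{j\ge0}A_j
\quad\text{and}\quad
\mathcal O(B(\mathbf x,r))=\bigcup_{j<0}A_j
\]
up to the null set $\mathcal O(\{\mathbf x\})$. Each $A_j$ is contained in $\mathcal O(B(\mathbf x,2^{j+1}r))$. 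By \eqref{eq:ball_orbit_compare},
\[
w(A_j)\le |G|\,w(B(\mathbf x,2^{j+1}r)).
\]
On $A_j$ we also have $w(B(\mathbf x,d(\mathbf x,\mathbf y)))\ge w(B(\mathbf x,2^jr))$. Doubling \eqref{eq:doubling} then gives, uniformly in $j$,
\[
\int_{A_j}\frac{dw(\mathbf y)}{w(B(\mathbf x,d(\mathbf x,\mathbf y)))}\le C|G|.
\]

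For part (A), on $A_j$ with $j\ge0$ we have $(d(\mathbf x,\mathbf y)/r)^{-\delta}\le 2^{-j\delta}$. The integral over $\mathcal O(B(\mathbf x,r))^c$ is therefore bounded by
\[
C|G|\sum_{j\ge0}2^{-j\delta}=C_\delta .
\]

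For part (B), on $A_j$ with $j<0$ we have $(d(\mathbf x,\mathbf y)/r)^{\delta}\le 2^{(j+1)\delta}$. Summing over $j<0$ again gives a geometric series bounded by $C_\delta$. The set $\mathcal O(\{\mathbf x\})$ has measure zero because $N\ge1$ and $dw$ is absolutely continuous, so it does not matter that the integrand may be undefined there.

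No step is a real obstacle. The only points to check are that the orbit neighbourhood $\mathcal O(B(\mathbf x,s))$ coincides with the sublevel set $\{d(\mathbf x,\cdot)<s\}$, and that its measure is comparable to $w(B(\mathbf x,s))$. The first follows from the definition \eqref{eq:distance_of_orbits} and the $G$-invariance of the Euclidean norm. The second is \eqref{eq:ball_orbit_compare} together with the $G$-invariance of $dw$.
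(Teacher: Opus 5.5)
Your argument is correct: the identification $\mathcal O(B(\mathbf x,s))=\{\mathbf y: d(\mathbf x,\mathbf y)<s\}$, the bound $w(A_j)\le |G|\,w(B(\mathbf x,2^{j+1}r))$ combined with doubling, and the two geometric series are all that is needed. The paper gives no written proof, only calling this a standard estimate on spaces of homogeneous type, and your dyadic decomposition into orbit annuli is that standard argument.
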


\begin{lemma}\label{lem:L_1}
    Assume $q > \frac{\mathbf{N}}{2}$, $N>2$, and and $V \in {\rm{RH}}^{q}(dw)$. Let $\tau \in \mathbb{R}$ and $g \in G$. There is a constant $C>0$ such that for all $f \in L^1(dw)$ we have
    \begin{equation}
        \int_{\mathbb{R}^N}|V(\mathbf{x})|\int_{\{\mathbf{y}\in\mathbb{R}^N\::\: d(\mathbf{x},\mathbf{y})=\|\mathbf{x}-g(\mathbf{y})\|<\frac{1}{m(\mathbf{x})}\}}|\Gamma(\mathbf{x},\mathbf{y},\tau)||f(\mathbf{y})|\,dw(\mathbf{y})\,dw(\mathbf{x}) \leq C\|f\|_{L^1(dw)}.
    \end{equation}
\end{lemma}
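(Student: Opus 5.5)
Interchanging the order of integration with Tonelli's theorem, it suffices to prove the pointwise bound
\[
 \sup_{\mathbf y}\int_{E_{\mathbf y}}V(\mathbf x)\,|\Gamma(\mathbf x,\mathbf y,\tau)|\,dw(\mathbf x)\leq C,
 \qquad
 E_{\mathbf y}=\{\mathbf x:\ d(\mathbf x,\mathbf y)=\|\mathbf x-g(\mathbf y)\|<m(\mathbf x)^{-1}\},
\]
with $C$ independent of $\mathbf y$ and $\tau$. On $E_{\mathbf y}$ we use only the basic estimate of Proposition~\ref{propo:basic_decay}; decay in $m$ is not needed in the near region. This gives
\[
 |\Gamma(\mathbf x,\mathbf y,\tau)|\leq C\,\frac{\|\mathbf x-g\mathbf y\|^2}{w(B(\mathbf x,\|\mathbf x-g\mathbf y\|))}.
\]
Put $\mathbf y'=g(\mathbf y)$. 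Since $\|\mathbf x-\mathbf y'\|<m(\mathbf x)^{-1}$, \eqref{eq:Shen_A} gives $m(\mathbf x)\simeq m(\mathbf y')$. Hence $E_{\mathbf y}\subset B(\mathbf y',C/m(\mathbf y'))$, and by doubling $w(B(\mathbf x,\|\mathbf x-\mathbf y'\|))\simeq w(B(\mathbf y',\|\mathbf x-\mathbf y'\|))$. Set $\rho=C/m(\mathbf y')$.

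The task is therefore to bound
\[
 \int_{B(\mathbf y',\rho)}V(\mathbf x)\,\frac{\|\mathbf x-\mathbf y'\|^2}{w(B(\mathbf y',\|\mathbf x-\mathbf y'\|))}\,dw(\mathbf x).
\]
Decompose into dyadic annuli $A_j=\{2^{-j-1}\rho\leq\|\mathbf x-\mathbf y'\|<2^{-j}\rho\}$, $j\geq0$. On $A_j$ the integrand is at most $C\,(2^{-j}\rho)^2\,w(B(\mathbf y',2^{-j}\rho))^{-1}V(\mathbf x)$. The piece over $A_j$ is thus controlled by
\[
 \frac{r_j^2}{w(B(\mathbf y',r_j))}\int_{B(\mathbf y',r_j)}V\,dw,\qquad r_j=2^{-j}\rho .
\]
Lemma~\ref{lem:Rr} with $r_1=r_j$ and $r_2=\rho$ bounds this by $C2^{-j(2-\mathbf N/q)}$ times the same quantity at scale $\rho$. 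That last quantity is $O(1)$: by the definition \eqref{eq:m}, together with doubling of $\mu$ (Lemma~\ref{lem:mu_doubling}) and of $w$ to move from radius $1/m(\mathbf y')$ to $\rho$, it is bounded by a constant. Here one uses the standard fact that the supremum in \eqref{eq:m} is attained with value $1$; if it is not, apply Lemma~\ref{lem:Rr} with $r_1\leq 1/m$ approaching $1/m$ from below, for which the defining quantity is $\leq1$.

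Since $q>\mathbf N/2$, the exponent $2-\mathbf N/q$ is positive, so the geometric series in $j$ converges. This yields a bound $C$ uniform in $\mathbf y$, $g$ and $\tau$. The measure-zero exceptional set in the kernel bound is harmless because we integrate.

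The main point to check is that the Lemma~\ref{lem:Rr} step is applied around the fixed centre $\mathbf y'$ rather than $\mathbf x$. This is exactly why we change the centre via the comparability $m(\mathbf x)\simeq m(\mathbf y')$ and volume doubling. The remaining steps are routine.
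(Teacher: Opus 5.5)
Your proposal is correct and follows essentially the same route as the paper: Tonelli, the $\tau$-uniform bound of Proposition~\ref{propo:basic_decay}, recentering at $g(\mathbf y)$ via \eqref{eq:Shen_A} and doubling, dyadic annuli, and the small-scale factor $(r_j/\rho)^{2-\mathbf N/q}$ summed as a geometric series. The only cosmetic difference is that the paper rederives this decay inline from H\"older's inequality and the reverse H\"older condition rather than citing Lemma~\ref{lem:Rr}; like you, it bounds the critical-scale quantity using the normalization \eqref{eq:m} together with doubling of $V\,dw$.
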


\begin{proof}
Put
\[
 \delta=2-\frac{\mathbf N}{q}>0.
\]
All constants below are independent of $\tau$ and $g$.

If $r=\|\mathbf x-g(\mathbf y)\|<m(\mathbf x)^{-1}$,
then \eqref{eq:Shen_A} gives
$m(g(\mathbf y))\leq C_0m(\mathbf x)$, where $C_0\geq1$.
Consequently,
\[
 r<\frac{C_0}{m(g(\mathbf y))}.
\]
On the integration region in the statement, we also have
$d(\mathbf x,\mathbf y)=r$. Hence
Proposition~\ref{propo:basic_decay} and Tonelli's theorem
show that the left-hand side is bounded by
\[
 C\int_{\mathbb R^N}|f(\mathbf y)|\,J(g(\mathbf y))\,dw(\mathbf y),
\]
where
\[
 J(\mathbf z)
 :=
 \int_{\|\mathbf x-\mathbf z\|<C_0/m(\mathbf z)}
 \frac{\|\mathbf x-\mathbf z\|^2}
      {w(B(\mathbf x,\|\mathbf x-\mathbf z\|))}
 V(\mathbf x)\,dw(\mathbf x).
\]
It therefore suffices to prove that $J(\mathbf z)\leq C$
uniformly in $\mathbf z$.

Fix $\mathbf z$, and write
\[
 \rho=\frac1{m(\mathbf z)},\qquad R=C_0\rho,
 \qquad
 A_{\mathbf z}(s)
 =\frac{s^2}{w(B(\mathbf z,s))}
       \int_{B(\mathbf z,s)}V\,dw.
\]
The critical-radius normalization gives
$A_{\mathbf z}(\rho)\leq C$.
Since $V\,dw$ is doubling and $C_0$ is fixed, it follows
that
\begin{equation}\label{eq:near_critical_average}
 A_{\mathbf z}(R)\leq C.
\end{equation}

For $0<s\leq R$, Hölder's inequality and
$V\in{\rm RH}^q(dw)$ yield
\[
 \begin{split}
 \int_{B(\mathbf z,s)}V\,dw
 &\leq
 w(B(\mathbf z,s))^{1-1/q}
 \left(\int_{B(\mathbf z,R)}V^q\,dw\right)^{1/q}\leq C
 \left(\frac{w(B(\mathbf z,s))}
            {w(B(\mathbf z,R))}\right)^{1-1/q}
 \int_{B(\mathbf z,R)}V\,dw.
 \end{split}
\]
Multiplying the preceding estimate by
$s^2/w(B(\mathbf z,s))$ and using
\[
 \int_{B(\mathbf z,R)}V\,dw
 =\frac{w(B(\mathbf z,R))}{R^2}A_{\mathbf z}(R),
\]
we obtain
\[
 \begin{split}
 A_{\mathbf z}(s)
 &\leq
 C\frac{s^2}{w(B(\mathbf z,s))}
 \left(\frac{w(B(\mathbf z,s))}
            {w(B(\mathbf z,R))}\right)^{1-1/q}
 \frac{w(B(\mathbf z,R))}{R^2}A_{\mathbf z}(R)\\
 &=C\left(\frac sR\right)^2
 \left(\frac{w(B(\mathbf z,R))}
            {w(B(\mathbf z,s))}\right)^{1/q}
 A_{\mathbf z}(R).
 \end{split}
\]
Since $0<s\leq R$, the upper volume growth estimate
in \eqref{eq:growth} gives
\[
 \frac{w(B(\mathbf z,R))}{w(B(\mathbf z,s))}
 \leq C\left(\frac Rs\right)^{\mathbf N}.
\]
Consequently,
\[
 \begin{split}
 A_{\mathbf z}(s)
 &\leq C\left(\frac sR\right)^2
          \left(\frac Rs\right)^{\mathbf N/q}
          A_{\mathbf z}(R)=C\left(\frac sR\right)^{2-\mathbf N/q}
          A_{\mathbf z}(R)
 =C\left(\frac sR\right)^\delta A_{\mathbf z}(R).
 \end{split}
\]
Finally, \eqref{eq:near_critical_average} implies
\[
 A_{\mathbf z}(s)\leq C\left(\frac sR\right)^\delta,
 \qquad 0<s\leq R.
\]

Now set $r_j=2^{-j}R$ and decompose $B(\mathbf z,R)$
into the annuli
\[
 E_j=\{\mathbf x:r_j/2\leq\|\mathbf x-\mathbf z\|<r_j\},
 \qquad j\geq0.
\]
For $\mathbf x\in E_j$, the doubling property gives
\[
 w(B(\mathbf x,\|\mathbf x-\mathbf z\|))
 \simeq w(B(\mathbf z,r_j)).
\]
Therefore,
\[
 \begin{split}
 J(\mathbf z)
 &\leq C\sum_{j=0}^\infty
 \frac{r_j^2}{w(B(\mathbf z,r_j))}
       \int_{B(\mathbf z,r_j)}V\,dw=C\sum_{j=0}^\infty A_{\mathbf z}(r_j)
 \leq C\sum_{j=0}^\infty 2^{-j\delta}
 \leq C.
 \end{split}
\]
The series converges because $\delta>0$.
Substituting this uniform bound into the initial
Tonelli estimate proves the assertion.
\end{proof}

\begin{lemma}\label{lem:L_q_large}
    Assume $q > \frac{\mathbf{N}}{2}$, $N>2$, and $V \in {\rm{RH}}^{q}(dw)$. Let $\tau \in \mathbb{R}$ and $g \in G$. There is a constant $C>0$ such that for all $f \in L^q(dw)$ we have
    \begin{equation}\label{eq:L_q_large}
        \int_{\mathbb{R}^N}|V(\mathbf{x})|^q\left|\int_{\{\mathbf{y}\in\mathbb{R}^N\::\: d(\mathbf{x},\mathbf{y})=\|\mathbf{x}-g(\mathbf{y})\|<\frac{1}{m(\mathbf{x})}\}}\Gamma(\mathbf{x},\mathbf{y},\tau)f(\mathbf{y})\,dw(\mathbf{y})\right|^q\,dw(\mathbf{x}) \leq C\|f\|_{L^q(dw)}^q.
    \end{equation}
\end{lemma}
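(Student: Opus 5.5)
Our plan is to bound the inner integral by a positive kernel and reduce the $L^q$ estimate to a Hölder/Schur-type argument in which the reverse Hölder property controls $V^q$ on critical balls. By Proposition~\ref{propo:basic_decay}, on the region $d(\mathbf x,\mathbf y)=\|\mathbf x-g(\mathbf y)\|<m(\mathbf x)^{-1}$ we have
\[
|\Gamma(\mathbf x,\mathbf y,\tau)|\le C\,K(\mathbf x,g\mathbf y),\qquad K(\mathbf x,\mathbf z)=\frac{\|\mathbf x-\mathbf z\|^2}{w(B(\mathbf x,\|\mathbf x-\mathbf z\|))}\mathbf 1_{\{\|\mathbf x-\mathbf z\|<1/m(\mathbf x)\}}.
\]
Changing variables $\mathbf z=g(\mathbf y)$, which is harmless since $dw$ is $G$-invariant, and replacing $f$ by $f\circ g^{-1}$, it suffices to show that
\[
\int V(\mathbf x)^q\Bigl(\int K(\mathbf x,\mathbf z)|f(\mathbf z)|\,dw(\mathbf z)\Bigr)^q dw(\mathbf x)\le C\|f\|_q^q.
\]
All constants will be independent of $\tau$ and $g$.

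We will cover $\mathbb R^N$ by critical balls $B_k=B(\mathbf x_k,\rho_k)$, $\rho_k=1/m(\mathbf x_k)$, with bounded overlap; this is a standard consequence of Lemma~\ref{lem:m_growth}. For $\mathbf x\in B_k$, \eqref{eq:Shen_A} gives $m(\mathbf x)\simeq m(\mathbf x_k)$, so the $\mathbf z$-support of $K(\mathbf x,\cdot)$ lies in $B_k^*=B(\mathbf x_k,C\rho_k)$. It then suffices to prove the local estimate
\[
\int_{B_k}V^q\,(K(|f|\mathbf 1_{B_k^*}))^q\,dw\le C\int_{B_k^*}|f|^q\,dw,
\]
and to sum over $k$ using bounded overlap of the $B_k^*$.

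For the local estimate we use Hölder's inequality with exponents $q,q'$ in $\mathbf z$, weighting by the kernel itself:
\[
(Kf)^q\le \Bigl(\int K(\mathbf x,\mathbf z)\,dw(\mathbf z)\Bigr)^{q-1}\int K(\mathbf x,\mathbf z)|f(\mathbf z)|^q\,dw(\mathbf z).
\]
A dyadic annuli computation, as in the proof of Lemma~\ref{lem:L_1}, gives $\int K(\mathbf x,\mathbf z)\,dw(\mathbf z)\le C m(\mathbf x)^{-2}\simeq\rho_k^2$. It therefore remains to bound, uniformly in $\mathbf z\in B_k^*$,
\[
\rho_k^{2(q-1)}\int_{B_k}V(\mathbf x)^q K(\mathbf x,\mathbf z)\,dw(\mathbf x)\le C.
\]
We split the $\mathbf x$-integration into annuli $\|\mathbf x-\mathbf z\|\simeq r_j=2^{-j}C\rho_k$. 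On the $j$-th annulus the kernel is at most $r_j^2/w(B(\mathbf z,r_j))$ (doubling). The reverse Hölder inequality \eqref{eq:reverse_Holder}, followed by the computation in Lemma~\ref{lem:L_1}, gives
\[
\frac{1}{w(B(\mathbf z,r_j))}\int_{B(\mathbf z,r_j)}V^q\,dw\le C\Bigl(\frac{1}{w(B(\mathbf z,r_j))}\int_{B(\mathbf z,r_j)}V\,dw\Bigr)^q=C\bigl(r_j^{-2}A_{\mathbf z}(r_j)\bigr)^q\le C\,r_j^{-2q}\Bigl(\frac{r_j}{\rho_k}\Bigr)^{\delta q},
\]
with $\delta=2-\mathbf N/q$ and $A_{\mathbf z}$ as in the proof of Lemma~\ref{lem:L_1}. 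Multiplying by $r_j^2$ and by $\rho_k^{2(q-1)}$ gives
\[
\Bigl(\frac{\rho_k}{r_j}\Bigr)^{2q-2}\Bigl(\frac{r_j}{\rho_k}\Bigr)^{\delta q}=\Bigl(\frac{r_j}{\rho_k}\Bigr)^{2-\mathbf N}.
\]

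This is the main obstacle: the exponent $2-\mathbf N$ is negative, so the series over $j$ diverges and the naive Hölder split fails. We would try first to put the Hölder weight unevenly, using $K^{1-\epsilon}$ and $K^{\epsilon}$ (a Schur test with power weights $\|\mathbf x-\mathbf z\|^{\pm a}$), or alternatively to bound $\sup_{\mathbf x}$ of a maximal-type quantity and then use $L^q$-boundedness of a fractional integral. The more robust route is duality. Write the left side as $\sup\int V\,Kf\,h$ over $\|h\|_{q'}\le1$, and estimate
\[
\int\!\!\int V(\mathbf x)K(\mathbf x,\mathbf z)|h(\mathbf x)||f(\mathbf z)|.
\]
We would bound $\int V K|h|\,dw(\mathbf x)$ by $M(V^q)^{1/q}$-type terms times a Hardy--Littlewood maximal function $M(|h|^{q'})^{1/q'}$, again via annuli and reverse Hölder. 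Here $\delta>0$ gives the geometric decay $\sum_j (r_j/\rho_k)^{\delta}$ exactly as in Lemma~\ref{lem:L_1}. To get an $L^{q'}$ bound for the maximal function one needs $V\in{\rm RH}^{q+\varepsilon}$ from Lemma~\ref{lem: self improve RH}(ii), which lets us use $M$ with exponent strictly below $q'$ in the $h$ slot. This gives $\int V K|h|\,dw(\mathbf x)\le C\,M_{s}h(\mathbf z)$ with $s<q'$, and the estimate then closes by Hölder in $\mathbf z$ and the $L^{q'}$-boundedness of $M_s$ on the doubling space $(\mathbb R^N,dw)$.
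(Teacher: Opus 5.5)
Your final duality argument is correct, but it takes a genuinely different route from the paper's proof.

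\textbf{The paper's route.} The paper stays on the primal side and repairs exactly the divergence you found. In the H\"older step it redistributes only the power of $r=\|\mathbf x-g(\mathbf y)\|$. With $0<\varepsilon<2-\mathbf N/q$, it pairs $r^{q(2-\varepsilon)}/w(B(\mathbf x,r))$ with $|f|^q$, and $r^{q'\varepsilon}/w(B(\mathbf x,r))$ with the constant. The latter integrates to $Cm(\mathbf x)^{-q\varepsilon}$ by Lemma~\ref{lem:homogeneous}(\ref{numitem:inside_orbit}). After Tonelli, $\alpha=q(2-\varepsilon)>\mathbf N$ makes the kernel $r^{\alpha}/w(B(\mathbf x,r))$ non-singular: it is dominated by its value $R^{\alpha}/w(B(\mathbf z,R))$ at the critical scale. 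Hence ${\rm RH}^q$ is applied once, on $B(\mathbf z,R)$, and neither self-improvement nor a maximal function is needed. This is the power-weight variant you mentioned and set aside. Note that it is the power of $r$, not the volume factor, that must be split, so phrasing it as $K^{1-\epsilon}$ versus $K^{\epsilon}$ would not work as stated.

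\textbf{Your route.} You prove the pointwise bound $\int V(\mathbf x)K(\mathbf x,\mathbf z)|h(\mathbf x)|\,dw(\mathbf x)\le C\bigl(M(|h|^s)(\mathbf z)\bigr)^{1/s}$. Here $V\in{\rm RH}^{q+\varepsilon_0}(dw)$ by Lemma~\ref{lem: self improve RH}(ii), and $s=(q+\varepsilon_0)'<q'$. On each annulus, H\"older with exponents $q+\varepsilon_0$ and $s$, followed by ${\rm RH}^{q+\varepsilon_0}$ on $B(\mathbf z,r_j)$, cancels all volume factors and leaves $A_{\mathbf z}(r_j)\le C(r_j/R)^{\delta}$. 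It is this reverse H\"older step on balls centred at $\mathbf z$, not any maximal function of $V^q$, that controls the potential. The phrase about ``$M(V^q)^{1/q}$-type terms'' should therefore be dropped.

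\textbf{Comparison.} Your route costs the self-improvement and the maximal theorem. In exchange, the same bound gives $L^p$ boundedness for every $1\le p<q+\varepsilon_0$ at once; the case $p=1$ is Lemma~\ref{lem:L_1}. This makes the Riesz--Thorin step in the proof of Theorem~\ref{thm:V_boundedness} unnecessary. Finally, the critical-ball covering is superfluous in your route: for fixed $\mathbf z$, \eqref{eq:Shen_A} already confines the $\mathbf x$-support of $K(\cdot,\mathbf z)$ to $B(\mathbf z,C_0/m(\mathbf z))$.
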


\begin{proof}
Choose
\[
 0<\varepsilon<2-\frac{\mathbf N}{q},
 \qquad
 \alpha=q(2-\varepsilon)>\mathbf N,
 \qquad q'=\frac q{q-1}.
\]
Write
\[
 E_{\mathbf x}
 =\{\mathbf y:\|\mathbf x-g(\mathbf y)\|<m(\mathbf x)^{-1}\},
 \qquad
 r=\|\mathbf x-g(\mathbf y)\|.
\]
Let $T_gf$ denote the inner integral in
\eqref{eq:L_q_large}.
On its integration region, $d(\mathbf x,\mathbf y)=r$.
By Proposition~\ref{propo:basic_decay}, we may therefore
estimate the kernel and then enlarge the integration
region to obtain
\[
 |T_gf(\mathbf x)|
 \leq C\int_{E_{\mathbf x}}
       \frac{r^2}{w(B(\mathbf x,r))}
       |f(\mathbf y)|\,dw(\mathbf y).
\]
Hölder's inequality gives
\[
 \begin{split}
 |T_gf(\mathbf x)|^q
 &\leq C
 \left(
 \int_{E_{\mathbf x}}
 \frac{r^{q(2-\varepsilon)}}{w(B(\mathbf x,r))}
 |f(\mathbf y)|^q\,dw(\mathbf y)
 \right)
 \left(
 \int_{E_{\mathbf x}}
 \frac{r^{q'\varepsilon}}{w(B(\mathbf x,r))}
 \,dw(\mathbf y)
 \right)^{q/q'}.
 \end{split}
\]
 Applying
Lemma~\ref{lem:homogeneous}(\ref{numitem:inside_orbit}),
we obtain
\[
 \left(
 \int_{E_{\mathbf x}}
 \frac{r^{q'\varepsilon}}{w(B(\mathbf x,r))}
 \,dw(\mathbf y)
 \right)^{q/q'}
 \leq C m(\mathbf x)^{-q\varepsilon}.
\]
Consequently, Tonelli's theorem yields
\[
 \int_{\mathbb R^N}V(\mathbf x)^q|T_gf(\mathbf x)|^q
 \,dw(\mathbf x)
 \leq C\int_{\mathbb R^N}|f(\mathbf y)|^q
                       H(g(\mathbf y))\,dw(\mathbf y),
\]
where
\[
 H(\mathbf z)
 :=
 \int_{\|\mathbf x-\mathbf z\|<m(\mathbf x)^{-1}}
 \frac{m(\mathbf x)^{-q\varepsilon}
       \|\mathbf x-\mathbf z\|^\alpha}
      {w(B(\mathbf x,\|\mathbf x-\mathbf z\|))}
 V(\mathbf x)^q\,dw(\mathbf x).
\]
It remains to bound $H(\mathbf z)$ uniformly in
$\mathbf z$.

Fix $\mathbf z$ and put $\rho=m(\mathbf z)^{-1}$.
On the integration region defining $H(\mathbf z)$,
\eqref{eq:Shen_A} implies
\[
 m(\mathbf x)\asymp m(\mathbf z),
 \qquad
 \|\mathbf x-\mathbf z\|<C_0\rho,
\]
where $C_0\geq1$ is fixed.
Replacing $m(\mathbf x)^{-q\varepsilon}$ on this
region and then enlarging the region, we obtain
\[
 H(\mathbf z)
 \leq C\rho^{q\varepsilon}
 \int_{\|\mathbf x-\mathbf z\|<R}
 \frac{\|\mathbf x-\mathbf z\|^\alpha}
      {w(B(\mathbf x,\|\mathbf x-\mathbf z\|))}
 V(\mathbf x)^q\,dw(\mathbf x),
 \qquad R=C_0\rho.
\]

For $r=\|\mathbf x-\mathbf z\|\in(0,R)$,
comparison of balls and the upper volume growth estimate
give
\[
 \frac1{w(B(\mathbf x,r))}
 \leq \frac{C}{w(B(\mathbf z,r))}
 \leq
 \frac{C}{w(B(\mathbf z,R))}
 \left(\frac Rr\right)^{\mathbf N}.
\]
Since $\alpha>\mathbf N$, it follows that
\[
 \frac{r^\alpha}{w(B(\mathbf x,r))}
 \leq
 \frac{CR^{\mathbf N}r^{\alpha-\mathbf N}}
      {w(B(\mathbf z,R))}
 \leq \frac{CR^\alpha}{w(B(\mathbf z,R))}.
\]
Thus, using $\rho\leq R$ and
$q\varepsilon+\alpha=2q$, we obtain
\[
 H(\mathbf z)
 \leq
 C\frac{R^{2q}}{w(B(\mathbf z,R))}
       \int_{B(\mathbf z,R)}V^q\,dw.
\]
The reverse Hölder inequality now yields
\[
 H(\mathbf z)
 \leq C
 \left(
 \frac{R^2}{w(B(\mathbf z,R))}
       \int_{B(\mathbf z,R)}V\,dw
 \right)^q.
\]
The expression in parentheses is uniformly bounded:
at radius $\rho=m(\mathbf z)^{-1}$ this follows
from the critical-radius normalization, and passage
to $R=C_0\rho$ follows from the doubling property
of $V\,dw$.
Hence $H(\mathbf z)\leq C$ uniformly in $\mathbf z$.

Substituting this bound into the Tonelli estimate gives
\[
 \int_{\mathbb R^N}V(\mathbf x)^q|T_gf(\mathbf x)|^q
 \,dw(\mathbf x)
 \leq C\|f\|_{L^q(dw)}^q,
\]
as required.
\end{proof}

\subsection{Far-field estimates and boundedness of \texorpdfstring{$VL^{-1}$}{VL-1}}

  \begin{lemma}\label{lem:L_p_large}
    Assume $q > \frac{\mathbf{N}}{2}$, $N>2$, and, $V \in {\rm{RH}}^{q}(dw)$, and that $V$ is $G$-invariant. Let $\tau \in \mathbb{R}$, $g \in G$, and $1 \leq p \leq q$. There exists a constant $C>0$ such that for all $f \in L^p(dw)$ we have
    \begin{equation}\label{eq:L_p_large}
        \int_{\mathbb{R}^N}|V(\mathbf{x})|^p\left|\int_{\{\mathbf{y}\in\mathbb{R}^N\::\: d(\mathbf{x},\mathbf{y})=\|\mathbf{x}-g(\mathbf{y})\| \geq \frac{1}{m(\mathbf{x})}\}}\Gamma(\mathbf{x},\mathbf{y},\tau)f(\mathbf{y})\,dw(\mathbf{y})\right|^p\,dw(\mathbf{x}) \leq C\|f\|_{L^p(dw)}^p.
    \end{equation}
\end{lemma}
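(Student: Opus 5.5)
The plan is to use the strong polynomial decay from Theorem~\ref{theorem:fundamental_invariant} to dominate the far-field operator by a positive kernel. We then prove $L^p$ bounds for that positive operator by Schur's test at $p=1$ and $p=q$ (and interpolate), or directly for each $p$ by Hölder.

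On the region $d(\mathbf x,\mathbf y)=\|\mathbf x-g(\mathbf y)\|\geq m(\mathbf x)^{-1}$, write $r=\|\mathbf x-g(\mathbf y)\|$. Theorem~\ref{theorem:fundamental_invariant} gives, for any $\kappa_1$,
\[
 |\Gamma(\mathbf x,\mathbf y,\tau)|\leq C_{\kappa_1}(m(\mathbf x)r)^{-\kappa_1}\frac{r^2}{w(B(\mathbf x,r))}.
\]
Set $\mathbf z=g(\mathbf y)$ and use the $G$-invariance of $dw$. It then suffices to bound the operator
\[
 Sf(\mathbf x)=V(\mathbf x)\int_{\|\mathbf x-\mathbf z\|\geq m(\mathbf x)^{-1}}(m(\mathbf x)\|\mathbf x-\mathbf z\|)^{-\kappa_1}\frac{\|\mathbf x-\mathbf z\|^2}{w(B(\mathbf x,\|\mathbf x-\mathbf z\|))}|f(\mathbf z)|\,dw(\mathbf z)
\]
on $L^p(dw)$, $1\le p\le q$. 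Composing with $f\mapsto f\circ g^{-1}$ is an isometry, so this reduction is harmless.

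\textbf{Step 1 ($p=1$).} By Tonelli it suffices to show that
\[
 \sup_{\mathbf z}\int_{\|\mathbf x-\mathbf z\|\geq m(\mathbf x)^{-1}}V(\mathbf x)(m(\mathbf x)r)^{-\kappa_1}\frac{r^2}{w(B(\mathbf x,r))}\,dw(\mathbf x)<\infty.
\]
Lemma~\ref{lem:x_y_switch} gives $r\geq C_1/m(\mathbf z)$. By \eqref{eq:Shen_B}, $m(\mathbf x)\gtrsim m(\mathbf z)(1+m(\mathbf z)r)^{-\kappa/(\kappa+1)}$, so $(m(\mathbf x)r)^{-\kappa_1}\lesssim (m(\mathbf z)r)^{-\kappa_1/(\kappa+1)}$ for $r\gtrsim 1/m(\mathbf z)$. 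Decompose into dyadic annuli $2^jR\le r<2^{j+1}R$ with $R=C_1/m(\mathbf z)$. On each annulus the integral is at most
\[
 C2^{-j\kappa_1/(\kappa+1)}\frac{(2^jR)^2}{w(B(\mathbf z,2^jR))}\int_{B(\mathbf z,2^{j+1}R)}V\,dw.
\]
The normalized average $A_{\mathbf z}(s)=\frac{s^2}{w(B(\mathbf z,s))}\int_{B(\mathbf z,s)}V\,dw$ grows at most polynomially for $s\ge m(\mathbf z)^{-1}$, say like $(sm(\mathbf z))^{\kappa_0}$. This follows from the doubling of $\mu$ (Lemma~\ref{lem:mu_doubling}) together with $A_{\mathbf z}(m(\mathbf z)^{-1})\simeq 1$; it is the standard estimate underlying \eqref{eq:Shen_B}. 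Choosing $\kappa_1$ large makes the series summable.

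\textbf{Step 2 ($p=q$).} Apply Hölder as in Lemma~\ref{lem:L_q_large}, splitting the kernel as $K=K^{1/q}K^{1/q'}$ with extra polynomial weights. Concretely,
\[
 |Sf(\mathbf x)|^q\le V(\mathbf x)^q\Bigl(\int K_1|f|^q\,dw\Bigr)\Bigl(\int K_2\,dw\Bigr)^{q/q'},
\]
where $K_2=(m(\mathbf x)r)^{-\kappa_2}w(B(\mathbf x,r))^{-1}$ on $r\ge m(\mathbf x)^{-1}$. Lemma~\ref{lem:homogeneous}(A) gives $\int K_2\,dw\le C$. The remaining kernel is $K_1=(m(\mathbf x)r)^{-\kappa_3}r^{2q}w(B(\mathbf x,r))^{-1}$. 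After Tonelli we must bound, uniformly in $\mathbf z$,
\[
 \int V(\mathbf x)^q(m(\mathbf x)r)^{-\kappa_3}\frac{r^{2q}}{w(B(\mathbf x,r))}\,dw(\mathbf x).
\]
Use the same dyadic annuli as in Step 1. The reverse Hölder inequality \eqref{eq:reverse_Holder} on $B(\mathbf z,2^{j+1}R)$ converts $\frac{s^{2q}}{w(B)}\int_B V^q$ into $A_{\mathbf z}(s)^q\lesssim (sm(\mathbf z))^{q\kappa_0}$, which is again killed by the decay for $\kappa_1$ large.

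\textbf{Step 3 ($1<p<q$).} Since $S$ is a positive linear operator bounded on $L^1$ and $L^q$, Riesz--Thorin interpolation gives all intermediate $p$. Alternatively, the Step 2 argument can be repeated with $q$ replaced by $p$, using Lemma~\ref{lem: self improve RH}(i).

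The main obstacle is Step 1's need for polynomial control of $A_{\mathbf z}(s)$ at large scales, together with switching between $m(\mathbf x)$ and $m(\mathbf z)$ via \eqref{eq:Shen_B}–\eqref{eq:Shen_C}. This is exactly where the arbitrary decay $\kappa_1$, available only for $G$-invariant $V$, is needed: the exponent $1/(\kappa+1)$ from Theorem~\ref{theorem:fundamental} alone would not beat the growth $\kappa_0$.
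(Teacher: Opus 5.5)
Your proposal is correct and follows essentially the paper's proof. The shared steps are: the kernel is dominated using the arbitrary decay of Theorem~\ref{theorem:fundamental_invariant}; you substitute $\mathbf z=g(\mathbf y)$; H\"older is applied with the decay split between the two factors; $m(\mathbf x)$ is traded for $m(\mathbf z)$ through Lemmas~\ref{lem:x_y_switch} and~\ref{lem:m_growth}; and on dyadic annuli around $\mathbf z$ the reverse H\"older inequality reduces everything to powers of the polynomially growing average $A_{\mathbf z}(s)$.

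The only real difference is that the paper runs the H\"older argument directly for each $1\le p\le q$ (your Step~3 alternative) rather than interpolating between $p=1$ and $p=q$. Two minor corrections: the bound $m(\mathbf x)\gtrsim m(\mathbf z)(1+m(\mathbf z)r)^{-\kappa/(\kappa+1)}$ is \eqref{eq:Shen_C} with the two points exchanged, not \eqref{eq:Shen_B} (the paper instead uses \eqref{eq:Shen_B} in the form $1+m(\mathbf z)r\le C(1+m(\mathbf x)r)^{\kappa+1}$). Also, Lemma~\ref{lem:homogeneous}(A) is stated for the orbit distance, so for your Euclidean region you should repeat its dyadic-annuli argument, as the paper does for $B(\mathbf x)$.
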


\begin{proof}
Fix $\ell>0$, to be chosen later, and apply
Theorem~\ref{theorem:fundamental_invariant}
with decay exponent $2\ell$.
Let $T_gf$ denote the inner integral in
\eqref{eq:L_p_large}, and put
\[
 E_{\mathbf x}
 =\{\mathbf y:\|\mathbf x-g(\mathbf y)\|
                  \geq m(\mathbf x)^{-1}\},
 \qquad
 r=\|\mathbf x-g(\mathbf y)\|.
\]
On the integration region defining $T_gf$, we have
$d(\mathbf x,\mathbf y)=r$.
Moreover, the doubling property gives
\[
 w(B(\mathbf x,r))\simeq w(B(g(\mathbf y),r)).
\]
We may therefore estimate the kernel on this region
and then enlarge the region to $E_{\mathbf x}$,
obtaining
\[
 |T_gf(\mathbf x)|
 \leq C_\ell\int_{E_{\mathbf x}}
 \frac{r^2|f(\mathbf y)|}
 {w(B(g(\mathbf y),r))(1+m(\mathbf x)r)^{2\ell}}
 \,dw(\mathbf y).
\]

Suppose first that $1<p\leq q$, and put $p'=p/(p-1)$.
Hölder's inequality yields
\[
 |T_gf(\mathbf x)|^p
 \leq C_\ell A_f(\mathbf x)B(\mathbf x)^{p/p'},
\]
where
\[
 A_f(\mathbf x)
 =
 \int_{E_{\mathbf x}}
 \frac{r^{2p}|f(\mathbf y)|^p}
 {w(B(g(\mathbf y),r))(1+m(\mathbf x)r)^{\ell p}}
 \,dw(\mathbf y)
\]
and
\[
 B(\mathbf x)
 =
 \int_{E_{\mathbf x}}
 \frac{dw(\mathbf y)}
 {w(B(g(\mathbf y),r))(1+m(\mathbf x)r)^{\ell p'}}.
\]

We first bound $B(\mathbf x)$ uniformly in $\mathbf x$.
Set $\rho_{\mathbf x}=m(\mathbf x)^{-1}$.
The change of variables $\mathbf z=g(\mathbf y)$,
the $G$-invariance of $dw$, and comparison of balls give
\[
 B(\mathbf x)
 \leq C
 \int_{\|\mathbf x-\mathbf z\|\geq\rho_{\mathbf x}}
 \frac{dw(\mathbf z)}
 {w(B(\mathbf x,\|\mathbf x-\mathbf z\|))
  (1+m(\mathbf x)\|\mathbf x-\mathbf z\|)^{\ell p'}}.
\]
Decomposing this region into dyadic annuli and using
the doubling property, we obtain
\[
 \begin{split}
 B(\mathbf x)
 &\leq C\sum_{j=0}^\infty
 2^{-j\ell p'}
 \frac{w(B(\mathbf x,2^{j+1}\rho_{\mathbf x}))}
      {w(B(\mathbf x,2^j\rho_{\mathbf x}))}\leq C\sum_{j=0}^\infty 2^{-j\ell p'}
 \leq C.
 \end{split}
\]
Multiplying the Hölder estimate by $V(\mathbf x)^p$,
integrating, and applying Tonelli's theorem therefore
gives
\begin{equation}\label{eq:large_kernel_reduction}
\begin{split}
 &\int_{\mathbb R^N}V(\mathbf x)^p|T_gf(\mathbf x)|^p
 \,dw(\mathbf x)\\
 &\quad\leq C_\ell\int_{\mathbb R^N}|f(\mathbf y)|^p
 \left(
 \int_{\|\mathbf x-g(\mathbf y)\|\geq m(\mathbf x)^{-1}}
 \frac{\|\mathbf x-g(\mathbf y)\|^{2p}V(\mathbf x)^p}
 {w(B(g(\mathbf y),\|\mathbf x-g(\mathbf y)\|))
  (1+m(\mathbf x)\|\mathbf x-g(\mathbf y)\|)^{\ell p}}
 \,dw(\mathbf x)
 \right)\,dw(\mathbf y).
\end{split}
\end{equation}
For $p=1$, the same estimate follows directly from
the initial kernel bound and Tonelli's theorem, since
\[
 (1+m(\mathbf x)r)^{-2\ell}
 \leq(1+m(\mathbf x)r)^{-\ell}.
\]
Thus \eqref{eq:large_kernel_reduction} holds for every
$1\leq p\leq q$.

We now estimate the inner integral uniformly in
$\mathbf y$. Put $\mathbf z=g(\mathbf y)$.
Lemma~\ref{lem:x_y_switch} gives a constant $c_0>0$
such that
\[
 \|\mathbf x-\mathbf z\|\geq m(\mathbf x)^{-1}
 \quad\Longrightarrow\quad
 \|\mathbf x-\mathbf z\|\geq c_0m(\mathbf z)^{-1}.
\]
Furthermore, \eqref{eq:Shen_B} implies
\[
 m(\mathbf z)
 \leq C m(\mathbf x)
       (1+m(\mathbf x)\|\mathbf x-\mathbf z\|)^\kappa,
\]
and hence
\[
 1+m(\mathbf z)\|\mathbf x-\mathbf z\|
 \leq C
 (1+m(\mathbf x)\|\mathbf x-\mathbf z\|)^{\kappa+1}.
\]
Consequently, setting
\[
 \theta=\frac{\ell}{\kappa+1},
\]
we obtain
\[
 (1+m(\mathbf x)\|\mathbf x-\mathbf z\|)^{-\ell p}
 \leq C_\ell
 (1+m(\mathbf z)\|\mathbf x-\mathbf z\|)^{-\theta p}.
\]
It therefore suffices to show that
\begin{equation}\label{eq:dyadic_sum}
 \int_{\|\mathbf x-\mathbf z\|\geq c_0m(\mathbf z)^{-1}}
 \frac{\|\mathbf x-\mathbf z\|^{2p}V(\mathbf x)^p}
 {w(B(\mathbf z,\|\mathbf x-\mathbf z\|))
  (1+m(\mathbf z)\|\mathbf x-\mathbf z\|)^{\theta p}}
 \,dw(\mathbf x)
 \leq C
\end{equation}
uniformly in $\mathbf z$.

Put $\rho=m(\mathbf z)^{-1}$ and define
\[
 A_j(\mathbf z)
 =
 \{\mathbf x:2^jc_0\rho\leq\|\mathbf x-\mathbf z\|
                      <2^{j+1}c_0\rho\},
 \qquad
 R_j=2^{j+1}c_0\rho,
 \qquad j\geq0.
\]
On $A_j(\mathbf z)$, the doubling property gives
\[
 w(B(\mathbf z,\|\mathbf x-\mathbf z\|))
 \simeq w(B(\mathbf z,R_j)),
\]
while
\[
 (1+m(\mathbf z)\|\mathbf x-\mathbf z\|)^{-\theta p}
 \leq C2^{-j\theta p}.
\]
Thus the integral in \eqref{eq:dyadic_sum} is bounded by
\begin{equation}\label{eq:large_annuli_potential_sum}
 C\sum_{j=0}^\infty 2^{-j\theta p}
 \frac{R_j^{2p}}{w(B(\mathbf z,R_j))}
 \int_{B(\mathbf z,R_j)}V^p\,dw.
\end{equation}

For every ball $B$, Hölder's inequality and
$V\in{\rm RH}^q(dw)$ imply
\[
 \left(\frac1{w(B)}\int_B V^p\,dw\right)^{1/p}
 \leq
 \left(\frac1{w(B)}\int_B V^q\,dw\right)^{1/q}
 \leq C\frac1{w(B)}\int_B V\,dw.
\]
Consequently,
\[
 \frac{R_j^{2p}}{w(B(\mathbf z,R_j))}
 \int_{B(\mathbf z,R_j)}V^p\,dw
 \leq C
 \left(
 \frac{R_j^2}{w(B(\mathbf z,R_j))}
 \int_{B(\mathbf z,R_j)}V\,dw
 \right)^p.
\]

To estimate the last expression, write $d\mu=V\,dw$.
The critical-radius normalization gives
\[
 \frac{\rho^2}{w(B(\mathbf z,\rho))}
       \mu(B(\mathbf z,\rho))\leq C.
\]
By Lemma~\ref{lem:mu_doubling}, there exists $\nu>0$
such that
\[
 \mu(B(\mathbf z,R))
 \leq C\left(\frac R\rho\right)^\nu
          \mu(B(\mathbf z,\rho)),
 \qquad R\geq\rho.
\]
Since $w(B(\mathbf z,R))\geq w(B(\mathbf z,\rho))$,
we obtain
\[
 \frac{R^2}{w(B(\mathbf z,R))}
       \int_{B(\mathbf z,R)}V\,dw
 \leq C\left(\frac R\rho\right)^{2+\nu},
 \qquad R\geq\rho.
\]
For $0<R<\rho$, Lemma~\ref{lem:Rr} gives
\[
 \frac{R^2}{w(B(\mathbf z,R))}
       \int_{B(\mathbf z,R)}V\,dw
 \leq C\left(\frac R\rho\right)^{2-\mathbf N/q}
 \leq C.
\]
Therefore, with $N_0=2+\nu$,
\[
 \frac{R^2}{w(B(\mathbf z,R))}
       \int_{B(\mathbf z,R)}V\,dw
 \leq C(1+Rm(\mathbf z))^{N_0},
 \qquad R>0.
\]
Since $R_jm(\mathbf z)=2^{j+1}c_0$, it follows that
\[
 \frac{R_j^{2p}}{w(B(\mathbf z,R_j))}
 \int_{B(\mathbf z,R_j)}V^p\,dw
 \leq C2^{jpN_0}.
\]
Substituting this into
\eqref{eq:large_annuli_potential_sum}, we obtain
\[
 C\sum_{j=0}^\infty 2^{-jp(\theta-N_0)}.
\]
Choose
\[
 \ell>(\kappa+1)N_0.
\]
Then $\theta>N_0$, so the series converges and proves
\eqref{eq:dyadic_sum}.
Finally, \eqref{eq:large_kernel_reduction} yields
\[
 \int_{\mathbb R^N}V(\mathbf x)^p|T_gf(\mathbf x)|^p
 \,dw(\mathbf x)
 \leq C\|f\|_{L^p(dw)}^p.
\]
The constant is independent of $\tau$ and $g$.
\end{proof}

\begin{remark}
The proof of Lemma~\ref{lem:L_p_large} uses
$G$-invariance through the arbitrary-order kernel decay
in Theorem~\ref{theorem:fundamental_invariant}.
This allows us to choose
$\ell>(\kappa+1)N_0$ and sum the resulting dyadic series.
The fixed decay exponent in
Theorem~\ref{theorem:fundamental} alone does not provide
this summability argument.
\end{remark}

The preceding lemmas provide the near-field and far-field
bounds needed to prove Theorem~\ref{thm:V_boundedness}.
For the near-field contribution, we interpolate between
the $L^1(dw)$ and $L^q(dw)$ estimates, while the far-field
contribution is bounded directly throughout the range
$1\leq p\leq q$.
Summing over the finitely many group elements then yields
the boundedness of $VL^{-1}$.
This estimate will also be the starting point for the
remaining results of this section: the identity
$-\Delta_kL^{-1}=I-VL^{-1}$ gives the second-order
derivative bounds, and analytic interpolation yields
the fractional potential estimates and the initial
range of boundedness for the first-order Riesz transform.

\begin{proof}[Proof of Theorem~\ref{thm:V_boundedness}]
Write
\[
 L=-\Delta_k+V,
 \qquad
 \Gamma(\mathbf x,\mathbf y)=\Gamma(\mathbf x,\mathbf y,0).
\]
Recall that $\Gamma\geq0$, since it is the time integral
of the nonnegative heat kernel of $L$.

For $g\in G$, define
\[
 \begin{split}
 E_{1,g}(\mathbf x)
 &=\left\{\mathbf y:
 d(\mathbf x,\mathbf y)=\|\mathbf x-g(\mathbf y)\|
 <m(\mathbf x)^{-1}\right\},\\
 E_{2,g}(\mathbf x)
 &=\left\{\mathbf y:
 d(\mathbf x,\mathbf y)=\|\mathbf x-g(\mathbf y)\|
 \geq m(\mathbf x)^{-1}\right\},
 \end{split}
\]
and introduce the operators
\[
 S_{i,g}f(\mathbf x)
 =V(\mathbf x)
  \int_{E_{i,g}(\mathbf x)}
       \Gamma(\mathbf x,\mathbf y)f(\mathbf y)\,dw(\mathbf y),
 \qquad i=1,2.
\]
These operators are initially considered on bounded,
compactly supported functions.

By Lemma~\ref{lem:L_1},
\[
 \|S_{1,g}f\|_{L^1(dw)}
 \leq C\|f\|_{L^1(dw)},
\]
whereas Lemma~\ref{lem:L_q_large} gives
\[
 \|S_{1,g}f\|_{L^q(dw)}
 \leq C\|f\|_{L^q(dw)}.
\]
The Riesz--Thorin interpolation theorem therefore yields
\[
 \|S_{1,g}f\|_{L^p(dw)}
 \leq C_p\|f\|_{L^p(dw)},
 \qquad 1\leq p\leq q.
\]
For the remaining operators,
Lemma~\ref{lem:L_p_large}, applied with $\tau=0$,
gives directly
\[
 \|S_{2,g}f\|_{L^p(dw)}
 \leq C_p\|f\|_{L^p(dw)},
 \qquad 1\leq p\leq q.
\]
Here we use the assumed $G$-invariance of $V$.

Now let $f\in C_c^\infty(\mathbb R^N)$ and define
\[
 Tf(\mathbf x)
 =V(\mathbf x)\int_{\mathbb R^N}
       \Gamma(\mathbf x,\mathbf y)f(\mathbf y)\,dw(\mathbf y).
\]
For every pair $(\mathbf x,\mathbf y)$, at least one
element $g\in G$ realizes the minimum defining
$d(\mathbf x,\mathbf y)$.
Thus the sets $E_{i,g}(\mathbf x)$ cover $\mathbb R^N$.
They need not be disjoint, but positivity of $\Gamma$
and $V$ gives
\[
 |Tf(\mathbf x)|
 \leq\sum_{g\in G}
       \bigl(S_{1,g}|f|(\mathbf x)
             +S_{2,g}|f|(\mathbf x)\bigr).
\]
The triangle inequality in $L^p(dw)$ and the preceding
bounds imply
\[
 \begin{split}
 \|Tf\|_{L^p(dw)}
 &\leq\sum_{g\in G}
 \left(
 \|S_{1,g}|f|\|_{L^p(dw)}
 +\|S_{2,g}|f|\|_{L^p(dw)}
 \right)\leq C_p\|f\|_{L^p(dw)},
 \qquad 1\leq p\leq q,
 \end{split}
\]
where the finite factor $|G|$ is absorbed into $C_p$.

Since $C_c^\infty(\mathbb R^N)$ is dense in $L^p(dw)$
for $1\leq p<\infty$, $T$ extends uniquely to a bounded
operator on $L^p(dw)$ for every $1\leq p\leq q$.
This extension is the operator denoted by $VL^{-1}$,
and it satisfies
\[
 \|VL^{-1}f\|_{L^p(dw)}
 \leq C_p\|f\|_{L^p(dw)}.
\]
\end{proof}

\subsection{Second-order Dunkl derivatives}

We write
\[
 \nabla_k^2f=(T_iT_jf)_{1\leq i,j\leq N}
\]
for the Dunkl Hessian and use the convention
\[
 \|\nabla_k^2f\|_{L^p(dw)}
 :=\sum_{i,j=1}^N\|T_iT_jf\|_{L^p(dw)}.
\]

\begin{lemma}\label{lem:Riesz_second_order}
Let $1<p<\infty$.
For every $1\leq i,j\leq N$, the second-order
Dunkl--Riesz transform
\[
 S_{ij}=T_iT_j(-\Delta_k)^{-1},
\]
initially defined on $L^2(dw)$ by the multiplier
$-\xi_i\xi_j/\|\boldsymbol\xi\|^2$, extends to a bounded
operator on $L^p(dw)$.
Moreover, if $v\in L^p(dw)$ and
$-\Delta_kv\in L^p(dw)$ in the distributional sense, then
\[
 T_iT_jv=S_{ij}(-\Delta_kv)
\]
and
\[
 \|T_iT_jv\|_{L^p(dw)}
 \leq C_p\|-\Delta_kv\|_{L^p(dw)}.
\]
\end{lemma}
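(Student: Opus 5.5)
Two separate claims need proof: the $L^p(dw)$ boundedness of $S_{ij}$, and the identification $T_iT_jv=S_{ij}(-\Delta_kv)$ for $v$ with $v,-\Delta_kv\in L^p(dw)$.

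\emph{Boundedness.} Use the factorization $S_{ij}=-R_iR_j$, where $R_j=T_j(-\Delta_k)^{-1/2}$ is the first-order Dunkl--Riesz transform with multiplier $-i\xi_j/\|\xi\|$. Indeed $(-i\xi_i/\|\xi\|)(-i\xi_j/\|\xi\|)=-\xi_i\xi_j/\|\xi\|^2$, and the product is taken on $L^2(dw)$, where both factors are contractions by Plancherel \eqref{eq:Plancherel} and \eqref{eq:der_transform}. Boundedness of $R_j$ on $L^p(dw)$ for $1<p<\infty$ is established in the Dunkl singular-integral theory cited in the introduction (\cite{ADzH,TanHanLi,HanLeeLiWick}). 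The kernel of $R_j$ satisfies size bounds $C/w(B(\mathbf x,d(\mathbf x,\mathbf y)))$ and smoothness bounds of Calder\'on--Zygmund type in terms of the orbit distance. Together with $L^2$ boundedness, a Calder\'on--Zygmund decomposition on the doubling space $(\mathbb R^N,\|\cdot\|,dw)$ gives weak type $(1,1)$; the Hörmander condition is verified using $d$ outside enlarged orbit balls $\mathcal O(B)$, with \eqref{eq:ball_orbit_compare} controlling their measure. Interpolation and duality (the $R_j$ are skew-adjoint) then give boundedness for $1<p<\infty$. Composing yields a bounded extension of $S_{ij}$ on $L^p(dw)$. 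If available, one could alternatively quote a Dunkl Hörmander multiplier theorem for the homogeneous degree-zero smooth symbol directly.

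\emph{Identification.} Let $v\in L^p(dw)$ with $F:=-\Delta_kv\in L^p(dw)$. I would show $T_iT_jv=S_{ij}F$ distributionally by testing against $\varphi\in\mathcal S(\mathbb R^N)$ whose Dunkl transform is supported away from the origin. For such $\varphi$, $\psi:=T_iT_j(-\Delta_k)^{-1}\varphi$ is again Schwartz, since its transform $-\xi_i\xi_j\|\xi\|^{-2}\mathcal F\varphi$ is smooth. Using the adjointness of $S_{ij}$ (its multiplier is real and even) and transferring the derivatives, $\langle S_{ij}F,\varphi\rangle=\langle F,\psi\rangle=\langle v,-\Delta_k\psi\rangle=\langle v,T_iT_j\varphi\rangle=\langle T_iT_jv,\varphi\rangle$, where $-\Delta_k\psi=T_iT_j\varphi$ is checked on the Fourier side. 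Hence $T_iT_jv-S_{ij}F$ is a tempered distribution whose Dunkl transform is supported at $\{0\}$, so it is a polynomial. Both $S_{ij}F$ and $T_iT_jv$ must then be handled at infinity. Let $P:=T_iT_jv-S_{ij}F$. The cleanest route is to mollify $v$ by a radial approximate identity, apply the same argument to $v-\pi$ for a harmonic polynomial $\pi$, and use $v\in L^p$, $p<\infty$, to exclude nonzero polynomial parts of $v$. A nonzero polynomial is not the second derivative of an $L^p$ function modulo an $L^p$ function, which gives $P=0$. I would carry this out by noting that $v\in L^p$ forces $\mathcal Fv$ to have no polynomial singular part at $0$, so $T_iT_jv$ has transform vanishing at $0$ in the distributional sense paired with functions flat at $0$; this leaves $P=0$.

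\emph{Main obstacle.} The hardest step is this last identification, which rules out the polynomial ambiguity in the Dunkl setting, where Dunkl translations are not known to be $L^p$-bounded. I would try first to pair against $\varphi$ with $\mathcal F\varphi$ vanishing to infinite order at $0$, which form a dense class in $L^{p'}(dw)$ modulo nothing when $p'>1$, and extend by continuity using the $L^p$ bound of $S_{ij}$. The final estimate $\|T_iT_jv\|_{L^p(dw)}\le C_p\|-\Delta_kv\|_{L^p(dw)}$ then follows immediately.
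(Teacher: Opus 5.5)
Your boundedness part matches the paper's argument: factor $S_{ij}$ through the first-order Dunkl--Riesz transforms and quote their $L^p(dw)$ bounds. One harmless sign slip: by \eqref{eq:der_transform} the multiplier of $R_j$ is $i\xi_j/\|\xi\|$, and your own computation gives $R_iR_j$ the multiplier $-\xi_i\xi_j/\|\xi\|^2$, so $S_{ij}=R_iR_j$, not $-R_iR_j$.

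\textbf{The gap is in the identification step.} Testing against $\varphi$ with $\mathcal F\varphi$ supported away from the origin correctly shows that $P=T_iT_jv-S_{ij}F$ is a polynomial. However, neither of your arguments shows $P=0$.
\begin{itemize}
\item The remark about the transform at the origin, tested only against functions flat at $0$, cannot detect $P$. Every distribution supported at the origin annihilates such functions, so this is the same information you have already used.
\item The density argument is circular. Extending $\langle T_iT_jv-S_{ij}F,\varphi\rangle=0$ to all test functions by $L^{p'}(dw)$-continuity requires $\varphi\mapsto\langle T_iT_jv,\varphi\rangle$ to be $L^{p'}$-continuous, that is, $T_iT_jv\in L^p(dw)$. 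That is the conclusion you are trying to prove. You would instead need approximation in a graph norm such as $\|\varphi\|_{p'}+\|\Delta_k\varphi\|_{p'}$, with the derivatives moved onto the test function, and a proof that such approximants exist in the Dunkl setting. Neither is provided.
\end{itemize}

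\textbf{A repair within your framework is a scaling argument.} For $\varphi\in C_c^\infty(\mathbb R^N)$ put $\varphi^\lambda(\mathbf x)=\lambda^{-\mathbf N}\varphi(\mathbf x/\lambda)$. Homogeneity of $dw$ (degree $\mathbf N$) and of the $T_j$ (degree $-1$) gives
\[
 |\langle T_iT_jv,\varphi^\lambda\rangle|
 =|\langle v,T_iT_j\varphi^\lambda\rangle|
 \leq\lambda^{-2-\mathbf N/p}\|v\|_{L^p(dw)}\|T_iT_j\varphi\|_{L^{p'}(dw)},
 \qquad
 |\langle S_{ij}F,\varphi^\lambda\rangle|
 \leq\lambda^{-\mathbf N/p}\|S_{ij}F\|_{L^p(dw)}\|\varphi\|_{L^{p'}(dw)}.
\]
Both bounds tend to $0$ as $\lambda\to\infty$ because $p<\infty$. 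On the other hand,
\[
 \langle P,\varphi^\lambda\rangle=\int P(\lambda\mathbf x)\varphi(\mathbf x)\,dw(\mathbf x)
\]
grows like $\lambda^{\deg P}\int P_{\mathrm{top}}\varphi\,dw$. For a suitable $\varphi$ this is nonzero unless $P=0$, so $P=0$.

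\textbf{The paper avoids the polynomial ambiguity entirely.} With $A=-\Delta_k$ and $b>0$ it writes $v=(A+bI)^{-1}(Av+bv)$, using that $A+bI$ has trivial kernel. This gives $T_iT_jv=S_{ij}A(A+bI)^{-1}(Av+bv)$. It then lets $b\downarrow0$, using that $A(A+bI)^{-1}=I-b(A+bI)^{-1}$ is uniformly bounded on $L^p(dw)$ and tends strongly to $I$, by the decay of the heat semigroup.
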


\begin{proof}
The first-order Dunkl--Riesz transforms
\[
 R_i=T_i(-\Delta_k)^{-1/2}
\]
are bounded on $L^p(dw)$ for $1<p<\infty$;
see~\cite{AS}.
Their Dunkl multipliers are
$i\xi_i/\|\boldsymbol\xi\|$.
Multiplication of these symbols gives
\[
 S_{ij}=R_iR_j
\]
on $L^2(dw)$ and hence, by density, on $L^p(dw)$.
This proves the boundedness of $S_{ij}$.

For completeness, put $A=-\Delta_k$ and suppose that
$v,Av\in L^p(dw)$.
For $b>0$, the free resolvent identity gives
\[
 v=(A+bI)^{-1}(Av+bv).
\]
Applying the corresponding Dunkl multipliers yields,
in the distributional sense,
\[
 T_iT_jv
 =S_{ij}A(A+bI)^{-1}(Av+bv).
\]
As $b\downarrow0$, the operators $A(A+bI)^{-1}$
converge strongly to the identity on $L^p(dw)$ and
are uniformly bounded there. This follows from
\[
 A(A+bI)^{-1}=I-b(A+bI)^{-1}
\]
and the strong decay of the heat semigroup on
$L^p(dw)$ for $1<p<\infty$.
Passing to the limit proves
$T_iT_jv=S_{ij}Av$ and the asserted estimate.
\end{proof}

\begin{theorem}\label{thm:Riesz_boundedness}
Assume $N>2$, $q>\mathbf N/2$, and
$V\in{\rm RH}^q(dw)$.
Suppose also that $V$ is $G$-invariant.
Then, for every $1<p\leq q$, the operator
$\nabla_k^2L^{-1}$, initially defined through the
Green kernel on $C_c^\infty(\mathbb R^N)$, extends
to a bounded operator on $L^p(dw)$.
In particular,
\[
 \|\nabla_k^2L^{-1}f\|_{L^p(dw)}
 \leq C_p\|f\|_{L^p(dw)}.
\]
\end{theorem}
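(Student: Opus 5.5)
The plan is to reduce the estimate to Theorem~\ref{thm:V_boundedness} and Lemma~\ref{lem:Riesz_second_order} via the identity $-\Delta_kL^{-1}=I-VL^{-1}$. Fix $f\in C_c^\infty(\mathbb R^N)$ and put $u=L^{-1}f$, defined through the Green kernel, $u(\mathbf x)=\int\Gamma(\mathbf x,\mathbf y,0)f(\mathbf y)\,dw(\mathbf y)$. The formal computation is
\[
 -\Delta_k u = f - Vu ,
\]
and Theorem~\ref{thm:V_boundedness} gives $\|Vu\|_{L^p(dw)}\leq C_p\|f\|_{L^p(dw)}$ for $1\leq p\leq q$. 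Hence $\|-\Delta_ku\|_{L^p(dw)}\leq C\|f\|_{L^p(dw)}$. Lemma~\ref{lem:Riesz_second_order} would then yield $\|T_iT_ju\|_{L^p(dw)}\leq C_p\|f\|_{L^p(dw)}$ for $1<p\leq q$, and density of $C_c^\infty$ gives the bounded extension.

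\textbf{Justifying the distributional identity.} We do this by spectral regularization rather than pointwise. For $a>0$, let $u_a=(L+a)^{-1}f\in\mathcal D(L)$. Testing against $\varphi\in C_c^\infty$ through the form $\mathbf Q$ gives
\[
 -\Delta_ku_a=f-Vu_a-au_a
\]
in the distributional sense. The kernel of $(L+a)^{-1}$ is $\Gamma_a$, and $0\leq\Gamma_a\uparrow\Gamma(\cdot,\cdot,0)$ as $a\downarrow0$ by monotone convergence in the time integral of $e^{-at}k_t$. Since $Vu$ is locally integrable (it lies in $L^p$ by Theorem~\ref{thm:V_boundedness}), monotone or dominated convergence gives $u_a\to u$ and $Vu_a\to Vu$ in $L^1_{\mathrm{loc}}(dw)$. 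The term $au_a$ is controlled by $a\Gamma_a\leq a\int_0^\infty h_t\,dt$ locally. Using the bound \eqref{eq:integrated_heat_bound} with $N>2$, it tends to zero in $L^1_{\mathrm{loc}}$. This yields $-\Delta_ku=f-Vu$ distributionally, with the right-hand side in $L^p(dw)$.

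\textbf{Main obstacle: checking the hypothesis $v\in L^p(dw)$ in Lemma~\ref{lem:Riesz_second_order}.} The Green potential $u$ need not lie in $L^p(dw)$; since $\Gamma\lesssim d^2/w(B)$, $u$ is only in $L^s$ for large $s$. I would avoid this by working with $u_a$ at the level of the regularized operator. Apply Lemma~\ref{lem:Riesz_second_order} with $A=-\Delta_k$ to $v=u_a$, which lies in $L^2$ and, via $0\leq\Gamma_a\leq \int e^{-at}h_t\,dt$, also in $L^p$. Then
\[
 T_iT_ju_a=S_{ij}(f-Vu_a-au_a).
\]
The terms $Vu_a$ are bounded uniformly in $a$ by the proof of Theorem~\ref{thm:V_boundedness}, because $\Gamma_a\leq\Gamma$ pointwise and the operator there was dominated by positive majorants. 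The terms $au_a$ satisfy $\|au_a\|_p\leq\|f\|_p$ by $L^p$-contractivity of $a(L+a)^{-1}$, which follows from $k_t\leq h_t$ and \eqref{eq:h_integral_1}. This gives a uniform bound on $\|T_iT_ju_a\|_{L^p}$.

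\textbf{Passing to the limit.} Letting $a\downarrow0$, $T_iT_ju_a\to T_iT_ju$ in distributions. If needed, I would show that $au_a\to0$ weakly in $L^p$ using the local bound above together with the uniform $L^p$ bound. Weak-$*$ compactness then gives $\|T_iT_ju\|_{L^p}\leq C_p\|f\|_{L^p}$, which is the estimate for $\nabla_k^2L^{-1}f$ with this convention.
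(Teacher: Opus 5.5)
Your proposal is correct and follows essentially the same route as the paper. You regularize with $u_a=(L+aI)^{-1}f$, bound $Vu_a$ uniformly in $a$ via positivity ($\Gamma_a\le\Gamma$) and Theorem~\ref{thm:V_boundedness}, control $au_a$ by $L^p$-contractivity, apply Lemma~\ref{lem:Riesz_second_order} to $u_a$, and pass to the limit using $L^1_{\mathrm{loc}}$ convergence $u_a\to u$ together with weak compactness in $L^p(dw)$. The distributional identity $-\Delta_k u=f-Vu$ for the limit $u$ is not needed for the estimate (only the local convergence $u_a\to u$ is used), but it does no harm.
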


\begin{proof}
Fix $1<p\leq q$.
We first work with $f\in C_c^\infty(\mathbb R^N)$.
For $a>0$, put
\[
 u_a=(L+aI)^{-1}f,
 \qquad
 u(\mathbf x)=\int_{\mathbb R^N}
       \Gamma(\mathbf x,\mathbf y,0)f(\mathbf y)\,dw(\mathbf y).
\]
We shall estimate the second-order derivatives of $u_a$
uniformly in $a$ and then let $a\downarrow0$.

Since the semigroup generated by $L$ is contractive
on $L^p(dw)$,
\[
 \|u_a\|_{L^p(dw)}
 \leq\int_0^\infty e^{-at}\|K_tf\|_{L^p(dw)}\,dt
 \leq a^{-1}\|f\|_{L^p(dw)}.
\]
Moreover, positivity of the heat kernel implies
\[
 |u_a(\mathbf x)|
 \leq \int_{\mathbb R^N}
       \Gamma(\mathbf x,\mathbf y,0)|f(\mathbf y)|\,dw(\mathbf y).
\]
Theorem~\ref{thm:V_boundedness}, applied to $|f|$,
therefore gives
\[
 \|Vu_a\|_{L^p(dw)}
 \leq C_p\|f\|_{L^p(dw)},
\]
uniformly in $a>0$.

The equation $(L+aI)u_a=f$ yields
\[
 -\Delta_ku_a=f-Vu_a-au_a.
\]
Consequently,
\[
 \|-\Delta_ku_a\|_{L^p(dw)}
 \leq (2+C_p)\|f\|_{L^p(dw)}.
\]
Since both $u_a$ and $-\Delta_ku_a$ belong to $L^p(dw)$,
Lemma~\ref{lem:Riesz_second_order} applies and gives
\begin{equation}\label{eq:regularized_hessian_bound}
 \sum_{i,j=1}^N\|T_iT_ju_a\|_{L^p(dw)}
 \leq C_p\|f\|_{L^p(dw)},
\end{equation}
with a constant independent of $a$.

We next pass to the Green potential.
The heat-kernel representation and dominated convergence
give
\[
 u_a\longrightarrow u
 \quad\text{in }L^1_{\mathrm{loc}}(dw)
 \qquad\text{as }a\downarrow0.
\]
Indeed, the absolute values are dominated by the Green
potential of $|f|$, which is locally integrable by the
heat-kernel majorant established in
Proposition~\ref{propo:basic_decay}.
Thus $T_iT_ju_a\to T_iT_ju$ in the distributional sense.

Because $1<p<\infty$, the bound
\eqref{eq:regularized_hessian_bound} and weak compactness
in $L^p(dw)$ allow us to choose a sequence $a_n\downarrow0$
such that all $T_iT_ju_{a_n}$ converge weakly in $L^p(dw)$.
Their distributional limits identify the weak limits
with $T_iT_ju$.
Weak lower semicontinuity therefore gives
\[
 \|\nabla_k^2u\|_{L^p(dw)}
 \leq C_p\|f\|_{L^p(dw)}.
\]

Finally, density of $C_c^\infty(\mathbb R^N)$ in
$L^p(dw)$ gives a unique bounded extension of
$\nabla_k^2L^{-1}$ to all of $L^p(dw)$.
\end{proof}

\subsection{Imaginary powers and fractional potential estimates}

We first record a consequence of the symmetric contraction semigroup
generated by $L$. This argument does not require pointwise estimates for
spatial derivatives of the kernel.

\begin{theorem}\label{thm:imaginary_powers_Lp}
Assume $q>\frac{\mathbf{N}}{2}$, $N>2$, $V\in {\rm{RH}}^q(dw)$, and that
$V$ is $G$-invariant. Then, for every $\gamma\in\mathbb{R}$ and every
$1<p<\infty$, there is a constant $C_{p,\gamma}>0$ such that
\begin{equation}\label{eq:imaginary_powers_Lp}
    \|(-\Delta_k+V)^{i\gamma}f\|_{L^p(dw)}
    \leq C_{p,\gamma}\|f\|_{L^p(dw)}.
\end{equation}
More precisely, if
\[
    \phi>\arcsin\left|1-\frac{2}{p}\right|,
\]
then one may take
\[
    C_{p,\gamma}\leq C_{p,\phi}e^{\phi|\gamma|}.
\]
\end{theorem}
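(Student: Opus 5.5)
The plan is to invoke the general theory of symmetric Markov semigroups rather than kernel estimates. The semigroup $K_t=e^{-tL}$ is self-adjoint on $L^2(dw)$ and has a kernel with $0\le k_t(\mathbf x,\mathbf y)\le h_t(\mathbf x,\mathbf y)$ by \eqref{eq:kernels_compare}. Together with symmetry $k_t(\mathbf x,\mathbf y)=k_t(\mathbf y,\mathbf x)$ and \eqref{eq:h_integral_1}, this gives
\[
\int k_t(\mathbf x,\mathbf y)\,dw(\mathbf y)\le 1
\quad\text{in each variable.}
\]
Hence $K_t$ is positive and contractive on $L^1(dw)$ and $L^\infty(dw)$, and by Riesz--Thorin on every $L^p(dw)$, $1\le p\le\infty$. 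So $\{K_t\}$ is a symmetric sub-Markovian semigroup in Stein's sense.

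For such semigroups the generator $L$ has a bounded $H^\infty$ functional calculus on $L^p$, $1<p<\infty$, in the sector of half-angle $\phi>\phi_p^*:=\arcsin|1-2/p|$. This is the sharp result of Carbonaro--Dragičević, which refines Cowling's theorem; any $\phi>\pi|1/2-1/p|$ would come from Cowling's theorem. Concretely, $K_t$ extends to a contractive analytic semigroup on $L^p$ in the sector $|\arg z|<\pi/2-\phi^*_p$. One sees this by interpolating, via Stein's complex interpolation, between the $L^2$ contractivity of $e^{-zL}$ for $\operatorname{Re} z>0$ (spectral theorem) and the $L^1/L^\infty$ contractivity of real times. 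Applying the functional calculus to $\lambda\mapsto\lambda^{i\gamma}$, whose sup on the sector $|\arg\lambda|<\phi$ equals $e^{\phi|\gamma|}$, gives
\[
\|L^{i\gamma}f\|_{L^p(dw)}\le C_{p,\phi}e^{\phi|\gamma|}\|f\|_{L^p(dw)}.
\]

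Alternatively, one can argue self-containedly along the lines of Stein's Topics in Harmonic Analysis and Cowling. Write
\[
L^{i\gamma}=\frac{1}{\Gamma(1-i\gamma)}\int_0^\infty t^{-i\gamma}\,L e^{-tL}\,dt
\]
on the range of $L$, or use the transference/multiplier argument for $\int_0^\infty m(t)\,\partial_t K_t\,dt$. The Littlewood--Paley $g$-function bounds for symmetric diffusion semigroups then give $L^p$ boundedness with constant $C_p(1+|\gamma|)^{c}e^{\pi|\gamma|/2}$. To sharpen this to $e^{\phi|\gamma|}$, rotate the contour into the analyticity sector established above.

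Two technical points must be checked. First, $L$ is injective, which makes $L^{i\gamma}$ a well-defined unitary operator on $L^2$. Since $V\not\equiv0$, the Fefferman--Phong inequality (Theorem~\ref{theo:Fefferman-Phong}) with $m>0$ excludes $Lf=0$ for $f\neq0$. Second, the $L^p$ operator must agree with the $L^2$ spectral definition on $L^2\cap L^p$, which holds by the consistency of the calculus. The $G$-invariance and reverse Hölder hypotheses are not really needed beyond ensuring this setting.

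The main obstacle is obtaining the sharp angle $\arcsin|1-2/p|$. Here I would simply cite the Carbonaro--Dragičević theorem on the optimal functional calculus of generators of symmetric contraction semigroups; its hypotheses reduce to sub-Markovianity of $K_t$, which is verified from \eqref{eq:kernels_compare} as above.
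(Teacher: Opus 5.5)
Your main line, which matches the paper's proof, is correct: you deduce from $0\le k_t\le h_t$, symmetry and \eqref{eq:h_integral_1} that $\{e^{-tL}\}$ is a symmetric contraction semigroup on every $L^p(dw)$, then apply the Carbonaro--Dragi\v{c}evi\'c $H^\infty$ calculus to $F(z)=z^{i\gamma}$ using $\sup_{|\arg z|<\phi}|z^{i\gamma}|=e^{\phi|\gamma|}$, and your Fefferman--Phong proof that $L$ is injective is a valid substitute for the paper's convention $L^{i\gamma}=0$ on the null space. One aside is wrong but not load-bearing: Stein's complex interpolation only gives contractive analyticity in $|\arg z|<\frac{\pi}{2}\bigl(1-|1-\frac{2}{p}|\bigr)$, which is strictly smaller for $p\neq2$, whereas the sharp sector $|\arg z|<\frac{\pi}{2}-\arcsin|1-\frac{2}{p}|$ is the Liskevich--Perelmuter theorem, and your contour-rotation alternative would need more than contractivity there, so rely on the citation alone.
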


\begin{proof}
Recall that $L=-\Delta_k+V$. Since $\{e^{-tL}\}_{t > 0}$ forms a symmetric contraction semigroup on $L^p(dw)$ for all $1 \le p \le \infty$, applying the holomorphic functional calculus for symmetric contraction semigroups (see \cite[Proposition 11]{CarbonaroDragicevic2017}) yields that for every $\phi > \arcsin \left| 1 - \frac{2}{p} \right|$ and any bounded holomorphic function $F$ defined on $\{z \neq 0 : |\arg z| < \phi\}$, 
\[
    \|F(L)f\|_{L^p(dw)}
    \leq C_{p,\phi}
       \sup_{|\arg z|<\phi}|F(z)|\,\|f\|_{L^p(dw)}.
\]
Apply this with $F(z)=z^{i\gamma}$. Since
\[
    \sup_{|\arg z|<\phi}|z^{i\gamma}|\leq e^{\phi|\gamma|},
\]
we obtain~\eqref{eq:imaginary_powers_Lp}. On the null space of $L$,
$L^{i\gamma}$ is defined to be zero. This convention does not affect the
estimate.
\end{proof}

The following interpolation argument yields full estimates for the operator $V^{1/2}(-\Delta_k+V)^{-1/2}$.

\begin{theorem}\label{thm:fractional_potential_boundedness}
Assume $q>\frac{\mathbf{N}}{2}$, $N>2$, $V\in {\rm{RH}}^q(dw)$, and that
$V$ is $G$-invariant. Let $0<\theta<1$. Then, for every
\[
    1<p<\frac{q}{\theta},
\]
there is a constant $C_{p,\theta}>0$ such that
\begin{equation}\label{eq:fractional_potential_boundedness}
    \|V^\theta (-\Delta_k+V)^{-\theta}f\|_{L^p(dw)}
    \leq C_{p,\theta}\|f\|_{L^p(dw)}.
\end{equation}
In particular, for every $1<p<2q$,
\begin{equation}\label{eq:half_potential_boundedness}
    \|V^{1/2}(-\Delta_k+V)^{-1/2}f\|_{L^p(dw)}
    \leq C_p\|f\|_{L^p(dw)}.
\end{equation}
\end{theorem}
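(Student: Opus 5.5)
The plan is Stein's complex interpolation applied to the analytic family
\[
 T_z=V^{z}L^{-z},\qquad 0\leq\operatorname{Re}z\leq1,
\]
defined first on a dense class. At the right endpoint we use Theorem~\ref{thm:V_boundedness} for $VL^{-1}$, and on the boundary lines we use Theorem~\ref{thm:imaginary_powers_Lp} for the imaginary powers. To make the family analytic with admissible growth we regularize: we replace $V$ by $V_\varepsilon=\min(V,\varepsilon^{-1})\mathbf 1_{B(0,1/\varepsilon)}+\varepsilon$ on the multiplier side, and $L$ by $L+\varepsilon$, so that $L^{-z}$ becomes $(L+\varepsilon)^{-z}$. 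Testing against simple functions $f,h$ supported on finite-measure sets, the function
\[
 F(z)=\int V_\varepsilon^{z}(L+\varepsilon)^{-z}f\,\overline{h}\,dw
\]
is holomorphic in the open strip and continuous and bounded on the closed strip. All estimates must be uniform in $\varepsilon$, and we pass to the limit at the end by monotone convergence. On $C_c^\infty$ one identifies $V^\theta(L+\varepsilon)^{-\theta}f\to V^\theta L^{-\theta}f$ a.e. along a subsequence, using the spectral theorem.

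The boundary estimates are as follows. On $\operatorname{Re}z=0$ we have $T_{i\gamma}=V_\varepsilon^{i\gamma}(L+\varepsilon)^{-i\gamma}$. The multiplier has modulus one, and $(L+\varepsilon)^{-i\gamma}$ is bounded on $L^{p_0}(dw)$ for every $1<p_0<\infty$ with norm at most $C_{p_0,\phi}e^{\phi|\gamma|}$. This follows from Theorem~\ref{thm:imaginary_powers_Lp}, whose proof applies verbatim to $L+\varepsilon$, since that operator also generates a symmetric contraction semigroup.

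On $\operatorname{Re}z=1$ we write $T_{1+i\gamma}=V_\varepsilon^{i\gamma}\,\bigl[V_\varepsilon(L+\varepsilon)^{-1}\bigr]\,(L+\varepsilon)^{-i\gamma}$. The middle factor is dominated pointwise by $(V+1)\cdot$ a positive kernel. The cleanest route is to use $V_\varepsilon\leq V+\varepsilon$, and to note that $(L+\varepsilon)^{-1}$ has kernel $\int e^{-\varepsilon t}k_t\,dt\leq\Gamma(\cdot,\cdot,0)$. This gives $|V_\varepsilon(L+\varepsilon)^{-1}g|\leq VL^{-1}|g|+\varepsilon(L+\varepsilon)^{-1}|g|$. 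The first term is controlled by Theorem~\ref{thm:V_boundedness}, and the second is a contraction on $L^{p_1}$. So we get boundedness on $L^{p_1}$ for $1<p_1\leq q$, with growth $e^{2\phi|\gamma|}$, which is admissible for Stein interpolation.

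Interpolation at $\operatorname{Re}z=\theta$ gives $L^{p}$ bounds with $1/p=(1-\theta)/p_0+\theta/p_1$. Taking $p_1=q$ and letting $p_0\to\infty$ reaches every $p<q/\theta$. Taking $p_0,p_1$ near $1$ reaches every $p$ close to $1$. Hence the full range $1<p<q/\theta$ is covered. The case $\theta=1/2$ gives \eqref{eq:half_potential_boundedness}.

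The main obstacle is the rigorous justification of the analytic family. The difficulty is verifying that $F$ is holomorphic and of admissible growth while the unbounded factor $V^z$ and the possibly nontrivial spectral behaviour of $L$ at $0$ are present. The regularization above, which makes $V_\varepsilon$ bounded above and below and shifts $L$ by $\varepsilon$, is designed to remove exactly this difficulty. The second point needing care is the limit $\varepsilon\to0$. Here I would first prove the bound for nonnegative $f\in C_c^\infty$, using Fatou's lemma together with $V_\varepsilon\to V$ and $(L+\varepsilon)^{-\theta}f\to L^{-\theta}f$, and then extend by density.
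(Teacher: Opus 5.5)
Your proposal is correct and takes essentially the same route as the paper. Both use Stein interpolation for a regularized version of $V^zL^{-z}$. On $\operatorname{Re}z=0$ both use the imaginary-power bound of Theorem~\ref{thm:imaginary_powers_Lp}, uniform for $L+\varepsilon$. On $\operatorname{Re}z=1$ both factor through $VL^{-1}$ and apply Theorem~\ref{thm:V_boundedness}. The indices are the same, $1/p=(1-\theta)/p_0+\theta/p_1$ with $1<p_0<\infty$ and $1<p_1\leq q$, and both finish with a Fatou/density limit. The only real difference is the regularization of the multiplier. The paper uses $\chi_nV^z$ with $\chi_n=\mathbf 1_{\{n^{-1}\leq V\leq n\}}$, so $\chi_nV\leq V$ gives the domination by $VL^{-1}|g|$ directly. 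Your $V_\varepsilon\leq V+\varepsilon$ produces an extra term $\varepsilon(L+\varepsilon)^{-1}|g|$, which you correctly dispose of by $L^{p}$-contractivity.
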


\begin{proof}
Fix $1<p<q/\theta$. Choose $1<p_0<\infty$ and $1<p_1\leq q$ so that
\begin{equation}\label{eq:interpolation_indices_potential}
    \frac{1}{p}=\frac{1-\theta}{p_0}+\frac{\theta}{p_1}.
\end{equation}
Such a choice is possible precisely because $1<p<q/\theta$.

To justify the analytic interpolation, put $L=-\Delta_k+V$
and first consider the regularized families
\[
 F_{a,n}(z)
 =M_{\chi_n V^z}(L+aI)^{-z},
 \qquad
 \chi_n=\mathbf1_{\{n^{-1}\leq V\leq n\}},
 \qquad a>0,\quad n\geq1,
\]
on the strip $0\leq\operatorname{Re}z\leq1$.
Here $M_{\chi_n V^z}$ denotes multiplication by
$\chi_n V^z$, defined to vanish outside the support
of $\chi_n$.
For fixed $a,n$, these families are analytic in the
open strip and strongly continuous on its boundary
on $L^2(dw)$.

The boundary estimates below hold uniformly in $a,n$:
the imaginary-power bounds apply uniformly to $L+aI$,
while positivity gives
$|(L+aI)^{-1}f|\leq L^{-1}|f|$.
After interpolation, let $n\to\infty$ and then
$a\downarrow0$. For compactly supported smooth $f$,
the semigroup representation of the negative powers
and heat-kernel domination justify the latter limit
almost everywhere. Fatou's lemma and density then
give the asserted estimate for $V^\theta L^{-\theta}$.

For $z=i\tau$, multiplication by $V^{i\tau}$ has norm at most one and
Theorem~\ref{thm:imaginary_powers_Lp} gives
\[
    \|V^{i\tau}(-\Delta_k+V)^{-i\tau}f\|_{L^{p_0}(dw)}
    \leq C_{p_0,\tau}\|f\|_{L^{p_0}(dw)}.
\]
For $z=1+i\tau$, we use the identity
\[
    V^{1+i\tau}(-\Delta_k+V)^{-1-i\tau}
    =V^{i\tau}(V(-\Delta_k+V)^{-1})(-\Delta_k+V)^{-i\tau}.
\]
Theorem~\ref{thm:V_boundedness} and
Theorem~\ref{thm:imaginary_powers_Lp} therefore imply
\[
    \|V^{1+i\tau}(-\Delta_k+V)^{-1-i\tau}f\|_{L^{p_1}(dw)}
    \leq C_{p_1,\tau}\|f\|_{L^{p_1}(dw)}.
\]
The exponential growth in $|\tau|$ is admissible in Stein's interpolation
theorem \cite{Stein1956}. Interpolation at $z=\theta$, together
with~\eqref{eq:interpolation_indices_potential}, proves
\eqref{eq:fractional_potential_boundedness}. 
\end{proof}

\subsection{An interpolation estimate for the Riesz transform}

The same argument applied to the kinetic part of $(-\Delta_k+V)$ gives a non-sharp but
unconditional range for the first-order transform.

\begin{theorem}\label{thm:Riesz_boundedness_interpolation}
Assume $q>\frac{\mathbf{N}}{2}$, $N>2$, $V\in {\rm{RH}}^q(dw)$, and that
$V$ is $G$-invariant. Then, for every $1<p<2q$, there is a constant
$C_p>0$ such that
\begin{equation}\label{eq:Riesz_boundedness_interpolation}
    \|\nabla_k(-\Delta_k+V)^{-1/2}f\|_{L^p(dw)}
    \leq C_p\|f\|_{L^p(dw)}.
\end{equation}
By duality,
\begin{equation}\label{eq:adjoint_Riesz_boundedness_interpolation}
    \|(-\Delta_k+V)^{-1/2}\nabla_k f\|_{L^p(dw)}
    \leq C_p\|f\|_{L^p(dw)}
\end{equation}
for $(2q)'<p<\infty$.
\end{theorem}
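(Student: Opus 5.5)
The plan is to mimic the proof of Theorem~\ref{thm:fractional_potential_boundedness}, replacing multiplication by $V^z$ with Dunkl derivatives of order $2z$ built from the free Dunkl--Riesz machinery. The key observation is that the kinetic part of $L$ is controlled at the endpoint $z=1$ by Theorem~\ref{thm:Riesz_boundedness}. Alternatively, it can be obtained directly from Theorem~\ref{thm:V_boundedness}, since
\[
 -\Delta_kL^{-1}=I-VL^{-1}
\]
is bounded on $L^{p_1}(dw)$ for $1\leq p_1\leq q$.

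I consider the analytic family
\[
 F(z)=\nabla_k(-\Delta_k)^{-1/2}\,(-\Delta_k)^{z}(L+aI)^{-z},\qquad 0\leq\operatorname{Re}z\leq1.
\]
At $z=1/2$ this equals $\nabla_k(L+aI)^{-1/2}$. The first factor is the vector of first-order Dunkl--Riesz transforms, bounded on every $L^p(dw)$, $1<p<\infty$, by~\cite{AS}, so it suffices to interpolate the operators
\[
 G(z)=(-\Delta_k)^{z}(L+aI)^{-z}.
\]

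On the line $\operatorname{Re}z=0$, $G(i\tau)=(-\Delta_k)^{i\tau}(L+aI)^{-i\tau}$. Both factors are bounded on $L^{p_0}(dw)$ for any $1<p_0<\infty$ with norm $\leq Ce^{\phi|\tau|}$. For $L+aI$ this follows from Theorem~\ref{thm:imaginary_powers_Lp}, applied uniformly in $a$. For $-\Delta_k$ the same Carbonaro--Dragi\v{c}evi\'c argument applies, since the Dunkl heat semigroup is a symmetric Markov semigroup.

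On the line $\operatorname{Re}z=1$, I write
\[
 G(1+i\tau)=(-\Delta_k)^{i\tau}\bigl[(-\Delta_k)(L+aI)^{-1}\bigr](L+aI)^{-i\tau},
\]
and use
\[
 -\Delta_k(L+aI)^{-1}=I-(V+a)(L+aI)^{-1}.
\]
Positivity gives $|(L+aI)^{-1}f|\leq L^{-1}|f|$, so $V(L+aI)^{-1}$ is bounded on $L^{p_1}$ for $1\leq p_1\leq q$ uniformly in $a$ by Theorem~\ref{thm:V_boundedness}. Also, $a(L+aI)^{-1}$ is a contraction. Stein interpolation at $z=1/2$ with
\[
 \frac1p=\frac{1/2}{p_0}+\frac{1/2}{p_1}
\]
then gives every $p$ with $1/p>1/(2q)$, i.e.\ $1<p<2q$ (taking $p_0$ near $1$ or large as needed), with constants independent of $a$. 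I then let $a\downarrow0$ as in the proof of Theorem~\ref{thm:fractional_potential_boundedness}, via weak compactness in $L^p$ and distributional convergence of $\nabla_k(L+aI)^{-1/2}f$ for $f\in C_c^\infty$. The dual estimate follows since the adjoint of $\nabla_kL^{-1/2}$ is $-L^{-1/2}\operatorname{div}_k$, giving the range $(2q)'<p<\infty$.

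The main obstacle is making the analytic family admissible. $(-\Delta_k)^{z}$ is unbounded on $L^2$ for $\operatorname{Re}z>0$, so I would regularize. One option is to replace $(-\Delta_k)^z$ by
\[
 (-\Delta_k)^z(\varepsilon-\Delta_k)^{-z}\,(\varepsilon-\Delta_k)^{z}\ldots
\]
A simpler option is to replace it by $(-\Delta_k)^{z}(I-\varepsilon\Delta_k)^{-1}$, a bounded operator analytic in $z$. One then checks that the endpoint bounds survive uniformly in $\varepsilon$, because $(I-\varepsilon\Delta_k)^{-1}$ is an $L^p$ contraction commuting with $(-\Delta_k)^{i\tau}$. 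Afterwards let $\varepsilon\to0$ by spectral calculus on $L^2\cap L^p$. One must also ensure that the identity at $\operatorname{Re}z=1$ holds on a dense class. Since the range of $(L+aI)^{-1}$ lies in $\mathcal D(L)\subset$ the domain where $-\Delta_ku=(L+a)u-(V+a)u$ distributionally, this should hold.
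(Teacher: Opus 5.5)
Your proposal is correct and follows essentially the paper's own route. The paper also runs Stein interpolation on the regularized family $(-\Delta_k)^z(L+aI)^{-z}$, using imaginary-power bounds on $\operatorname{Re}z=0$ and the identity $-\Delta_k(L+aI)^{-1}=I-V(L+aI)^{-1}-a(L+aI)^{-1}$ with Theorem~\ref{thm:V_boundedness} on $\operatorname{Re}z=1$, and then composes with the free Dunkl--Riesz transforms and applies duality. The only cosmetic difference is that the paper regularizes with $e^{-b(-\Delta_k)}$ where you use $(I-\varepsilon\Delta_k)^{-1}$, which serves the same purpose.
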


\begin{proof}
Since
\[
    -\Delta_k(-\Delta_k+V)^{-1}=I-V(-\Delta_k+V)^{-1},
\]
Theorem~\ref{thm:V_boundedness} shows that $(-\Delta_k)(-\Delta_k+V)^{-1}$ is bounded
on $L^p(dw)$ for $1<p\leq q$. The operator $-\Delta_k$ also generates a
symmetric contraction semigroup. Hence its imaginary powers are bounded
on $L^p(dw)$ for $1<p<\infty$, by the argument used in
Theorem~\ref{thm:imaginary_powers_Lp}.

Consider the analytic family
\[
    (-\Delta_k)^z(-\Delta_k+V)^{-z},\qquad 0\leq\operatorname{Re}z\leq1.
\]
The analytic family below is justified by the same
regularization and interpolation argument as in the proof
of Theorem~\ref{thm:fractional_potential_boundedness}.
More precisely, with $A=-\Delta_k$, we first use
\[
 F_{a,b}(z)=e^{-bA}A^z(L+aI)^{-z},
 \qquad a,b>0.
\]
The boundary estimates are uniform in $a,b$, since
\[
 A(L+aI)^{-1}
 =I-V(L+aI)^{-1}-a(L+aI)^{-1}.
\]
After interpolation, we let $b\downarrow0$ and then
$a\downarrow0$, using $L^2$ convergence on test functions,
Fatou's lemma, and density.
For simplicity, we write the argument below for
the unregularized family $A^zL^{-z}$.
On the lower boundary both imaginary powers are bounded. On the upper
boundary we use
\[
 (-\Delta_k)^{1+i\tau}(-\Delta_k+V)^{-1-i\tau}
 =(-\Delta_k)^{i\tau}\big((-\Delta_k)(-\Delta_k+V)^{-1}\big)L^{-i\tau}.
\]
The interpolation argument in the proof of
Theorem~\ref{thm:fractional_potential_boundedness}, with $\theta=1/2$,
therefore gives
\begin{equation}\label{eq:kinetic_square_root}
    \|(-\Delta_k)^{1/2}(-\Delta_k+V)^{-1/2}f\|_{L^p(dw)}
    \leq C_p\|f\|_{L^p(dw)},\qquad 1<p<2q.
\end{equation}
The Dunkl Riesz transforms $T_j(-\Delta_k)^{-1/2}$ are bounded on
$L^p(dw)$ for every $1<p<\infty$. On a dense subspace,
\[
    T_j(-\Delta_k+V)^{-1/2}
    =T_j(-\Delta_k)^{-1/2}(-\Delta_k)^{1/2}L^{-1/2}.
\]
Combining this identity with~\eqref{eq:kinetic_square_root} and summing over
$1\leq j\leq N$ proves~\eqref{eq:Riesz_boundedness_interpolation}.

Every $T_j$ is skew-symmetric on $L^2(dw)$, while $(-\Delta_k+V)^{-1/2}$ is
self-adjoint. Thus
\[
    (T_j(-\Delta_k+V)^{-1/2})^*=-(-\Delta_k+V)^{-1/2}T_j.
\]
Hence, duality gives~\eqref{eq:adjoint_Riesz_boundedness_interpolation}.
\end{proof}

\begin{corollary}\label{cor:mixed_unconditional}
Under the assumptions of Theorem~\ref{thm:Riesz_boundedness_interpolation},
for every $(2q)'<p<2q$,
\begin{align}
 \|\nabla_k(-\Delta_k+V)^{-1}\nabla_k f\|_{L^p(dw)}
 &\leq C_p\|f\|_{L^p(dw)},
 \label{eq:mixed_second_order_unconditional}\\
 \|V^{1/2}(-\Delta_k+V)^{-1}\nabla_k f\|_{L^p(dw)}
 &\leq C_p\|f\|_{L^p(dw)}.
 \label{eq:mixed_potential_unconditional}
\end{align}
\end{corollary}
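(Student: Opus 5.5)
The plan is to factor each mixed operator through $L^{-1/2}$ and then combine Theorem~\ref{thm:Riesz_boundedness_interpolation}, its dual form \eqref{eq:adjoint_Riesz_boundedness_interpolation}, and the half-power potential estimate \eqref{eq:half_potential_boundedness}. On a dense subspace, for instance $f\in C_c^\infty(\mathbb R^N)$ taking values in $\mathbb R^N$, we write
\[
 \nabla_k L^{-1}\nabla_k f=\bigl(\nabla_k L^{-1/2}\bigr)\bigl(L^{-1/2}\nabla_k f\bigr),
 \qquad
 V^{1/2}L^{-1}\nabla_k f=\bigl(V^{1/2}L^{-1/2}\bigr)\bigl(L^{-1/2}\nabla_k f\bigr).
\]
Here $\nabla_k f$ stands for the vector of components $T_if_j$, or for $\operatorname{div}_k$, depending on the intended index contraction. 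In either case every component has the form $L^{-1/2}T_j f_j$ followed by a first-order or potential factor, so it is enough to treat scalar components and then sum finitely many terms.

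Now fix $(2q)'<p<2q$. By \eqref{eq:adjoint_Riesz_boundedness_interpolation}, the inner factor $L^{-1/2}T_j$ is bounded on $L^p(dw)$, since $p>(2q)'$. By \eqref{eq:Riesz_boundedness_interpolation}, the outer factor $\nabla_kL^{-1/2}$ is bounded on $L^p(dw)$, since $p<2q$. Composing the two gives \eqref{eq:mixed_second_order_unconditional}. For \eqref{eq:mixed_potential_unconditional}, we replace the outer factor by $V^{1/2}L^{-1/2}$, which is bounded for $1<p<2q$ by \eqref{eq:half_potential_boundedness}. The intersection of the admissible ranges is exactly $(2q)'<p<2q$. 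Density of test functions in $L^p(dw)$ then yields the bounded extensions.

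The main obstacle is to justify the factorization identities on a common dense class. The range of $L^{-1/2}T_j$ applied to test functions must lie in the domain where $\nabla_kL^{-1/2}$ and $V^{1/2}L^{-1/2}$ act through their bounded extensions. We also need $L^{-1/2}L^{-1/2}=L^{-1}$ in the Green-kernel sense, even though $L^{-1}$ is not bounded on $L^2$. To handle this, we first prove the estimates for the regularized operators $(L+aI)^{-1}=(L+aI)^{-1/2}(L+aI)^{-1/2}$, where the factorization is legitimate by the spectral theorem on $L^2(dw)$. The bounds for $(L+aI)^{-1/2}T_j$, $\nabla_k(L+aI)^{-1/2}$ and $V^{1/2}(L+aI)^{-1/2}$ are uniform in $a>0$ by the same regularized interpolation used in the proofs of Theorems~\ref{thm:fractional_potential_boundedness} and~\ref{thm:Riesz_boundedness_interpolation}. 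We then let $a\downarrow0$, using weak compactness in $L^p(dw)$ for $1<p<\infty$ together with distributional convergence, which follows from heat-kernel domination as in the proof of Theorem~\ref{thm:Riesz_boundedness}, and conclude by lower semicontinuity of the norm.
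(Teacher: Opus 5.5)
Your proposal is correct and follows the paper's proof. Both factor $T_iL^{-1}T_j$ and $V^{1/2}L^{-1}T_j$ through $L^{-1/2}$, bound the outer factors $\nabla_kL^{-1/2}$ and $V^{1/2}L^{-1/2}$ for $1<p<2q$, bound the inner factor $L^{-1/2}T_j$ for $(2q)'<p<\infty$ by duality, and intersect the two ranges. The one difference is that the paper simply \emph{defines} the mixed operators as these compositions, so your regularization through $(L+aI)^{-1}$ and the limit $a\downarrow0$ is not needed there; that extra step buys the identification of the compositions with the Green-potential interpretation.
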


\begin{proof}
Put $L=-\Delta_k+V$.
We interpret the operators componentwise through
the compositions
\[
 T_iL^{-1}T_j
 :=(T_iL^{-1/2})(L^{-1/2}T_j),
 \qquad 1\leq i,j\leq N,
\]
and
\[
 V^{1/2}L^{-1}T_j
 :=(V^{1/2}L^{-1/2})(L^{-1/2}T_j).
\]
The left factors are bounded on $L^p(dw)$ for
$1<p<2q$, by
Theorems~\ref{thm:Riesz_boundedness_interpolation}
and~\ref{thm:fractional_potential_boundedness}.
The right factors are bounded for $(2q)'<p<\infty$
by duality.
Thus both compositions are bounded whenever
$(2q)'<p<2q$.
Summing the component estimates proves the assertion.
\end{proof}

Combining the unconditional estimates proved above with
Theorems~\ref{thm:V_boundedness} and~\ref{thm:Riesz_boundedness} gives the
following statement.

\begin{corollary}\label{cor:equation_unconditional}
Assume that $N>2$, $q>\mathbf N/2$, and that
$V\geq0$, $V\not\equiv0$, is $G$-invariant and belongs to
${\rm RH}^q(dw)\cap L^2_{\mathrm{loc}}(dw)$.
Write $L=-\Delta_k+V$.

For every $f\in L^p(dw)$, $1\leq p\leq q$, the Green
potential $u=L^{-1}f$ is well defined as an element of
$L^p_{\mathrm{loc}}(dw)$ and satisfies
\[
 Lu=f
 \quad\text{in the distributional sense}.
\]
Moreover,
\begin{align*}
 \|Vu\|_{L^p(dw)}
 &\leq C_p\|f\|_{L^p(dw)},
 &&1\leq p\leq q,\\
 \|\nabla_k^2u\|_{L^p(dw)}
 &\leq C_p\|f\|_{L^p(dw)},
 &&1<p\leq q.
\end{align*}

For every vector field
$\mathbf f=(f_1,\ldots,f_N)\in L^p(dw;\mathbb C^N)$,
where $(2q)'<p<2q$, the canonical Green solution
\[
 u=L^{-1}\operatorname{div}_k\mathbf f,
 \qquad
 \operatorname{div}_k\mathbf f=\sum_{j=1}^N T_jf_j,
\]
belongs to $L^p_{\mathrm{loc}}(dw)$, satisfies
\[
 Lu=\operatorname{div}_k\mathbf f
 \quad\text{in the distributional sense},
\]
and obeys
\[
 \|\nabla_k u\|_{L^p(dw)}
 +\|V^{1/2}u\|_{L^p(dw)}
 \leq C_p\|\mathbf f\|_{L^p(dw;\mathbb C^N)}.
\]
Here
\[
 \|\mathbf f\|_{L^p(dw;\mathbb C^N)}
 =
 \left\|
 \left(\sum_{j=1}^N|f_j|^2\right)^{1/2}
 \right\|_{L^p(dw)}.
\]

For general data, the canonical solutions are obtained
as limits in $L^p_{\mathrm{loc}}(dw)$ of Green solutions
with smooth compactly supported data. These limits
are independent of the approximating sequence.
\end{corollary}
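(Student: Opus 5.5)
The proof has two parts: the potential-equation part (data $f\in L^p(dw)$, $1\le p\le q$) and the divergence-form part.

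\emph{First part.} Define $u=L^{-1}f$ by the Green kernel,
\[
u(\mathbf x)=\int\Gamma(\mathbf x,\mathbf y,0)f(\mathbf y)\,dw(\mathbf y).
\]
Since $\Gamma\ge0$, we have $|u|\le L^{-1}|f|$. Local membership in $L^p_{\mathrm{loc}}(dw)$ will come from the majorant in Proposition~\ref{propo:basic_decay}, namely
\[
\Gamma(\mathbf x,\mathbf y,0)\le C\,\frac{d(\mathbf x,\mathbf y)^2}{w(B(\mathbf x,d(\mathbf x,\mathbf y)))}.
\]
We split the $\mathbf y$-integral into $d(\mathbf x,\mathbf y)<r_0$ and $d(\mathbf x,\mathbf y)\ge r_0$. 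The near part is bounded on $L^p$ by Young/Schur together with Lemma~\ref{lem:homogeneous}(B) with $\delta=2$, which gives a kernel integral $\lesssim r_0^2$. For the far part we use H\"older in $\mathbf y$. For $\mathbf x$ in a fixed ball the kernel lies in $L^{p'}(dw(\mathbf y))$ outside a neighbourhood of the orbit when $p'(\mathbf N-2)>\mathbf N$, by the lower volume growth (exponent $N>2$). The endpoint cases, and the case $p$ large, are handled instead by the bound $\|Vu\|_p\lesssim\|f\|_p$ together with the local positivity of $V$. I expect this local integrability for large $p$ to be the main technical nuisance. If the direct H\"older argument fails, the fallback is Fefferman--Phong–type control: $m$ is bounded below on compacts and Lemma~\ref{lem:Rr} shows $V$ has positive averages there. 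Combined with $Vu\in L^p$ and the mean-value inequality (Corollary~\ref{coro:c2}), this yields $u\in L^p_{\mathrm{loc}}$.

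For $f\in C_c^\infty$, the identity $Lu=f$ in the distributional sense is obtained as in the proof of Theorem~\ref{thm:Riesz_boundedness}. Set $u_a=(L+a)^{-1}f$; then $(L+a)u_a=f$ weakly, and $u_a\to u$ in $L^1_{\mathrm{loc}}$ by dominated convergence. Moreover $au_a\to0$ locally, since $|u_a|\le L^{-1}|f|$. The bounds on $Vu$ and $\nabla_k^2u$ are exactly Theorems~\ref{thm:V_boundedness} and~\ref{thm:Riesz_boundedness}.

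For general $f$, take $f_n\in C_c^\infty$ with $f_n\to f$ in $L^p$. Then $u_n=L^{-1}f_n$ is Cauchy in $L^p_{\mathrm{loc}}$ by the linear local bound above, and $Vu_n$, $\nabla_k^2u_n$ converge in $L^p$. The limits are independent of the approximating sequence because the same bounds apply to differences of sequences. The distributional equation $Lu=f$ passes to the limit: $\Delta_ku_n\to\Delta_ku$ as distributions since $T_j$ is skew-symmetric, and $Vu_n\to Vu$ in $L^p$.

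\emph{Second part.} Define $u=L^{-1/2}(L^{-1/2}\operatorname{div}_k\mathbf f)$, in the spirit of Corollary~\ref{cor:mixed_unconditional}. For $\mathbf f\in C_c^\infty$ this agrees with the Green potential of $\operatorname{div}_k\mathbf f\in C_c^\infty$, to which the first part applies, giving $Lu=\operatorname{div}_k\mathbf f$. The estimates
\[
\|\nabla_k u\|_p+\|V^{1/2}u\|_p\lesssim\|\mathbf f\|_p,\qquad (2q)'<p<2q,
\]
are exactly Corollary~\ref{cor:mixed_unconditional}, summed over components with equivalence of the Euclidean and $\ell^1$ norms.

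For local integrability of $u$, use the kernel of $L^{-1}T_j$, which is $-T_{j,\mathbf y}\Gamma$ by skew-symmetry. A cleaner route is to write $u=L^{-1/2}g$ with
\[
g=L^{-1/2}\operatorname{div}_k\mathbf f\in L^p
\]
by duality in Theorem~\ref{thm:Riesz_boundedness_interpolation}. Then $L^{-1/2}$ has kernel $\int_0^\infty t^{-1/2}k_t\,dt\le C\,d/w(B(\mathbf x,d))$, which is locally integrable like the Green kernel; the local estimate from the first part, with exponent $1$ in place of $2$, gives $L^p_{\mathrm{loc}}$ bounds. Density and the same Cauchy argument then give existence, uniqueness of the limit, and the distributional equation for general $\mathbf f$.
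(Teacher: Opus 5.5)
There is a genuine gap, and it is exactly where you expected it. Your overall structure matches the paper's: Green-kernel definition, Theorems~\ref{thm:V_boundedness} and~\ref{thm:Riesz_boundedness} for the scalar bounds, Corollary~\ref{cor:mixed_unconditional} for the divergence bounds, then density and a Cauchy argument.

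\textbf{The gap.} Everything depends on the local bound $\|L^{-1}g\|_{L^p(K,dw)}\leq C_K\|g\|_{L^p(dw)}$ for compact $K$, and on its analogue for $L^{-1/2}$. Without it you have neither $u\in L^p_{\mathrm{loc}}(dw)$, nor the Cauchy property of $(u_n)$, nor independence of the approximating sequence, nor the limit in $Lu_n=f_n$.

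With only the majorant of Proposition~\ref{propo:basic_decay}, the kernel behaves like $d(\mathbf x,\mathbf y)^{2-\mathbf N}$ for $\mathbf x$ in a compact set and $\mathbf y\to\infty$. So your H\"older step covers only $p<\mathbf N/2$, and the nonempty range $\mathbf N/2\leq p\leq q$ is left open. No cleverer use of that majorant can help. Take $|f(\mathbf y)|=\|\mathbf y\|^{-2}(\log\|\mathbf y\|)^{-1}$ for $\|\mathbf y\|>3$ and $f=0$ otherwise. Then $f\in L^p(dw)$ for every $p\geq\mathbf N/2$ (using $\mathbf N>2$), yet $\int d(\mathbf x,\mathbf y)^{2-\mathbf N}|f(\mathbf y)|\,dw(\mathbf y)=\infty$.

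The same failure occurs in the divergence part. Your majorant $d/w(B(\mathbf x,d))\sim d^{1-\mathbf N}$ for $L^{-1/2}$ handles only $p<\mathbf N$, while the claimed range reaches $2q>\mathbf N$.

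\textbf{The fallback does not work.} Corollary~\ref{coro:c2} is a mean-value inequality for weak solutions of the homogeneous equation that already lie in the local form domain. But $u$ solves $Lu=f$ on the support of $f$, and its local integrability is exactly what is unknown, so the argument is circular. Also, $Vu\in L^p(dw)$ gives no control of $u$ where $V$ is small: for $V(\mathbf x)=\|\mathbf x\|^2$ it says nothing near the origin, whatever the averages of $V$.

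\textbf{The missing ingredient} is the $m$-decay of the kernels; this is where $G$-invariance of $V$ is used. Theorem~\ref{theorem:fundamental_invariant} gives, with $\rho=d(\mathbf x,\mathbf y)$ and every $M$,
\[
 \Gamma(\mathbf x,\mathbf y,0)\leq C_M\rho^2w(B(\mathbf x,\rho))^{-1}(1+m(\mathbf x)\rho)^{-M}.
\]
Cauchy--Schwarz in $t$ transfers this decay to the kernel of $L^{-1/2}$:
\[
 \int_0^\infty t^{-1/2}k_t(\mathbf x,\mathbf y)\,dt
 \leq\Gamma(\mathbf x,\mathbf y,0)^{1/2}
 \Bigl(\int_0^\infty t^{-1}h_t(\mathbf x,\mathbf y)\,dt\Bigr)^{1/2}.
\]
Combined with $\int_0^\infty t^{-1}h_t\,dt\leq C/w(B(\mathbf x,\rho))$, this gives the bound $C_M\rho\,w(B(\mathbf x,\rho))^{-1}(1+m(\mathbf x)\rho)^{-M}$.

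The paper then runs a Schur test on $m(\mathbf x)^\alpha G_\alpha(\mathbf x,\mathbf y)$, $\alpha=1,2$, using symmetry and Lemma~\ref{lem:m_growth} for the integral in $\mathbf x$. This yields $\|m^2L^{-1}g\|_{L^p(dw)}+\|mL^{-1/2}g\|_{L^p(dw)}\leq C_p\|g\|_{L^p(dw)}$ for all $1\leq p<\infty$, and the local bounds follow because $m$ is bounded below on compact sets. In your own setup, inserting this decay with $m\geq c_K>0$ on $K$ puts the far part of both kernels in every $L^{p'}(dw)$, and your H\"older argument then works for all $p$.

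\textbf{A smaller point.} In the divergence part only $V^{1/2}u_n$ converges in $L^p(dw)$, not $Vu_n$ as in your first part. To pass to the limit in the potential term you need $V^{1/2}\varphi\in L^{p'}(dw)$ for test functions $\varphi$. This holds because $p'<2q$ and $V\in L^q_{\mathrm{loc}}(dw)$, and it is where the condition $p>(2q)'$ is actually used.
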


\begin{proof}
We first establish two auxiliary bounds ensuring
local convergence of the Green solutions:
\begin{equation}\label{eq:critical_weight_green_bounds}
 \|m^2L^{-1}g\|_{L^p(dw)}
 +\|mL^{-1/2}g\|_{L^p(dw)}
 \leq C_p\|g\|_{L^p(dw)},
 \qquad 1\leq p<\infty.
\end{equation}

Let $G_2(\mathbf x,\mathbf y)=\Gamma(\mathbf x,\mathbf y,0)$
and define the kernel of $L^{-1/2}$ by
\[
 G_1(\mathbf x,\mathbf y)
 =\frac1{\sqrt\pi}\int_0^\infty
 t^{-1/2}k_t(\mathbf x,\mathbf y)\,dt.
\]
The latter representation agrees with the spectral
definition on its natural domain, by the semigroup
formula for negative powers.

Put $\rho=d(\mathbf x,\mathbf y)>0$.
Integrating the heat-kernel estimate gives
\[
 \int_0^\infty t^{-1}h_t(\mathbf x,\mathbf y)\,dt
 \leq \frac{C}{w(B(\mathbf x,\rho))}.
\]
Consequently, Cauchy--Schwarz and $0\leq k_t\leq h_t$
yield
\[
 G_1(\mathbf x,\mathbf y)
 \leq C
 \left(\Gamma(\mathbf x,\mathbf y,0)\right)^{1/2}
 \left(
 \int_0^\infty t^{-1}h_t(\mathbf x,\mathbf y)\,dt
 \right)^{1/2}.
\]
Together with
Theorem~\ref{theorem:fundamental_invariant}, this proves,
for $\alpha\in\{1,2\}$ and every $M>0$,
\begin{equation}\label{eq:green_fractional_critical_decay}
 0\leq G_\alpha(\mathbf x,\mathbf y)
 \leq
 C_M\frac{\rho^\alpha}{w(B(\mathbf x,\rho))}
       (1+m(\mathbf x)\rho)^{-M}.
\end{equation}
Both kernels are symmetric.

Consider
\[
 H_\alpha(\mathbf x,\mathbf y)
 =m(\mathbf x)^\alpha G_\alpha(\mathbf x,\mathbf y).
\]
Decomposing into orbit annuli of radii
$2^\ell/m(\mathbf x)$, $\ell\in\mathbb Z$, gives
\[
 \sup_{\mathbf x}
 \int_{\mathbb R^N}H_\alpha(\mathbf x,\mathbf y)
 \,dw(\mathbf y)
 \leq
 C_M\sum_{\ell\in\mathbb Z}
 2^{\ell\alpha}(1+2^\ell)^{-M}
 <\infty
\]
when $M>\alpha$.

To estimate the integral in the other variable,
Lemma~\ref{lem:m_growth} and the $G$-invariance of $m$
give
\[
 m(\mathbf x)
 \leq C m(\mathbf y)
       (1+m(\mathbf y)d(\mathbf x,\mathbf y))^\kappa.
\]
Using the symmetry of $G_\alpha$, followed by
\eqref{eq:green_fractional_critical_decay} with the
variables interchanged, we obtain
\[
 \sup_{\mathbf y}
 \int_{\mathbb R^N}H_\alpha(\mathbf x,\mathbf y)
 \,dw(\mathbf x)<\infty
\]
by choosing $M>\alpha(\kappa+1)$.
The two integral bounds and interpolation prove
\eqref{eq:critical_weight_green_bounds}.
Since $m$ is bounded below by a positive constant
on each compact set, these bounds also give
\[
 \|L^{-1}g\|_{L^p(K,dw)}
 +\|L^{-1/2}g\|_{L^p(K,dw)}
 \leq C_{K,p}\|g\|_{L^p(dw)}
\]
for every compact set $K$.

We now consider scalar data. Fix $1\leq p\leq q$.
Let $f_n\in C_c^\infty(\mathbb R^N)$ converge to
$f$ in $L^p(dw)$, and put $u_n=L^{-1}f_n$.
By \eqref{eq:critical_weight_green_bounds},
the sequence $u_n$ converges in $L^p_{\mathrm{loc}}(dw)$
to a limit $u$, independently of the approximation.
This limit agrees with the Green potential of $f$.

Since $1\leq p\leq q$, Theorem~\ref{thm:V_boundedness}
gives
\[
 \|V(u_n-u_m)\|_{L^p(dw)}
 \leq C_p\|f_n-f_m\|_{L^p(dw)}.
\]
Thus $(Vu_n)$ is a Cauchy sequence in $L^p(dw)$
and consequently converges in that space.
Local convergence of $u_n$, followed by passage to an
almost everywhere convergent subsequence, identifies
this limit with $Vu$.
Thus
\[
 \|Vu\|_{L^p(dw)}\leq C_p\|f\|_{L^p(dw)}.
\]
Passing to the limit in $Lu_n=f_n$ proves $Lu=f$
distributionally.
For $1<p\leq q$,
Theorem~\ref{thm:Riesz_boundedness} likewise gives
convergence of $T_iT_ju_n$ in $L^p(dw)$.
Their distributional limits are $T_iT_ju$, which
proves the asserted Hessian estimate.

Next, let $(2q)'<p<2q$ and choose
$\mathbf f_n\in C_c^\infty(\mathbb R^N;\mathbb C^N)$
converging to $\mathbf f$ in $L^p(dw;\mathbb C^N)$.
Set
\[
 w_n=L^{-1/2}\operatorname{div}_k\mathbf f_n,
 \qquad
 u_n=L^{-1}\operatorname{div}_k\mathbf f_n.
\]
For smooth compactly supported data, the identity
\[
 u_n=L^{-1/2}w_n
\]
is understood through spectral regularization and
agrees with the Green potential.
The adjoint estimate in
Theorem~\ref{thm:Riesz_boundedness_interpolation}
implies that $w_n$ converges in $L^p(dw)$.
Applying \eqref{eq:critical_weight_green_bounds} to
$L^{-1/2}(w_n-w_m)$ therefore shows that $u_n$
converges in $L^p_{\mathrm{loc}}(dw)$ to a limit $u$,
independently of the approximating sequence.

Corollary~\ref{cor:mixed_unconditional} gives
\[
 \|\nabla_k(u_n-u_m)\|_{L^p(dw)}
 +\|V^{1/2}(u_n-u_m)\|_{L^p(dw)}
 \leq C_p
 \|\mathbf f_n-\mathbf f_m\|_{L^p(dw;\mathbb C^N)}.
\]
The limits are identified with $\nabla_k u$
distributionally and with $V^{1/2}u$ almost everywhere.
Passing to the limit proves the required estimates.

Finally, $p>(2q)'$ implies $p'<2q$.
Hence $V^{1/2}\varphi\in L^{p'}(dw)$ for every
$\varphi\in C_c^\infty(\mathbb R^N)$, and
\[
 \left|\int_{\mathbb R^N}V(u_n-u)\varphi\,dw\right|
 \leq
 \|V^{1/2}(u_n-u)\|_{L^p(dw)}
 \|V^{1/2}\varphi\|_{L^{p'}(dw)}
 \longrightarrow0.
\]
Together with local convergence of $u_n$ and
$\mathbf f_n\to\mathbf f$ in $L^p(dw;\mathbb C^N)$,
this allows us to pass to the limit in
\[
 Lu_n=\operatorname{div}_k\mathbf f_n.
\]
Thus $Lu=\operatorname{div}_k\mathbf f$
in the distributional sense.
\end{proof}

\section{Gradient and annular kernel estimates}

In the preceding results we use the estimates already proved. The range in Shen's first-order theorem and the pointwise
Calder\'on--Zygmund assertions require additional spatial regularity. In the
Euclidean proof this regularity is obtained from a local gradient estimate,
the classical fractional integral theorem, and Morrey's embedding theorem.
For the Dunkl Laplacian, $T_j$ contains reflection terms, so that the same
conclusion does not follow from Lemma~\ref{lem:subharmonic} or
Theorem~\ref{theorem:fundamental_invariant}. We isolate below the exact
estimates which close the argument.

\subsection{Derivatives of the free resolvent kernel}

Let $K_j$ and $K_j^0$ denote the distribution kernels of
$T_jL^{-1/2}$ and $T_j(-\Delta_k)^{-1/2}$, respectively,
where $L=-\Delta_k+V$.
We use the principal branch, so that
\[
 (-i\tau)^{-1/2}
 =|\tau|^{-1/2}
   e^{i\pi\operatorname{sgn}(\tau)/4},
 \qquad \tau\neq0.
\]
The scalar identity
\[
 \lambda^{-1/2}
 =\frac1{2\pi}\int_{\mathbb R}
   (-i\tau)^{-1/2}(\lambda+i\tau)^{-1}\,d\tau,
 \qquad \lambda>0,
\]
leads to the representations
\begin{align}
 K_j(\mathbf x,\mathbf y)
 &=\frac1{2\pi}\int_{\mathbb R}
 (-i\tau)^{-1/2}T_{j,\mathbf x}
 \Gamma(\mathbf x,\mathbf y,\tau)\,d\tau,
 \label{eq:Riesz_kernel_representation}\\
 K_j^0(\mathbf x,\mathbf y)
 &=\frac1{2\pi}\int_{\mathbb R}
 (-i\tau)^{-1/2}T_{j,\mathbf x}
 \Gamma_0(\mathbf x,\mathbf y,\tau)\,d\tau.
 \label{eq:free_Riesz_kernel_representation}
\end{align}
Here $\Gamma_0$ denotes the resolvent kernel of
$-\Delta_k+i\tau I$.

At this stage, these identities are understood weakly.
More precisely, one first uses the shifted resolvents
$(L+aI+i\tau I)^{-1}$ and
$(-\Delta_k+aI+i\tau I)^{-1}$, with $a>0$,
applies $T_j$ in the distributional sense, and then
lets $a\downarrow0$.
Pointwise interpretations of these formulas away from
the orbit diagonal require the derivative estimates
established below.

\begin{lemma}\label{lem:estimates_before_annular_Riesz}
Assume that $V$ is $G$-invariant, $N>2$, and
$V\in {\rm RH}^{q_1}(dw)$ with $q_1>\mathbf N/2$. Then for every integer $M\geq0$, there exists a constant $C_M>0$ such
that, whenever $d(\mathbf{a},\mathbf{b})>0$,
\begin{align}
 |T_{j,\mathbf{a}}\Gamma_0(\mathbf{a},\mathbf{b},\tau)|
 &\leq
 C_M
 \frac{d(\mathbf{a},\mathbf{b})}
 {w(B(\mathbf{a},d(\mathbf{a},\mathbf{b})))}
 \bigl(1+|\tau|^{1/2}d(\mathbf{a},\mathbf{b})\bigr)^{-M},
 \label{eq:free_resolvent_first_derivative}\\
 |T_{i,\mathbf{a}}T_{j,\mathbf{b}}
       \Gamma_0(\mathbf{a},\mathbf{b},\tau)|
 &\leq
 \frac{C_M}
 {w(B(\mathbf{a},d(\mathbf{a},\mathbf{b})))}
 \bigl(1+|\tau|^{1/2}d(\mathbf{a},\mathbf{b})\bigr)^{-M}.
 \label{eq:free_resolvent_second_derivative}
\end{align}

Moreover, for every $M,\ell>0$, there exists a constant
$C_{M,\ell}>0$ such that
\begin{equation}\label{eq:fundamental_tau_decay}
\begin{split}
 |\Gamma(\mathbf{a},\mathbf{b},\tau)|
 &\leq
 C_{M,\ell}
 \bigl(1+|\tau|^{1/2}d(\mathbf{a},\mathbf{b})\bigr)^{-M}
 \bigl(1+m(\mathbf{a})d(\mathbf{a},\mathbf{b})\bigr)^{-\ell}
 \frac{d(\mathbf{a},\mathbf{b})^2}
 {w(B(\mathbf{a},d(\mathbf{a},\mathbf{b})))}.
\end{split}
\end{equation}
The estimate for $\Gamma$ holds for almost every pair
$(\mathbf a,\mathbf b)$ with $d(\mathbf a,\mathbf b)>0$,
for each fixed $\tau\in\mathbb R$.
\end{lemma}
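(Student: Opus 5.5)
For the free kernel we will work directly with the heat kernel. Since $k_t=h_t$ when $V=0$, Proposition~\ref{propo:basic_decay} gives
\[
 \Gamma_0(\mathbf a,\mathbf b,\tau)=\int_0^\infty e^{-i\tau t}h_t(\mathbf a,\mathbf b)\,dt .
\]
We then differentiate under the integral sign. The needed input is the known derivative bounds for the Dunkl heat kernel from \cite{ADzH,DzH1}:
\[
 |T_{j,\mathbf a}h_t(\mathbf a,\mathbf b)|\leq Ct^{-1/2}\mathcal G_{t/c}(\mathbf a,\mathbf b),
 \qquad
 |T_{i,\mathbf a}T_{j,\mathbf b}h_t(\mathbf a,\mathbf b)|\leq Ct^{-1}\mathcal G_{t/c}(\mathbf a,\mathbf b).
\]
These are obtained from R\"osler's formula (Theorem~\ref{teo:roesler_formula}) applied to the Gaussian. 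Put $r=d(\mathbf a,\mathbf b)$.

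Integrating these bounds in $t$, split at $t=r^2$ exactly as in Proposition~\ref{propo:basic_decay}, gives the $M=0$ cases. For $t\leq r^2$ we use the Gaussian factor $e^{-cr^2/t}$ together with the upper growth bound~\eqref{eq:growth}. For $t\geq r^2$ we use the lower growth bound $w(B(\mathbf a,\sqrt t))\gtrsim (\sqrt t/r)^Nw(B(\mathbf a,r))$. This leaves integrals $\int_{r^2}^\infty t^{-N/2-1/2}\,dt$ and $\int_{r^2}^\infty t^{-N/2-1}\,dt$, which converge because $N>2$ and yield $r/w(B(\mathbf a,r))$ and $1/w(B(\mathbf a,r))$.

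To obtain the factor $(1+|\tau|^{1/2}r)^{-M}$ when $|\tau|r^2\geq1$, we integrate by parts $M$ times in $t$, writing $e^{-i\tau t}=(-i\tau)^{-1}\partial_te^{-i\tau t}$. The boundary terms vanish: as $t\to0$ they are killed by the Gaussian factor since $r>0$, and as $t\to\infty$ by polynomial decay. We then use $\partial_t^nh_t=\Delta_k^nh_t$ together with the bound
\[
 |\partial_t^n D h_t|\leq C_nt^{-n}|\cdot|_{D}\,\mathcal G_{t/c},
\]
where $D$ is one of the derivative operators above and $|\cdot|_D$ is the corresponding prefactor. This bound again follows from R\"osler's formula, because time derivatives of the Gaussian remain Gaussians times polynomials in $\|\mathbf x\|^2/t$. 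Each integration by parts therefore gains a factor $(|\tau|t)^{-1}$. Since $(|\tau|t)^{-M}\leq(|\tau|r^2)^{-M}$ on $t\geq r^2$, and the extra $t^{-M}$ is harmless against the Gaussian on $t\leq r^2$, the same splitting gives decay $(|\tau|r^2)^{-M}$. This proves \eqref{eq:free_resolvent_first_derivative} and \eqref{eq:free_resolvent_second_derivative} after renaming $M$.

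For \eqref{eq:fundamental_tau_decay} the obstacle is that $k_t$ carries no time-regularity information comparable to $\Delta_k^nh_t$. We will therefore use local weak-solution estimates instead of integration by parts. Fix $\mathbf b$ generic, let $u=\Gamma(\cdot,\mathbf b,\tau)$, and set $R=r/10$. By Lemma~\ref{lem:resolvent_local_weak_solution}, $u$ is a weak solution of $(-\Delta_k+V+i\tau)u=0$ on $U(\mathbf a,2R)$.

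The key step is a $\tau$-version of the iteration in Proposition~\ref{propo:large_decay}. Lemma~\ref{lem:cacc} already contains the term $|\tau|\int_{U(\mathbf a,s)}|u|^2\leq C s^{-2}\int_{U(\mathbf a,\gamma s)}|u|^2$. This gives $I(s)\leq C(|\tau|s^2)^{-1}I(\gamma s)$, which gains a factor $(|\tau|^{1/2}s)^{-2}$ per step with no use of $m$. We can interleave steps of this kind with the $m$-steps \eqref{eq:procedure_first_step}. Both are applied on orbit neighbourhoods, and $G$-invariance makes $m(g\mathbf a)=m(\mathbf a)$, so the two kinds of step compose exactly as in the proof of Proposition~\ref{propo:large_decay}. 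We take radii $s_i=\gamma^{2i}R$ with $\gamma$ close to $1$, using about $M$ $\tau$-steps and about $\ell(\kappa+1)$ $m$-steps. The final step uses Corollary~\ref{coro:c2} together with the Lebesgue point argument to convert the $L^2$ average into a pointwise bound.

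The remaining average $\bigl(w(B(\mathbf a,R))^{-1}\int_{U(\mathbf a,2R)}|u|^2\bigr)^{1/2}$ is controlled by $Cr^2/w(B(\mathbf a,r))$ via Proposition~\ref{propo:basic_decay} and the orbit-distance comparisons used in Theorem~\ref{theorem:fundamental}. In the cases $|\tau|r^2\leq1$ or $m(\mathbf a)r\leq10$, the corresponding factor is bounded below and that branch of the iteration is simply omitted. The step I expect to be hardest is checking that all nested neighbourhoods stay inside $U(\mathbf a,2R)$ while the constants remain independent of $\tau$. This is the same bookkeeping as in Proposition~\ref{propo:large_decay}, with roughly $M+\ell(\kappa+1)$ steps instead of about $\ell(\kappa+1)$.
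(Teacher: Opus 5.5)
Your proposal is correct. For the free kernel it follows the paper closely: heat-kernel derivative bounds, integration in $t$ split at the scale $d(\mathbf a,\mathbf b)^2$, and repeated integration by parts in $t$ to gain $(|\tau|\,d(\mathbf a,\mathbf b)^2)^{-n}$. There are two differences.

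First, the paper does not take the mixed bound $|T_{i,\mathbf a}T_{j,\mathbf b}\partial_t^nh_t(\mathbf a,\mathbf b)|\lesssim t^{-n-1}w(B(\mathbf a,\sqrt t))^{-1}e^{-c\,d(\mathbf a,\mathbf b)^2/t}$ as given. It derives it from the one-variable bounds, using $h_t=\int h_{t/2}(\cdot,\mathbf z)h_{t/2}(\mathbf z,\cdot)\,dw(\mathbf z)$ and a Gaussian convolution estimate. Your justification ``from R\"osler's formula'' is too quick as stated, since $T_j$ of the Gaussian is not radial. It can be completed directly with the following ingredients:
\begin{enumerate}
\item the identity $T_{j,\mathbf x}h_t(\mathbf x,\mathbf y)=\frac{y_j-x_j}{2t}h_t(\mathbf x,\mathbf y)$;
\item the product rule \eqref{eq:general_Leibniz}, where for $\varphi(\mathbf a)=a_j-b_j$ the reflection quotient is just $\alpha_j$, and the reflected terms $h_t(\sigma_\alpha\mathbf a,\mathbf b)$ obey the same Gaussian bound by $G$-invariance of $d$ and $w$;
\item the bound $|\partial_t^mh_t(\mathbf a,\mathbf b)|\le Ct^{-m}h_{2t}(\mathbf a,\mathbf b)$, which does follow from R\"osler's formula;
\item the factor $(1+\|\mathbf a-\mathbf b\|/\sqrt t)^{-2}$ in \eqref{eq:Gauss}, which absorbs $\|\mathbf a-\mathbf b\|^2/t$.
\end{enumerate}
This route is arguably shorter than the paper's convolution argument.

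Second, for \eqref{eq:fundamental_tau_decay} the paper does not interleave the two decay mechanisms. It runs only the Caccioppoli $|\tau|$-iteration, which needs neither $m$ nor $G$-invariance, to get $|\Gamma|\lesssim(1+|\tau|^{1/2}\rho)^{-2M}\rho^2/w(B(\mathbf a,\rho))$. It then multiplies this with Theorem~\ref{theorem:fundamental_invariant} at exponent $2\ell$ and takes square roots. Your single chain mixing $\tau$-steps and $m$-steps is also valid. Each step is an inequality of the form $I(s)\le c_s\,I(\gamma' s)$ between orbit neighbourhoods inside $U(\mathbf a,2R)$, so the steps compose in any order and yield the product of the two decay factors directly. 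The cost is redoing the radii bookkeeping of Proposition~\ref{propo:large_decay}. The paper's geometric-mean argument is more modular, since it reuses Theorem~\ref{theorem:fundamental_invariant} as a black box.
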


\begin{proof}
We first prove the free resolvent estimates. The heat-kernel derivative
estimates in
\cite[Theorem~4.1(c)]{ADzH} give, for every
integer $n\geq0$,
\begin{equation}\label{eq:free_heat_first_derivative}
\left|
T_{j,\mathbf{a}}\partial_t^n
h_t(\mathbf{a},\mathbf{b})
\right|
\leq
Ct^{-n-1/2}
w(B(\mathbf{a},\sqrt{t}))^{-1}
\exp\left(-c\frac{d(\mathbf{a},\mathbf{b})^2}{t}\right).
\end{equation}
Using the semigroup identity
\begin{equation}\label{eq:heat_semigroup_decomposition}
h_t(\mathbf{a},\mathbf{b})
=
\int_{\mathbb{R}^N}
h_{t/2}(\mathbf{a},\mathbf{z})
h_{t/2}(\mathbf{z},\mathbf{b})\,dw(\mathbf{z})
\end{equation}
together with the
symmetry of the heat kernel, give
\begin{equation}\label{eq:symmetric_free_heat_first_derivative}
\begin{split}
&\left|
T_{j,\mathbf{a}}\partial_t^m
h_t(\mathbf{a},\mathbf{b})
\right|
+
\left|
T_{j,\mathbf{b}}\partial_t^m
h_t(\mathbf{a},\mathbf{b})
\right|
\leq
Ct^{-m-1/2}
\frac{
\exp\left(-c\,d(\mathbf{a},\mathbf{b})^2/t\right)}
{\max\left\{
w(B(\mathbf{a},\sqrt{t})),
w(B(\mathbf{b},\sqrt{t}))
\right\}}.
\end{split}
\end{equation}
The heat kernel is smooth, and its Gaussian derivative estimates
provide integrable majorants locally uniformly in the external
variables and in $t>0$. We may therefore apply
$T_{i,\mathbf a}T_{j,\mathbf b}\partial_t^n$ under the integral
sign in \eqref{eq:heat_semigroup_decomposition}. By the Leibniz
formula,
\begin{equation}\label{eq:second_heat_derivative_semigroup_formula}
\begin{split}
&T_{i,\mathbf{a}}T_{j,\mathbf{b}}\partial_t^n
h_t(\mathbf{a},\mathbf{b})=
2^{-n}\sum_{m=0}^{n}\binom{n}{m}
\int_{\mathbb{R}^N}
\left.
T_{i,\mathbf{a}}\partial_s^m
h_s(\mathbf{a},\mathbf{z})
\right|_{s=t/2}
\left.
T_{j,\mathbf{b}}\partial_s^{n-m}
h_s(\mathbf{z},\mathbf{b})
\right|_{s=t/2}
\,dw(\mathbf{z}).
\end{split}
\end{equation}
The factor $2^{-n}$ appears because every differentiation of
$h_{t/2}$ with respect to $t$ produces a factor $1/2$.

Applying \eqref{eq:symmetric_free_heat_first_derivative} to both
factors on the right-hand side of
\eqref{eq:second_heat_derivative_semigroup_formula}, we obtain
\begin{equation}\label{eq:second_heat_derivative_before_convolution}
\begin{split}
&\left|
T_{i,\mathbf{a}}T_{j,\mathbf{b}}\partial_t^n
h_t(\mathbf{a},\mathbf{b})
\right|\leq
Ct^{-n-1}
\int_{\mathbb{R}^N}
\frac{
\exp\left(-c\,d(\mathbf{a},\mathbf{z})^2/t\right)}
{\max\left\{
w(B(\mathbf{a},\sqrt{t})),
w(B(\mathbf{z},\sqrt{t}))
\right\}}
\\
&\hspace{37mm}\times
\frac{
\exp\left(-c\,d(\mathbf{z},\mathbf{b})^2/t\right)}
{\max\left\{
w(B(\mathbf{z},\sqrt{t})),
w(B(\mathbf{b},\sqrt{t}))
\right\}}
\,dw(\mathbf{z}).
\end{split}
\end{equation}

Now the integral in
\eqref{eq:second_heat_derivative_before_convolution} is bounded by
\begin{equation}\label{eq:Gaussian_product_integral}
\begin{split}
&\frac{1}
{w(B(\mathbf{a},\sqrt{t}))
 w(B(\mathbf{b},\sqrt{t}))}
\int_{\mathbb{R}^N}
\exp\left(
-c\frac{
d(\mathbf{a},\mathbf{z})^2+
d(\mathbf{z},\mathbf{b})^2}{t}
\right)\,dw(\mathbf{z}).
\end{split}
\end{equation}
The orbit distance satisfies the triangle inequality. Therefore,
\[
d(\mathbf{a},\mathbf{z})^2+
d(\mathbf{z},\mathbf{b})^2
\geq
\frac12d(\mathbf{a},\mathbf{b})^2.
\]
It follows that
\begin{equation}\label{eq:Gaussian_product_separation}
\begin{split}
&\exp\left(
-c\frac{
d(\mathbf{a},\mathbf{z})^2+
d(\mathbf{z},\mathbf{b})^2}{t}
\right)\leq
\exp\left(
-\frac{c}{4}
\frac{d(\mathbf{a},\mathbf{b})^2}{t}
\right)
\exp\left(
-\frac{c}{2}
\frac{
d(\mathbf{a},\mathbf{z})^2+
d(\mathbf{z},\mathbf{b})^2}{t}
\right).
\end{split}
\end{equation}
The doubling property of $dw$ and a decomposition into dyadic orbit
annuli give
\begin{equation}\label{eq:Gaussian_orbit_integral}
\int_{\mathbb{R}^N}
\exp\left(-c\frac{d(\mathbf{x},\mathbf{z})^2}{t}\right)
\,dw(\mathbf{z})
\leq
Cw(B(\mathbf{x},\sqrt{t})).
\end{equation}
Using \eqref{eq:Gaussian_product_separation} and then
\eqref{eq:Gaussian_orbit_integral}, once with $\mathbf{x}=\mathbf{a}$
and once with $\mathbf{x}=\mathbf{b}$, we obtain
\begin{align*}
&\int_{\mathbb{R}^N}
\exp\left(
-c\frac{
d(\mathbf{a},\mathbf{z})^2+
d(\mathbf{z},\mathbf{b})^2}{t}
\right)\,dw(\mathbf{z})
\\
&\quad\leq
C\exp\left(
-c'\frac{d(\mathbf{a},\mathbf{b})^2}{t}
\right)
\min\left\{
w(B(\mathbf{a},\sqrt{t})),
w(B(\mathbf{b},\sqrt{t}))
\right\}.
\end{align*}
Substitution into \eqref{eq:Gaussian_product_integral} gives
\begin{align*}
&\int_{\mathbb{R}^N}
\frac{
\exp\left(-c\,d(\mathbf{a},\mathbf{z})^2/t\right)}
{\max\left\{
w(B(\mathbf{a},\sqrt{t})),
w(B(\mathbf{z},\sqrt{t}))
\right\}}
\frac{
\exp\left(-c\,d(\mathbf{z},\mathbf{b})^2/t\right)}
{\max\left\{
w(B(\mathbf{z},\sqrt{t})),
w(B(\mathbf{b},\sqrt{t}))
\right\}}
\,dw(\mathbf{z})
\\
&\quad\leq
C
\frac{
\exp\left(-c'\,d(\mathbf{a},\mathbf{b})^2/t\right)}
{
w(B(\mathbf{a},\sqrt{t}))}.
\end{align*}
Combining this estimate with
\eqref{eq:second_heat_derivative_before_convolution}, we conclude that
\begin{equation}\label{eq:free_heat_second_derivative}
\begin{split}
&\left|
T_{i,\mathbf{a}}T_{j,\mathbf{b}}\partial_t^n
h_t(\mathbf{a},\mathbf{b})
\right|\leq
Ct^{-n-1}
w(B(\mathbf{a},\sqrt{t}))^{-1}
\exp\left(-c\frac{d(\mathbf{a},\mathbf{b})^2}{t}\right).
\end{split}
\end{equation}

Put $\rho=d(\mathbf{a},\mathbf{b})$. The doubling and reverse
doubling properties of $dw$ imply
\begin{align}
 &\int_0^\infty
 t^{-n-1/2}w(B(\mathbf{a},\sqrt{t}))^{-1}
 \exp\left(-c\frac{\rho^2}{t}\right)\,dt
 \leq
 C_n\frac{\rho^{1-2n}}{w(B(\mathbf{a},\rho))},
 \label{eq:integrated_first_heat_derivative}\\
 &\int_0^\infty
 t^{-n-1}w(B(\mathbf{a},\sqrt{t}))^{-1}
 \exp\left(-c\frac{\rho^2}{t}\right)\,dt
 \leq
 C_n\frac{\rho^{-2n}}{w(B(\mathbf{a},\rho))}.
 \label{eq:integrated_second_heat_derivative}
\end{align}
The free resolvent kernel is understood in the sense:
\[
\Gamma_0(\mathbf{a},\mathbf{b},\tau)
=
\lim_{\delta\to 0+}
\int_0^\infty
e^{-(\delta+i\tau)t}
h_t(\mathbf{a},\mathbf{b})\,dt.
\]
For $\tau\neq0$, integration by parts $n$ times in the time variable,
followed by
\eqref{eq:integrated_first_heat_derivative} and
\eqref{eq:integrated_second_heat_derivative}, gives
\begin{align*}
 |T_{j,\mathbf{a}}\Gamma_0(\mathbf{a},\mathbf{b},\tau)|
 &\leq
 C_n|\tau|^{-n}
 \frac{\rho^{1-2n}}{w(B(\mathbf{a},\rho))},\\
 |T_{i,\mathbf{a}}T_{j,\mathbf{b}}
 \Gamma_0(\mathbf{a},\mathbf{b},\tau)|
 &\leq
 C_n|\tau|^{-n}
 \frac{\rho^{-2n}}{w(B(\mathbf{a},\rho))}.
\end{align*}
The estimates corresponding to $n=0$ give
\[
 |T_{j,\mathbf{a}}\Gamma_0(\mathbf{a},\mathbf{b},\tau)|
 \leq
 C\frac{\rho}{w(B(\mathbf{a},\rho))}
\]
and
\[
 |T_{i,\mathbf{a}}T_{j,\mathbf{b}}
 \Gamma_0(\mathbf{a},\mathbf{b},\tau)|
 \leq
 \frac{C}{w(B(\mathbf{a},\rho))}.
\]
Combining the estimates for $n=0$ with those for sufficiently large
$n$ proves
\eqref{eq:free_resolvent_first_derivative} and
\eqref{eq:free_resolvent_second_derivative}.

We next prove~\eqref{eq:fundamental_tau_decay}.
Fix $\tau\in\mathbb R$. By
Lemma~\ref{lem:resolvent_local_weak_solution}, for almost every
$\mathbf b\in\mathbb R^N$, the function
\[
 u(\mathbf z)=\Gamma(\mathbf z,\mathbf b,\tau)
\]
belongs to
$\mathcal D_{\mathrm{loc}}(\mathbf Q;\Omega_{\mathbf b})$, where
\[
 \Omega_{\mathbf b}
 =\mathbb R^N\setminus\mathcal O(\{\mathbf b\}),
\]
and satisfies
\[
 (-\Delta_k+V+i\tau)u=0
 \quad\text{weakly in }\Omega_{\mathbf b}.
\]
Fix such a point $\mathbf b$ for which the conclusion of
Proposition~\ref{propo:basic_decay} also holds for almost every
point in the first variable.

Let $\mathbf a\in\Omega_{\mathbf b}$ be a Lebesgue point of
$|u|^2$. Choose a fixed constant $0<c_0<1/8$ and put
\[
 \rho=d(\mathbf a,\mathbf b),
 \qquad R=c_0\rho.
\]
Then
\[
 \overline{U(\mathbf a,2R)}\subset\Omega_{\mathbf b},
\]
so the local Caccioppoli estimate applies on all the orbit
neighbourhoods used below.

Fix an integer $n\geq1$ and introduce the radii
\[
 r_i=R\left(1+\frac{i}{n}\right),
 \qquad i=0,\ldots,n.
\]
Using $G$-invariant cutoff functions between
$U(\mathbf a,r_i)$ and $U(\mathbf a,r_{i+1})$,
Lemma~\ref{lem:cacc} gives
\[
 |\tau|\int_{U(\mathbf a,r_i)}|u|^2\,dw
 \leq \frac{C_n}{R^2}
       \int_{U(\mathbf a,r_{i+1})}|u|^2\,dw,
 \qquad i=0,\ldots,n-1.
\]
For $\tau\neq0$, iteration therefore yields
\[
 \int_{U(\mathbf a,R)}|u|^2\,dw
 \leq C_n(|\tau|R^2)^{-n}
       \int_{U(\mathbf a,2R)}|u|^2\,dw.
\]
Combining this with the trivial estimate coming from
$U(\mathbf a,R)\subset U(\mathbf a,2R)$, we obtain
\begin{equation}\label{eq:tau_iteration_simple}
 \int_{U(\mathbf a,R)}|u|^2\,dw
 \leq C_n(1+|\tau|R^2)^{-n}
       \int_{U(\mathbf a,2R)}|u|^2\,dw.
\end{equation}
For $\tau=0$, the same inequality follows directly from
the inclusion of these sets.

Corollary~\ref{coro:c2}, applied with inner radius $R/2$
and outer radius $R$, together with the doubling property
of $dw$, gives
\[
 |u(\mathbf a)|^2
 \leq \frac{C}{w(B(\mathbf a,R))}
       \int_{U(\mathbf a,R)}|u(\mathbf z)|^2\,dw(\mathbf z).
\]
Here the value at $\mathbf a$ is understood through the
Lebesgue representative. Using
\eqref{eq:tau_iteration_simple}, we obtain
\[
 |u(\mathbf a)|^2
 \leq
 \frac{C_n(1+|\tau|R^2)^{-n}}{w(B(\mathbf a,R))}
 \int_{U(\mathbf a,2R)}|u(\mathbf z)|^2\,dw(\mathbf z).
\]
Since $dw$ is $G$-invariant and doubling,
\[
 w(U(\mathbf a,2R))
 \leq |G|\,w(B(\mathbf a,2R))
 \leq Cw(B(\mathbf a,R)).
\]
Consequently, taking square roots gives
\begin{equation}\label{eq:tau_mean_value_simple}
 |\Gamma(\mathbf a,\mathbf b,\tau)|
 \leq
 C_n(1+|\tau|R^2)^{-n/2}
 \operatorname*{ess\,sup}_{\mathbf z\in U(\mathbf a,2R)}
 |\Gamma(\mathbf z,\mathbf b,\tau)|.
\end{equation}

We now estimate the essential supremum on the right-hand side.
For $\mathbf z\in U(\mathbf a,2R)$, the triangle inequality
for the orbit distance gives
\[
 (1-2c_0)\rho
 \leq d(\mathbf z,\mathbf b)
 \leq (1+2c_0)\rho.
\]
Moreover, there exists $g\in G$ such that
$\|\mathbf z-g(\mathbf a)\|<2R$.
The $G$-invariance and the doubling property of $dw$
therefore imply
\[
 w(B(\mathbf z,d(\mathbf z,\mathbf b)))
 \simeq w(B(g(\mathbf a),\rho))
 =w(B(\mathbf a,\rho)).
\]
Applying Proposition~\ref{propo:basic_decay}, we conclude that
\[
 \operatorname*{ess\,sup}_{\mathbf z\in U(\mathbf a,2R)}
 |\Gamma(\mathbf z,\mathbf b,\tau)|
 \leq C\frac{\rho^2}{w(B(\mathbf a,\rho))}.
\]
Substitution into~\eqref{eq:tau_mean_value_simple} yields
\[
 |\Gamma(\mathbf a,\mathbf b,\tau)|
 \leq
 C_n(1+|\tau|R^2)^{-n/2}
 \frac{\rho^2}{w(B(\mathbf a,\rho))}.
\]
Since $R=c_0\rho$ and $c_0$ is fixed, this implies
\[
 |\Gamma(\mathbf a,\mathbf b,\tau)|
 \leq
 C_n(1+|\tau|^{1/2}\rho)^{-n}
 \frac{\rho^2}{w(B(\mathbf a,\rho))}.
\]

Finally, fix $M,\ell>0$ and choose an integer $n\geq2M$.
The preceding estimate gives
\[
 |\Gamma(\mathbf a,\mathbf b,\tau)|
 \leq
 C_M(1+|\tau|^{1/2}\rho)^{-2M}
 \frac{\rho^2}{w(B(\mathbf a,\rho))}.
\]
On the other hand, since $V$ is $G$-invariant,
Theorem~\ref{theorem:fundamental_invariant}, applied with
exponent $2\ell$, gives
\[
 |\Gamma(\mathbf a,\mathbf b,\tau)|
 \leq
 C_\ell(1+m(\mathbf a)\rho)^{-2\ell}
 \frac{\rho^2}{w(B(\mathbf a,\rho))}.
\]
Multiplying these two inequalities and taking square roots,
we obtain
\[
 |\Gamma(\mathbf a,\mathbf b,\tau)|
 \leq
 C_{M,\ell}
 (1+|\tau|^{1/2}\rho)^{-M}
 (1+m(\mathbf a)\rho)^{-\ell}
 \frac{\rho^2}{w(B(\mathbf a,\rho))}.
\]
This proves~\eqref{eq:fundamental_tau_decay} for almost every
pair $(\mathbf a,\mathbf b)$ off the orbit diagonal,
for each fixed $\tau\in\mathbb R$.
\end{proof}

\subsection{A localized fractional integral estimate}

Recall that
\[
 U(\mathbf a,r)=\bigcup_{g\in G}B(g(\mathbf a),r).
\]

\begin{lemma}\label{lem:estimates_before_annular_Riesz2}
Assume that $V$ is $G$-invariant,
$\frac{\mathbf{N}}{2}<q_1<\mathbf{N}$,  $N>2$, and
$V\in {\rm RH}^{q_1}(dw)$. Define $s$ by
\[
    \frac1s=1-\frac1{q_1}+\frac1{\mathbf{N}},
\]
and let $s'$ be its conjugate exponent. Then for every $\mathbf{a}\in\mathbb{R}^N$, $R>0$, and every function
$F\in L^{q_1}(dw)$ supported in
$U(\mathbf{a},2R)$, we have
\begin{equation}\label{eq:localized_fractional_integral}
\begin{split}
 &\left\|
 \int_{U(\mathbf{a},2R)}
 \frac{d(\,\cdot\,,\mathbf{z})}
 {w(B(\,\cdot\,,d(\,\cdot\,,\mathbf{z})))}
 |F(\mathbf{z})|\,dw(\mathbf{z})
 \right\|_{
 L^{s'}(U(\mathbf{a},R),dw)}
 \leq
 CR\,w(B(\mathbf{a},R))^{-1/\mathbf{N}}
 \|F\|_{
 L^{q_1}(U(\mathbf{a},2R),dw)}.
\end{split}
\end{equation}
Finally, let $\tau\in\mathbb R$ and suppose that
$u\in\mathcal D_{\mathrm{loc}}(\mathbf Q;U(\mathbf a,4R))$
is a local weak solution of
\[
    -\Delta_k u+(V+i\tau)u=0
\]
in $U(\mathbf{a},4R)$. Then
\begin{equation}\label{eq:local_resolvent_gradient}
\begin{split}
 &\|T_j u\|_{
 L^{s'}(U(\mathbf{a},R),dw)}
 \\
 &\quad\leq
 C\Bigg(
 Rw(B(\mathbf{a},R))^{-1/\mathbf{N}}
 \|V\|_{
 L^{q_1}(U(\mathbf{a},2R),dw)}
 +
 R^{-1}w(B(\mathbf{a},R))^{1/s'}
 \Bigg)
 \operatorname*{ess\,sup}_{\mathbf{z}\in
 U(\mathbf{a},4R)}
 |u(\mathbf{z})|.
\end{split}
\end{equation}
Consequently, there exists $\kappa_0>0$ such that
\begin{equation}\label{eq:local_resolvent_gradient_simplified}
\begin{split}
 &\|T_j u\|_{
 L^{s'}(U(\mathbf{a},R),dw)}\leq
 CR^{-1}w(B(\mathbf{a},R))^{1/s'}
 \bigl(1+Rm(\mathbf{a})\bigr)^{\kappa_0}
 \operatorname*{ess\,sup}_{\mathbf{z}\in
 U(\mathbf{a},4R)}
 |u(\mathbf{z})|.
\end{split}
\end{equation}
All constants are independent of $\mathbf a$, $R$, $\tau$, and the functions involved.
\end{lemma}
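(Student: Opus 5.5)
The proof has three parts, proved in the order of the statement: the fractional integral bound \eqref{eq:localized_fractional_integral}, the gradient bound \eqref{eq:local_resolvent_gradient} via a representation formula, and the simplified form \eqref{eq:local_resolvent_gradient_simplified} via reverse Hölder and the critical function.

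\emph{Step 1: the localized fractional integral.} For $\mathbf x\in U(\mathbf a,R)$ and $\mathbf z\in U(\mathbf a,2R)$ we have $d(\mathbf x,\mathbf z)\le 3R$. Decompose the orbit annuli $\{2^{-i-1}3R\le d(\mathbf x,\mathbf z)<2^{-i}3R\}$. On the $i$-th annulus the kernel is about $r_i/w(B(\mathbf x,r_i))$, and $w(B(\mathbf x,r_i))\simeq w(B(\mathbf a,R))(r_i/R)^{\gamma}$ with $N\le\gamma\le\mathbf N$ by \eqref{eq:growth}; the centre $\mathbf x$ may be replaced by $\mathbf a$ via $G$-invariance and doubling.

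The plan is Hedberg's argument with maximal functions on the doubling space $(U(\mathbf a,2R),d,dw)$, where $\mathcal O$-balls have measure comparable to ordinary balls by \eqref{eq:ball_orbit_compare}. Split at a level $\delta$. For the near part, use $r\cdot r^{-\gamma}$ with $\gamma\ge N>2$ to get at most $C\delta\, \mathcal M F(\mathbf x)$. For the far part, use Hölder: the quantity $w(B(\mathbf x,r))^{-1/q_1}\,r$ is controlled using the lower bound $w(B(\mathbf x,r))\ge c\,w(B(\mathbf a,R))(r/R)^{\mathbf N}$, valid for $r\le 3R$ and the reason the exponent $\mathbf N$ enters. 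Optimizing $\delta$ gives the weak-type bound, and Marcinkiewicz interpolation with $\mathcal M$ bounded on $L^{q_1}$ gives the normalized estimate $\|I F\|_{L^{s'}}\le CR\,w(B(\mathbf a,R))^{-1/\mathbf N}\|F\|_{L^{q_1}}$, since $1/s'=1/q_1-1/\mathbf N$. Scaling to $R=1$ and $w(B(\mathbf a,R))$-normalization makes the constants uniform.

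\emph{Step 2: representation of $T_ju$.} Take a $G$-invariant cutoff $\eta$ equal to one on $U(\mathbf a,3R/2)$, supported in $U(\mathbf a,2R)$, with $|\nabla\eta|\lesssim R^{-1}$ and $|\Delta\eta|\lesssim R^{-2}$. Then $v=\eta u$ satisfies, weakly,
\[
 -\Delta_k v=-\eta(V+i\tau)u-2\langle\nabla\eta,\nabla_k u\rangle-u\Delta_k\eta,
\]
using \eqref{eq:G_inv_Leibniz}, which is available because $\eta$ is $G$-invariant. Write $v=(-\Delta_k)^{-1}(\cdot)$ via the Green kernel $\Gamma_0(\cdot,\cdot,0)$ and differentiate using \eqref{eq:free_resolvent_first_derivative} at $\tau=0$. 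The resulting terms are handled as follows.
\begin{itemize}
\item The $V$ term gives the fractional integral of $\eta V|u|$, which Step 1 bounds by the first term on the right of \eqref{eq:local_resolvent_gradient}.
\item The $u\Delta_k\eta$ term and the gradient term live where $\nabla\eta\neq0$, at orbit distance $\gtrsim R$ from $U(\mathbf a,R)$. For the gradient term, first integrate by parts to move $T$ onto the kernel and $\eta$. This uses \eqref{eq:free_resolvent_second_derivative}, or equivalently writing $\langle\nabla\eta,\nabla_k u\rangle$ through divergence form.
\item The $i\tau u$ term is the main obstacle. Since $|\tau|$ is unbounded, it cannot be absorbed crudely. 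I would first try to absorb it by using the free resolvent $(-\Delta_k+i\tau)^{-1}$ in place of $(-\Delta_k)^{-1}$, together with the decay \eqref{eq:free_resolvent_first_derivative} and \eqref{eq:free_resolvent_second_derivative}, which is uniform in $\tau$.
\end{itemize}
The two far-field kernels are then bounded by $R^{-1}/w(B(\mathbf a,R))$, times $\int|u|\,dw$ over the support, which yields $R^{-1}w(B(\mathbf a,R))^{1/s'}\operatorname{ess\,sup}|u|$ after taking the $L^{s'}$ norm. The identification of the representation with $T_jv$ goes through $L^2$ approximation in the form domain.

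\emph{Step 3: the simplified bound.} By reverse Hölder, $\|V\|_{L^{q_1}(U(\mathbf a,2R))}\lesssim w(B(\mathbf a,R))^{1/q_1-1}\int_{B(\mathbf a,2R)}V\,dw$ after summing over $G$. The normalized average $R^2w^{-1}\int_{B(\mathbf a,2R)}V$ is at most $C(1+Rm(\mathbf a))^{N_0}$, as shown in the proof of Lemma~\ref{lem:L_p_large}; $G$-invariance makes all group translates equivalent. Since $1-1/\mathbf N-1/s'=1-1/q_1$, the first term becomes $R^{-1}w(B(\mathbf a,R))^{1/s'}(1+Rm(\mathbf a))^{N_0}$, giving \eqref{eq:local_resolvent_gradient_simplified} with $\kappa_0=N_0$.
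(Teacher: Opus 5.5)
Your proposal is correct. Steps 1 and 3 match the paper: Step 1 is the paper's pointwise Hedberg splitting at level $\delta$ with the lower volume bound $w(B(\mathbf y,r))\ge C^{-1}(r/R)^{\mathbf N}w(B(\mathbf a,R))$ for $r\le 3R$, and Step 3 is the same reverse H\"older and critical-radius bound for the $V$-term.

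Step 2 takes a genuinely different route. The paper writes $u=h-v$, where $v=\int_{U(\mathbf a,2R)}\Gamma_0(\cdot,\mathbf z,\tau)Vu\,dw$ and $h$ solves $(-\Delta_k+i\tau)h=0$ in $U(\mathbf a,2R)$. It bounds $T_jv$ by \eqref{eq:localized_fractional_integral}. It controls $T_jh$ by an interior estimate for free solutions: mollify $\chi h$, apply Lemma~\ref{lem:cacc}, use that $T_j$ commutes with $\Delta_k$ so that $|T_jh_\varepsilon|^2$ is subharmonic by Lemma~\ref{lem:subharmonic}, and apply the mean-value inequality. This forces the paper to prove separately the $L^\infty$ bound \eqref{eq:local_free_potential_supremum} for $v$ and the local form regularity of $v$.

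Your cutoff representation $\eta u=(-\Delta_k+i\tau)^{-1}g$ instead moves the $i\tau$ term to the left. This works, because the $M=0$ cases of \eqref{eq:free_resolvent_first_derivative}--\eqref{eq:free_resolvent_second_derivative} are uniform in $\tau$. All commutator terms then live on the $G$-invariant annulus $\operatorname{supp}\nabla\eta$, at orbit distance at least $R/2$ from $U(\mathbf a,R)$. After one integration by parts they are pointwise $O(R^{-1}\operatorname*{ess\,sup}|u|)$. The reflection terms from \eqref{eq:general_Leibniz} only produce $T_{j,\mathbf y}\Gamma_0(\mathbf y,\sigma_\alpha\mathbf z,\tau)$, which has the same size.

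Your route avoids the mollification, the subharmonicity and mean-value machinery, and the bound on $\sup|v|$. Its price is the mixed second-derivative bound \eqref{eq:free_resolvent_second_derivative}, which the paper proves but otherwise never uses. You also need to identify $T_j(\eta u)$ with the kernel integral: uniqueness in the free form domain for $\tau\neq0$, and a Fourier or limiting argument using $\mathbf N>2$ at $\tau=0$. The paper's route needs no second-order kernel bounds. It also yields an interior $L^\infty$ gradient estimate for free solutions, which it reuses in the case $q_1>\mathbf N$ of Lemma~\ref{lem:weighted_gradient_maximal_estimate}.

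There are a few small corrections. For $G$-invariant $\eta$, the product rule is $\Delta_k(\eta u)=\eta\Delta_ku+u\Delta_k\eta+2\langle\nabla\eta,\nabla u\rangle$, with the Euclidean gradient. Written through $\nabla_k u$, the weak identity is
\[
(-\Delta_k+i\tau)(\eta u)=-\eta Vu-2\langle\nabla\eta,\nabla_k u\rangle-u\,\Delta\eta-\sum_{\alpha\in R}k(\alpha)\frac{\partial_\alpha\eta(\mathbf x)}{\langle\alpha,\mathbf x\rangle}\,u(\sigma_\alpha\mathbf x).
\]
Your formula is therefore off by a zeroth-order term of size $R^{-2}(|u|+|u\circ\sigma_\alpha|)$ on $\operatorname{supp}\nabla\eta$, which is harmless.

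You also need $\|\nabla^2\eta\|_\infty\le CR^{-2}$, not just a bound on $|\Delta\eta|$. This controls $\partial_\alpha\eta/\langle\alpha,\mathbf x\rangle$, using that $\partial_\alpha\eta$ is odd under $\sigma_\alpha$, and it controls the reflection quotients of $\partial_i\eta$ in your integration by parts.

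In Step 1, the optimized $\delta$ already gives the pointwise bound $CRw(B(\mathbf a,R))^{-1/\mathbf N}\|F\|_{q_1}^{q_1/\mathbf N}\bigl(\sum_{g\in G}M(|F|)(g(\cdot))\bigr)^{1-q_1/\mathbf N}$. Strong type then follows directly from $s'(1-q_1/\mathbf N)=q_1$, with no Marcinkiewicz step. You must, however, treat separately the case where the optimal $\delta$ exceeds $3R$; there the far part is empty.
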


\begin{proof}
Let $M$ denote the Hardy--Littlewood maximal operator on
$(\mathbb R^N,dw)$, defined using Euclidean balls.
Fix $\mathbf{y}\in U(\mathbf{a},R)$. Then there exists $g_0\in G$ such that $\mathbf{y}\in B(g_0(\mathbf{a}),R)$. The $G$-invariance and doubling property of $dw$ imply
\[
    w(B(\mathbf{y},R))
    \simeq w(B(g_0(\mathbf{a}),R))
    =w(B(\mathbf{a},R)).
\]
Consequently, by~\eqref{eq:growth}, for every $0<r\leq3R$,
\begin{equation}\label{eq:local_volume_lower_bound}
 w(B(\mathbf{y},r))
 \geq
 C^{-1}
 \left(\frac{r}{R}\right)^{\mathbf{N}}
 w(B(\mathbf{a},R)).
\end{equation}

Fix $0<\delta\leq3R$ and split the integral into $d(\mathbf{y},\mathbf{z})<\delta$ and $d(\mathbf{y},\mathbf{z})\geq\delta$. For the first part, consider the dyadic orbit annuli
\[
  2^{-\ell-1}\delta
  \leq d(\mathbf{y},\mathbf{z})
  <2^{-\ell}\delta,
  \qquad \ell\geq0.
\]
The doubling property and the $G$-invariance of $dw$ give
\begin{align*}
&\int_{\{2^{-\ell-1}\delta
\leq d(\mathbf{y},\mathbf{z})<2^{-\ell}\delta\}}
\frac{d(\mathbf{y},\mathbf{z})}
{w(B(\mathbf{y},d(\mathbf{y},\mathbf{z})))}
|F(\mathbf{z})|\,dw(\mathbf{z})
\\
&\quad\leq
C2^{-\ell}\delta
\sum_{g\in G}
\frac{1}{w(B(g(\mathbf{y}),2^{-\ell}\delta))}
\int_{B(g(\mathbf{y}),2^{-\ell}\delta)}
|F(\mathbf{z})|\,dw(\mathbf{z})
\leq
C2^{-\ell}\delta
\sum_{g\in G}M(|F|)(g(\mathbf{y})).
\end{align*}
Summing over $\ell\geq0$, we obtain
\begin{equation}\label{eq:localized_fractional_near_part}
\begin{split}
&\int_{\{d(\mathbf{y},\mathbf{z})<\delta\}}
\frac{d(\mathbf{y},\mathbf{z})}
{w(B(\mathbf{y},d(\mathbf{y},\mathbf{z})))}
|F(\mathbf{z})|\,dw(\mathbf{z})
\leq
C\delta
\sum_{g\in G}M(|F|)(g(\mathbf{y})).
\end{split}
\end{equation}

For the second part, decompose into the annuli $2^\ell\delta
  \leq d(\mathbf{y},\mathbf{z})
  <2^{\ell+1}\delta$, for those $\ell\geq0$ for which $2^\ell\delta\leq3R$.
The restriction $2^\ell\delta\leq3R$ is sufficient because $\mathbf{y}\in U(\mathbf{a},R),
 \mathbf{z}\in U(\mathbf{a},2R) $
imply $d(\mathbf{y},\mathbf{z})<3R $.
By Hölder's inequality,
\begin{align*}
&\int_{\{2^\ell\delta
\leq d(\mathbf{y},\mathbf{z})<2^{\ell+1}\delta\}}
\frac{d(\mathbf{y},\mathbf{z})}
{w(B(\mathbf{y},d(\mathbf{y},\mathbf{z})))}
|F(\mathbf{z})|\,dw(\mathbf{z})
\leq
C2^\ell\delta\,
w(B(\mathbf{y},2^\ell\delta))^{-1/q_1}
\|F\|_{
L^{q_1}(U(\mathbf{a},2R),dw)}.
\end{align*}
Using~\eqref{eq:local_volume_lower_bound}, we obtain
\begin{align*}
&\int_{\{2^\ell\delta
\leq d(\mathbf{y},\mathbf{z})<2^{\ell+1}\delta\}}
\frac{d(\mathbf{y},\mathbf{z})}
{w(B(\mathbf{y},d(\mathbf{y},\mathbf{z})))}
|F(\mathbf{z})|\,dw(\mathbf{z})
\\
&\quad\leq
C R^{\mathbf{N}/q_1}
w(B(\mathbf{a},R))^{-1/q_1}
(2^\ell\delta)^{1-\mathbf{N}/q_1}
\|F\|_{
L^{q_1}(U(\mathbf{a},2R),dw)}.
\end{align*}
Since $q_1<\mathbf{N}$, we have $ 1-{\mathbf{N}}/{q_1}<0$. Thus the sum over $\ell$ is convergent and gives
\begin{equation}\label{eq:localized_fractional_far_part}
\begin{split}
&\int_{\substack{
\mathbf{z}\in U(\mathbf{a},2R)\\
d(\mathbf{y},\mathbf{z})\geq\delta}}
\frac{d(\mathbf{y},\mathbf{z})}
{w(B(\mathbf{y},d(\mathbf{y},\mathbf{z})))}
|F(\mathbf{z})|\,dw(\mathbf{z})
\leq
C R^{\mathbf{N}/q_1}
w(B(\mathbf{a},R))^{-1/q_1}
\delta^{1-\mathbf{N}/q_1}
\|F\|_{
L^{q_1}(U(\mathbf{a},2R),dw)}.
\end{split}
\end{equation}

Combining~\eqref{eq:localized_fractional_near_part} and
\eqref{eq:localized_fractional_far_part}, we obtain
\begin{align*}
&\int_{U(\mathbf{a},2R)}
\frac{d(\mathbf{y},\mathbf{z})}
{w(B(\mathbf{y},d(\mathbf{y},\mathbf{z})))}
|F(\mathbf{z})|\,dw(\mathbf{z})
\\
&\quad\leq
C\delta\sum_{g\in G}M(|F|)(g(\mathbf{y}))+
C R^{\mathbf{N}/q_1}
w(B(\mathbf{a},R))^{-1/q_1}
\delta^{1-\mathbf{N}/q_1}
\|F\|_{
L^{q_1}(U(\mathbf{a},2R),dw)}.
\end{align*}
Assume first that $F\not\equiv0$. We work at points $\mathbf y$
where $\sum_{g\in G}M(|F|)(g(\mathbf y))$ is finite,
which holds almost everywhere. Then
\[
    \sum_{g\in G}M(|F|)(g(\mathbf{y}))>0.
\]
Choose
\begin{equation}\label{eq:balancing_delta_fractional}
\begin{split}
 \delta_0
 &=
 Rw(B(\mathbf{a},R))^{-1/\mathbf{N}}
 \left(
 \frac{
 \|F\|_{
 L^{q_1}(U(\mathbf{a},2R),dw)}
 }{
 \displaystyle\sum_{g\in G}M(|F|)(g(\mathbf{y}))
 }
 \right)^{q_1/\mathbf{N}}.
\end{split}
\end{equation}
This value is obtained by equating the two terms in the preceding
estimate. Suppose first that $\delta_0\leq3R$. Taking
$\delta=\delta_0$, both terms in the preceding estimate are equal.
Moreover,
\begin{align*}
&\delta_0\sum_{g\in G}M(|F|)(g(\mathbf{y}))
=
Rw(B(\mathbf{a},R))^{-1/\mathbf{N}}
\|F\|_{
L^{q_1}(U(\mathbf{a},2R),dw)
}^{q_1/\mathbf{N}}
\left(
\sum_{g\in G}M(|F|)(g(\mathbf{y}))
\right)^{1-q_1/\mathbf{N}}.
\end{align*}
Consequently,
\begin{align*}
\int_{U(\mathbf{a},2R)}
\frac{d(\mathbf{y},\mathbf{z})}
{w(B(\mathbf{y},d(\mathbf{y},\mathbf{z})))}
|F(\mathbf{z})|\,dw(\mathbf{z})&\leq
CRw(B(\mathbf{a},R))^{-1/\mathbf{N}}
\|F\|_{
L^{q_1}(U(\mathbf{a},2R),dw)
}^{q_1/\mathbf{N}}
\\
&\qquad\times
\left(
\sum_{g\in G}M(|F|)(g(\mathbf{y}))
\right)^{1-q_1/\mathbf{N}}.
\end{align*}

Suppose now that $\delta_0>3R$. Since
\[
 \mathbf{y}\in U(\mathbf{a},R)
 \quad\text{and}\quad
 \operatorname{supp}F
 \subseteq
 U(\mathbf{a},2R),
\]
we have $ d(\mathbf{y},\mathbf{z})<3R$ for every $\mathbf{z}\in\operatorname{supp}F$. Taking $\delta=3R$, the far part is therefore empty, and
\[
\begin{split}
&\int_{U(\mathbf{a},2R)}
\frac{d(\mathbf{y},\mathbf{z})}
{w(B(\mathbf{y},d(\mathbf{y},\mathbf{z})))}
|F(\mathbf{z})|\,dw(\mathbf{z})\leq
CR\sum_{g\in G}M(|F|)(g(\mathbf{y})).
\end{split}
\]
On the other hand, $\delta_0>3R$ and
\eqref{eq:balancing_delta_fractional} imply
\[
\begin{split}
&Rw(B(\mathbf{a},R))^{-1/\mathbf{N}}
\|F\|_{
L^{q_1}(U(\mathbf{a},2R),dw)
}^{q_1/\mathbf{N}}
\left(
\sum_{g\in G}M(|F|)(g(\mathbf{y}))
\right)^{1-q_1/\mathbf{N}}>
3R\sum_{g\in G}M(|F|)(g(\mathbf{y})).
\end{split}
\]
Hence the same pointwise estimate holds also when
$\delta_0>3R$. If $F\equiv0$, the estimate is immediate.

Now, $s'\left(1-{q_1}/{\mathbf{N}}\right)=q_1$. Therefore, taking the $L^{s'}$-norm and using the
$L^{q_1}(dw)$-boundedness of the Hardy--Littlewood maximal operator,
we obtain
\begin{align*}
&\left\|
\sum_{g\in G}M(|F|)(g(\,\cdot\,))
\right\|_{
L^{q_1}(U(\mathbf{a},R),dw)}
\leq
\sum_{g\in G}
\left\|
M(|F|)(g(\,\cdot\,))
\right\|_{L^{q_1}(dw)}
\\
&\quad=
|G|\|M(|F|)\|_{L^{q_1}(dw)}
\leq
C\|F\|_{L^{q_1}(dw)}.
\end{align*}
This proves~\eqref{eq:localized_fractional_integral}.

We next prove~\eqref{eq:local_resolvent_gradient}.
The local mean-value estimate implies that $u$ is essentially
bounded on $U(\mathbf a,2R)$, whose closure is contained in
$U(\mathbf a,4R)$. Define
\[
 v(\mathbf{y})
 =
 \int_{U(\mathbf{a},2R)}
 \Gamma_0(\mathbf{y},\mathbf{z},\tau)
 V(\mathbf{z})u(\mathbf{z})\,dw(\mathbf{z})
\]
and put $h=u+v$. The equation for $v$ is understood
in the distributional sense, using the fundamental-solution
identity for $\Gamma_0$; at $\tau=0$ we use the free Green kernel.
The local form regularity needed below is justified after
estimating $v$. Since $ (-\Delta_k+i\tau)u=-Vu$ and $(-\Delta_k+i\tau)v
 =
 V u\,
 \chi_{U(\mathbf{a},2R)}$,
we have $(-\Delta_k+i\tau)h=0 $
in $U(\mathbf{a},2R)$.

We first estimate $v$. For $\mathbf y\in U(\mathbf a,2R)$,
H\"older's inequality and the free-kernel version of
Proposition~\ref{propo:basic_decay}, obtained by integrating
$h_t$, give
\begin{align*}
 |v(\mathbf{y})|
 &\leq
 C
 \operatorname*{ess\,sup}_{\mathbf{z}\in
 U(\mathbf{a},2R)}
 |u(\mathbf{z})|
 \\
 &\quad\times
 \|V\|_{
 L^{q_1}(U(\mathbf{a},2R),dw)}
 \left(
 \int_{\substack{
 \mathbf{z}\in U(\mathbf{a},2R)\\
 d(\mathbf{y},\mathbf{z})<4R}}
 \left(
 \frac{d(\mathbf{y},\mathbf{z})^2}
 {w(B(\mathbf{y},d(\mathbf{y},\mathbf{z})))}
 \right)^{q_1'}
 dw(\mathbf{z})
 \right)^{1/q_1'}.
\end{align*}
A decomposition into the dyadic orbit annuli
$2^{-\ell-1}4R
 \leq d(\mathbf{y},\mathbf{z})
 <2^{-\ell}4R,
  \ell\geq0$, 
together with the doubling property, gives
\[
\begin{split}
&\left(
 \int_{\substack{
 \mathbf{z}\in U(\mathbf{a},2R)\\
 d(\mathbf{y},\mathbf{z})<4R}}
 \left(
 \frac{d(\mathbf{y},\mathbf{z})^2}
 {w(B(\mathbf{y},d(\mathbf{y},\mathbf{z})))}
 \right)^{q_1'}
 dw(\mathbf{z})
 \right)^{1/q_1'}\leq
CR^2w(B(\mathbf{a},R))^{-1/q_1}.
\end{split}
\]
Here the volume lower bound is used with centres in
$U(\mathbf a,2R)$ and radii at most $4R$, which follows
in the same way as \eqref{eq:local_volume_lower_bound}.
The geometric series converges because
\[
 2q_1'-\mathbf N(q_1'-1)
 =\frac{2q_1-\mathbf N}{q_1-1}>0.
\]
Consequently,
\begin{equation}\label{eq:local_free_potential_supremum}
\begin{split}
 &\operatorname*{ess\,sup}_{\mathbf{y}\in
 U(\mathbf{a},2R)}
 |v(\mathbf{y})|\leq
 CR^2w(B(\mathbf{a},R))^{-1/q_1}
 \|V\|_{
 L^{q_1}(U(\mathbf{a},2R),dw)}
 \operatorname*{ess\,sup}_{\mathbf{z}\in
 U(\mathbf{a},4R)}
 |u(\mathbf{z})|.
\end{split}
\end{equation}

We also justify the local form regularity of $v$ and the
first-order derivative representation used below.
Approximate $F=Vu\,\mathbf1_{U(\mathbf a,2R)}$ in $L^{q_1}(dw)$
by functions in $C_c^\infty(U(\mathbf a,2R))$ and consider their
free potentials. The preceding kernel-integrability estimate
gives uniform convergence of the potentials on $U(\mathbf a,2R)$.
Moreover, \eqref{eq:free_resolvent_first_derivative} and
\eqref{eq:localized_fractional_integral}, applied with radius
$2R$, give convergence of their Dunkl derivatives in
$L^{s'}(U(\mathbf a,2R),dw)$.
Since
\[
 s'=\frac{\mathbf Nq_1}{\mathbf N-q_1}>\mathbf N>2,
\]
these derivatives also converge in $L^2$ on this bounded set.
Integration by parts identifies the limits with the
Dunkl derivatives of $v$. Passing to the limit in the
weak equations shows that $v$, and hence $h$, belong to
the local form domain of the free Dunkl Laplacian and that
$(-\Delta_k+i\tau)h=0$ weakly in $U(\mathbf a,2R)$.

We next use an interior estimate for the free equation.
To justify applying the mean-value inequality to a derivative,
we first regularize $h$. Choose a $G$-invariant cutoff
$\chi\in C_c^\infty(U(\mathbf a,2R))$, with $0\leq\chi\leq1$,
equal to one on a neighbourhood of
$\overline{U(\mathbf a,7R/4)}$.
Let $\rho_\varepsilon$ be a nonnegative normalized smooth radial
Dunkl approximate identity supported in $B(0,\varepsilon)$,
and set
\[
 h_\varepsilon=(\chi h)*\rho_\varepsilon.
\]
For sufficiently small $\varepsilon$, this is a smooth solution of
\[
 (-\Delta_k+i\tau)h_\varepsilon=0
 \quad\text{in }U(\mathbf a,7R/4).
\]
Indeed, the Dunkl Laplacian commutes with convolution, and the
convolution kernel vanishes when the orbit distance exceeds
$\varepsilon$. Its positivity and normalization also imply
\[
 \|h_\varepsilon\|_{L^\infty(dw)}
 \leq \|h\|_{L^\infty(U(\mathbf a,2R),dw)}.
\]
Applying Lemma~\ref{lem:cacc} with zero potential,
inner radius $3R/2$, and outer radius $7R/4$, and using
the $G$-invariance and doubling property of $dw$, we obtain
\[
 \int_{U(\mathbf a,3R/2)}|T_jh_\varepsilon|^2\,dw
 \leq CR^{-2}w(B(\mathbf a,R))
       \|h\|_{L^\infty(U(\mathbf a,2R),dw)}^2.
\]
The Dunkl operators commute with $\Delta_k$, and hence
\[
 (-\Delta_k+i\tau)T_jh_\varepsilon=0
 \quad\text{in }U(\mathbf a,7R/4).
\]
By Lemma~\ref{lem:subharmonic} with zero potential,
$|T_jh_\varepsilon|^2$ is $\Delta_k$-subharmonic.
Applying the local mean-value estimate on orbit neighbourhoods
of radius comparable to $R$ contained in $U(\mathbf a,3R/2)$,
and using the preceding Caccioppoli bound, gives
\[
 \sup_{\mathbf y\in U(\mathbf a,R)}|T_jh_\varepsilon(\mathbf y)|
 \leq CR^{-1}\|h\|_{L^\infty(U(\mathbf a,2R),dw)}.
\]
The constants are independent of $\varepsilon$ and $\tau$.
Since $h_\varepsilon\to h$ locally in $L^2(dw)$, integration
by parts identifies the weak limits of $T_jh_\varepsilon$
with $T_jh$. Passing to the limit yields
\[
 \operatorname*{ess\,sup}_{\mathbf y\in U(\mathbf a,R)}
 |T_jh(\mathbf y)|
 \leq CR^{-1}
 \operatorname*{ess\,sup}_{\mathbf z\in U(\mathbf a,2R)}
 |h(\mathbf z)|.
\]
It follows that
\begin{equation}\label{eq:free_local_gradient_supremum}
\begin{split}
 &\|T_jh\|_{
 L^{s'}(U(\mathbf{a},R),dw)}\leq
 CR^{-1}w(B(\mathbf{a},R))^{1/s'}
 \operatorname*{ess\,sup}_{\mathbf{z}\in
 U(\mathbf{a},2R)}
 |h(\mathbf{z})|.
\end{split}
\end{equation}

Since $h=u+v$, estimate
\eqref{eq:local_free_potential_supremum} gives
\begin{align*}
&\operatorname*{ess\,sup}_{\mathbf{z}\in
U(\mathbf{a},2R)}
|h(\mathbf{z})|
\leq
\Bigg(
1+
CR^2w(B(\mathbf{a},R))^{-1/q_1}
\|V\|_{
L^{q_1}(U(\mathbf{a},2R),dw)}
\Bigg)
\operatorname*{ess\,sup}_{\mathbf{z}\in
U(\mathbf{a},4R)}
|u(\mathbf{z})|.
\end{align*}
Substituting this into
\eqref{eq:free_local_gradient_supremum}, and using $ 1/{s'}-1/{q_1}
    =-1/{\mathbf{N}}$, we obtain
\begin{align}
&\|T_jh\|_{
L^{s'}(U(\mathbf{a},R),dw)}
\nonumber\\
&\quad\leq
C\Bigg(
R^{-1}w(B(\mathbf{a},R))^{1/s'}
+
Rw(B(\mathbf{a},R))^{-1/\mathbf{N}}
\|V\|_{
L^{q_1}(U(\mathbf{a},2R),dw)}
\Bigg)
\operatorname*{ess\,sup}_{\mathbf{z}\in
U(\mathbf{a},4R)}
|u(\mathbf{z})|.
\label{eq:local_gradient_free_part}
\end{align}

On the other hand, the first-order free resolvent estimate \eqref{eq:free_resolvent_first_derivative} gives
\[
 |T_jv(\mathbf{y})|
 \leq
 C\int_{U(\mathbf{a},2R)}
 \frac{d(\mathbf{y},\mathbf{z})}
 {w(B(\mathbf{y},d(\mathbf{y},\mathbf{z})))}
 V(\mathbf{z})|u(\mathbf{z})|\,dw(\mathbf{z}).
\]
Applying~\eqref{eq:localized_fractional_integral} with
\[
    F=Vu\,
    \chi_{U(\mathbf{a},2R)},
\]
we obtain
\begin{align}
&\|T_jv\|_{
L^{s'}(U(\mathbf{a},R),dw)}
\nonumber\leq
CRw(B(\mathbf{a},R))^{-1/\mathbf{N}}
\|V\|_{
L^{q_1}(U(\mathbf{a},2R),dw)}
\operatorname*{ess\,sup}_{\mathbf{z}\in
U(\mathbf{a},4R)}
|u(\mathbf{z})|.
\label{eq:local_gradient_potential_part}
\end{align}

Finally, since $u=h-v$ the triangle inequality gives
\begin{align*}
 &\|T_j u\|_{
 L^{s'}(U(\mathbf{a},R),dw)}
 \\
 &\quad\leq
 C\Bigg(
 Rw(B(\mathbf{a},R))^{-1/\mathbf{N}}
 \|V\|_{
 L^{q_1}(U(\mathbf{a},2R),dw)}
 +
 R^{-1}w(B(\mathbf{a},R))^{1/s'}
 \Bigg)
 \operatorname*{ess\,sup}_{\mathbf{z}\in
 U(\mathbf{a},4R)}
 |u(\mathbf{z})|.
\end{align*}
This proves~\eqref{eq:local_resolvent_gradient}.

It remains to prove
\eqref{eq:local_resolvent_gradient_simplified}. The reverse
H\"older inequality, the doubling property of the measure
$V\,dw$, the definition of $m$, and the critical-radius estimates in Lemmas \ref{lem:mu_doubling} and \ref{lem:Rr} imply that there exists
$\kappa_0>0$ such that
\[
 \frac{1}{w(B(\mathbf{a},2R))}
 \int_{B(\mathbf{a},2R)}V(\mathbf{z})\,dw(\mathbf{z})
 \leq
 CR^{-2}\bigl(1+Rm(\mathbf{a})\bigr)^{\kappa_0}.
\]
By the reverse H\"older inequality,
\[
\begin{split}
 \|V\|_{L^{q_1}(B(\mathbf{a},2R),dw)}
 &\leq
 Cw(B(\mathbf{a},R))^{1/q_1}
 R^{-2}
 \bigl(1+Rm(\mathbf{a})\bigr)^{\kappa_0}.
\end{split}
\]
Since $V$ and $dw$ are $G$-invariant,
\[
\begin{split}
&\|V\|_{
L^{q_1}(U(\mathbf{a},2R),dw)}\leq
CR^{-2}w(B(\mathbf{a},R))^{1/q_1}
\bigl(1+Rm(\mathbf{a})\bigr)^{\kappa_0}.
\end{split}
\]
Finally, $1/{s'}
    =1/{q_1}-1/{\mathbf{N}}$, and therefore
\[
 Rw(B(\mathbf{a},R))^{-1/\mathbf{N}}
 \|V\|_{
 L^{q_1}(U(\mathbf{a},2R),dw)}
 \leq
 CR^{-1}w(B(\mathbf{a},R))^{1/s'}
 \bigl(1+Rm(\mathbf{a})\bigr)^{\kappa_0}.
\]
Substituting this into
\eqref{eq:local_resolvent_gradient}
proves~\eqref{eq:local_resolvent_gradient_simplified}.
\end{proof}

\subsection{Comparison with the free resolvent and annular estimates for the Riesz kernels}

Recall that $K_j(\mathbf y,\mathbf x)$ and $K_j^0(\mathbf y,\mathbf x)$
denote the integral kernels of $T_jL^{-1/2}$ and
$T_j(-\Delta_k)^{-1/2}$, respectively (see~\eqref{eq:Riesz_kernel_representation} and~\eqref{eq:free_Riesz_kernel_representation}).

\begin{lemma}\label{lem:annular_Riesz_kernel_bounds}
Assume that $N>2$, that $V$ is $G$-invariant, and that
$\mathbf N/2<q_1<\mathbf N$ and
$V\in{\rm RH}^{q_1}(dw)$. Let $s$ be defined by
\[
    \frac1s=1-\frac1{q_1}+\frac1{\mathbf{N}},
\]
and let $s'$ be its conjugate exponent.For almost every $\mathbf x\in\mathbb R^N$, put
$r=m(\mathbf x)^{-1}$.
There exists $\varepsilon>0$ such that, for every integer $\ell\geq1$,
\begin{equation}\label{eq:far_annular_Riesz_kernel}
\begin{split}
 &\left(
 \int_{\{2^{\ell-1}r<d(\mathbf{x},\mathbf{y})\leq2^\ell r\}}
 |K_j(\mathbf{y},\mathbf{x})|^{s'}\,dw(\mathbf{y})
 \right)^{1/s'}
\leq
 C2^{-\ell\varepsilon}
 w(B(\mathbf{x},2^\ell r))^{-1/s}.
\end{split}
\end{equation}
Moreover, for every integer $\ell\leq0$,
\begin{equation}\label{eq:local_annular_Riesz_kernel}
\begin{split}
 &\left(
 \int_{\{2^{\ell-1}r<d(\mathbf{x},\mathbf{y})\leq2^\ell r\}}
 |K_j(\mathbf{y},\mathbf{x})
      -K_j^0(\mathbf{y},\mathbf{x})|^{s'}\,dw(\mathbf{y})
 \right)^{1/s'}
\leq
 C2^{\ell(2-\mathbf{N}/q_1)}
 w(B(\mathbf{x},2^\ell r))^{-1/s}.
\end{split}
\end{equation}
\end{lemma}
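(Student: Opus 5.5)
Both estimates come from the $\tau$-integral representation \eqref{eq:Riesz_kernel_representation} for $K_j(\cdot,\mathbf x)$, with $L^{s'}$ bounds on orbit annuli from Lemma~\ref{lem:estimates_before_annular_Riesz2}. Fix $\mathbf x$ outside the exceptional null sets, put $\rho_\ell=2^\ell r$, and let $A_\ell=\{2^{\ell-1}r<d(\mathbf x,\mathbf y)\le 2^\ell r\}$. Cover $A_\ell$ by finitely many orbit neighbourhoods $U(\mathbf a,c\rho_\ell)$ with $d(\mathbf a,\mathbf x)\simeq\rho_\ell$ and $c$ small, so that $U(\mathbf a,4c\rho_\ell)$ avoids $\mathcal O(\{\mathbf x\})$. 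By doubling, the number of such neighbourhoods is bounded uniformly in $\ell$.

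\textbf{Far annuli, $\ell\ge1$.} For each $\tau$, the function $u_\tau=\Gamma(\cdot,\mathbf x,\tau)$ is a local weak solution in $U(\mathbf a,4c\rho_\ell)$ by Lemma~\ref{lem:resolvent_local_weak_solution}. Apply \eqref{eq:local_resolvent_gradient_simplified} with $R=c\rho_\ell$, bounding the supremum of $u_\tau$ through \eqref{eq:fundamental_tau_decay} and using symmetry $\Gamma(\mathbf y,\mathbf x,\tau)=\Gamma(\mathbf x,\mathbf y,\tau)$ (up to conjugation) so that the weight is $m(\mathbf x)$. This gives
\[
 \|T_ju_\tau\|_{L^{s'}(U(\mathbf a,R))}\le C_{M,\ell'}\,R^{-1}w(B(\mathbf a,R))^{1/s'}(1+2^\ell)^{\kappa_0}(1+|\tau|^{1/2}\rho_\ell)^{-M}(1+2^\ell)^{-\ell'}\frac{\rho_\ell^2}{w(B(\mathbf x,\rho_\ell))}.
\]
Here $Rm(\mathbf a)\lesssim 2^{\ell(1+\kappa)}$ by Lemma~\ref{lem:m_growth}, so $\kappa_0$ should be replaced by $\kappa_0(1+\kappa)$. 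Next integrate in $\tau$ against $|\tau|^{-1/2}$ using Minkowski's inequality. Since $\int|\tau|^{-1/2}(1+|\tau|^{1/2}\rho)^{-M}d\tau\simeq\rho^{-1}$, the $\rho^2/\rho$ balances the $R^{-1}$. Combined with $w(B(\mathbf a,R))\simeq w(B(\mathbf x,\rho_\ell))$ and $1/s'-1=-1/s$, this yields $C2^{-\ell(\ell'-\kappa_0(1+\kappa))}w(B(\mathbf x,\rho_\ell))^{-1/s}$. Choosing $\ell'$ large gives any $\varepsilon>0$.

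\textbf{Local annuli, $\ell\le0$.} Work with the difference $D_\tau=\Gamma-\Gamma_0$. By the resolvent identity,
\[
 D_\tau(\mathbf y,\mathbf x)=-\int\Gamma_0(\mathbf y,\mathbf z,\tau)V(\mathbf z)\Gamma(\mathbf z,\mathbf x,\tau)\,dw(\mathbf z),
\]
which is justified through the $a$-regularization as before. Then
\[
 T_{j,\mathbf y}D_\tau=-\int T_{j,\mathbf y}\Gamma_0(\mathbf y,\mathbf z,\tau)V(\mathbf z)\Gamma(\mathbf z,\mathbf x,\tau)\,dw(\mathbf z).
\]
Before integrating in $\tau$ with $|\tau|^{-1/2}$, estimate pointwise with \eqref{eq:free_resolvent_first_derivative} and \eqref{eq:fundamental_tau_decay}. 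The $\tau$-integral of the product of the two decay factors is bounded by
\[
 \min(d(\mathbf y,\mathbf z),d(\mathbf z,\mathbf x))^{-1}\le d(\mathbf y,\mathbf z)^{-1/2}d(\mathbf z,\mathbf x)^{-1/2}.
\]
This gives a kernel bound of the form
\[
 |K_j-K_j^0|(\mathbf y,\mathbf x)\lesssim\int\frac{d(\mathbf y,\mathbf z)^{1/2}}{w(B(\mathbf y,d(\mathbf y,\mathbf z)))}V(\mathbf z)\frac{d(\mathbf z,\mathbf x)^{3/2}}{w(B(\mathbf z,d(\mathbf z,\mathbf x)))}(1+m(\mathbf x)d(\mathbf z,\mathbf x))^{-\ell'}\,dw(\mathbf z).
\]
Split $\mathbf z$ into three regions: near $\mathbf y$ (within $\rho_\ell/4$ in orbit distance), near $\mathbf x$, and far from both. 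In the near-$\mathbf y$ region, apply the localized fractional integral bound \eqref{eq:localized_fractional_integral}, in its $1/2$-order variant, or more simply the full first-order version after the cruder estimate $\min\le d(\mathbf y,\mathbf z)$. The $L^{q_1}$ norm of $V$ on $U(\mathbf x,C\rho_\ell)$ is controlled by reverse H\"older and Lemma~\ref{lem:Rr}, which give
\[
 \|V\|_{L^{q_1}}\lesssim\rho_\ell^{-2}w^{1/q_1}(\rho_\ell/r)^{2-\mathbf N/q_1}.
\]
This produces exactly $2^{\ell(2-\mathbf N/q_1)}w(B(\mathbf x,\rho_\ell))^{-1/s}$. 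In the remaining regions the kernel in $\mathbf y$ is essentially constant on $A_\ell$. Dyadic sums of $\rho^2w^{-1}\int_{B}V$ over scales $\rho\ge\rho_\ell$, controlled by Lemma~\ref{lem:Rr} below $r$ and by the $m$-decay above $r$, again give the factor $2^{\ell(2-\mathbf N/q_1)}$ times $w(B(\mathbf x,\rho_\ell))^{-1}$. The $L^{s'}$ norm over $A_\ell$ then contributes $w^{1/s'}$.

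\textbf{Main obstacle.} The hardest step is the $\tau$-integration in the local case. The pointwise product bound must keep exactly one power of distance and still be compatible with the fractional integral lemma, which is stated only for the kernel $d/w$. I would first try the crude bound $\min(d(\mathbf y,\mathbf z),d(\mathbf z,\mathbf x))^{-1}\le d(\mathbf y,\mathbf z)^{-1}$, which turns the $\Gamma_0$-factor into $d(\mathbf y,\mathbf z)/w$. The $\Gamma$-factor is then bounded by $\rho_\ell^2/w(B(\mathbf x,\rho_\ell))$ near $\mathbf y$. This should suffice, since near $\mathbf y$ we have $d(\mathbf z,\mathbf x)\simeq\rho_\ell$.
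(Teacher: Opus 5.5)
Your far estimate ($\ell\geq1$) matches the paper's argument. You cover the annulus by a bounded number of small orbit neighbourhoods, apply \eqref{eq:local_resolvent_gradient_simplified} to $\Gamma(\cdot,\mathbf x,\tau)$, control the supremum by \eqref{eq:fundamental_tau_decay}, absorb the growth of $m(\mathbf a)$ via Lemma~\ref{lem:m_growth}, and use Minkowski in $\tau$.

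The local estimate ($\ell\leq0$), however, fails at exactly the step you flag. Write $a=d(\mathbf y,\mathbf z)$, $b=d(\mathbf z,\mathbf x)$ and substitute $\sigma=|\tau|^{1/2}$. Then
\[
 \int_{\mathbb R}|\tau|^{-1/2}(1+|\tau|^{1/2}a)^{-M}(1+|\tau|^{1/2}b)^{-M}\,d\tau\simeq\max(a,b)^{-1}.
\]
The bound $\min(a,b)^{-1}$ is only a weaker upper bound. The inequalities $\min(a,b)^{-1}\leq(ab)^{-1/2}$ and $\min(a,b)^{-1}\leq a^{-1}$ that you use are false; they hold with $\max$.

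Even the valid symmetric kernel $d(\mathbf y,\mathbf z)^{1/2}d(\mathbf z,\mathbf x)^{3/2}$ does not close. Near $\mathbf y$, the order-$\frac12$ analogue of \eqref{eq:localized_fractional_integral} maps $L^{q_1}$ only into $L^t$ with $1/t=1/q_1-1/(2\mathbf N)$, and $t<s'$. Near $\mathbf x$, Lemma~\ref{lem:Rr} controls the shell $d(\mathbf z,\mathbf x)\simeq2^{-k}\rho_\ell$ only by $\rho_\ell^{-1/2}(\rho_\ell m(\mathbf x))^{\delta}2^{k(1/2-\delta)}$ with $\delta=2-\mathbf N/q_1$. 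This sum diverges when $q_1\leq2\mathbf N/3$, a case your hypotheses allow.

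Your fallback does not work either. Near $\mathbf y$ one has $\min(a,b)=a$, so the factor $a^{-1}$ turns $T_{j,\mathbf y}\Gamma_0$ into the order-zero kernel $w(B(\mathbf y,d(\mathbf y,\mathbf z)))^{-1}$, which is not integrable against $V$. If instead you keep $d(\mathbf y,\mathbf z)/w$ and $\rho_\ell^2/w$ with no $\tau$-factor, the bound is off by a factor $\rho_\ell$.

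The missing observation is that for $\mathbf y\in A_\ell$,
\[
 \max(d(\mathbf y,\mathbf z),d(\mathbf z,\mathbf x))\geq\tfrac12d(\mathbf x,\mathbf y)>\rho_\ell/4,
\]
so the $\tau$-integral is $\lesssim\rho_\ell^{-1}$ uniformly in $\mathbf z$. The kernel then becomes
\[
 \rho_\ell^{-1}\frac{d(\mathbf y,\mathbf z)}{w(B(\mathbf y,d(\mathbf y,\mathbf z)))}V(\mathbf z)\frac{d(\mathbf z,\mathbf x)^2}{w(B(\mathbf x,d(\mathbf z,\mathbf x)))}\bigl(1+m(\mathbf x)d(\mathbf z,\mathbf x)\bigr)^{-\ell'},
\]
and your three regions close:
\begin{itemize}
\item near $\mathbf y$, by the first-order bound \eqref{eq:localized_fractional_integral} and your $L^{q_1}$ estimate for $V$;
\item near $\mathbf x$, by the convergent sum $\sum_k(2^{-k}\rho_\ell m(\mathbf x))^\delta$;
\item elsewhere, by reverse doubling $w(B(\mathbf x,2^k\rho_\ell))\gtrsim2^{kN}w(B(\mathbf x,\rho_\ell))$ with $N-1>\delta$, together with the $m$-decay beyond $r$.
\end{itemize}

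This is the paper's mechanism in a different order. At fixed $\tau$, the paper splits into $d(\mathbf x,\mathbf z)<R/4$, $R/4\leq d(\mathbf x,\mathbf z)\leq4R$ and $d(\mathbf x,\mathbf z)>4R$. It takes the factor $(1+|\tau|^{1/2}R)^{-M}$ from whichever kernel lives at distance $\gtrsim R$, proves the fixed-$\tau$ bound \eqref{eq:local_resolvent_difference_annular_short}, and only then integrates in $\tau$. That ordering avoids the two-factor $\tau$-integral entirely. The fixed-$\tau$ estimate is also reused for the spectral truncations in Lemma~\ref{lem:Riesz_operator_decomposition}.
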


\begin{proof}

We first record the following consequence of the resolvent identity:
for $0<R\leq r$ and every $M>1$,
\begin{equation}\label{eq:local_resolvent_difference_annular_short}
\begin{split}
&\left\|
T_{j,\mathbf{y}}
\bigl(
\Gamma(\mathbf{y},\mathbf{x},\tau)
-\Gamma_0(\mathbf{y},\mathbf{x},\tau)
\bigr)
\right\|_{
L^{s'}(\{R/2<d(\mathbf{x},\mathbf{y})\leq R\},dw)}
\\
&\qquad\leq
C_M R
\bigl(Rm(\mathbf{x})\bigr)^{2-\mathbf{N}/q_1}
w(B(\mathbf{x},R))^{-1/s}
\bigl(1+|\tau|^{1/2}R\bigr)^{-M}.
\end{split}
\end{equation}
Indeed, the semigroup perturbation formula
(see \cite[proof of Lemma~16]{Hejna_Schrodinger}) gives
\[
 h_t(\mathbf y,\mathbf x)-k_t(\mathbf y,\mathbf x)
 =
 \int_0^t\int_{\mathbb R^N}
 h_{t-s}(\mathbf y,\mathbf z)V(\mathbf z)
 k_s(\mathbf z,\mathbf x)\,dw(\mathbf z)\,ds.
\]
For the possibly unbounded potential $V$, this identity
is understood initially in the quadratic-form sense,
by approximation with bounded nonnegative potentials.
Positivity then yields the corresponding identity
for the kernels.

Integrating in $t$ and applying Tonelli's theorem, we obtain
\[
\begin{split}
 &\int_{\mathbb R^N}
 \Gamma_0(\mathbf y,\mathbf z,0)V(\mathbf z)
 \Gamma(\mathbf z,\mathbf x,0)\,dw(\mathbf z)
 \\
 &\qquad=
 \Gamma_0(\mathbf y,\mathbf x,0)
 -\Gamma(\mathbf y,\mathbf x,0)
 \leq \Gamma_0(\mathbf y,\mathbf x,0)<\infty
\end{split}
\]
for almost every pair $(\mathbf y,\mathbf x)$ off the
orbit diagonal. The last assertion follows from $N>2$
and the integrated heat-kernel bound.
Thus the Duhamel formula remains absolutely integrable
after multiplication by $e^{-i\tau t}$.
Fubini's theorem and the change of variables $t=s+r$
therefore give, for each fixed $\tau\in\mathbb R$,
\[
\begin{split}
 &\Gamma(\mathbf y,\mathbf x,\tau)
 -\Gamma_0(\mathbf y,\mathbf x,\tau)=
 -\int_{\mathbb R^N}
 \Gamma_0(\mathbf y,\mathbf z,\tau)V(\mathbf z)
 \Gamma(\mathbf z,\mathbf x,\tau)\,dw(\mathbf z).
\end{split}
\]
The integral is absolutely convergent, since
$|\Gamma_0(\mathbf y,\mathbf z,\tau)|
 \leq\Gamma_0(\mathbf y,\mathbf z,0)$ and
$|\Gamma(\mathbf z,\mathbf x,\tau)|
 \leq\Gamma(\mathbf z,\mathbf x,0)$.

Put
\[
 E_R=
 \{\mathbf{y}:R/2<d(\mathbf{x},\mathbf{y})\leq R\}.
\]
We assume that
\[
    0<R\leq r=m(\mathbf{x})^{-1}.
\]

The resolvent identity gives
\begin{equation}\label{eq:resolvent_identity_before_differentiation}
\begin{split}
&\Gamma(\mathbf{y},\mathbf{x},\tau)
-\Gamma_0(\mathbf{y},\mathbf{x},\tau)=
-\int_{\mathbb{R}^N}
\Gamma_0(\mathbf{y},\mathbf{z},\tau)
V(\mathbf{z})
\Gamma(\mathbf{z},\mathbf{x},\tau)\,dw(\mathbf{z}).
\end{split}
\end{equation}

{
Applying the Dunkl operator $T_{j,\mathbf y}$ distributionally to
\eqref{eq:resolvent_identity_before_differentiation}, we obtain
\begin{equation}\label{eq:differentiated_resolvent_identity_proper}
\begin{split}
&T_{j,\mathbf y}
\bigl(
\Gamma(\mathbf y,\mathbf x,\tau)
-\Gamma_0(\mathbf y,\mathbf x,\tau)
\bigr)=
-\int_{\mathbb R^N}
T_{j,\mathbf y}\Gamma_0(\mathbf y,\mathbf z,\tau)
V(\mathbf z)
\Gamma(\mathbf z,\mathbf x,\tau)\,dw(\mathbf z)
\end{split}
\end{equation}
in the sense of distributions.

We emphasize that, at this point, the integral on the right-hand side
is not asserted to converge pointwise.  We shall prove below that,
when restricted to $E_R$.

it defines a function in $L^{s'}(E_R,dw)$.  Consequently,
\eqref{eq:differentiated_resolvent_identity_proper} then holds almost
everywhere on $E_R$ and the distributional Dunkl derivative on the
left-hand side is represented by this $L^{s'}$ function.

To verify this, we split the right-hand side of
\eqref{eq:differentiated_resolvent_identity_proper} according to
\[
d(\mathbf x,\mathbf z)<R/4,\qquad
R/4\le d(\mathbf x,\mathbf z)\le4R,\qquad
d(\mathbf x,\mathbf z)>4R,
\]
and denote the resulting terms by
$I_1$, $I_2$, and $I_3$, respectively.}

We shall repeatedly use the following consequences of the reverse
H\"older assumptions and of the definition of the auxiliary function
$m$.  Put
\[
    \delta=2-\frac{\mathbf N}{q_1}.
\]
By Lemma~\ref{lem:Rr}, applied with exponent $q_1$, and
\eqref{eq:m}, for $0<t\leq m(\mathbf x)^{-1}$ we have
\[
\frac{t^2}{w(B(\mathbf{x},t))}
\int_{B(\mathbf{x},t)}
V(\mathbf{z})\,dw(\mathbf{z})
\leq
C\bigl(tm(\mathbf{x})\bigr)^\delta.
\]
Since $V$ and $dw$ are $G$-invariant, the same estimate, up to a
constant depending only on $G$, holds with $B(\mathbf x,t)$ replaced
by its orbit $\mathcal O(B(\mathbf x,t))$.  On the other hand, the
doubling property of the measure $d\mu=V\,dw$ from
Lemma~\ref{lem:mu_doubling}, together with the doubling properties of
$dw$ and the definition \eqref{eq:m}, yields a number
$\kappa_1>0$ such that
\begin{equation}\label{eq:potential_scale_used_in_resolvent_difference}
\frac{t^2}{w(B(\mathbf{x},t))}
\int_{\mathcal{O}(B(\mathbf{x},t))}
V(\mathbf{z})\,dw(\mathbf{z})
\leq
\begin{cases}
C\bigl(tm(\mathbf{x})\bigr)^\delta,
&0<t\leq m(\mathbf x)^{-1},\\[2mm]
C\bigl(tm(\mathbf{x})\bigr)^{\kappa_1},
&t>m(\mathbf x)^{-1}.
\end{cases}
\end{equation}

Furthermore, by the reverse H\"older inequality
\eqref{eq:reverse_Holder}, applied with exponent $q_1$, and the
first estimate in
\eqref{eq:potential_scale_used_in_resolvent_difference}, we obtain,
whenever $0<R\leq m(\mathbf x)^{-1}$,
\begin{equation}\label{eq:Lq_potential_scale_used_in_difference}
\begin{split}
&\|V\|_{
L^{q_1}(\mathcal{O}(B(\mathbf{x},4R)),dw)}\leq
CR^{-2}w(B(\mathbf{x},R))^{1/q_1}
\bigl(Rm(\mathbf{x})\bigr)^\delta.
\end{split}
\end{equation}
Here and below the harmless change from $R$ to $4R$ is absorbed by
the doubling properties of $dw$ and $\mu$.

We first estimate $I_1$. If $\mathbf{y}\in E_R$ and
$d(\mathbf{x},\mathbf{z})<R/4$, then $d(\mathbf{y},\mathbf{z})\simeq R$.
Therefore, by \eqref{eq:free_resolvent_first_derivative},
\[
\begin{split}
|T_{j,\mathbf{y}}\Gamma_0(\mathbf{y},\mathbf{z},\tau)|
&\leq
C_M\frac{R}{w(B(\mathbf{x},R))}
\bigl(1+|\tau|^{1/2}R\bigr)^{-M}.
\end{split}
\]
The symmetry of Proposition~\ref{propo:basic_decay}  give
\[
 |\Gamma(\mathbf{z},\mathbf{x},\tau)|
 \leq
 C\frac{d(\mathbf{x},\mathbf{z})^2}
 {w(B(\mathbf{x},d(\mathbf{x},\mathbf{z})))}.
\]
Consequently,
\begin{align*}
|I_1(\mathbf{y})|
&\leq
C_M\frac{R}{w(B(\mathbf{x},R))}
\bigl(1+|\tau|^{1/2}R\bigr)^{-M}
\int_{\{d(\mathbf{x},\mathbf{z})<R/4\}}
\frac{d(\mathbf{x},\mathbf{z})^2}
{w(B(\mathbf{x},d(\mathbf{x},\mathbf{z})))}
V(\mathbf{z})\,dw(\mathbf{z}).
\end{align*}
Decomposing the last integral into
\[
 2^{-\ell-1}R
 \leq d(\mathbf{x},\mathbf{z})<2^{-\ell}R,
 \qquad \ell\geq2,
\]
and using
\eqref{eq:potential_scale_used_in_resolvent_difference}, we obtain
\begin{align*}
&\int_{\{d(\mathbf{x},\mathbf{z})<R/4\}}
\frac{d(\mathbf{x},\mathbf{z})^2}
{w(B(\mathbf{x},d(\mathbf{x},\mathbf{z})))}
V(\mathbf{z})\,dw(\mathbf{z})\leq
C\sum_{\ell=2}^{\infty}
\bigl(2^{-\ell}Rm(\mathbf{x})\bigr)^\delta
\leq
C\bigl(Rm(\mathbf{x})\bigr)^\delta.
\end{align*}
Hence
\[
\begin{split}
|I_1(\mathbf{y})|
&\leq
C_M\frac{R}{w(B(\mathbf{x},R))}
\bigl(Rm(\mathbf{x})\bigr)^\delta
\bigl(1+|\tau|^{1/2}R\bigr)^{-M}.
\end{split}
\]
Since
\[
    w(E_R)\leq Cw(B(\mathbf{x},R)),
\]
it follows that
\begin{equation}\label{eq:first_part_resolvent_difference}
\begin{split}
\|I_1\|_{L^{s'}(E_R,dw)}
&\leq
C_MR\,w(B(\mathbf{x},R))^{-1/s}
\bigl(Rm(\mathbf{x})\bigr)^\delta
\bigl(1+|\tau|^{1/2}R\bigr)^{-M}.
\end{split}
\end{equation}

We next estimate $I_2$. On its domain of integration,
\[
    d(\mathbf{x},\mathbf{z})\simeq R.
\]
Therefore, by the symmetry of $\Gamma$ and
\eqref{eq:fundamental_tau_decay},
\[
\begin{split}
|\Gamma(\mathbf{z},\mathbf{x},\tau)|
&\leq
C_M\frac{R^2}{w(B(\mathbf{x},R))}
\bigl(1+|\tau|^{1/2}R\bigr)^{-M}.
\end{split}
\]
It follows from
\eqref{eq:free_resolvent_first_derivative} that
\begin{align*}
|I_2(\mathbf{y})|
&\leq
C_M\frac{R^2}{w(B(\mathbf{x},R))}
\bigl(1+|\tau|^{1/2}R\bigr)^{-M}
\int_{\mathcal{O}(B(\mathbf{x},4R))}
\frac{d(\mathbf{y},\mathbf{z})}
{w(B(\mathbf{y},d(\mathbf{y},\mathbf{z})))}
V(\mathbf{z})\,dw(\mathbf{z}).
\end{align*}
Applying \eqref{eq:localized_fractional_integral}, with radius $2R$,
gives
\begin{align*}
\|I_2\|_{L^{s'}(E_R,dw)}
&\leq
C_M\frac{R^2}{w(B(\mathbf{x},R))}
\bigl(1+|\tau|^{1/2}R\bigr)^{-M}
R\,w(B(\mathbf{x},R))^{-1/\mathbf{N}}
\|V\|_{
L^{q_1}(\mathcal{O}(B(\mathbf{x},4R)),dw)}.
\end{align*}

Using \eqref{eq:Lq_potential_scale_used_in_difference} and
\[
    \frac1{s'}=\frac1{q_1}-\frac1{\mathbf{N}},
\]
we get
\begin{equation}\label{eq:second_part_resolvent_difference}
\begin{split}
\|I_2\|_{L^{s'}(E_R,dw)}
&\leq
C_MR\,w(B(\mathbf{x},R))^{-1/s}
\bigl(Rm(\mathbf{x})\bigr)^\delta
\bigl(1+|\tau|^{1/2}R\bigr)^{-M}.
\end{split}
\end{equation}

Finally, decompose $I_3$ as
\[
 I_3(\mathbf{y})
 =
 \sum_{\ell=2}^{\infty}I_{3,\ell}(\mathbf{y}),
\]
where the integral defining $I_{3,\ell}$ is taken over
\[
 2^\ell R<d(\mathbf{x},\mathbf{z})\leq2^{\ell+1}R.
\]
For $\mathbf{y}\in E_R$ and $\mathbf{z}$ in this region,
\[
    d(\mathbf{y},\mathbf{z})\simeq2^\ell R.
\]
Hence, using
\eqref{eq:free_resolvent_first_derivative} and
\eqref{eq:fundamental_tau_decay}, we obtain
\begin{align*}
|I_{3,\ell}(\mathbf{y})|
&\leq
C_M
\frac{2^\ell R}{w(B(\mathbf{x},2^\ell R))}
\bigl(1+|\tau|^{1/2}R\bigr)^{-M}
\bigl(1+2^\ell Rm(\mathbf{x})\bigr)^{-\kappa}
\frac{(2^\ell R)^2}{w(B(\mathbf{x},2^\ell R))}
\int_{\mathcal{O}(B(\mathbf{x},2^{\ell+1}R))}
V(\mathbf{z})\,dw(\mathbf{z}).
\end{align*}
Using
\eqref{eq:potential_scale_used_in_resolvent_difference}, this gives
\[
\begin{split}
|I_{3,\ell}(\mathbf{y})|
&\leq
C_M\frac{2^\ell R}{w(B(\mathbf{x},2^\ell R))}
\bigl(1+|\tau|^{1/2}R\bigr)^{-M}
\times
\begin{cases}
\bigl(2^\ell Rm(\mathbf{x})\bigr)^\delta,
&2^\ell R\leq r,\\[1mm]
\bigl(2^\ell Rm(\mathbf{x})\bigr)^{\kappa_1-\kappa},
&2^\ell R>r.
\end{cases}
\end{split}
\]
The reverse doubling property (see~\eqref{eq:growth}) implies
\[
 \frac{2^\ell R}{w(B(\mathbf{x},2^\ell R))}
 \leq
 C2^{-\ell(N-1)}
 \frac{R}{w(B(\mathbf{x},R))}.
\]
Since
\[
    N-1-\delta>0,
\]
the terms for which $2^\ell R\leq r$ form a convergent series and are
bounded by
\[
C_M\frac{R}{w(B(\mathbf{x},R))}
\bigl(Rm(\mathbf{x})\bigr)^\delta
\bigl(1+|\tau|^{1/2}R\bigr)^{-M}.
\]
Choose
\[
    \kappa>\kappa_1+N.
\]
The terms for which $2^\ell R>r$ are then bounded by
\[
C_M\frac{R}{w(B(\mathbf{x},R))}
\bigl(Rm(\mathbf{x})\bigr)^{N-1}
\bigl(1+|\tau|^{1/2}R\bigr)^{-M}.
\]
Because
\[
    Rm(\mathbf{x})\leq1
    \quad\text{and}\quad
    N-1>\delta,
\]
we have
\[
    \bigl(Rm(\mathbf{x})\bigr)^{N-1}
    \leq
    \bigl(Rm(\mathbf{x})\bigr)^\delta.
\]
Consequently,
\begin{equation}\label{eq:third_part_resolvent_difference}
\begin{split}
\|I_3\|_{L^{s'}(E_R,dw)}
&\leq
C_M R\,w(B(\mathbf{x},R))^{-1/s}
\bigl(Rm(\mathbf{x})\bigr)^{2-\mathbf{N}/q_1}
\bigl(1+|\tau|^{1/2}R\bigr)^{-M}.
\end{split}
\end{equation}

Combining
\eqref{eq:first_part_resolvent_difference},
\eqref{eq:second_part_resolvent_difference}, and
\eqref{eq:third_part_resolvent_difference}, we conclude that
\[
\begin{split}
&\left\|
T_{j,\mathbf{y}}
\bigl(
\Gamma(\mathbf{y},\mathbf{x},\tau)
-\Gamma_0(\mathbf{y},\mathbf{x},\tau)
\bigr)
\right\|_{L^{s'}(E_R,dw)}
\\
&\qquad\leq
C_MR\,w(B(\mathbf{x},R))^{-1/s}
\bigl(Rm(\mathbf{x})\bigr)^{2-\mathbf{N}/q_1}
\bigl(1+|\tau|^{1/2}R\bigr)^{-M}.
\end{split}
\]
This proves \eqref{eq:local_resolvent_difference_annular_short}.

We now prove the far estimate. Let $\ell\geq1$, put
$R=2^\ell r$, and write
\[
 E_R=\{\mathbf y:R/2<d(\mathbf x,\mathbf y)\leq R\}.
\]
Cover $E_R$ by a bounded number of balls
$B(\mathbf a,cR)$ with centres $\mathbf a\in E_R$,
where $c>0$ is a sufficiently small fixed constant.
The orbit neighbourhoods $U(\mathbf a,4cR)$
do not meet $\mathcal O(\{\mathbf x\})$, and
$d(\mathbf x,\mathbf z)\simeq R$ throughout these
neighbourhoods.

Applying \eqref{eq:local_resolvent_gradient_simplified}
to $u(\mathbf y)=\Gamma(\mathbf y,\mathbf x,\tau)$,
and then using the symmetry of $\Gamma$ and
\eqref{eq:fundamental_tau_decay}, and choosing the spatial decay exponent sufficiently large,
we obtain, for every $M>0$ and some $\varepsilon>0$,
\begin{equation}\label{eq:far_resolvent_gradient_short}
\begin{split}
&\left\|
T_{j,\mathbf{y}}\Gamma(\mathbf{y},\mathbf{x},\tau)
\right\|_{
L^{s'}(\{R/2<d(\mathbf{x},\mathbf{y})\leq R\},dw)}
\leq
C_MR\,w(B(\mathbf{x},R))^{-1/s}
\bigl(1+Rm(\mathbf{x})\bigr)^{-\varepsilon}
\bigl(1+|\tau|^{1/2}R\bigr)^{-M}.
\end{split}
\end{equation}
Indeed, the local gradient estimate contributes the factor
$(1+cRm(\mathbf a))^{\kappa_0}$.
Choosing $g\in G$ such that
$\|\mathbf a-g(\mathbf x)\|=d(\mathbf a,\mathbf x)\leq R$,
we obtain from \eqref{eq:Shen_B} and the $G$-invariance of $m$
that
\[
 m(\mathbf a)
 \leq Cm(\mathbf x)(1+Rm(\mathbf x))^\kappa.
\]
Hence
\[
 (1+cRm(\mathbf a))^{\kappa_0}
 \leq C(1+Rm(\mathbf x))^{(\kappa+1)\kappa_0}.
\]
This factor is absorbed by choosing the spatial decay exponent
in \eqref{eq:fundamental_tau_decay} sufficiently large.
We also use
$w(B(\mathbf a,cR))\simeq w(B(\mathbf x,R))$
and $1/s'-1=-1/s$, and sum over the bounded number
of balls in the covering.

By \eqref{eq:Riesz_kernel_representation}, Minkowski's inequality,
and \eqref{eq:far_resolvent_gradient_short},
\begin{align*}
&\left(
\int_{\{R/2<d(\mathbf{x},\mathbf{y})\leq R\}}
|K_j(\mathbf{y},\mathbf{x})|^{s'}\,dw(\mathbf{y})
\right)^{1/s'}
\\
&\quad\leq
C_MR\,w(B(\mathbf{x},R))^{-1/s}
\bigl(1+Rm(\mathbf{x})\bigr)^{-\varepsilon}
\int_{\mathbb{R}}
|\tau|^{-1/2}
\bigl(1+|\tau|^{1/2}R\bigr)^{-M}\,d\tau.
\end{align*}
For $M>1$,
\[
\int_{\mathbb{R}}
|\tau|^{-1/2}
\bigl(1+|\tau|^{1/2}R\bigr)^{-M}\,d\tau
=
\frac{C_M}{R}.
\]
Hence,
\[
\begin{split}
&\left(
\int_{\{R/2<d(\mathbf{x},\mathbf{y})\leq R\}}
|K_j(\mathbf{y},\mathbf{x})|^{s'}\,dw(\mathbf{y})
\right)^{1/s'}\leq
Cw(B(\mathbf{x},R))^{-1/s}
\bigl(1+Rm(\mathbf{x})\bigr)^{-\varepsilon}.
\end{split}
\]
Since $Rm(\mathbf{x})=2^\ell$, this proves
\eqref{eq:far_annular_Riesz_kernel}.

Finally, let $\ell\leq0$ and again put $R=2^\ell r$.
Now $R\leq r$, so
\eqref{eq:local_resolvent_difference_annular_short} applies.
Using
\eqref{eq:Riesz_kernel_representation},
\eqref{eq:free_Riesz_kernel_representation}, and Minkowski's
inequality, we obtain
\begin{align*}
&\left(
\int_{\{R/2<d(\mathbf{x},\mathbf{y})\leq R\}}
|K_j(\mathbf{y},\mathbf{x})
-K_j^0(\mathbf{y},\mathbf{x})|^{s'}\,dw(\mathbf{y})
\right)^{1/s'}
\\
&\quad\leq
C_MR
\bigl(Rm(\mathbf{x})\bigr)^{2-\mathbf{N}/q_1}
w(B(\mathbf{x},R))^{-1/s}
\int_{\mathbb{R}}
|\tau|^{-1/2}
\bigl(1+|\tau|^{1/2}R\bigr)^{-M}\,d\tau
\\
&\quad\leq
C\bigl(Rm(\mathbf{x})\bigr)^{2-\mathbf{N}/q_1}
w(B(\mathbf{x},R))^{-1/s}.
\end{align*}
This proves \eqref{eq:local_annular_Riesz_kernel}.
\end{proof}

\section{First-order and mixed estimates}

\subsection{Maximal-function bounds for the kernel terms}

\begin{lemma}\label{lem:sharp_Riesz_maximal_estimates} Let $V$ be $G$-invariant.
Assume $\frac{\mathbf{N}}{2}<q<\mathbf{N}$,  $N>2$, and let $q_1>q$ be supplied by
the self-improvement of the reverse Holder inequality, with
$q_1<\mathbf{N}$ (see Lemma~\ref{lem: self improve RH}). Define $s$ by
\begin{equation*}
    \frac1s=1-\frac1{q_1}+\frac1{\mathbf{N}}.
\end{equation*}
Then, for $r=m(\mathbf{x})^{-1}$ and every compactly supported bounded
function $f$, the following two estimates hold:
\begin{align}
 &\left|\int_{d(\mathbf{x},\mathbf{y})>r}
 K_j(\mathbf{y},\mathbf{x})f(\mathbf{y})\,dw(\mathbf{y})\right|
 \leq C
 \left(\sum_{g\in G}M(|f|^s)(g(\mathbf{x}))\right)^{1/s},
 \label{eq:far_Riesz_missing}\\
 &\left|\int_{d(\mathbf{x},\mathbf{y})<r}
 \big(K_j(\mathbf{y},\mathbf{x})-K_j^0(\mathbf{y},\mathbf{x})\big)
 f(\mathbf{y})\,dw(\mathbf{y})\right|
 \leq C
 \left(\sum_{g\in G}M(|f|^s)(g(\mathbf{x}))\right)^{1/s}.
 \label{eq:local_Riesz_missing}
\end{align}
Here, we recall that $M$ is the Hardy--Littlewood maximal operator associated with $dw$.
\end{lemma}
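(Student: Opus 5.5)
Both estimates follow from Lemma~\ref{lem:annular_Riesz_kernel_bounds} by decomposing the region of integration into dyadic orbit annuli
\[
 A_\ell=\{\mathbf y:2^{\ell-1}r<d(\mathbf x,\mathbf y)\leq2^\ell r\},
\]
applying H\"older's inequality with exponents $(s',s)$ on each annulus, and then summing a geometric series. Here $q_1$ is taken from Lemma~\ref{lem: self improve RH}, so $V\in{\rm RH}^{q_1}(dw)$ with $\mathbf N/2<q_1<\mathbf N$, and Lemma~\ref{lem:annular_Riesz_kernel_bounds} applies with this $q_1$ and the stated $s$. Since $f$ is bounded and compactly supported and the annular $L^{s'}$ bounds are finite, each annular integral converges absolutely. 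Thus the truncated integrals on the left-hand sides are well defined for the almost every $\mathbf x$ at which Lemma~\ref{lem:annular_Riesz_kernel_bounds} holds.

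The key pointwise step is a maximal bound for the annular $L^s$ norms of $f$. Since $A_\ell\subseteq\mathcal O(B(\mathbf x,2^\ell r))=\bigcup_{g\in G}B(g(\mathbf x),2^\ell r)$, and since $G$-invariance of $dw$ gives $w(B(g(\mathbf x),2^\ell r))=w(B(\mathbf x,2^\ell r))$,
\[
 \int_{A_\ell}|f|^s\,dw
 \leq\sum_{g\in G}\int_{B(g(\mathbf x),2^\ell r)}|f|^s\,dw
 \leq w(B(\mathbf x,2^\ell r))\sum_{g\in G}M(|f|^s)(g(\mathbf x)).
\]
Hence $\|f\|_{L^s(A_\ell)}\leq w(B(\mathbf x,2^\ell r))^{1/s}\bigl(\sum_{g}M(|f|^s)(g\mathbf x)\bigr)^{1/s}$. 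The factor $w(B(\mathbf x,2^\ell r))^{1/s}$ cancels exactly against the factor $w(B(\mathbf x,2^\ell r))^{-1/s}$ in the kernel bounds.

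For \eqref{eq:far_Riesz_missing}, write $\{d(\mathbf x,\mathbf y)>r\}=\bigcup_{\ell\geq1}A_\ell$. H\"older's inequality on each $A_\ell$, combined with \eqref{eq:far_annular_Riesz_kernel}, bounds the $\ell$-th term by $C2^{-\ell\varepsilon}\bigl(\sum_gM(|f|^s)(g\mathbf x)\bigr)^{1/s}$. The sum over $\ell\geq1$ converges because $\varepsilon>0$.

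For \eqref{eq:local_Riesz_missing}, write $\{d(\mathbf x,\mathbf y)<r\}\subseteq\bigcup_{\ell\leq0}A_\ell$ (up to the orbit of $\mathbf x$, which is $dw$-null). Using \eqref{eq:local_annular_Riesz_kernel} in the same way gives terms $C2^{\ell(2-\mathbf N/q_1)}\bigl(\sum_gM(|f|^s)(g\mathbf x)\bigr)^{1/s}$. These are summable over $\ell\leq0$ because $2-\mathbf N/q_1>0$ when $q_1>\mathbf N/2$.

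The main obstacle is justification rather than computation. The kernels $K_j$ and $K_j^0$ are defined only weakly, through \eqref{eq:Riesz_kernel_representation} and \eqref{eq:free_Riesz_kernel_representation}, so one has to check that the truncated integrals in the statement coincide with pairing $f$ against the $L^{s'}$ functions produced in Lemma~\ref{lem:annular_Riesz_kernel_bounds}. The plan is to regard the annular bounds as defining $K_j(\cdot,\mathbf x)$ and $K_j(\cdot,\mathbf x)-K_j^0(\cdot,\mathbf x)$ as $L^{s'}_{\mathrm{loc}}$ functions off the orbit of $\mathbf x$, for almost every $\mathbf x$. Dominated convergence over the finitely supported annular decomposition then legitimizes the interchange of the sum and the integral.
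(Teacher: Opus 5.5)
Your proposal is correct and follows the paper's proof essentially line by line. Both arguments use dyadic orbit annuli, H\"older's inequality with exponents $(s',s)$ against the annular bounds of Lemma~\ref{lem:annular_Riesz_kernel_bounds}, and the covering $\{d(\mathbf x,\mathbf y)\leq 2^\ell r\}\subseteq\bigcup_{g\in G}B(g(\mathbf x),2^\ell r)$ to produce $\sum_{g\in G}M(|f|^s)(g(\mathbf x))$ with exact cancellation of $w(B(\mathbf x,2^\ell r))^{\pm1/s}$, followed by geometric summation via $\varepsilon>0$ and $2-\mathbf N/q_1>0$. The justification issue you flag for the weakly defined kernels is not treated in the paper's proof of this lemma either; it is handled afterwards in Lemma~\ref{lem:Riesz_operator_decomposition}, along the lines you sketch.
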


\begin{proof}
Let $s'$ denote the conjugate exponent of $s$. Then
\[
    \frac1{s'}=\frac1{q_1}-\frac1{\mathbf{N}}.
\]

We first prove \eqref{eq:far_Riesz_missing}. Decompose
$\{\mathbf{y}:d(\mathbf{x},\mathbf{y})>r\}$ into dyadic orbit annuli:
\[
 \{\mathbf{y}:d(\mathbf{x},\mathbf{y})>r\}
 =
 \bigcup_{\ell=1}^{\infty}
 \{\mathbf{y}:2^{\ell-1}r<d(\mathbf{x},\mathbf{y})
                    \leq2^\ell r\}.
\]
By Hölder's inequality and
\eqref{eq:far_annular_Riesz_kernel},
\begin{align*}
 &\int_{\{2^{\ell-1}r<d(\mathbf{x},\mathbf{y})\leq2^\ell r\}}
 |K_j(\mathbf{y},\mathbf{x})f(\mathbf{y})|\,dw(\mathbf{y})
 \\
 &\quad\leq
 C2^{-\ell\varepsilon}
 w(B(\mathbf{x},2^\ell r))^{-1/s}
 \left(
 \int_{\{d(\mathbf{x},\mathbf{y})\leq2^\ell r\}}
 |f(\mathbf{y})|^s\,dw(\mathbf{y})
 \right)^{1/s}.
\end{align*}
Since
\[
 \{\mathbf{y}:d(\mathbf{x},\mathbf{y})\leq2^\ell r\}
 \subseteq
 \bigcup_{g\in G}B(g(\mathbf{x}),2^\ell r),
\]
the $G$-invariance of $dw$ gives
\begin{align*}
 \int_{\{d(\mathbf{x},\mathbf{y})\leq2^\ell r\}}
 |f(\mathbf{y})|^s\,dw(\mathbf{y})
 &\leq
 \sum_{g\in G}
 \int_{B(g(\mathbf{x}),2^\ell r)}
 |f(\mathbf{y})|^s\,dw(\mathbf{y})\leq
 w(B(\mathbf{x},2^\ell r))
 \sum_{g\in G}M(|f|^s)(g(\mathbf{x})).
\end{align*}
Consequently,
\[
\begin{split}
 &\int_{\{2^{\ell-1}r<d(\mathbf{x},\mathbf{y})\leq2^\ell r\}}
 |K_j(\mathbf{y},\mathbf{x})f(\mathbf{y})|\,dw(\mathbf{y})\leq
 C2^{-\ell\varepsilon}
 \left(\sum_{g\in G}M(|f|^s)(g(\mathbf{x}))\right)^{1/s}.
\end{split}
\]
Summing over $\ell\geq1$ proves
\eqref{eq:far_Riesz_missing}.

For the local term, write
\[
 \{\mathbf{y}:0<d(\mathbf{x},\mathbf{y})<r\}
 =
 \bigcup_{\ell=-\infty}^{0}
 \{\mathbf{y}:2^{\ell-1}r<d(\mathbf{x},\mathbf{y})
                    \leq2^\ell r\}.
\]
By Hölder's inequality and
\eqref{eq:local_annular_Riesz_kernel},
\begin{align*}
 &\int_{\{2^{\ell-1}r<d(\mathbf{x},\mathbf{y})\leq2^\ell r\}}
 |K_j(\mathbf{y},\mathbf{x})-K_j^0(\mathbf{y},\mathbf{x})|
 |f(\mathbf{y})|\,dw(\mathbf{y})
 \\
 &\quad\leq
 C2^{\ell(2-\mathbf{N}/q_1)}
 w(B(\mathbf{x},2^\ell r))^{-1/s}
 \left(
 \int_{\{d(\mathbf{x},\mathbf{y})\leq2^\ell r\}}
 |f(\mathbf{y})|^s\,dw(\mathbf{y})
 \right)^{1/s}
 \\
 &\quad\leq
 C2^{\ell(2-\mathbf{N}/q_1)}
 \left(\sum_{g\in G}
 M(|f|^s)(g(\mathbf{x}))\right)^{1/s}.
\end{align*}
Since $q_1>\mathbf{N}/2$, we have $2-\frac{\mathbf{N}}{q_1}>0$. Therefore,
\[
    \sum_{\ell=-\infty}^{0}
    2^{\ell(2-\mathbf{N}/q_1)}<\infty.
\]
Summing over $\ell\leq0$ proves
\eqref{eq:local_Riesz_missing}.
\end{proof}

\subsection{Free maximal truncations and operator decomposition}

\begin{lemma}\label{lem:free_Riesz_maximal_truncation}
Recall that $K_j(\mathbf y,\mathbf x)$ and $K_j^0(\mathbf y,\mathbf x)$
denote the integral kernels of $T_jL^{-1/2}$ and
$T_j(-\Delta_k)^{-1/2}$, respectively (see~\eqref{eq:Riesz_kernel_representation} and~\eqref{eq:free_Riesz_kernel_representation}). For bounded compactly supported $f$, define
\[
 \mathcal R_{j,*}^{0,d}f(\mathbf x)
 =
 \sup_{\varepsilon>0}
 \left|
 \int_{\{\mathbf y:\,d(\mathbf x,\mathbf y)>\varepsilon\}}
 K_j^0(\mathbf y,\mathbf x)f(\mathbf y)\,dw(\mathbf y)
 \right|.
\]
Then $\mathcal R_{j,*}^{0,d}$ extends to a bounded sublinear
operator on $L^p(dw)$ for every $1<p<\infty$:
\begin{equation}\label{eq:free_Riesz_maximal_truncation}
 \|\mathcal R_{j,*}^{0,d}f\|_{L^p(dw)}
 \leq C_p\|f\|_{L^p(dw)}.
\end{equation}
\end{lemma}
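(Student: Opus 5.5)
The plan is to reduce the orbit-distance maximal truncation to the Euclidean-distance maximal truncation of the free Dunkl Riesz transform. That operator is known to be $L^p(dw)$-bounded for $1<p<\infty$ by the Dunkl Calder\'on--Zygmund theory of \cite{ADzH,DzH1,HanLeeLiWick,TanHanHanLeeLi,TanHanLi}, applied to the kernel $K_j^0$ of $T_j(-\Delta_k)^{-1/2}$. The kernel $K_j^0$ satisfies the standard Dunkl bounds
\[
 |K_j^0(\mathbf y,\mathbf x)|\leq \frac{C}{w(B(\mathbf x,d(\mathbf x,\mathbf y)))}\Big(\frac{d(\mathbf x,\mathbf y)}{\|\mathbf x-\mathbf y\|}\Big)^{\!\gamma}
\]
for some $\gamma>0$, together with H\"ormander-type smoothness estimates. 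Given this, the maximal operator
\[
 \mathcal R_{j,*}^{0}f(\mathbf x)=\sup_{\varepsilon>0}\Big|\int_{\|\mathbf x-\mathbf y\|>\varepsilon}K_j^0(\mathbf y,\mathbf x)f(\mathbf y)\,dw(\mathbf y)\Big|
\]
is bounded on $L^p(dw)$ by Cotlar's inequality on the space of homogeneous type $(\mathbb R^N,\|\cdot\|,dw)$, combined with the $L^p$ boundedness of the Dunkl Riesz transforms \cite{AS}. I will cite this rather than reprove it. If the needed decay or smoothness bound is not available in the stated form, I will derive it from \eqref{eq:free_Riesz_kernel_representation} and the heat-kernel derivative bounds \eqref{eq:free_heat_first_derivative}.

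Next I will compare the two truncations. For fixed $\varepsilon$ and $\mathbf x$, the difference between the orbit truncation and the Euclidean truncation is an integral over the set where $d(\mathbf x,\mathbf y)\le\varepsilon<\|\mathbf x-\mathbf y\|$ or the reverse. Since $d\le\|\cdot\|$, only the first case occurs, up to the measure-zero boundary. This set lies in $\mathcal O(B(\mathbf x,\varepsilon))\setminus B(\mathbf x,\varepsilon)$, so it sits inside balls $B(g\mathbf x,\varepsilon)$ with $g\mathbf x\neq\mathbf x$.

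I will bound the difference by splitting into dyadic pieces $2^{-i-1}\varepsilon<d(\mathbf x,\mathbf y)\le 2^{-i}\varepsilon$, where $\|\mathbf x-\mathbf y\|>\varepsilon$. On each piece the decay factor gives
\[
 |K_j^0(\mathbf y,\mathbf x)|\le C\,2^{-i\gamma}\,w(B(\mathbf x,2^{-i}\varepsilon))^{-1}.
\]
Summing the pieces yields at most $C\sum_{g\in G}Mf(g\mathbf x)$, and the outermost piece $d\simeq\varepsilon$ is handled the same way. This gives the pointwise bound
\[
 \mathcal R_{j,*}^{0,d}f(\mathbf x)\le \mathcal R_{j,*}^{0}f(\mathbf x)+C\sum_{g\in G}Mf(g\mathbf x).
\]
Boundedness of $M$ on $L^p(dw)$ and the $G$-invariance of $dw$ then give \eqref{eq:free_Riesz_maximal_truncation}. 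Sublinear extension from bounded compactly supported functions follows by density.

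The main obstacle is the kernel decay factor $(d/\|\mathbf x-\mathbf y\|)^\gamma$, which is what makes the region with $d$ small but Euclidean distance large harmless. The first thing I will try is to take it from \cite[Theorem 3.1]{DzH1}-type improved bounds, with the factor $(1+\|\mathbf x-\mathbf y\|/\sqrt t)^{-2}$ in Theorem~\ref{theorem:heat} transferred to $T_jh_t$. Integrating against $t^{-1/2}\,dt$ then gives $\gamma=2$. If that transfer fails, the fallback is to use only the size bound $C/w(B(\mathbf x,d))$. Each dyadic piece then contributes $C\sum_g Mf(g\mathbf x)$ without decay, and the sum diverges. That route would require an additional averaging argument in the region $d\ll\|\mathbf x-\mathbf y\|$, which I expect to be the delicate point.
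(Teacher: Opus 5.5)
Your route differs from the paper's, and as written its foundation is not justified. The paper does not compare truncations at all. It regards $\mathcal R_j^0$ as a Dunkl--Calder\'on--Zygmund operator and applies the maximal theorem for truncations in the \emph{orbit} distance, \cite[Corollary~3.2]{TanHanLi}. Beforehand it checks, via $T_{j,\mathbf x}h_t(\mathbf x,\mathbf y)=\frac{y_j-x_j}{2t}h_t(\mathbf x,\mathbf y)$, that $K_j^0(\mathbf y,\mathbf x)=-K_j^0(\mathbf x,\mathbf y)$, so the transposed kernel in the lemma is covered.

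You instead assume the $L^p(dw)$ bound for the Euclidean truncations $\mathcal R_{j,*}^0$ and attribute it to Cotlar's inequality on $(\mathbb R^N,\|\cdot\|,dw)$. That does not work, because $K_j^0$ is not a Calder\'on--Zygmund kernel for the Euclidean metric:
\begin{itemize}
\item Its size is governed by $w(B(\mathbf x,d(\mathbf x,\mathbf y)))^{-1}$, improved at best by a factor $d/\|\mathbf x-\mathbf y\|$. This is not dominated by $w(B(\mathbf x,\|\mathbf x-\mathbf y\|))^{-1}$ as $\mathbf y\to g\mathbf x\neq\mathbf x$; in general the kernel is singular at every point of $\mathcal O(\mathbf x)$.
\item Its regularity estimates in $\mathbf x$ are available only for $\|\mathbf x-\mathbf x'\|\le d(\mathbf x,\mathbf y)/2$, since moving $\mathbf x$ moves those singularities.
\end{itemize}
The classical Cotlar argument therefore breaks down precisely on $\{d(\mathbf x,\mathbf y)\le\varepsilon<\|\mathbf x-\mathbf y\|\}$.

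Moreover, your own comparison gives $|\mathcal R_{j,*}^{0,d}f-\mathcal R_{j,*}^{0}f|\le C\sum_{g\in G}Mf(g\,\cdot\,)$ pointwise. So the input you assume is \emph{equivalent} to the lemma, and without an independent proof of the Euclidean version the reduction makes no progress. To close the gap you can do one of two things:
\begin{itemize}
\item Invoke the orbit-distance theorem directly. The comparison then becomes unnecessary, and this is the paper's proof.
\item Write out a Dunkl-adapted Cotlar argument for Euclidean truncations, in which the contribution of the region above is controlled by the decay estimate below.
\end{itemize}

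Your ``main obstacle'' is actually easy, though the exponent is $\gamma=1$, not $2$. From the identity above and Theorem~\ref{theorem:heat},
\[
 |T_{j,\mathbf x}h_t(\mathbf x,\mathbf y)|
 \leq \frac{\|\mathbf x-\mathbf y\|}{2t}h_t(\mathbf x,\mathbf y)
 \leq Ct^{-1/2}\Bigl(1+\frac{\|\mathbf x-\mathbf y\|}{\sqrt t}\Bigr)^{-1}\mathcal G_{t/c}(\mathbf x,\mathbf y).
\]
One power of $(1+\|\mathbf x-\mathbf y\|/\sqrt t)^{-1}$ is consumed by the factor $\|\mathbf x-\mathbf y\|/t$, which is why you get $\gamma=1$. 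Integrate against $t^{-1/2}\,dt$, split at $t=d(\mathbf x,\mathbf y)^2$ and $t=\|\mathbf x-\mathbf y\|^2$, and use \eqref{eq:growth} (the middle range needs $N>1$). This gives
\[
 |K_j^0(\mathbf x,\mathbf y)|\leq C\frac{d(\mathbf x,\mathbf y)}{\|\mathbf x-\mathbf y\|}\,\frac{1}{w(B(\mathbf x,d(\mathbf x,\mathbf y)))},
\]
which is all your dyadic summation needs, so the comparison step itself is rigorous. Your suggested derivation from \eqref{eq:free_heat_first_derivative} would not have produced this, since that bound involves only the orbit distance.

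With both ingredients in place, your argument yields slightly more than the lemma: orbit and Euclidean maximal truncations of $\mathcal R_j^0$ are interchangeable modulo $\sum_{g\in G}Mf(g\,\cdot\,)$. But this holds only after the orbit-distance maximal theorem has been supplied.
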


\begin{proof}
The Dunkl--Riesz transform $\mathcal R_j^0$ is a
Dunkl--Calder\'on--Zygmund operator in the sense of
\cite{TanHanHanLeeLi}; see also the kernel estimates in
\cite{HanLeeLiWick}. Thus Corollary~3.2 of
\cite{TanHanLi} applies to its maximal truncation with
respect to the orbit distance $d$.

We check the orientation of the kernel. The free heat
kernel is symmetric and satisfies
\[
 T_{j,\mathbf x}h_t(\mathbf x,\mathbf y)
 =
 \frac{y_j-x_j}{2t}h_t(\mathbf x,\mathbf y).
\]
Its integral representation therefore gives, whenever
$d(\mathbf x,\mathbf y)>0$,
\[
 K_j^0(\mathbf y,\mathbf x)
 =-K_j^0(\mathbf x,\mathbf y).
\]
Consequently,
\[
 \mathcal R_{j,*}^{0,d}f(\mathbf x)
 =
 \sup_{\varepsilon>0}
 \left|
 \int_{\{\mathbf y:\,d(\mathbf x,\mathbf y)>\varepsilon\}}
 K_j^0(\mathbf x,\mathbf y)f(\mathbf y)\,dw(\mathbf y)
 \right|.
\]
This is the orbit-distance maximal truncation covered by
\cite[Corollary~3.2]{TanHanLi}. Its $L^p(dw)$ boundedness
proves~\eqref{eq:free_Riesz_maximal_truncation}.
\end{proof}

\begin{lemma}\label{lem:Riesz_operator_decomposition}
Assume that $N>2$, $\mathbf N/2<q_1<\mathbf N$, and that
$V\in{\rm RH}^{q_1}(dw)$ is $G$-invariant.
Let $K_j$ and $K_j^0$ be the kernels of
$T_jL^{-1/2}$ and $T_j(-\Delta_k)^{-1/2}$,
respectively, with the first variable being the output
variable.

For every bounded compactly supported function $f$, the
following identity holds for almost every $\mathbf x$,
where $r=m(\mathbf x)^{-1}$:
\begin{equation}\label{eq:sharp_Riesz_operator_decomposition}
\begin{split}
 (T_jL^{-1/2})^*f(\mathbf x)
 ={}&
 \int_{d(\mathbf x,\mathbf y)>r}
 \overline{K_j(\mathbf y,\mathbf x)}
 f(\mathbf y)\,dw(\mathbf y)
 \\
 &+
 \int_{d(\mathbf x,\mathbf y)<r}
 \overline{(K_j-K_j^0)(\mathbf y,\mathbf x)}
 f(\mathbf y)\,dw(\mathbf y)
 \\
 &+(\mathcal R_j^0)^*f(\mathbf x)
 -
 \int_{d(\mathbf x,\mathbf y)>r}
 \overline{K_j^0(\mathbf y,\mathbf x)}
 f(\mathbf y)\,dw(\mathbf y).
\end{split}
\end{equation}
All three integrals on the right-hand side converge
absolutely.
\end{lemma}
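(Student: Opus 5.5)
The plan is to prove the identity first for regularized operators, where all kernels are honest functions, and then remove the regularization. Fix a bounded compactly supported $f$ and write $(T_jL^{-1/2})^*=-L^{-1/2}T_j$ and $(\mathcal R_j^0)^*=-(-\Delta_k)^{-1/2}T_j$ on a dense class. The adjoint kernels are $\overline{K_j(\mathbf y,\mathbf x)}$ and $\overline{K_j^0(\mathbf y,\mathbf x)}$. Formally, \eqref{eq:sharp_Riesz_operator_decomposition} is the trivial splitting
\[
 K_j=K_j\mathbf 1_{d>r}+(K_j-K_j^0)\mathbf 1_{d<r}+K_j^0-K_j^0\mathbf 1_{d>r},
\]
since the orbit sphere $\{d(\mathbf x,\mathbf y)=r\}$ is $dw$-null. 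The content is therefore twofold. First, the three truncated integrals must converge absolutely. Second, the non-truncated part $K_j^0$ must be identified with the operator $(\mathcal R_j^0)^*$ near the orbit diagonal, where $K_j^0$ is not integrable.

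Absolute convergence comes from the annular bounds in Lemma~\ref{lem:annular_Riesz_kernel_bounds}. Decompose $\{d>r\}$ and $\{d<r\}$ into the dyadic orbit annuli $2^{\ell-1}r<d(\mathbf x,\mathbf y)\le 2^\ell r$ and apply H\"older with exponents $s,s'$. On each annulus, $\|f\|_{L^s}\le \|f\|_\infty w(B(\mathbf x,2^\ell r))^{1/s}$. Exactly as in Lemma~\ref{lem:sharp_Riesz_maximal_estimates}, this gives summable series $\sum 2^{-\ell\varepsilon}$ for $\ell\ge1$ and $\sum 2^{\ell(2-\mathbf N/q_1)}$ for $\ell\le0$. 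The free far integral converges absolutely by the pointwise bound $|K_j^0(\mathbf y,\mathbf x)|\le C\,w(B(\mathbf x,d))^{-1}$, which follows from integrating \eqref{eq:free_resolvent_first_derivative} against $|\tau|^{-1/2}$. Together with boundedness and compact support of $f$, this makes the annuli sum finite. All of this holds for almost every $\mathbf x$, namely those where the annular estimates hold.

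For the identification, I would use spectral regularization. For $\varepsilon>0$, replace $L^{-1/2}$ by $L^{-1/2}e^{-\varepsilon L}$ (equivalently, truncate the $\tau$-integral and use shifted resolvents $(L+a+i\tau)^{-1}$), and do the same for $-\Delta_k$. The regularized kernels $K_{j,\varepsilon}$ and $K_{j,\varepsilon}^0$ are locally bounded, so the splitting identity holds pointwise with every integral absolutely convergent. As $\varepsilon\to0$, the left side and $(\mathcal R^0_{j,\varepsilon})^*f$ converge in $L^2(dw)$ to $(T_jL^{-1/2})^*f$ and $(\mathcal R_j^0)^*f$ by the spectral theorem. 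For the three integrals, I pass to the limit by dominated convergence. The annular estimates \eqref{eq:local_resolvent_difference_annular_short} and \eqref{eq:far_resolvent_gradient_short}, together with the free bounds, are uniform in $a$ and in the truncation, because they were derived from $\tau$-decay estimates valid for the shifted resolvents with constants independent of $a$ (Lemma~\ref{lem:cacc} explicitly allows $V+a$). Extracting an a.e.\ convergent subsequence then gives \eqref{eq:sharp_Riesz_operator_decomposition} almost everywhere.

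The main obstacle is this last limiting step. Specifically, it requires checking that the annular $L^{s'}$ bounds, and not only the pointwise kernel formulas \eqref{eq:Riesz_kernel_representation}, hold uniformly for the regularized kernels, and that $K_{j,\varepsilon}\to K_j$ in $L^{s'}$ on each annulus. I would handle this by reproving the Minkowski-in-$\tau$ step with $e^{-\varepsilon\cdot}$ or $a$ inserted, which only improves the integrand.
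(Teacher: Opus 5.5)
Your overall architecture matches the paper's: regularize, split the regularized operator into absolutely convergent integrals, pass to the limit with annular bounds that are uniform in the regularization, and extract an a.e.\ subsequence. Your treatment of absolute convergence is also fine. The gap is in the regularization. The factor $e^{-\varepsilon L}$ is not a scalar weight in the $\tau$-integral. In $\lambda^{-1/2}e^{-\varepsilon\lambda}=\frac1{2\pi}\int_{\mathbb R}(-i\tau)^{-1/2}e^{-\varepsilon\lambda}(\lambda+i\tau)^{-1}\,d\tau$ it stays attached to the operator. The kernel you must then control is $u=(L+i\tau)^{-1}k_\varepsilon(\cdot,\mathbf x)$, which solves $(L+i\tau)u=k_\varepsilon(\cdot,\mathbf x)\neq0$ on every annulus. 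Lemma~\ref{lem:estimates_before_annular_Riesz2}, and with it \eqref{eq:far_resolvent_gradient_short} and \eqref{eq:local_resolvent_difference_annular_short}, concern homogeneous weak solutions and do not apply. Moreover, $K_{j,\varepsilon}-K^0_{j,\varepsilon}$ mixes $e^{-\varepsilon L}$ with $e^{\varepsilon\Delta_k}$, so the resolvent identity no longer yields the interior bound. So ``inserting $e^{-\varepsilon\cdot}$ only improves the integrand'' is false. The shifted resolvents are not equivalent either: they may be harmless for the estimates, but $T_j(L+a)^{-1/2}$ keeps the non-integrable singularity $w(B(\mathbf x,d(\mathbf x,\mathbf y)))^{-1}$, so the split integrals still diverge at the orbit diagonal. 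The regularization that works is the scalar cutoff in $\tau$ you mention only in passing, which is the paper's choice: $S_{j,n}=\frac1{2\pi}\int_{\{n^{-1}<|\tau|<n\}}(-i\tau)^{-1/2}T_j(L+i\tau)^{-1}\,d\tau=T_jL^{-1/2}b_n(L)$, with $|b_n|\le C$ and $b_n\to1$ on $(0,\infty)$. For it, Minkowski over the truncated $\tau$-range genuinely gives annular bounds uniform in $n$.

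Even with this truncation, the regularized kernels are not locally bounded. Already for $\Gamma_0$, $T_{j,\mathbf y}\Gamma(\mathbf y,\mathbf x,\tau)$ is of size $d(\mathbf x,\mathbf y)/w(B(\mathbf x,d(\mathbf x,\mathbf y)))$ near $\mathcal O(\{\mathbf x\})$. So $K_{j,n}(\cdot,\mathbf x)$ is at best integrable there, and this must be proved; it is the substantive part of the paper's argument, and your proposal skips it. The paper proceeds in two steps.
\begin{itemize}
\item[(i)] Cover the orbit annuli $\{\rho/2<d(\mathbf x,\mathbf y)\le\rho\}$, $\rho<c\,m(\mathbf x)^{-1}$, by boundedly many balls. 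Applying \eqref{eq:local_resolvent_gradient_simplified} with Proposition~\ref{propo:basic_decay} gives $\int_{U(\mathbf x,\rho)}|T_{j,\mathbf y}\Gamma(\mathbf y,\mathbf x,\tau)|\,dw(\mathbf y)\le C\rho$, uniformly in $\tau$.
\item[(ii)] Since $\Gamma(\cdot,\mathbf x,\tau)$ is only known to be a weak solution off the orbit, one must show that its distributional Dunkl derivative on $\mathbb R^N$ has no part carried by $\mathcal O(\{\mathbf x\})$. This follows from a $G$-invariant cutoff at scale $\rho$ together with $\int_{U(\mathbf x,2\rho)}|\Gamma(\mathbf y,\mathbf x,\tau)|\,dw(\mathbf y)\le C\rho^2$.
\end{itemize}
Only after these steps is $S_{j,n}^*f(\mathbf x)$ given by integration against $\overline{K_{j,n}(\cdot,\mathbf x)}$, so that your pointwise splitting applies. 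From there your limiting argument coincides with the paper's.
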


\begin{proof}
The quadratic forms of $L=-\Delta_k+V$ and $-\Delta_k$ satisfy
\[
 \sum_{i=1}^N\|T_i h\|_{L^2(dw)}^2
 \leq \|L^{1/2}h\|_{L^2(dw)}^2,
 \qquad
 \sum_{i=1}^N\|T_i h\|_{L^2(dw)}^2
 =\|(-\Delta_k)^{1/2}h\|_{L^2(dw)}^2.
\]
Applying these inequalities to $h=L^{-1/2}f$ and
$h=(-\Delta_k)^{-1/2}f$, initially on suitable spectral
subspaces, shows that both $T_jL^{-1/2}$ and
$\mathcal R_j^0=T_j(-\Delta_k)^{-1/2}$ extend to bounded
operators on $L^2(dw)$, with norm at most one.
For integers $n\geq2$, define
\[
 S_{j,n}
 =
 \frac1{2\pi}
 \int_{\{n^{-1}<|\tau|<n\}}
 (-i\tau)^{-1/2}T_j(L+i\tau)^{-1}\,d\tau,
\]
and define $S_{j,n}^0$ by replacing $L$ with
$-\Delta_k$. We use the branch
\[
 (-i\tau)^{-1/2}
 =
 |\tau|^{-1/2}
 \exp\left(\frac{i\pi}{4}\operatorname{sgn}\tau\right).
\]
For fixed $n$, these integrals converge in operator norm
on $L^2(dw)$.

To verify convergence as $n\to\infty$, put
\[
 b_n(\lambda)
 =
 \frac{\lambda^{1/2}}{2\pi}
 \int_{\{n^{-1}<|\tau|<n\}}
 \frac{(-i\tau)^{-1/2}}{\lambda+i\tau}\,d\tau,
 \qquad \lambda>0.
\]
Set $b_n(0)=0$. The scalar functional-calculus identity gives
$b_n(\lambda)\to1$ for every $\lambda>0$. Moreover,
\[
 |b_n(\lambda)|
 \leq
 C\lambda^{1/2}
 \int_0^\infty
 \frac{t^{-1/2}}{(\lambda^2+t^2)^{1/2}}\,dt
 \leq C.
\]
Since
\[
 S_{j,n}=T_jL^{-1/2}b_n(L),
 \qquad
 S_{j,n}^0
 =T_j(-\Delta_k)^{-1/2}b_n(-\Delta_k),
\]
the spectral theorem yields
\begin{equation}\label{eq:Riesz_spectral_truncations_L2}
 S_{j,n}^*f\longrightarrow(T_jL^{-1/2})^*f,
 \qquad
 (S_{j,n}^0)^*f
 \longrightarrow(\mathcal R_j^0)^*f
 \quad\text{in }L^2(dw).
\end{equation}
The Riesz transforms are understood to vanish on the
corresponding null spaces.

Let
\[
 K_{j,n}(\mathbf y,\mathbf x)
 =
 \frac1{2\pi}
 \int_{\{n^{-1}<|\tau|<n\}}
 (-i\tau)^{-1/2}
 T_{j,\mathbf y}\Gamma(\mathbf y,\mathbf x,\tau)\,d\tau,
\]
and define $K_{j,n}^0$ analogously using $\Gamma_0$.
These are the kernels of $S_{j,n}$ and $S_{j,n}^0$,
respectively. We first justify their local integrability
across the orbit diagonal.

For almost every fixed $\mathbf x$, the basic kernel
bound (see Proposition~\ref{propo:basic_decay}) gives
\[
 \int_{U(\mathbf x,\rho)}
 |\Gamma(\mathbf y,\mathbf x,\tau)|\,dw(\mathbf y)
 \leq C\rho^2.
\]
Fix $0<\rho<c\,m(\mathbf x)^{-1}$, where $c<1$ is a
sufficiently small fixed constant, and put
\[
 E_\rho(\mathbf x)
 =\{\mathbf y:\rho/2<d(\mathbf x,\mathbf y)\leq\rho\},
 \qquad R=\rho/32.
\]
We can cover $E_\rho(\mathbf x)$ by at most $C_{N,G}$
balls $B(\mathbf a_\nu,R)$ with
$\mathbf a_\nu\in E_\rho(\mathbf x)$. For every such centre,
\[
 U(\mathbf a_\nu,4R)
 \subset
 \{\mathbf z:3\rho/8<d(\mathbf x,\mathbf z)<9\rho/8\}.
\]
In particular, these orbit neighbourhoods do not meet
$\mathcal O(\{\mathbf x\})$, so
$\Gamma(\,\cdot\,,\mathbf x,\tau)$ is a local weak solution
on each of them.

Since $\mathbf a_\nu$ lies within distance $\rho$ of
$g(\mathbf x)$ for some $g\in G$, Lemma~\ref{lem:m_growth}
and the $G$-invariance of $m$ give
$m(\mathbf a_\nu)\simeq m(\mathbf x)$. Hence
$Rm(\mathbf a_\nu)\leq C$, and
\eqref{eq:local_resolvent_gradient_simplified} gives
\[
 \|T_{j,\mathbf y}\Gamma(\mathbf y,\mathbf x,\tau)\|
 _{L^{s'}(B(\mathbf a_\nu,R),dw)}
 \leq
 CR^{-1}w(B(\mathbf a_\nu,R))^{1/s'}
 \operatorname*{ess\,sup}_{\mathbf z\in U(\mathbf a_\nu,4R)}
 |\Gamma(\mathbf z,\mathbf x,\tau)|.
\]
On $U(\mathbf a_\nu,4R)$, the orbit distance
$d(\mathbf x,\mathbf z)$ is comparable to $\rho$.
Proposition~\ref{propo:basic_decay}, the doubling property,
and the $G$-invariance of $dw$ therefore imply
\[
 \operatorname*{ess\,sup}_{\mathbf z\in U(\mathbf a_\nu,4R)}
 |\Gamma(\mathbf z,\mathbf x,\tau)|
 \leq C\frac{\rho^2}{w(B(\mathbf x,\rho))},
 \qquad
 w(B(\mathbf a_\nu,R))
 \simeq w(B(\mathbf x,\rho)).
\]
As $R=\rho/32$ and $1/s+1/s'=1$, we conclude that
\[
 \|T_{j,\mathbf y}\Gamma(\mathbf y,\mathbf x,\tau)\|
 _{L^{s'}(B(\mathbf a_\nu,R),dw)}
 \leq C\rho\,w(B(\mathbf x,\rho))^{-1/s}.
\]
Summing over the bounded number of covering balls yields
\[
 \left\|
 T_{j,\mathbf y}\Gamma(\mathbf y,\mathbf x,\tau)
 \right\|_{L^{s'}(E_\rho(\mathbf x),dw)}
 \leq C\rho\,w(B(\mathbf x,\rho))^{-1/s},
 \qquad
 \frac1{s'}=\frac1{q_1}-\frac1{\mathbf N}.
\]
The constant is independent of $\rho$, $\mathbf x$, and $\tau$.
Hölder's inequality and summation over decreasing
dyadic annuli therefore imply
\[
 \int_{U(\mathbf x,\rho)}
 |T_{j,\mathbf y}\Gamma(\mathbf y,\mathbf x,\tau)|
 \,dw(\mathbf y)
 \leq C\rho,
 \qquad 0<\rho<c\,m(\mathbf x)^{-1}.
\]
The constants are independent of $\tau$.
The analogous estimates for $\Gamma_0$ follow from
the free kernel estimates.

To identify this derivative distributionally across
$\mathcal O(\{\mathbf x\})$, use a $G$-invariant cutoff
vanishing on $U(\mathbf x,\rho)$ and equal to one outside
$U(\mathbf x,2\rho)$, with gradient bounded by $C/\rho$.
For a fixed test function, the additional term in
integration by parts is bounded by
\[
 C\rho^{-1}
 \int_{U(\mathbf x,2\rho)}
 |\Gamma(\mathbf y,\mathbf x,\tau)|\,dw(\mathbf y)
 \leq C\rho.
\]
It tends to zero as $\rho\to0$.
Thus the locally integrable derivative above represents
the distributional Dunkl derivative on the whole space.
Integration over the finite spectral interval then
justifies the kernel representations for
$S_{j,n}$ and $S_{j,n}^0$.

For almost every $\mathbf x$, set
\[
 r=m(\mathbf x)^{-1},
 \qquad
 E_\ell(\mathbf x)
 =
 \{\mathbf y:
 2^{\ell-1}r<d(\mathbf x,\mathbf y)\leq2^\ell r\}.
\]
Fix $M>1$ and put $R=2^\ell r$. By Minkowski's inequality
and \eqref{eq:far_resolvent_gradient_short}, for $\ell\geq1$,
\[
\begin{split}
 &\|K_{j,n}(\cdot,\mathbf x)\|_{L^{s'}(E_\ell(\mathbf x),dw)}
 \\
 &\quad\leq
 C_M R\,w(B(\mathbf x,R))^{-1/s}
 (1+Rm(\mathbf x))^{-\varepsilon}
 \int_{\{n^{-1}<|\tau|<n\}}
 |\tau|^{-1/2}(1+|\tau|^{1/2}R)^{-M}\,d\tau.
\end{split}
\]
Similarly, \eqref{eq:local_resolvent_difference_annular_short}
gives, for $\ell\leq0$,
\[
\begin{split}
 &\|(K_{j,n}-K_{j,n}^0)(\cdot,\mathbf x)\|
 _{L^{s'}(E_\ell(\mathbf x),dw)}
 \\
 &\quad\leq
 C_M R\,(Rm(\mathbf x))^\delta
 w(B(\mathbf x,R))^{-1/s}
 \int_{\{n^{-1}<|\tau|<n\}}
 |\tau|^{-1/2}(1+|\tau|^{1/2}R)^{-M}\,d\tau,
\end{split}
\]
where $\delta=2-\mathbf N/q_1>0$.

The spectral integrals satisfy
\[
 \int_{\{n^{-1}<|\tau|<n\}}
 |\tau|^{-1/2}(1+|\tau|^{1/2}R)^{-M}\,d\tau
 \leq
 \int_{\mathbb R}
 |\tau|^{-1/2}(1+|\tau|^{1/2}R)^{-M}\,d\tau
 =\frac{4}{R(M-1)}.
\]
Thus the factor $R$ cancels. Since $Rm(\mathbf x)=2^\ell$,
we obtain
\[
 \|K_{j,n}(\cdot,\mathbf x)\|_{L^{s'}(E_\ell(\mathbf x),dw)}
 \leq
 C2^{-\ell\varepsilon}
 w(B(\mathbf x,2^\ell r))^{-1/s},
 \qquad \ell\geq1,
\]
and
\[
 \|(K_{j,n}-K_{j,n}^0)(\cdot,\mathbf x)\|
 _{L^{s'}(E_\ell(\mathbf x),dw)}
 \leq
 C2^{\ell\delta}
 w(B(\mathbf x,2^\ell r))^{-1/s},
 \qquad \ell\leq0.
\]
The constants are independent of $n$.
The same integrable majorants show that
$K_{j,n}\to K_j$ on each exterior annulus and
$K_{j,n}-K_{j,n}^0\to K_j-K_j^0$ on each interior annulus
in $L^{s'}(dw)$.
The free derivative estimate
\eqref{eq:free_resolvent_first_derivative} similarly gives
$K_{j,n}^0\to K_j^0$ in $L^{s'}(dw)$ on every fixed
annulus separated from the orbit of $\mathbf x$.
These statements hold for almost every $\mathbf x$,
by Fubini's theorem.

Let $f$ be bounded and compactly supported.
For each $n$, the locally integrable kernel
representations give
\[
\begin{split}
 S_{j,n}^*f(\mathbf x)
 ={}&
 \int_{d(\mathbf x,\mathbf y)>r}
 \overline{K_{j,n}(\mathbf y,\mathbf x)}
 f(\mathbf y)\,dw(\mathbf y)
 \\
 &+
 \int_{d(\mathbf x,\mathbf y)<r}
 \overline{(K_{j,n}-K_{j,n}^0)(\mathbf y,\mathbf x)}
 f(\mathbf y)\,dw(\mathbf y)
 \\
 &+(S_{j,n}^0)^*f(\mathbf x)
 -
 \int_{d(\mathbf x,\mathbf y)>r}
 \overline{K_{j,n}^0(\mathbf y,\mathbf x)}
 f(\mathbf y)\,dw(\mathbf y).
\end{split}
\]
For fixed $\mathbf x$, only finitely many exterior
annuli meet the support of $f$. Hence convergence on
individual annuli handles both exterior integrals.

For the local difference, we repeat the argument used in the
proof of Lemma~\ref{lem:sharp_Riesz_maximal_estimates}, now
using the annular estimates for $K_{j,n}-K_{j,n}^0$ obtained
above, whose constants are independent of $n$.
H\"older's inequality and the definition of the maximal
operator give
\[
\begin{split}
 &\int_{E_\ell(\mathbf x)}
 |K_{j,n}(\mathbf y,\mathbf x)-K_{j,n}^0(\mathbf y,\mathbf x)|
 |f(\mathbf y)|\,dw(\mathbf y)\leq
 C2^{\ell\delta}
 \left(
 \sum_{g\in G}M(|f|^s)(g(\mathbf x))
 \right)^{1/s},
 \qquad \ell\leq0.
\end{split}
\]
Since $\delta>0$, these bounds are summable in $\ell$,
uniformly in $n$. On each fixed annulus, the
$L^{s'}$ convergence of $K_{j,n}-K_{j,n}^0$, together
with H\"older's inequality, gives convergence of the
integrals against $f$. Dominated convergence for the
sum over the annuli therefore yields
\[
\begin{split}
 &\lim_{n\to\infty}
 \int_{d(\mathbf x,\mathbf y)<r}
 \overline{(K_{j,n}-K_{j,n}^0)(\mathbf y,\mathbf x)}
 f(\mathbf y)\,dw(\mathbf y)=
 \int_{d(\mathbf x,\mathbf y)<r}
 \overline{(K_j-K_j^0)(\mathbf y,\mathbf x)}
 f(\mathbf y)\,dw(\mathbf y)
\end{split}
\]
for almost every $\mathbf x$.

Finally, use \eqref{eq:Riesz_spectral_truncations_L2}
and choose a common subsequence along which the two
operator terms converge almost everywhere.
Passing to the limit proves
\eqref{eq:sharp_Riesz_operator_decomposition}.
Absolute convergence follows from the same annular
estimates and the compact support of $f$.
\end{proof}

\subsection{\texorpdfstring{$L^p$}{Lp}-boundedness of the Riesz transforms}

\begin{proposition}\label{prop:sharp_Riesz_reduction}
Assume that $N>2$, $\mathbf N/2<q<\mathbf N$, and that
$V\in{\rm RH}^q(dw)$ is $G$-invariant.
Then, for every
\begin{equation}\label{eq:sharp_Riesz_range}
 1<p<p_0,
 \qquad
 \frac1{p_0}=\frac1q-\frac1{\mathbf N},
\end{equation}
there exists a constant $C_p>0$ such that
\begin{equation}\label{eq:sharp_Riesz_conclusion}
 \|\nabla_k(-\Delta_k+V)^{-1/2}f\|_{L^p(dw)}
 \leq C_p\|f\|_{L^p(dw)}.
\end{equation}
Here the operator is initially defined on $L^2(dw)$
and extended to $L^p(dw)$ by density.
\end{proposition}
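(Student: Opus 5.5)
The plan is to argue by duality, following Shen's scheme. We show that the adjoint $(T_jL^{-1/2})^*$ is bounded on $L^{p'}(dw)$ for every $p_0'<p'<\infty$; this is equivalent to the asserted range $1<p<p_0$. Fix $p$ with $1<p<p_0$. By Lemma~\ref{lem: self improve RH} we choose $q_1>q$, still with $q_1<\mathbf N$, such that $V\in{\rm RH}^{q_1}(dw)$ and
\[
 \frac1{p}>\frac1{q_1}-\frac1{\mathbf N}=\frac1{s'}.
\]
This is possible because $1/p>1/p_0=1/q-1/\mathbf N$ and the map $q_1\mapsto 1/q_1-1/\mathbf N$ is continuous. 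The choice means $p<s'$, or equivalently $p'>s$.

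For bounded compactly supported $f$, we apply the decomposition \eqref{eq:sharp_Riesz_operator_decomposition} of Lemma~\ref{lem:Riesz_operator_decomposition}, which holds at almost every $\mathbf x$. The first term is the far part of $K_j$; conjugation is harmless, and it is bounded pointwise by \eqref{eq:far_Riesz_missing}. The second term is the local difference $K_j-K_j^0$, bounded by \eqref{eq:local_Riesz_missing}. Both are therefore at most
\[
 C\Bigl(\sum_{g\in G}M(|f|^s)(g\mathbf x)\Bigr)^{1/s}.
\]
The third term is $(\mathcal R_j^0)^*f$. Since $K_j^0(\mathbf y,\mathbf x)=-K_j^0(\mathbf x,\mathbf y)$ and the free Riesz kernel is real up to the conjugation convention, this is a constant multiple of $\mathcal R_j^0f$, which is bounded on every $L^{p'}$ by \cite{AS}. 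The fourth term is a truncation at the variable radius $r=m(\mathbf x)^{-1}$, and it is dominated by the maximal truncation $\mathcal R^{0,d}_{j,*}f(\mathbf x)$. Lemma~\ref{lem:free_Riesz_maximal_truncation} makes that maximal operator bounded on $L^{p'}$.

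Next we take $L^{p'}$ norms. Since $p'/s>1$, the Hardy--Littlewood maximal operator is bounded on $L^{p'/s}(dw)$. Because $dw$ is $G$-invariant, each composition $f\mapsto M(|f|^s)(g\,\cdot)^{1/s}$ is bounded on $L^{p'}$ as well. This gives
\[
 \|(T_jL^{-1/2})^*f\|_{L^{p'}}\le C\|f\|_{L^{p'}}
\]
on the dense class of bounded compactly supported functions. Pairing with $g\in L^2\cap L^p$ then yields $|\langle T_jL^{-1/2}g,f\rangle|\le C\|g\|_{L^p}\|f\|_{L^{p'}}$, hence $\|T_jL^{-1/2}g\|_{L^p}\le C\|g\|_{L^p}$. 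Extending by density and summing over $j$ gives \eqref{eq:sharp_Riesz_conclusion}.

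The main obstacle is the exponent bookkeeping. The maximal bounds in Lemma~\ref{lem:sharp_Riesz_maximal_estimates} only control the adjoint for $p'>s$, so the open endpoint $p_0$ has to be reached through the choice of $q_1$ via self-improvement, and the argument must stay below the borderline case $q_1=\mathbf N$.

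A secondary technical point is the treatment of the variable truncation radius. Domination by $\mathcal R^{0,d}_{j,*}$ requires the sets $\{d(\mathbf x,\mathbf y)>r\}$ to be exactly of the orbit-truncation form used in that lemma, which they are. It also requires the identity to hold almost everywhere, which is supplied by Lemma~\ref{lem:Riesz_operator_decomposition}.

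Finally, compatibility of the $L^2$ definition with the extension holds on $L^2\cap L^p$, so the extension is consistent with the operator originally defined on $L^2(dw)$.
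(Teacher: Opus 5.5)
Your proposal is correct and follows the paper's proof essentially step by step: the same decomposition from Lemma~\ref{lem:Riesz_operator_decomposition}, the maximal-function bounds of Lemma~\ref{lem:sharp_Riesz_maximal_estimates} for the first two terms, boundedness of $(\mathcal R_j^0)^*$ and of $\mathcal R_{j,*}^{0,d}$ for the free terms, and duality from $s<p'<\infty$ to $1<p<s'$. The only cosmetic difference is that you choose $q_1$ depending on $p$, whereas the paper fixes a single $q<q_1<\mathbf N$ and notes that $s'>p_0$ holds automatically, which also makes your continuity argument unnecessary.
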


\begin{proof}
Recall that $L=-\Delta_k+V$ and
\[
 \mathcal R_j=T_jL^{-1/2},
 \qquad
 \mathcal R_j^0=T_j(-\Delta_k)^{-1/2}.
\]
By Lemma~\ref{lem: self improve RH}, choose
$q<q_1<\mathbf N$ such that $V\in{\rm RH}^{q_1}(dw)$,
and let
\[
 \frac1s=1-\frac1{q_1}+\frac1{\mathbf N}.
\]

Let $f$ be bounded and compactly supported.
Apply the decomposition from
Lemma~\ref{lem:Riesz_operator_decomposition}.
The first two terms are bounded by
Lemma~\ref{lem:sharp_Riesz_maximal_estimates}.
Since $K_j^0$ is real-valued, the last integral is
bounded in absolute value by
$\mathcal R_{j,*}^{0,d}f(\mathbf x)$.
Consequently,
\[
\begin{split}
 |\mathcal R_j^*f(\mathbf x)|
 \leq{}&
 C\left(
 \sum_{g\in G}M(|f|^s)(g(\mathbf x))
 \right)^{1/s}+|(\mathcal R_j^0)^*f(\mathbf x)|
 +\mathcal R_{j,*}^{0,d}f(\mathbf x).
\end{split}
\]

For $p>s$, the $G$-invariance of $dw$ and the
$L^{p/s}(dw)$ boundedness of $M$ give
\[
 \left\|
 \left(
 \sum_{g\in G}M(|f|^s)(g(\,\cdot\,))
 \right)^{1/s}
 \right\|_{L^p(dw)}
 \leq C_p\|f\|_{L^p(dw)}.
\]
The free Dunkl--Riesz transform and its adjoint are
bounded on $L^p(dw)$ for $1<p<\infty$.
Together with
Lemma~\ref{lem:free_Riesz_maximal_truncation},
this yields
\[
 \|\mathcal R_j^*f\|_{L^p(dw)}
 \leq C_p\|f\|_{L^p(dw)},
 \qquad s<p<\infty.
\]
By density, this estimate extends to $L^p(dw)$.

Duality therefore gives the boundedness of
$\mathcal R_j$ on $L^p(dw)$ for $1<p<s'$, where
\[
 \frac1{s'}=\frac1{q_1}-\frac1{\mathbf N}.
\]
Since $q_1>q$, we have $s'>p_0$.
Combining the estimates for $j=1,\ldots,N$ proves
\eqref{eq:sharp_Riesz_conclusion}.
\end{proof}

\subsection{Gradient estimates with a potential factor}

\begin{lemma}\label{lem:weighted_gradient_maximal_estimate}
Assume that $N>2$, $q>\mathbf N/2$, and that
$V\in{\rm RH}^q(dw)$ is $G$-invariant.
Let $q_1>q$ be a finite exponent supplied by the
reverse H\"older self-improvement.

If $\frac{\mathbf{N}}{2}<q_1<\mathbf{N},$
define $s$ by
\begin{equation}\label{eq:s_weighted_gradient_below_dimension}
    \frac1s=1-\frac{3}{2q_1}+\frac1{\mathbf{N}}.
\end{equation}
Then, for every compactly supported bounded function $f$,
\begin{equation}\label{eq:weighted_gradient_missing_below_dimension}
 \left|\int_{\mathbb{R}^N}T_{j,\mathbf{y}}
 \Gamma(\mathbf{x},\mathbf{y},0)V(\mathbf{y})^{1/2}
 f(\mathbf{y})\,dw(\mathbf{y})\right|
 \leq C
 \left(\sum_{g\in G}M(|f|^s)(g(\mathbf{x}))\right)^{1/s}.
\end{equation}

If $q_1\geq\mathbf{N}$, define $s$ by
\begin{equation}\label{eq:s_weighted_gradient_above_dimension}
    \frac1s=1-\frac1{2q_1}.
\end{equation}
Then estimate
\eqref{eq:weighted_gradient_missing_below_dimension} holds with this
value of $s$.
\end{lemma}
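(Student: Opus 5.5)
Since $\Gamma(\cdot,\cdot,0)$ is symmetric, I would use $T_{j,\mathbf y}\Gamma(\mathbf x,\mathbf y,0)=T_{j,\mathbf y}\Gamma(\mathbf y,\mathbf x,0)$, and so reduce to $L^{s'}$ bounds on dyadic orbit annuli
\[
E_\ell(\mathbf x)=\{\mathbf y:2^{\ell-1}r<d(\mathbf x,\mathbf y)\leq 2^\ell r\},\qquad r=m(\mathbf x)^{-1},\ \ell\in\mathbb Z,
\]
for the weighted kernel $T_{j,\mathbf y}\Gamma(\mathbf y,\mathbf x,0)V(\mathbf y)^{1/2}$. This follows the strategy of Lemma~\ref{lem:sharp_Riesz_maximal_estimates}. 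There is one important difference: no free kernel is subtracted on the inner annuli. The factor $\Gamma(\cdot,\cdot,0)$ is the Green kernel, whose gradient behaves like $d/w(B)$ rather than $1/w(B)$, so the local part should already be summable. The target bound is
\[
\bigl\|T_{j,\mathbf y}\Gamma(\mathbf y,\mathbf x,0)V^{1/2}\bigr\|_{L^{s'}(E_\ell(\mathbf x),dw)}\leq C\,2^{-\varepsilon|\ell|}\,w(B(\mathbf x,2^\ell r))^{-1/s}
\]
for some $\varepsilon>0$ and all $\ell\in\mathbb Z$. Once this holds, H\"older's inequality on each annulus gives a bound by $2^{-\varepsilon|\ell|}\bigl(\sum_{g}M(|f|^s)(g\mathbf x)\bigr)^{1/s}$, exactly as before, and summing over $\ell$ proves \eqref{eq:weighted_gradient_missing_below_dimension}.

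To get the annular bound, put $R=2^\ell r$ and cover $E_\ell(\mathbf x)$ by boundedly many balls $B(\mathbf a,cR)$ whose orbit neighbourhoods $U(\mathbf a,4cR)$ avoid $\mathcal O(\{\mathbf x\})$. On each ball I apply H\"older with exponents $1/s'=1/t+1/(2q_1)$. This puts $T_j\Gamma$ in $L^{t}$ and $V^{1/2}$ in $L^{2q_1}$.

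In the case $q_1<\mathbf N$, I take $1/t=1/q_1-1/\mathbf N$, which gives $1/s'=3/(2q_1)-1/\mathbf N$ and matches \eqref{eq:s_weighted_gradient_below_dimension}. The $L^t$ norm of $T_j\Gamma$ on the annulus is bounded via \eqref{eq:local_resolvent_gradient_simplified}, combined with Theorem~\ref{theorem:fundamental_invariant} (arbitrary decay in $Rm$) and Proposition~\ref{propo:basic_decay}. This yields
\[
CR\,w(B(\mathbf x,R))^{1/t-1}(1+Rm(\mathbf x))^{-K}
\]
for any $K$, as in \eqref{eq:far_resolvent_gradient_short} with $\tau=0$. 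Here the factor $m(\mathbf a)\lesssim m(\mathbf x)(1+Rm(\mathbf x))^\kappa$ coming from \eqref{eq:Shen_B} is absorbed into the decay.

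For the potential, the reverse H\"older inequality with exponent $q_1$ and \eqref{eq:potential_scale_used_in_resolvent_difference} give
\[
\|V^{1/2}\|_{L^{2q_1}(U(\mathbf x,R))}\leq C\,w(B(\mathbf x,R))^{1/(2q_1)}R^{-1}\cdot\begin{cases}(Rm(\mathbf x))^{\delta/2}, & R\le r,\\ (Rm(\mathbf x))^{\kappa_1/2}, & R>r,\end{cases}
\]
where $\delta=2-\mathbf N/q_1>0$. Multiplying the two bounds, the powers of $R$ cancel and the volume exponent is $1/t-1+1/(2q_1)=-1/s$. For $\ell\le0$ this leaves the factor $2^{\ell\delta/2}$. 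For $\ell\ge1$ it leaves $2^{\ell(\kappa_1/2-K)}$, and choosing $K>\kappa_1/2$ makes it decay.

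In the case $q_1\ge\mathbf N$, the local gradient estimate is not available with $t=\infty$. I would instead pick an auxiliary $\tilde q<\mathbf N$ close to $\mathbf N$ with $V\in{\rm RH}^{\tilde q}$ (by Lemma~\ref{lem: self improve RH}(i)). Lemma~\ref{lem:estimates_before_annular_Riesz2} then gives $T_j\Gamma\in L^{\tilde t}$ with $\tilde t$ arbitrarily large. Interpolating this with the trivial $L^2$ Caccioppoli bound from Lemma~\ref{lem:cacc} lets me take any finite $t$ with $1/t<1/s'-1/(2q_1)$, so $s$ can be taken slightly larger than in \eqref{eq:s_weighted_gradient_above_dimension}. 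To reach $1/t=0$ exactly, I would rerun the proof of Lemma~\ref{lem:estimates_before_annular_Riesz2} with the fractional integral estimate placed in $L^\infty$. This is possible because, when $q_1>\mathbf N$, the kernel $d/w(B)$ lies in $L^{q_1'}$ locally with the correct scaling. The borderline case $q_1=\mathbf N$ is handled by first increasing $q_1$ slightly via self-improvement.

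The main obstacle is the inner annuli $\ell\le0$. There one must check that the local gradient estimate is sharp enough that the only surviving small factor is $(Rm)^{\delta/2}$, with no $(1+Rm(\mathbf a))^{\kappa_0}$ loss; this holds because $Rm(\mathbf a)\lesssim1$ there. One must also confirm that the distributional derivative across the orbit diagonal is represented by the locally integrable function, by the same cutoff argument used in Lemma~\ref{lem:Riesz_operator_decomposition}.
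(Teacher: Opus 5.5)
Your proposal is correct and follows the paper's proof. You use the same dyadic orbit-annulus decomposition at scale $m(\mathbf x)^{-1}$ and the same H\"older splitting $1/t+1/(2q_1)+1/s=1$. The $L^t$ gradient bound comes, as in the paper, from the local resolvent-gradient estimate combined with arbitrary-order decay, and reverse H\"older for $V^{1/2}$ gives $2^{\ell\delta/2}$ on inner annuli and decay on outer ones. For $q_1\ge\mathbf N$ you likewise use the $L^\infty$ version of the localized fractional integral estimate and handle $q_1=\mathbf N$ by self-improvement. Packaging the kernel and $V^{1/2}$ into one $L^{s'}$ annular bound is only a cosmetic regrouping of the paper's three-factor H\"older, and your interpolation detour is unnecessary since you end at the same $L^\infty$ argument.
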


\begin{proof}
Fix $\mathbf{x}\in\mathbb{R}^{N}$ and put $r=m(\mathbf{x})^{-1}$.
We first consider the case $\frac{\mathbf{N}}{2}<q_1<\mathbf{N}$.
Define $t$ by
\begin{equation}\label{eq:t_weighted_gradient}
    \frac1t=\frac1{q_1}-\frac1{\mathbf{N}}.
\end{equation}
It follows from
\eqref{eq:s_weighted_gradient_below_dimension} and
\eqref{eq:t_weighted_gradient} that
\begin{equation}\label{eq:three_exponents_weighted_gradient}
    \frac1t+\frac1{2q_1}+\frac1s=1.
\end{equation}

We first record the annular estimate for the gradient of the fundamental
solution. For every integer $\ell\in\mathbb{Z}$, set $R=2^\ell r$. The local resolvent-gradient estimate
\eqref{eq:local_resolvent_gradient}, applied to $u(\mathbf{y})=\Gamma(\mathbf{x},\mathbf{y},0)$,
together with
\eqref{eq:fundamental_tau_decay}, gives, for every $M>0$,
\begin{equation}\label{eq:annular_gradient_fundamental_below_dimension}
\begin{split}
 &\left(
 \int_{\{R/2<d(\mathbf{x},\mathbf{y})\leq R\}}
 |T_{j,\mathbf{y}}\Gamma(\mathbf{x},\mathbf{y},0)|^t
 \,dw(\mathbf{y})
 \right)^{1/t}\leq
 C_M(1+Rm(\mathbf{x}))^{-M}
 \frac{R}{w(B(\mathbf{x},R))^{1-1/t}}.
\end{split}
\end{equation}
Indeed, the orbit annulus on the left-hand side can be covered by a fixed
finite number of orbit neighbourhoods of radius comparable with $R$.
On each such neighbourhood, the point
$\mathbf{x}$ and all its reflected points lie outside the corresponding
larger neighbourhood. Thus
$\Gamma(\mathbf{x},\cdot,0)$ satisfies
\[
    -\Delta_k u+Vu=0
\]
there. 
Applying the reverse H\"older inequality with exponent
$q_1$ on $B(g(\mathbf{x}),2R)$, we obtain
\[
 \|V\|_{L^{q_1}(B(g(\mathbf{x}),2R),dw)}
 \leq
 Cw(B(g(\mathbf{x}),2R))^{1/q_1}
 \frac{1}{w(B(g(\mathbf{x}),2R))}
 \int_{B(g(\mathbf{x}),2R)}V\,dw.
\]
Since $dw$ is $G$-invariant and doubling,
\[
 w(B(g(\mathbf{x}),2R))
 =w(B(\mathbf{x},2R))
 \leq Cw(B(\mathbf{x},R)).
\]
Consequently,
\[
 \|V\|_{L^{q_1}(B(g(\mathbf{x}),2R),dw)}
 \leq
 Cw(B(\mathbf{x},R))^{1/q_1}
 \frac{1}{w(B(g(\mathbf{x}),2R))}
 \int_{B(g(\mathbf{x}),2R)}V\,dw.
\]
Since $V$ is $G$-invariant,
\[
    m(g(\mathbf{x}))=m(\mathbf{x}),
    \qquad
    w(B(g(\mathbf{x}),R))=w(B(\mathbf{x},R)).
\]
The critical-radius estimates give, for some fixed
$\kappa_0>0$,
\[
 R^2\frac{1}{w(B(\mathbf x,2R))}
 \int_{B(\mathbf x,2R)}V\,dw
 \leq C(1+Rm(\mathbf x))^{\kappa_0}.
\]
Thus the potential term in the local gradient estimate
contributes at most the factor
$(1+Rm(\mathbf x))^{\kappa_0}$.
For a prescribed $M>0$, apply
Theorem~\ref{theorem:fundamental_invariant}
with decay exponent $A=M+\kappa_0$.
The resulting factors satisfy
\[
 (1+Rm(\mathbf x))^{\kappa_0}
 (1+Rm(\mathbf x))^{-A}
 =(1+Rm(\mathbf x))^{-M}.
\]
This gives the required decay in
\eqref{eq:annular_gradient_fundamental_below_dimension}.

We decompose $\mathbb{R}^{N}$ into dyadic orbit annuli:
\[
 \mathbb{R}^{N}
 =
 \bigcup_{\ell\in\mathbb{Z}}
 \{\mathbf{y}:2^{\ell-1}r<d(\mathbf{x},\mathbf{y})
                         \leq2^\ell r\},
\]
up to a set of measure zero. By
\eqref{eq:three_exponents_weighted_gradient} and Hölder's inequality,
\begin{align}
 &\int_{\{R/2<d(\mathbf{x},\mathbf{y})\leq R\}}
 |T_{j,\mathbf{y}}\Gamma(\mathbf{x},\mathbf{y},0)|
 V(\mathbf{y})^{1/2}|f(\mathbf{y})|\,dw(\mathbf{y})
 \nonumber\\
 &\quad\leq
 \left(
 \int_{\{R/2<d(\mathbf{x},\mathbf{y})\leq R\}}
 |T_{j,\mathbf{y}}\Gamma(\mathbf{x},\mathbf{y},0)|^t
 \,dw(\mathbf{y})
 \right)^{1/t}
 \nonumber\\
 &\qquad\times
 \left(
 \int_{\{d(\mathbf{x},\mathbf{y})\leq R\}}
 V(\mathbf{y})^{q_1}\,dw(\mathbf{y})
 \right)^{1/(2q_1)}
 \left(
 \int_{\{d(\mathbf{x},\mathbf{y})\leq R\}}
 |f(\mathbf{y})|^s\,dw(\mathbf{y})
 \right)^{1/s}.
 \label{eq:three_Holder_weighted_gradient}
\end{align}

Since
\[
 \{\mathbf{y}:d(\mathbf{x},\mathbf{y})\leq R\}
 \subseteq
 \bigcup_{g\in G}B(g(\mathbf{x}),R),
\]
the reverse Hölder inequality and the $G$-invariance of $V$ and $dw$
give
\begin{equation}\label{eq:potential_factor_weighted_gradient}
\begin{split}
 &\left(
 \int_{\{d(\mathbf{x},\mathbf{y})\leq R\}}
 V(\mathbf{y})^{q_1}\,dw(\mathbf{y})
 \right)^{1/(2q_1)}
 \leq
 Cw(B(\mathbf{x},R))^{1/(2q_1)}
 \left(
 \frac{1}{w(B(\mathbf{x},R))}
 \int_{B(\mathbf{x},R)}V(\mathbf{y})\,dw(\mathbf{y})
 \right)^{1/2}.
\end{split}
\end{equation}
Similarly,
\begin{equation}\label{eq:function_factor_weighted_gradient}
\begin{split}
 &\left(
 \int_{\{d(\mathbf{x},\mathbf{y})\leq R\}}
 |f(\mathbf{y})|^s\,dw(\mathbf{y})
 \right)^{1/s}\leq
 w(B(\mathbf{x},R))^{1/s}
 \left(
 \sum_{g\in G}M(|f|^s)(g(\mathbf{x}))
 \right)^{1/s}.
\end{split}
\end{equation}
Combining
\eqref{eq:annular_gradient_fundamental_below_dimension}--
\eqref{eq:function_factor_weighted_gradient}, and using
\eqref{eq:three_exponents_weighted_gradient}, we obtain
\begin{align}
 &\int_{\{R/2<d(\mathbf{x},\mathbf{y})\leq R\}}
 |T_{j,\mathbf{y}}\Gamma(\mathbf{x},\mathbf{y},0)|
 V(\mathbf{y})^{1/2}|f(\mathbf{y})|\,dw(\mathbf{y})
 \nonumber\\
 &\quad\leq
 C_M(1+Rm(\mathbf{x}))^{-M}
 R
 \left(
 \frac{1}{w(B(\mathbf{x},R))}
 \int_{B(\mathbf{x},R)}V(\mathbf{y})\,dw(\mathbf{y})
 \right)^{1/2}
 \left(
 \sum_{g\in G}M(|f|^s)(g(\mathbf{x}))
 \right)^{1/s}.
 \label{eq:annular_weighted_gradient_reduced}
\end{align}
Notice that all powers of $w(B(\mathbf{x},R))$ cancel because
\[
 -1+\frac1t+\frac1{2q_1}+\frac1s=0.
\]

Suppose first that $\ell\leq0$, so that $R\leq r$. Then
\[
 \frac{R^2}{w(B(\mathbf{x},R))}
 \int_{B(\mathbf{x},R)}V(\mathbf{y})\,dw(\mathbf{y})
 \leq
 C\left(\frac{R}{r}\right)^{2-\mathbf{N}/q_1}.
\]
Consequently,
\begin{equation}\label{eq:small_annuli_weighted_gradient}
 R
 \left(
 \frac{1}{w(B(\mathbf{x},R))}
 \int_{B(\mathbf{x},R)}V\,dw
 \right)^{1/2}
 \leq
 C\left(\frac{R}{r}\right)^{1-\mathbf{N}/(2q_1)}
 =
 C2^{\ell(1-\mathbf{N}/(2q_1))}.
\end{equation}
Since $q_1>\mathbf{N}/2$, $1-{\mathbf{N}}/{2q_1}>0$, 
and hence the sum over $\ell\leq0$ is convergent.

For $\ell>0$, the doubling property of the measure $V\,dw$ and the
properties of the critical function give, for some $\kappa_0>0$,
\[
 \frac{R^2}{w(B(\mathbf{x},R))}
 \int_{B(\mathbf{x},R)}V\,dw
 \leq
 C\left(\frac{R}{r}\right)^{\kappa_0}.
\]
Therefore,
\begin{equation}\label{eq:large_annuli_weighted_gradient}
\begin{split}
 &(1+Rm(\mathbf{x}))^{-M}
 R
 \left(
 \frac{1}{w(B(\mathbf{x},R))}
 \int_{B(\mathbf{x},R)}V\,dw
 \right)^{1/2}\leq
 C(1+2^\ell)^{-M}2^{\ell\kappa_0/2}.
\end{split}
\end{equation}
Choosing $M>\kappa_0/2$ makes the sum over $\ell>0$ convergent.
Summing \eqref{eq:annular_weighted_gradient_reduced} over
$\ell\in\mathbb{Z}$ proves
\eqref{eq:weighted_gradient_missing_below_dimension} when
$q_1<\mathbf{N}$.

We now consider the case $q_1>\mathbf N$.
First, for $\mathbf y\in U(\mathbf a,R)$, the volume
growth estimate and a dyadic decomposition give
\[
\begin{split}
 &\int_{U(\mathbf a,2R)}
 \left(
 \frac{d(\mathbf y,\mathbf z)}
 {w(B(\mathbf y,d(\mathbf y,\mathbf z)))}
 \right)^{q_1'}\,dw(\mathbf z) \leq
 C R^{q_1}{}'
 w(B(\mathbf a,R))^{1-q_1'}
 \sum_{\ell=0}^{\infty}
 2^{-\ell(q_1'-\mathbf N(q_1'-1))}.
\end{split}
\]
Here the factor $R^{q_1}{}'$ is understood as $R^{q_1'}$.
The series converges because
\[
 q_1'-\mathbf N(q_1'-1)
 =\frac{q_1-\mathbf N}{q_1-1}>0.
\]
Consequently, H\"older's inequality yields
\[
\begin{split}
 &\operatorname*{ess\,sup}_{\mathbf y\in U(\mathbf a,R)}
 \int_{U(\mathbf a,2R)}
 \frac{d(\mathbf y,\mathbf z)}
 {w(B(\mathbf y,d(\mathbf y,\mathbf z)))}
 |F(\mathbf z)|\,dw(\mathbf z)\leq
 CR\,w(B(\mathbf a,R))^{-1/q_1}
 \|F\|_{L^{q_1}(U(\mathbf a,2R),dw)}.
\end{split}
\]
Using this bound for the potential term in the proof of
\eqref{eq:local_resolvent_gradient}, together with
the free local gradient estimate, gives
\[
\begin{split}
 &\operatorname*{ess\,sup}_{U(\mathbf a,R)}|T_j u|\leq
 C\left(
 R^{-1}
 +R\,w(B(\mathbf a,R))^{-1/q_1}
 \|V\|_{L^{q_1}(U(\mathbf a,2R),dw)}
 \right)
 \operatorname*{ess\,sup}_{U(\mathbf a,4R)}|u|.
\end{split}
\]
The reverse H\"older inequality and the critical-radius
estimates therefore imply, for some $\kappa_0>0$,
\[
 \operatorname*{ess\,sup}_{U(\mathbf a,R)}|T_j u|
 \leq
 CR^{-1}(1+Rm(\mathbf a))^{\kappa_0}
 \operatorname*{ess\,sup}_{U(\mathbf a,4R)}|u|.
\]
Apply this estimate to
$u=\Gamma(\mathbf x,\cdot,0)$ on a finite covering
of each orbit annulus by sufficiently small orbit
neighbourhoods. Using the symmetry of $\Gamma$,
Lemma~\ref{lem:m_growth}, and
Theorem~\ref{theorem:fundamental_invariant}, with
a sufficiently large decay exponent, gives, for every $M>0$,
\begin{equation}\label{eq:pointwise_gradient_fundamental_above_dimension}
 |T_{j,\mathbf y}\Gamma(\mathbf x,\mathbf y,0)|
 \leq
 C_M(1+d(\mathbf x,\mathbf y)m(\mathbf x))^{-M}
 \frac{d(\mathbf x,\mathbf y)}
 {w(B(\mathbf x,d(\mathbf x,\mathbf y)))}
\end{equation}
for almost every pair $(\mathbf x,\mathbf y)$ off
the orbit diagonal.

For $ R=2^\ell r$, estimate \eqref{eq:pointwise_gradient_fundamental_above_dimension}
implies
\[
 \operatorname*{ess\,sup}_{\{R/2<d(\mathbf{x},\mathbf{y})\leq R\}}
 |T_{j,\mathbf{y}}\Gamma(\mathbf{x},\mathbf{y},0)|
 \leq
 C_M(1+Rm(\mathbf{x}))^{-M}
 \frac{R}{w(B(\mathbf{x},R))}.
\]
By \eqref{eq:s_weighted_gradient_above_dimension}, $
    1/{(2q_1)}+1/s=1$. 
Hölder's inequality therefore gives
\begin{align*}
 &\int_{\{R/2<d(\mathbf{x},\mathbf{y})\leq R\}}
 |T_{j,\mathbf{y}}\Gamma(\mathbf{x},\mathbf{y},0)|
 V(\mathbf{y})^{1/2}|f(\mathbf{y})|\,dw(\mathbf{y})
 \\
 &\quad\leq
 C_M(1+Rm(\mathbf{x}))^{-M}
 R
 \left(
 \frac{1}{w(B(\mathbf{x},R))}
 \int_{B(\mathbf{x},R)}V\,dw
 \right)^{1/2}\times
 \left(
 \sum_{g\in G}M(|f|^s)(g(\mathbf{x}))
 \right)^{1/s}.
\end{align*}
The sum over $\ell\in\mathbb{Z}$ converges by exactly the arguments in
\eqref{eq:small_annuli_weighted_gradient} and
\eqref{eq:large_annuli_weighted_gradient}. This proves the result when
$q_1>\mathbf{N}$.

Finally, suppose that $q_1=\mathbf{N}$. The reverse Hölder
self-improvement gives an exponent $q_2>\mathbf{N}$ such that $V\in{\rm RH}^{q_2}(dw)$.
Applying the preceding argument with $q_2$ gives the estimate with
\[
    \frac1{s_2}=1-\frac1{2q_2}.
\]
Since $q_2>q_1$, we have $s_2<s$, where
\[
    \frac1s=1-\frac1{2q_1}.
\]
Jensen's inequality for the maximal operator gives
\[
    M(|f|^{s_2})(\mathbf{z})^{1/s_2}
    \leq
    |G|^{1/s_2-1/s}M(|f|^s)(\mathbf{z})^{1/s}.
\]
Thus the estimate obtained with $s_2$ implies the required estimate
with $s$. This completes the proof.
\end{proof}

\begin{proposition}\label{prop:weighted_gradient_reduction}
Assume that $N>2$, $q>\mathbf{N}/2$, and that
$V\in{\rm RH}^q(dw)$ is $G$-invariant. Put
\[
 p_1=
 \begin{cases}
 \displaystyle
 \left(\frac{3}{2q}-\frac1{\mathbf{N}}\right)^{-1},
 & \mathbf{N}/2<q<\mathbf{N},\\[2mm]
 2q, & q\geq\mathbf{N}.
 \end{cases}
\]
Then, for every $1<p<p_1$, the operator
$V^{1/2}\nabla_k(-\Delta_k+V)^{-1}$,
initially defined through the Green potential on
$C_c^\infty(\mathbb R^N)$, extends to a bounded operator
from $L^p(dw)$ to $L^p(dw;\mathbb C^N)$. In particular,
\begin{equation}\label{eq:weighted_gradient_conclusion}
 \|V^{1/2}\nabla_k(-\Delta_k+V)^{-1}f\|_{L^p(dw)}
 \leq C_p\|f\|_{L^p(dw)}.
\end{equation}
\end{proposition}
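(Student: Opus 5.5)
The plan is to obtain the estimate by duality from Lemma~\ref{lem:weighted_gradient_maximal_estimate}, following the pattern of the proof of Proposition~\ref{prop:sharp_Riesz_reduction}. Fix $1\le j\le N$ and consider the operator
\[
 S_j g(\mathbf x)=\int_{\mathbb R^N}T_{j,\mathbf y}\Gamma(\mathbf x,\mathbf y,0)V(\mathbf y)^{1/2}g(\mathbf y)\,dw(\mathbf y),
\]
defined for bounded compactly supported $g$.

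\textbf{Step 1: identify $S_j$ as an adjoint.} Since $\Gamma(\cdot,\cdot,0)$ is real and symmetric and $T_j$ is skew-symmetric, a formal computation gives, for $f\in C_c^\infty$,
\[
 \langle V^{1/2}T_jL^{-1}f,g\rangle = -\langle f,S_jg\rangle .
\]
This identity must be justified rigorously. I would use the regularized resolvents $(L+aI)^{-1}$ with kernels $\Gamma_a\le\Gamma$. I would also truncate $V$ by $\chi_n$ as in the proof of Theorem~\ref{thm:fractional_potential_boundedness}, so that $V^{1/2}g\in L^2$. For fixed $a$, the function $\Gamma_a(\mathbf x,\cdot)$ lies in the local form domain off the orbit of $\mathbf x$, by Lemma~\ref{lem:resolvent_local_weak_solution}. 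Its Dunkl derivative is locally integrable across the orbit diagonal by the $L^{s'}$ annular bounds, exactly as in the proof of Lemma~\ref{lem:Riesz_operator_decomposition}. This legitimizes the integration by parts. The limit $a\downarrow0$ is then taken using the uniform annular bounds of Lemma~\ref{lem:weighted_gradient_maximal_estimate}, whose proof is uniform in the regularization because the Caccioppoli and mean-value constants are independent of $a$.

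\textbf{Step 2: bound $S_j$ on $L^{p'}$.} Lemma~\ref{lem:weighted_gradient_maximal_estimate} gives the pointwise estimate
\[
 |S_jg(\mathbf x)|\le C\Big(\sum_{g'\in G}M(|g|^s)(g'(\mathbf x))\Big)^{1/s}.
\]
By the $G$-invariance of $dw$ and the $L^{r/s}(dw)$ boundedness of $M$, it follows that $\|S_jg\|_{L^r}\le C_r\|g\|_{L^r}$ for every $s<r<\infty$. Duality then gives \eqref{eq:weighted_gradient_conclusion} for $1<p<s'$ on $C_c^\infty$, and density extends it to all of $L^p(dw)$.

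\textbf{Step 3: check the range.}
\begin{itemize}
\item If $\mathbf N/2<q<\mathbf N$, choose $q_1>q$ from Lemma~\ref{lem: self improve RH} with $q_1<\mathbf N$. Then $1/s'=3/(2q_1)-1/\mathbf N<3/(2q)-1/\mathbf N$, so $s'>p_1$ and the whole range $1<p<p_1$ is covered.
\item If $q\ge\mathbf N$, any $q_1>q$ satisfies $q_1>\mathbf N$. Then $s'=2q_1>2q=p_1$.
\end{itemize}
A small bookkeeping point is to make sure that $s>1$, i.e.\ that $p_1>1$. In the subcritical case this holds because $3/(2q)-1/\mathbf N<3/\mathbf N-1/\mathbf N<1$ for $\mathbf N>2$.

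\textbf{Main obstacle.} I expect the main difficulty to be Step 1: justifying that the bounded extension is given by the kernel integral in Lemma~\ref{lem:weighted_gradient_maximal_estimate}, so that the maximal bound really controls the adjoint. The two issues are the singularity of $T_{j,\mathbf y}\Gamma$ near the orbit diagonal and the unbounded factor $V^{1/2}$. My first approach is to regularize by $\chi_nV$ and $a>0$, prove the duality identity with absolutely convergent integrals (using the dyadic annular $L^t$ bounds \eqref{eq:annular_gradient_fundamental_below_dimension}), and pass to the limit by Fatou's lemma and monotone convergence.
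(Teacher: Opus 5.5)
Your proposal is correct and is essentially the paper's proof. Both build the kernel operator from $T_{j,\mathbf y}\Gamma(\mathbf x,\mathbf y,0)V(\mathbf y)^{1/2}$, use the pointwise maximal bound of Lemma~\ref{lem:weighted_gradient_maximal_estimate} to get $L^r(dw)$-boundedness for $s<r<\infty$, identify this operator with the adjoint of $V^{1/2}T_jL^{-1}$, dualize, and check $s'>p_1$.

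Two minor remarks. First, the Step~1 identity holds with a plus sign, since it uses only the reality and symmetry of $\Gamma(\cdot,\cdot,0)$, not the skew-symmetry of $T_j$. Second, the paper justifies that identity directly by Fubini and the locally integrable distributional representation of $T_{j,\mathbf y}\Gamma$, so your $a$-regularization is unnecessary; its claimed uniformity in $a$ would in any case require rerunning the local gradient estimate with potential $V+a$.
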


\begin{proof}
Write $L=-\Delta_k+V$, and choose a finite exponent
$q_1>q$ such that $V\in{\rm RH}^{q_1}(dw)$.
Let $s$ be the exponent in
Lemma~\ref{lem:weighted_gradient_maximal_estimate}.

For bounded compactly supported $f$, define
\[
 \mathcal A_jf(\mathbf x)
 =
 \int_{\mathbb R^N}
 T_{j,\mathbf y}\Gamma(\mathbf x,\mathbf y,0)
 V(\mathbf y)^{1/2}f(\mathbf y)\,dw(\mathbf y).
\]
The absolute-integral estimate proved in
Lemma~\ref{lem:weighted_gradient_maximal_estimate}
ensures that this integral is well defined.
For every $s<a<\infty$, the boundedness of $M$ on
$L^{a/s}(dw)$ and the $G$-invariance of $dw$ give
\[
 \|\mathcal A_jf\|_{L^a(dw)}
 \leq C_a\|f\|_{L^a(dw)}.
\]
Thus $\mathcal A_j$ extends to a bounded operator
on $L^a(dw)$.

We identify its adjoint using the Green kernel.
For bounded compactly supported $f,h$, the absolute
estimate above and H\"older's inequality justify
Fubini's theorem. The reality and symmetry of
$\Gamma(\mathbf x,\mathbf y,0)$, together with its
distributional derivative representation, give
\[
 \langle \mathcal A_jf,h\rangle
 =
 \langle f,V^{1/2}T_jL^{-1}h\rangle.
\]
Here $L^{-1}h$ is understood as the Green potential.
Consequently, by duality,
\[
 \|V^{1/2}T_jL^{-1}h\|_{L^{a'}(dw)}
 \leq C_a\|h\|_{L^{a'}(dw)},
 \qquad s<a<\infty.
\]
The operator therefore extends by density to
$L^p(dw)$ for every $1<p<s'$.

If $q_1<\mathbf N$, then
\[
 \frac1{s'}=\frac{3}{2q_1}-\frac1{\mathbf N}
 <\frac{3}{2q}-\frac1{\mathbf N}
 =\frac1{p_1}.
\]
If $q<\mathbf N\leq q_1$, then
$p_1<2\mathbf N\leq2q_1=s'$.
Finally, if $q\geq\mathbf N$, then
$s'=2q_1>2q=p_1$.
Thus $s'>p_1$ in every case.
Combining the estimates for $j=1,\ldots,N$
proves \eqref{eq:weighted_gradient_conclusion}.
\end{proof}

\subsection{Mixed operators and estimates for equations}

\begin{theorem}\label{thm:conditional_sharp_mixed_estimates}
Assume that $N>2$, $q>\mathbf N/2$, and that
$V\in{\rm RH}^q(dw)$ is $G$-invariant. Put
\[
 p_0=
 \begin{cases}
 \displaystyle\frac{\mathbf Nq}{\mathbf N-q},
       & \mathbf N/2<q<\mathbf N,\\[2mm]
 \infty, & q\geq\mathbf N,
 \end{cases}
\]
with the convention $p_0'=1$ when $p_0=\infty$.
For a vector field $\mathbf f=(f_1,\ldots,f_N)$, write
\[
 \operatorname{div}_k\mathbf f=\sum_{j=1}^N T_jf_j.
\]
Then
\begin{align}
 \|(-\Delta_k+V)^{-1/2}
       \operatorname{div}_k\mathbf f\|_{L^p(dw)}
 &\leq C_p\|\mathbf f\|_{L^p(dw)},
 &&p_0'<p<\infty,
 \label{eq:conditional_adjoint_Riesz}\\
 \|\nabla_k(-\Delta_k+V)^{-1}
       \operatorname{div}_k\mathbf f\|_{L^p(dw)}
 &\leq C_p\|\mathbf f\|_{L^p(dw)},
 &&p_0'<p<p_0,
 \label{eq:conditional_mixed_Riesz}\\
 \|V^{1/2}(-\Delta_k+V)^{-1}
       \operatorname{div}_k\mathbf f\|_{L^p(dw)}
 &\leq C_p\|\mathbf f\|_{L^p(dw)},
 &&p_0'<p<2q.
 \label{eq:conditional_mixed_potential}
\end{align}
The operators are understood as bounded extensions
of the corresponding compositions defined below.
\end{theorem}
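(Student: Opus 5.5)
The plan is to obtain all three estimates by duality and composition from the Riesz transform bound in Proposition~\ref{prop:sharp_Riesz_reduction} and the fractional potential bound \eqref{eq:half_potential_boundedness} of Theorem~\ref{thm:fractional_potential_boundedness}. The argument has the same structure as Corollary~\ref{cor:mixed_unconditional}, but it uses the sharper range $1<p<p_0$ for $\nabla_kL^{-1/2}$ in place of $1<p<2q$.

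\textbf{Step 1: the range of $\nabla_kL^{-1/2}$.} When $\mathbf N/2<q<\mathbf N$, Proposition~\ref{prop:sharp_Riesz_reduction} gives boundedness of $\mathcal R_j=T_jL^{-1/2}$ on $L^p(dw)$ for $1<p<p_0$. When $q\geq\mathbf N$, Lemma~\ref{lem: self improve RH}(i) gives $V\in{\rm RH}^{\tilde q}(dw)$ for every $\mathbf N/2<\tilde q<\mathbf N$. Applying the proposition with such $\tilde q$ gives the range $1<p<\mathbf N\tilde q/(\mathbf N-\tilde q)$, which tends to $\infty$ as $\tilde q\uparrow\mathbf N$. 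Hence $\mathcal R_j$ is bounded on $L^p(dw)$ for every $1<p<\infty=p_0$. The constants depend on $\tilde q$, but for each fixed $p$ we fix one admissible $\tilde q$, so this causes no difficulty.

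\textbf{Step 2: the estimate \eqref{eq:conditional_adjoint_Riesz}.} Since $T_j$ is skew-symmetric and $L^{-1/2}$ is self-adjoint, we have $(T_jL^{-1/2})^*=-L^{-1/2}T_j$ on $C_c^\infty$. Duality then gives boundedness of $L^{-1/2}T_j$ on $L^p(dw)$ for $p_0'<p<\infty$. The operator $L^{-1/2}\operatorname{div}_k\mathbf f=\sum_jL^{-1/2}T_jf_j$ is defined initially for $\mathbf f\in C_c^\infty$ and extended by density. Summing over $j$ yields \eqref{eq:conditional_adjoint_Riesz}.

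\textbf{Step 3: the compositions.} As in Corollary~\ref{cor:mixed_unconditional}, we define
\[
 T_iL^{-1}\operatorname{div}_k\mathbf f:=(T_iL^{-1/2})(L^{-1/2}\operatorname{div}_k\mathbf f),
 \qquad
 V^{1/2}L^{-1}\operatorname{div}_k\mathbf f:=(V^{1/2}L^{-1/2})(L^{-1/2}\operatorname{div}_k\mathbf f).
\]
\begin{itemize}
 \item For the first composition, the left factor is bounded for $1<p<p_0$ and the right factor for $p_0'<p<\infty$. Their intersection is $p_0'<p<p_0$, which is nonempty since $p_0>2$ (indeed $q>\mathbf N/2$ gives $p_0>\mathbf N>2$). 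This proves \eqref{eq:conditional_mixed_Riesz}.
 \item For the second composition, \eqref{eq:half_potential_boundedness} gives the left factor for $1<p<2q$. Intersecting with $p_0'<p<\infty$ gives \eqref{eq:conditional_mixed_potential}.
\end{itemize}

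\textbf{Main obstacle: identifying the compositions with the Green-potential operators.} We must check that, for $\mathbf f\in C_c^\infty$, the factorizations agree with $\nabla_kL^{-1}\operatorname{div}_k\mathbf f$ and $V^{1/2}L^{-1}\operatorname{div}_k\mathbf f$ as defined through the Green kernel. The plan is to use spectral regularization: replace $L$ by $L+aI$, for which $(L+a)^{-1}=(L+a)^{-1/2}(L+a)^{-1/2}$ holds exactly on $L^2(dw)$. We then let $a\downarrow0$, using heat-kernel domination together with the local bounds \eqref{eq:critical_weight_green_bounds} from Corollary~\ref{cor:equation_unconditional} to pass to $L^1_{\mathrm{loc}}$ limits. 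Derivatives are then identified distributionally, and weak compactness in $L^p$ together with Fatou's lemma transfer the norm bounds. The uniform-in-$a$ bounds for $T_j(L+a)^{-1/2}$ and $V^{1/2}(L+a)^{-1/2}$ follow as in the proofs of Theorem~\ref{thm:fractional_potential_boundedness} and Lemma~\ref{lem:Riesz_operator_decomposition}. If these bounds turn out to be delicate, the fallback is to take the factorized operators themselves as the definitions, which is what the statement's phrase ``compositions defined below'' permits.
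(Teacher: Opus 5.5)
Your proposal is correct and follows essentially the same route as the paper. The paper takes the sharp range of $\nabla_kL^{-1/2}$ from Proposition~\ref{prop:sharp_Riesz_reduction}, using reverse H\"older exponents approaching $\mathbf N$ when $q\geq\mathbf N$, obtains $L^{-1/2}\operatorname{div}_k$ by duality, and composes with $\nabla_kL^{-1/2}$ and $V^{1/2}L^{-1/2}$; it does not attempt your identification with the Green potential here, and instead takes the factorized compositions as the definitions of the mixed operators, which is exactly your stated fallback.
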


\begin{proof}
If $\mathbf N/2<q<\mathbf N$,
Proposition~\ref{prop:sharp_Riesz_reduction} gives
the boundedness of $\nabla_kL^{-1/2}$ on $L^p(dw)$
for $1<p<p_0$.

If $q\geq\mathbf N$, then
$V\in{\rm RH}^{q_*}(dw)$ for every
$\mathbf N/2<q_*<\mathbf N$.
Given any $1<p<\infty$, choose $q_*$ sufficiently
close to $\mathbf N$ that
\[
 p<\frac{\mathbf Nq_*}{\mathbf N-q_*}.
\]
The same proposition therefore gives the boundedness
of $\nabla_kL^{-1/2}$ on $L^p(dw)$ for every
$1<p<\infty$.

Since the adjoint of $\nabla_kL^{-1/2}$ is
$-L^{-1/2}\operatorname{div}_k$, duality proves
\eqref{eq:conditional_adjoint_Riesz}.
The composition
\[
 (\nabla_kL^{-1/2})
 (L^{-1/2}\operatorname{div}_k)
\]
is consequently bounded on $L^p(dw)$ whenever
$p_0'<p<p_0$, which proves
\eqref{eq:conditional_mixed_Riesz}.

Finally, Theorem~\ref{thm:fractional_potential_boundedness}
gives the boundedness of $V^{1/2}L^{-1/2}$ for
$1<p<2q$. Hence
\[
 (V^{1/2}L^{-1/2})
 (L^{-1/2}\operatorname{div}_k)
\]
is bounded for $p_0'<p<2q$, proving
\eqref{eq:conditional_mixed_potential}.
The mixed operators in the statement are understood
through these compositions.
\end{proof}

\end{document}